\newtheorem{proposition}{Proposition}
\newtheorem{theorem}{Theorem}
\newtheorem{lemma}[proposition]{Lemma}
\newtheorem{corollary}[proposition]{Corollary}
\theoremstyle{definition}
\newtheorem{definition}[proposition]{Definition}
\theoremstyle{remark}
\newtheorem{remark}[proposition]{Remark}
\numberwithin{equation}{section}
\numberwithin{proposition}{section}
\newcommand\R{{\ensuremath {\mathbb R} }}
\newcommand\C{{\ensuremath {\mathbb C} }}
\newcommand\N{{\ensuremath {\mathbb N} }}
\newcommand\Z{{\ensuremath {\mathbb Z} }}
\renewcommand\phi{\varphi}
\renewcommand\le{\leqslant}
\renewcommand\ge{\geqslant}
\renewcommand\epsilon{\varepsilon}
\renewcommand\hat{\widehat}
\renewcommand\tilde{\widetilde}
\renewcommand\bar{\overline}
\newcommand{\gH}{\mathfrak{H}}
\newcommand{\gK}{\mathfrak{K}}
\newcommand{\gS}{\mathfrak{S}}
\newcommand{\cD}{\mathcal{D}}
\newcommand{\cH}{\mathcal{H}}
\newcommand\ii{{\ensuremath {\infty}}}
\newcommand{\cB}{\mathcal{B}}
\newcommand{\cU}{\mathcal{U}}
\newcommand{\cL}{\mathcal{L}}
\newcommand{\cF}{\mathcal{F}}
\newcommand{\cI}{\mathcal{I}}
\newcommand\1{{\ensuremath {\mathds 1} }}
\newcommand{\Sph}{\mathbb{S}}
\newcommand{\esp}{\mathbb{E}}
\newcommand{\sd}{\langle \nabla_x \rangle}
\newcommand{\nsd}{\langle \nabla \rangle}
\newcommand\cS{\mathcal{S}}
\DeclareMathOperator{\im}{Im}
\DeclareMathOperator{\re}{Re}
\DeclareMathOperator{\supp}{supp}
\DeclareMathOperator{\tr}{Tr}
\newcommand{\cA}{\mathcal{A}}
\newcommand{\vertiii}[1]{{\left\vert\kern-0.25ex\left\vert\kern-0.25ex\left\vert #1
    \right\vert\kern-0.25ex\right\vert\kern-0.25ex\right\vert}}
\title{Scattering for the positive density Hartree equation}
\author[A. Borie]{Antoine Borie}
\address{Univ Rennes, CNRS\\ IRMAR - UMR
  6625\\ F-35000 Rennes, France}
\email{antoine.borie@univ-rennes.fr}
\author[S. Hadama]{Sonae Hadama}
\address{Research Institute for Mathematical Sciences, Kyoto University, Kita-Shirakawa, Sakyoku, Kyoto, Japan 606-8502}
\email{hadama@kurims.kyoto-u.ac.jp}
\author[J. Sabin]{Julien Sabin}
\address{Univ Rennes, CNRS\\ IRMAR - UMR
  6625\\ F-35000 Rennes, France}
\email{julien.sabin@univ-rennes.fr}
\begin{document}

\maketitle

\begin{abstract}
 We study the asymptotic stability for large times of homogeneous stationary states for the nonlinear Hartree equation for density matrices in $\R^d$ for $d\ge3$. We can reach both the optimal Sobolev and Schatten exponents for the initial data, with a wide class of interaction potentials $w$ (under the sole assumption that $\hat{w}$ is bounded, including in particular delta potentials). Our method relies on fractional Leibniz rules for density matrices to deal with the fractional critical Sobolev regularity $s=d/2-1$ for odd $d$, as well as Christ-Kiselev lemmas in Schatten spaces.
\end{abstract}

\tableofcontents

\section*{Introduction}

\subsection{Context}

The time-dependent Hartree equation
\begin{equation}\label{eq:hartree}
 i\partial_t \gamma = [-\Delta_x+w*\rho_\gamma,\gamma]
\end{equation}
models the mean-field evolution of  several quantum particles moving in $\R^d$ and interacting via a two-body potential $w:\R^d\to\R$. At time $t\in\R$, the quantum state of these particles is described by its one-body density matrix $\gamma(t)$ which is a non-negative bounded operator on $L^2_x(\R^d)$. If $\gamma(t,x,y)$ denotes the integral kernel of $\gamma(t)$ (where $x,y\in\R^d$), we used the notation
$$\rho_{\gamma(t)}(x):=\gamma(t,x,x),\ x\in\R^d,$$ which corresponds physically to the spatial distribution of particles, and which is refered to as the \emph{density} of $\gamma(t)$. Notice that when $\gamma=|u\rangle\langle u|$ is a rank-one operator, one recovers the nonlinear Schr\"odinger equation where the unknown is the wavefunction $u\in L^2_x(\R^d,\C)$,
$$i\partial_t u = -\Delta_x u + (w*|u|^2)u.$$

If $g\in L^1(\R^d,\R_+)$, then $\gamma=g(-i\nabla_x)$ is a stationary (that is, time independent) solution to \eqref{eq:hartree} such that $\rho_\gamma$ is constant \cite{LewSab-13a}, and this constant is positive if $g\neq0$. The Hartree equation around these stationary states (that is, for $\gamma=g(-i\nabla_x)+Q$ for some (compact) perturbation $Q$) is thus coined the \emph{positive density Hartree equation}. Physically, these stationary states model homogeneous gases, important examples of which including non-interacting electron gases at zero or positive temperature \cite{GiuVig-05}. For other relevant physical examples, see \cite{LewSab-13a}.

Mathematically, the study of the nonlinear Hartree equation for density matrices started in \cite{BovPraFan-74,BovPraFan-76,ChaGla-75,Chadam-76,Zagatti-92}, for either finite-rank or trace-class $\gamma$. Since $g(-i\nabla_x)$ is never compact if $g\neq0$, solutions to the positive density Hartree equation are never compact and hence are not covered by these works. The first works about the positive density Hartree equation were \cite{LewSab-13a,LewSab-13b} and we carry on the study initiated there.

More precisely, we focus on the question of the asymptotic stability of the stationary states $g(-i\nabla_x)$: if $\gamma(0)$ is close enough to $g(-i\nabla_x)$ in some adequate topology, is it true that $\gamma(t)\to g(-i\nabla_x)$ as $t\to+\ii$ (possibly for a different topology)? This question was tackled for the first time in \cite{LewSab-13b} where the case $d=2$ and nice enough $w$ was treated. In \cite{CheHonPav-17}, the case $d\ge3$ and nice enough $w$ was treated. In \cite{ColDeS-20,ColDeS-22}, the authors managed to obtain results for $d\ge2$ and less regular $w$ (in particular, their assumptions include $w=c\delta_0$ for some $c\in\R$, which corresponds to the cubic nonlinear Schr\"odinger equation) by considering a particular ansatz for the initial data which relates \eqref{eq:hartree} to a kind of nonlinear Schr\"odinger where the unknown is a wavefunction rather than a density matrix. These works have been extended to treat more general nonlinearities (quintic or with an exchange term) in  \cite{Maleze-23,ColDanDeSMal-23}.  Finally, in \cite{Hadama-23}, the case $d=3$ and less regular $w$ (including $w=c\delta_0$) is treated, for a class of initial data which is sharp in a sense that we detail below. In this work, we extend the results of \cite{Hadama-23} to any dimension $d\ge3$ using a different method.

Our approach relies on several tools. When dealing with cubic (or any power) Schr\"odinger equations, a powerful way to obtain scattering of small solutions at the critical Sobolev regularity (which is $s=d/2-1$ for cubic nonlinearities) is to combine (fractional) Leibniz rules (to estimate the nonlinearity) with Strichartz estimates (see for instance \cite{CazWei-90}), and this is one of the reasons that the strategy of \cite{ColDeS-20,ColDeS-22} allowed to treat $w$ with low regularity. However, it was never properly understood how to extend this idea to density matrices. While \cite{Hadama-23} was a first step in this direction, we fully introduce the way to exploit fractional Leibniz rules and Strichartz estimates in the context of density matrices. In particular, we extend the approach of \cite{Li-19} to fractional Leibniz rules to our setting. Fractional Leibniz rules are useful in odd dimensions $d$ for which $s=d/2-1$ is not an integer. In even dimensions $d$, $s$ is an integer and standard Leibniz rules are enough. In odd dimensions, one could alternatively work with the non-optimal $s=(d-1)/2$ which is now an integer and for which standard Leibniz rules are enough. Still, there is another major tool which is widely used in the context of semilinear Schrödinger equations and for the which a satisfying extension to density matrices was not known: inhomogeneous Strichartz estimates. In this work, we also extend \cite{Hadama-23} in this regard by finding an adequate setting to state and use Christ-Kiselev lemmas \cite{ChrKis-01,Tao-00} adapted to density matrices, which is the key tool to obtain inhomogeneous Strichartz estimates.

If one introduces a semiclassical parameter $\hbar>0$ in the positive density Hartree equation, it can be shown that one recovers the positive density Vlasov equation as $\hbar\to0$ \cite{LewSab-20}. The stationary states $g(-i\nabla_x)$ (or $g(-i\hbar\nabla_x)$ in a semiclassical context) correspond to phase-space distributions $f(x,v)=g(v)$ that are homogeneous in the spatial variable $x$ (and only depend on the velocity variable $v$). These homogeneous distributions are then also stationary solutions to the nonlinear Vlasov equation, and their asymptotic stability is related to the so-called \emph{Landau damping} phenomenon \cite{Landau-46}, which was the subject of several mathematical works lately \cite{MouVil-11,BedMouMas-18,HanNguRou-21,IonPauWanWid-24}. In the context of the Hartree equation, the semilinear structure of the problem enables the use of powerful Strichartz estimates which allow to treat quite singular $w$'s like $w=c\delta_0$ which is not covered by the results for the Vlasov equation, as well as low regularity $g$'s (like a characteristic function of a ball, corresponding to the free Fermi sea at zero temperature \cite{Hadama-23}). Interestingly, the stability of both the linearized Hartree and Vlasov equations around homogeneous states boils down to a very similar criterion which is called Penrose criterion in the Vlasov case \cite{Penrose-60}, see Section \ref{sec:penrose}. In the physically relevant setting of the (repulsive) Coulomb potential $w(x)=|x|^{-1}$ in $d=3$ (which is the setting originally studied by Landau and Penrose), the study of the linearized operator has been performed in \cite{HanNguRou-21b} in the Vlasov case and in \cite{NguYou-23} in the Hartree case. Extending these results to the full nonlinear equation is a major open problem in both the Vlasov and Hartree cases.

\subsection{Main results}

To state our main result, we need to introduce the adequate functional spaces in which we work. For any $s\ge0$ and $\alpha\ge1$, we define
$$\cH^{s,\alpha}:=\{Q\in\gS^\alpha(L^2(\R^d)),\ \sd^s Q\sd^s \in \gS^\alpha(L^2(\R^d))\},$$
where $\sd:=(1-\Delta_x)^{1/2}$ and $\gS^\alpha(\gH)$ denotes the Schatten space of all compact operators $A$ on the Hilbert space $\gH$ such that $\tr_\gH|A|^\alpha<+\ii$. We then denote the associated Schatten norm by $\|A\|_{\gS^\alpha}:=(\tr_\gH|A|^\alpha)^{1/\alpha}$. Similarly, the space $\cH^{s,\alpha}$ is endowed with the natural norm $\|Q\|_{\cH^{s,\alpha}}:=\|\sd^sQ\sd^s\|_{\gS^\alpha}$. The spaces $\cH^{s,\alpha}$ are the analogue of Sobolev spaces in the context of density matrices, with an additional summability index $\alpha$. As was already noticed in \cite{LewSab-13b}, Schatten spaces are a very useful tool to study the positive density Hartree equation, in particular to control the terms coming from the background $g(-i\nabla_x)$. This will become apparent in Section \ref{sec:strategy} below where we explain our proof strategy.

Our main result is:

\begin{theorem}\label{thm:main1}

 Let $d\ge3$. Let $w\in\cS'(\R^d)$ be such that $\hat{w}$ is even, continuous and bounded. Let $g\in L^1(\R^d,\R_+)$ be such that $\langle\xi\rangle^{2(d-2)}g(\xi)\in L^\ii_\xi(\R^d)$ and $\sup_{\omega\in\Sph^{d-1}}\int_0^\ii t|\hat{g}(t\omega)|\,dt<+\ii$. Assume that $(w,g)$ is Penrose stable in the sense of Definition \ref{def:penrose} below. Then, there exists $\epsilon_0>0$ such that for any $Q_{\rm{in}}\in\cH^{d/2-1,2d/(d+1)}$ with $\|Q_{\rm{in}}\|_{\cH^{d/2-1,2d/(d+1)}}\le\epsilon_0$, there exists $Q\in C^0_t(\R,\cH^{d/2-1,2})$ with $\rho_Q\in L^2_t(\R,H^{d/2-1}_x)$ such that $\gamma(t)=g(-i\nabla_x)+Q(t)$ is a solution to \eqref{eq:hartree} and such that $\gamma(0)=g(-i\nabla_x)+Q_{\rm{in}}$. Furthermore, $\gamma(t)$ scatters linearly around $g(-i\nabla)$ as $t\to\pm\ii$: there exists $Q_\pm\in\cH^{d/2-1,2d/(d-1)}$ such that
 $$\lim_{t\to\pm\ii}\left\| \gamma(t) - g(-i\nabla_x) - e^{it\Delta_x}Q_\pm e^{-it\Delta_x} \right\|_{\cH^{d/2-1,2d/(d-1)}}=0.$$

\end{theorem}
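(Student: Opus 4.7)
The plan is to rewrite \eqref{eq:hartree} as a perturbation equation for $Q:=\gamma-g(-i\nabla_x)$. Substituting and using that $\rho_{g(-i\nabla_x)}$ is a constant (so $w*\rho_{g(-i\nabla_x)}$ commutes with $\gamma$), one obtains
$$i\partial_t Q = [-\Delta_x,Q] + [w*\rho_Q, g(-i\nabla_x)] + [w*\rho_Q, Q],$$
and Duhamel's formula yields
$$Q(t) = e^{it\Delta_x}Q_{\rm in}e^{-it\Delta_x} - i\int_0^t e^{i(t-s)\Delta_x}\bigl([w*\rho_Q(s),g(-i\nabla_x)] + [w*\rho_Q(s),Q(s)]\bigr)e^{-i(t-s)\Delta_x}\d{s}.$$
Taking the density of both sides produces an equation of the form $(1+\cL)\rho_Q = \rho_{e^{it\Delta_x}Q_{\rm in}e^{-it\Delta_x}} + N(Q)$, where $\cL$ is a space-time Fourier multiplier whose invertibility on $L^2_tH^{d/2-1}_x$ is exactly the content of the Penrose stability of $(w,g)$ from Definition \ref{def:penrose}; its boundedness relies on $\hat{w}\in L^\ii$ together with the integrability/decay assumptions on $g$ and $\hat{g}$.

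The second step is to set up the resolution space
$$X:=\bigl\{Q\in C^0_t(\R,\cH^{d/2-1,2}):\rho_Q\in L^2_t(\R,H^{d/2-1}_x)\bigr\},$$
augmented by appropriate Strichartz-admissible norms on $Q$. The free term $e^{it\Delta_x}Q_{\rm in}e^{-it\Delta_x}$ belongs to $X$ by orthonormal Strichartz estimates of Frank--Sabin type: these provide
$$\|\rho_{e^{it\Delta_x}Q_{\rm in}e^{-it\Delta_x}}\|_{L^2_tH^{d/2-1}_x}\lesssim\|Q_{\rm in}\|_{\cH^{d/2-1,2d/(d+1)}},$$
with derivatives placed symmetrically on both sides of $Q_{\rm in}$. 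The Schatten exponent $2d/(d+1)$ is exactly the optimal one in these inequalities. Penrose stability then absorbs the linear contribution $[w*\rho_Q,g(-i\nabla_x)]$ by inverting $1+\cL$ at the level of densities.

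The heart of the proof is the estimate for the genuinely nonlinear term $[w*\rho_Q,Q]$. Since the relevant inhomogeneous Strichartz pair falls outside the diagonal admissible range, I would apply a Christ--Kiselev lemma tailored to Schatten spaces (the tool developed in this paper) to reduce the inhomogeneous bound to a homogeneous-like one. The resulting estimate requires controlling quantities of the form $\|\sd^{d/2-1}(w*\rho_Q\cdot Q)\sd^{d/2-1}\|_{\gS^\alpha}$ in suitable space-time norms, which I would handle through a \emph{fractional Leibniz rule for density matrices} extending \cite{Li-19}: it distributes the fractional derivatives between the scalar factor $w*\rho_Q$ (on which convolution with $w$ is bounded in $H^{d/2-1}_x$ thanks to $\hat{w}\in L^\ii$) and the operator factor $Q$. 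Together with Penrose invertibility, this yields a contraction on a small ball of $X$ once $\epsilon_0$ is small enough. The main technical obstacle, and the core novelty, is establishing this fractional Leibniz rule for operators at the critical regularity $s=d/2-1$ in odd dimensions, where standard integer Leibniz rules no longer suffice.

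Finally, scattering is read off the Duhamel formula: by the dual orthonormal Strichartz bounds the nonlinear integral is Cauchy in $\cH^{d/2-1,2d/(d-1)}$ as $t\to\pm\ii$ (the exponent $2d/(d-1)$ being the dual of $2d/(d+1)$ for Schatten duality), so defining
$$Q_\pm := Q_{\rm in} - i\int_0^{\pm\ii}e^{-is\Delta_x}\bigl[w*\rho_Q(s),g(-i\nabla_x)+Q(s)\bigr]e^{is\Delta_x}\d{s}$$
yields the claimed convergence $\|\gamma(t)-g(-i\nabla_x)-e^{it\Delta_x}Q_\pm e^{-it\Delta_x}\|_{\cH^{d/2-1,2d/(d-1)}}\to0$ via tail estimates on this integral.
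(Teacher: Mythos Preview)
Your proposal identifies the right ingredients (Penrose invertibility of $1+\cL$, orthonormal Strichartz with sharp Schatten exponent $2d/(d+1)$, fractional Leibniz for operators, Christ--Kiselev in Schatten spaces), but organizes the fixed-point argument differently from the paper, and this reorganization hides a genuine gap.

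The paper does \emph{not} iterate on $Q$ in a space like your $X$. It iterates on $\varrho=\rho_Q$ alone in $L^2_tH^s_x$, using the perturbed propagator $U_V$ (with $V=w*\varrho$) to write
$$Q(t)=U_V(t)Q_{\rm in}U_V(t)^*-i\int_0^t U_V(t,\tau)[V(\tau),g(-i\nabla_x)]U_V(t,\tau)^*\,d\tau,$$
so that $\rho_Q=\Phi(\varrho)$ is a functional of $\varrho$ (and $Q_{\rm in},g$) only. The sharp Schatten hypothesis on $Q_{\rm in}$ is then used exactly once, through the Strichartz bound $\|\rho_{U_V(t)Q_{\rm in}U_V(t)^*}\|_{L^2_tH^s_x}\lesssim\|Q_{\rm in}\|_{\cH^{s,2d/(d+1)}}$ (Theorem~\ref{thm:stri-potential}); thereafter only the scalar norm $\|\varrho\|_{L^2_tH^s_x}$ is propagated.

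Your scheme instead requires closing
$$\Bigl\|\rho\Bigl[\int_0^t e^{i(t-s)\Delta_x}[V(s),Q(s)]e^{-i(t-s)\Delta_x}\,ds\Bigr]\Bigr\|_{L^2_tH^s_x}\lesssim\|V\|_{L^2_tH^s_x}\|Q\|_X.$$
With only $Q\in C^0_t\cH^{s,2}$ the Schatten arithmetic fails: dual orthonormal Strichartz places the $W$- and $V$-factors in $\gS^{4d/(d-1)}$-type classes, and H\"older then demands $Q$ in $\gS^{2d/(d+1)}$, not $\gS^2$. Your ``augmented Strichartz-admissible norms on $Q$'' are left unspecified, and there is no evident candidate that both propagates under the iteration and supplies the missing Schatten summability. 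This is precisely why the paper avoids iterating on $Q$.

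Relatedly, in your free-propagator Duhamel the $g$-term looks purely linear and hence absorbed by $(1+\cL)^{-1}$, but this is deceptive: the higher-order reaction of the background to $V$ reappears inside $[V,Q]$ once you iterate (since $Q$ itself carries the response of $g$). In the paper's $U_V$-formulation these reaction terms are isolated explicitly (Section~\ref{sec:reaction}) and are the hardest part of the proof---in particular the quadratic piece $\rho_{2,2}$ (Proposition~\ref{prop:rho22}) forces a detour through an $\dot H^{-1/2}$ Strichartz estimate (Theorem~\ref{thm:strichartz-HS}) combined with the \emph{partially orthogonal} Christ--Kiselev lemma (Corollary~\ref{coro:CK-2}), because the singular factor cannot be split as a product of two multiplication-type operators. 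Your sketch gives no mechanism for this term. Your scattering paragraph, by contrast, is essentially the paper's argument.
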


\begin{remark}
The space $\cH^{d/2-1,2d/(d+1)}$ for the initial data is sharp in two senses: firstly, the number of derivatives $s=d/2-1$ is critical for cubic nonlinearities (such as $w=c\delta_0$) and so cannot be lowered in general. Secondly, the Schatten exponent $\alpha=2d/(d+1)$ is also sharp since if one considers the linear evolution ($g=0$ and $w=0$), for which $\gamma(t)=e^{it\Delta_x}Q_{\text{in}}e^{-it\Delta_x}$. The estimate
$$\|\rho_{\gamma(t)}\|_{L^2_t H^s_x}\lesssim \|\sd^s Q_{\text{in}}\sd^s\|_{\gS^\alpha}$$
then holds for $s=d/2-1$ only if $\alpha\le 2d/(d+1)$. Indeed, it implies that
$$\|\rho_{\gamma(t)}\|_{L^2_{t,x}}\lesssim \|\sd^{s} Q_{\text{in}}\sd^{s}\|_{\gS^\alpha}$$
and this implies that $\alpha\le2d/(d+1)$ by \cite[Sec. 4.7]{BezHonLeeNakSaw-19}.

\end{remark}

\begin{remark}
 Sufficient conditions ensuring that $(w,g)$ is Penrose stable are given in Proposition \ref{prop:penrose-sufficient}.
\end{remark}

\begin{remark}
 In Theorem \ref{thm:main1}, we only mention existence of solutions to the Hartree equation. A result about uniqueness is given in Proposition \ref{prop:uniqueness} below, and requires slightly more decay assumptions on $g$.
\end{remark}

\subsection{Comparison with existing results}

\begin{enumerate}

 \item In \cite{LewSab-13b}, the authors consider $d=2$ where the critical exponent is $s=d/2-1=0$ so there are no derivatives. The initial data $Q_{\text{in}}$ is taken in $\gS^{4/3}$ which is indeed the sharp Schatten space $\gS^\alpha$ with $\alpha=2d/(d+1)$. The interaction potential $w$ has additional assumptions compared to us, most notably that $|\xi|^{1/2}\hat{w}(\xi)\in L^\ii(\R^2)$ due to the reaction terms from $g(-i\nabla_x)$ that are quadratic in $V$ (see \cite[Prop. 4]{LewSab-13b}). Due to these quadratic terms, our method does not provide a substantial improvement in $d=2$. The only result of \cite{LewSab-13b} that we can improve is \cite[Lem. 3]{LewSab-13b}: using our method, we can replace the assumption that $\check{g}\in L^1(\R^2)$ by $g\in L^1$  (because we only need that $g(-i\nabla_x)$ is a bounded operator).

 \item In \cite{CheHonPav-17}, the authors consider $d\ge3$ and work with a regularity $s>d/2-1$ which does not reach the critical one. On the other hand, they can consider initial data $Q_{\text{in}}$ belonging to $\cH^{s,\alpha}$ with $\alpha=2$ while we have the stronger assumption with $\alpha=2d/(d+1)<2$. For the terms involving only $Q_{\text{in}}$ \cite[Prop. 6.1]{CheHonPav-17}, they only need $\hat{w}\in L^\ii(\R^d)$  which is the same as us (but $s>d/2-1$ is still needed). For the reaction terms coming from $g(-i\nabla_x)$, they need much more assumptions on $w$: it must be split as $w=w_1*w_2$ with several assumptions on $\hat{w_1}$ and $\hat{w_2}$ which are not so explicit in terms of $w$ (see \cite[Thm. 1.1, (ii)]{CheHonPav-17}). In particular, their assumptions imply that that $|\xi|^{-1/2}\hat{w}(\xi)$ remains bounded around $\xi=0$, so in particular $\hat{w}(0)=0$ which does not seem relevant from a physical point of view. Also, in \cite{CheHonPav-17}, the authors overlook the issue regarding nested time integrals that requires that we use a kind of Christ-Kiselev lemma. For instance in \cite[Eq. (5.8)]{CheHonPav-17}, the authors estimate the nested integrals $\int_0^\ii dt\int_0^t dt_1\int_0^{t_1}\cdots$ by $\int_0^\ii dt\int_0^\ii dt_1\int_0^{\ii}\cdots$ invoking the inequality $|\tr AB|\le \tr|A||B|$. While this inequality is true, what they use is a version of it with more operators, for instance $|\tr ABC|\le \tr |A||B||C|$ which is clearly wrong (the trace on the right side may not even be non-negative). This issue can be resolved using the Christ-Kiselev lemmas that we present here.

 \item In \cite{ColDeS-20,ColDeS-22}, the authors consider initial data of the form $\esp|X\rangle\langle X|$ where $X$ is a random variable with values in $L^2(\R^d)$. They show that one can find solutions that still have this form, and the corresponding equation on $X$ is
 $$i\partial_t X= -\Delta_x X + (w*_x\esp|X|^2)X,$$
 so that one may use techniques closer to standard nonlinear Schr\"odinger equations. They can then treat any $d\ge2$ and their assumptions on $w$ are  weaker than the ones of \cite{CheHonPav-17} (but stronger than our $\hat{w}\in L^\ii$): in $d=3$ they can treat any $w$ which is a finite Borel measure and in $d\ge4$ they can treat any $w$ with $w\in W^{d/2-1,1}$ such that $\langle\xi\rangle\hat{w}(\xi)\in L^{2(d+2)/(d-2)}_\xi$. They also work at the critical regularity $s=d/2-1$. A natural question is to ask what kind of initial $Q_{\text{in}}$ can be written in the above form. In \cite[Remark  1.5]{ColDeS-22}, the authors mention that they can treat any non-negative $Q_{\text{in}}\in\cH^{s,1}$ with $\rho_{Q_{\text{in}}}\in L^{3/4}_x$, while we can treat any $Q_{\text{in}}\in\cH^{s,2d/(d+1)}$. Hence, we improve on the Schatten exponent, and we don't need the non-negativity of $Q_{\text{in}}$ (which is important from a physical point of view, we can both add and remove locally particles to the gas), neither the condition $\rho_{Q_{\text{in}}}\in L^{3/4}_x$.

 \item In \cite{Hadama-23}, which is the main motivation of this work, the author obtains the same result as us for $d=3$. In this work, the author estimates $\rho$ in $H^{1/2}_x$ by interpolating between $L^2_x$ and $H^1_x$, while we are able to treat any dimension $d\ge3$, with a different method based on Leibniz rules to estimate terms in $H^s$. It may be possible to obtain our result with interpolation methods, but it is not clear to us how to find the right spaces to do interpolation with. Also, Leibniz rules for operators or operators densities may have other applications elsewhere so this is why we choose this approach.

 \item Finally, let us mention the very recent works \cite{You-24,Smith-24}, which have the common feature of extending the strategy of the Vlasov case to the Hartree case. In \cite{You-24}, this is done by proving pointwise-in-time decay estimates on $\rho$ (and its derivatives), of the type $|\rho_Q(t,x)|\lesssim t^{-d}$, similarly to \cite{HanNguRou-21} in the Vlasov case. In \cite{Smith-24}, bounds of the type $|\hat{\rho_Q}(t,k)|\lesssim\langle k,tk\rangle^{-N}$ are proved, similarly to \cite{BedMouMas-18}. This last work has the advantage to be uniform in the semiclassical limit, allowing to treat both quantum and classical cases at once. In both \cite{You-24,Smith-24}, the price to pay is to have very regular and decaying initial data, while we work at the lowest level of regularity (and without pointwise space decay assumptions).

\end{enumerate}

\subsection{Proof strategy and novelties}\label{sec:strategy}

The general strategy is the same as the original one of \cite{LewSab-13b}, namely to build first the density $\rho_Q$ by a fixed point strategy, and then deduce properties of the full state $\gamma(t)$ using the relation
$$\gamma(t)=U_V(t)(g(-i\nabla_x)+Q_{\text{in}})U_V(t)^*,$$
where $V=w*\rho_Q$ and $U_V(t)$ is the propagator of $-\Delta+V(t)$, namely the unique solution to
$$
\begin{cases}
 i\partial_t U_V(t) = (-\Delta+V(t))U_V(t),\\
 U_V(0)=\text{id}_{L^2(\R^d)}.
\end{cases}
$$
To find the right space to do the fixed point theorem for $\rho_Q$, we notice that in the case of the cubic nonlinear Schr\"odinger equation $i\partial_t u = -\Delta u +c|u|^2 u$ (which we can find from the Hartree equation choosing $g=0$,  $Q_{\text{in}}=|u_{\text{in}}\rangle\langle u_{\text{in}}|$ and $w=c\delta_0$), a natural space to obtain scattering for small solutions is $u\in L^4_t W^{s,2d/(d-1)}_x$ where $s=d/2-1$ is the critical regularity (this can be seen for instance in \cite{CazWei-90}). The $L^4_t$ part is nice with respect to the cubic nonlinearity, since then $|u|^2 u$ belongs to $L^{4/3}_t$ which is the dual of $L^4_t$, which is relevant when combined with inhomogeneous Strichartz. Once the power $4$ is chosen for the time Lebesgue exponent, the space Lebesgue exponent $2d/(d-1)$ is chosen such that $(4,2d/(d-1))$ is admissible for the Strichartz estimate. Now since $\rho_\gamma=|u|^2$ for $\gamma=|u\rangle\langle u|$ and since $u\in L^4_t W^{s,2d/(d-1)}_x$ implies that $|u|^2$ belongs to $L^2_t H^s_x$, we deduce that $L^2_t H^s_x$ is  a natural space to put $\rho_Q$. Note also that since $\hat{w}\in L^\ii$, $\rho_Q\in L^2_t H^s_x$ implies that $V\in L^2_t H^s_x$.

We thus have to estimate $\rho_Q$ in $L^2_t H^s_x$. When $s$ is an integer, this can be done recalling that $\rho_Q(x)=Q(x,x)$ so that for instance $\partial_j \rho_Q = \rho_{\partial_j Q}-\rho_{Q\partial_j}$ and the derivatives are moved on the operator $Q$ (in the same spirit as standard Leibniz rules for the integer derivatives of a product of two functions). When $s$ is not an integer, we have to understand how such a fact persists and this is the content of our Lemma \ref{lem:leibniz-gamma}. Taking fractional derivatives has other consequences that we will mention later, but for now let us explain our strategy in the simple case $s=1$ (that corresponds to $d=4$). Since we consider an initial condition $
g(-i\nabla_x)+Q_{\text{in}}$, we deduce that for $Q(t)=\gamma(t)-g(-i\nabla_x)$ we have
$$\rho_{Q(t)}=\rho_{U_V(t)Q_{\text{in}}U_V(t)^*}+\rho_{U_V(t)g(-i\nabla_x)U_V(t)^*-g(-i\nabla_x)}.$$
In particular, we have two contributions to $\rho_{Q(t)}$: one coming from the initial data $Q_{\text{in}}$ and one coming from the background $g(-i\nabla_x)$ (these last ones can be understood as the reaction of the background to the potential $V$). The contribution coming from the initial data is the same as the one that we would have if there was no background ($g=0$), which corresponds to a more ``standard'' nonlinear Hartree equation.

As we said, to estimate $\rho_{U_V(t)Q_{\text{in}}U_V(t)^*}$ in $L^2_t H^1_x$, we need to estimate
$$\rho_{U_V(t)Q_{\text{in}}U_V(t)^*},\ \rho_{\partial_j U_V(t)Q_{\text{in}}U_V(t)^*},\ \rho_{U_V(t)Q_{\text{in}}U_V(t)^*\partial_j}$$
in $L^2_{t,x}$. Here, we can guess the right assumption on $Q_{\text{in}}$ to do so, by looking at the case $V=0$ so that $U_V(t)=e^{it\Delta_x}$ is the free propagator. It is a result of \cite{BezHonLeeNakSaw-19} and that we recall below in Corollary \ref{coro:deriv-distrib} that
\begin{equation}\label{eq:intro-bez}
 \|\rho_{e^{it\Delta_x}\Gamma e^{-it\Delta_x}}\|_{L^2_{t,x}}\lesssim \|\sd^{s_1}\Gamma\sd^{s_2}\|_{\gS^\alpha},
\end{equation}
with $\alpha=2d/(d+1)=8/5$ (which is sharp) and $s_1+s_2=s=1$. Actually, in \cite{BezHonLeeNakSaw-19} it is only stated for $s_1=s_2=s/2$ and we explain how to have it for general $s_1$, $s_2$ in Corollary \ref{coro:deriv-distrib} below. This generalization to $s_1\neq s_2$ is quite important to us since we want to apply it to $\Gamma=Q_{\text{in}}$, $\Gamma=\partial_jQ_{\text{in}}$, and $\Gamma=Q_{\text{in}}\partial_j$. In particular, in order to get a symmetric estimate in the end it is natural to choose $s_1=s_2=1/2$ for $Q_{\text{in}}$, $s_1=0$, $s_2=1$ for $\partial_j Q_{\text{in}}$, and $s_1=1$, $s_2=0$ for $Q_{\text{in}}\partial_j$. We deduce that
\begin{equation}\label{eq:intro-rho-H1}
\|\rho_{e^{it\Delta_x}Q_{\text{in}} e^{-it\Delta_x}}\|_{L^2_t H^1_x}\lesssim \|\sd Q_{\text{in}}\sd\|_{\gS^{8/5}},
\end{equation}
so that $Q_{\text{in}}\in\cH^{1,8/5}$ is the natural assumption on $Q_{\text{in}}$. In general dimensions, the analogue assumption will be $Q_{\text{in}}\in\cH^{s,\alpha}$ with $s=d/2-1$ and $\alpha=2d/(d+1)$.

To show the same estimate with $e^{it\Delta_x}$ replaced by $U_V(t)$ is the content of Section \ref{sec:strichartz-UV}. The first difference between $e^{it\Delta_x}$ and $U_V(t)$ is that $U_V(t)$ does not commute with derivatives, which we used in the argument above to estimate
$\rho_{\partial_j e^{it\Delta_x}Q_{\text{in}}e^{-it\Delta_x}} = \rho_{e^{it\Delta_x}\partial_j Q_{\text{in}}e^{-it\Delta_x}}$
in terms of a Schatten norm of $\partial_j Q_{\text{in}}$. Hence, we not only show that $U_V(t)$ satisfies the same Strichartz estimates as $e^{it\Delta_x}$ but also that $\sd U_V(t) \sd^{-1}$ satisfies the same estimates as $e^{it\Delta_x}$. For standard nonlinear Schrödinger equation, an important tool to estimate $U_V$ perturbatively is inhomogeneous Strichartz estimates, which can be obtained from the homogeneous ones from the Christ-Kiselev lemma \cite{ChrKis-01,Tao-00}. In the context of density matrices, the analogue of inhomogeneous Strichartz estimates \cite[Cor. 1]{FraLewLieSei-13} is not as powerful as its analogue for functions. There are several ways around this issue: in \cite{LewSab-13b}, $U_V$ was developed in a Dyson expansion and each term in the expansion is estimated by going back to the proof of Strichartz estimates. In \cite{CheHonPav-17}, the authors manage to use a version of inhomogeneous Strichartz estimates as powerful as their analogue for functions, using Strichartz estimates for the integral kernel $Q_{\text{in}}(x,y)$. The price to pay is to use norms on $Q_{\text{in}}$ based on $L^2_{x,y}$ (in other words, Hilbert-Schmidt norms on $Q_{\text{in}}$) and in particular they cannot reach the optimal regularity $s=d/2-1$, and need the interaction potential $w$ to be regularizing as well. In \cite{ColDeS-20,ColDeS-22}, they author work with a nonlinear Schrödinger equation on functions (which depend on an additional parameter $\omega$), so that they are able to use ``standard'' inhomogeneous Strichartz estimates and can reach the optimal regularity $s=d/2-1$ and an interaction potential $w$ merely in $L^1$ (or a finite measure). Finally, in \cite{Hadama-23}, a better inhomogeneous Strichartz estimate for density matrices is used we extend this strategy in this work. It allows to work at the sharp regularity $s=d/2-1$ and at the sharp Schatten exponent $\alpha=2d/(d+1)$. However, in \cite{Hadama-23} the author uses interpolation estimates to deal with $H^s$-norms (in his case, $H^{1/2}$) and hence needs to again use Dyson expansions to do interpolation. Since we do not use interpolation, we do not need to use Dyson expansions and estimate directly $U_V$ with a strategy that is closer to standard inhomogeneous estimates. Furthermore, the range of exponents for which one can apply interpolation methods is unclear. With our method based on fractional Leibniz rules and Christ-Kiselev lemmas, we do not rely on interpolation and prove Strichartz estimates for $U_V$ with general assumptions on the various exponents involved.

More precisely, we prove Strichartz estimates for $\cU(t):=\sd U_V(t) \sd^{-1}$  through their dual formulation (that we recall in Section \ref{sec:free} below),
$$\|h(t)\cU(t)\|_{\gS^{2\alpha'}(L^2_x\to L^2_{t,x})} \lesssim \|h\|_{L^{2p'}_t L^{2q'}_x},$$
together with the Duhamel formula
$$h(t)\cU(t)=h(t)e^{it\Delta_x}-ih(t)e^{it\Delta_x}\int_0^t e^{-i\tau\Delta_x}\sd V(\tau)\sd^{-1}\cU(\tau)\,d\tau.$$
First, let us notice that using Leibniz type formulas, we may think as $\sd V(\tau)\sd^{-1}$ as being the operator $V(\tau)+(\sd V)(\tau)\sd^{-1}$. Next, notice that if the integral $\int_0^t\,d\tau$ is replaced by $\int_0^\ii\,d\tau$, it becomes easy to estimate it, because $h(t)e^{it\Delta_x}:L^2_x\to L^2_{t,x}$ belongs to a Schatten space due to Strichartz estimates for $e^{it\Delta_x}$, while $\int_0^\ii e^{-i\tau\Delta_x}V(\tau)\cU(\tau)$ can be seen as the composition of the operators $\sqrt{V(\tau)}\cU(\tau):L^2_x\to L^2_{\tau,x}$ (which is similar to the one we estimate so it can be absorbed by the left side if $V$ is small enough), and the operator $\int_0^\ii e^{-i\tau\Delta_x}\sqrt{|V(\tau)|}\,d\tau:L^2_{\tau,x}\to L^2_x$ which is known to be bounded (even in some Schatten class) by Strichartz estimates for $e^{it\Delta_x}$. As for inhomogeneous Strichartz estimates, the replacement of $\int_0^\ii\,d\tau$ by $\int_0^t\,d\tau$ is non-trivial and can be done via the use of a Christ-Kiselev lemma. We thus extend the proof of  \cite{Tao-00} to Schatten spaces and provide the relevant results in Section~ \ref{sec:CK}. It implies in particular that an operator of the form
$$h(t)e^{it\Delta_x}\int_0^t e^{-i\tau\Delta_x}\tilde{h}(\tau)\,d\tau: L^2_{\tau,x}\to L^2_{t,x}$$
belongs to Schatten spaces since $h(t)e^{it\Delta_x}:L^2_x\to L^2_{t,x}$, $\tilde{h}(\tau)e^{i\tau\Delta_x}:L^2_x\to L^2_{\tau,x}$ belong to Schatten spaces. At this point, we emphasize that this type of Christ-Kiselev result deals with operators from $L^2_{\tau,x}$ to $L^2_{t,x}$ (that is, there is a time variable both in the source and target spaces). To obtain such a structure, it was important to see the operator $\int_0^t e^{-i\tau\Delta_x}V(\tau)\cU(\tau)\,d\tau: L^2_x\to L^2_{t,x}$ as the composition of the operators $\sqrt{V(\tau)}\cU(\tau):L^2_x\to L^2_{\tau,x}$ and $\int_0^t e^{-i\tau\Delta_x}\sqrt{|V(\tau)|}\,d\tau: L^2_{\tau,x}\to L^2_{t,x}$ (so that the second operator indeed maps $L^2_{\tau,x}$ to $L^2_{t,x}$), which comes from the splitting $V(\tau)=\sqrt{|V(\tau)|}\sqrt{V(\tau)}$. While this splitting is always possible for any multiplication operator, we will need to provide for more general operators of the type $\sd^s V(\tau)\sd^{-s}$ and this will greatly influence the way we write our Leibniz rules in Section \ref{sec:leibniz} (for instance, see in this respect \eqref{eq:decomp-leibniz-schatten-J} where $\sd^s V(\tau)\sd^{-s}$ is written as a sum of terms, each of which is a product with a good structure to combine it with a Christ-Kiselev lemma). Notice also that while $V(\tau)=\sqrt{|V(\tau)|}\sqrt{V(\tau)}$ can be seen as a composition of two multiplication operators, we will expand $\sd^s V(\tau)\sd^{-s}$ in compositions of more general operators than multiplication operators (typically, $A^*B$ with $A,B:L^2_x\to L^2(X)$ for some measure space $X$. This general measure space is in particular useful to include Littlewood-Paley frequency summation, see the proof of Lemma \ref{lem:leibniz-op-general} below). For this reason, we also need to extend (dual) Strichartz estimates to more general operators than multiplication operators (see Section \ref{sec:gene-stri}). To conclude, Strichartz estimates for $U_V$ are thus obtained through the use of two main ingredients: Christ-Kiselev lemmas in Schatten spaces and fractional Leibniz rules adapted to these Christ-Kiselev lemmas. This is done in Section \ref{sec:strichartz-UV}.

Let us now detail how to treat the reaction terms $\rho_{U_V(t)g(-i\nabla_x)U_V(t)^*-g(-i\nabla_x)}$. To understand how they behave, it useful to think of Dyson expansions where $U_V(t)$ is expanded in powers of $V$. Since $\rho_{U_V(t)g(-i\nabla_x)U_V(t)^*-g(-i\nabla_x)}$ vanishes when $V=0$, there is no constant term in this expansion. The linear terms in $V$ are treated independently by assuming linear stability (with sufficient conditions to have it, see Section \ref{sec:penrose}). One may then think of the term of order $n\ge2$ in $V$ of $U_V(t)g(-i\nabla_x) U_V(t)^*-g(-i\nabla_x)$ as
$$e^{it\Delta_x}\left(\int_\R e^{-i\tau\Delta_x}V(\tau)e^{i\tau\Delta_x}\,d\tau\right)^ng(-i\nabla_x)e^{-it\Delta_x}$$
so that by \eqref{eq:intro-rho-H1}, we need to estimate
$$\left\| \sd \left(\int_\R e^{-i\tau\Delta_x}V(\tau)e^{i\tau\Delta_x}\,d\tau\right)^n g(-i\nabla_x)\sd \right\|_{\gS^{8/5}}.$$
Under the assumption that $\langle\xi\rangle^2g(\xi)\in L^\ii_\xi$, the idea to control this term is to commute successively the $\sd$ with each of the $V$'s to obtain finally $\sd g(-i\nabla_x)\sd$ which is a bounded operator. Since the number of derivatives is an integer here, we can use standard Leibniz rules to infer that is is enough to estimate for any $k=0,\ldots,n-1$
$$
\begin{multlined}
  \left\|\left(\int_\R e^{-i\tau\Delta_x}V(\tau)e^{i\tau\Delta_x}\,d\tau\right)^{k} \left(\int_\R e^{-i\tau\Delta_x}(\partial_jV)(\tau)e^{i\tau\Delta_x}\,d\tau\right)\right.\times\\
  \left.\times\left(\int_\R e^{-i\tau\Delta_x}V(\tau)e^{i\tau\Delta_x}\,d\tau\right)^{n-1-k} g(-i\nabla_x)\sd \right\|_{\gS^{8/5}}
\end{multlined}
$$
as well as
$$\left\|  \left(\int_\R e^{-i\tau\Delta_x}V(\tau)e^{i\tau\Delta_x}\,d\tau\right)^n \sd g(-i\nabla_x)\sd \right\|_{\gS^{8/5}}.$$
This last term is easier to estimate so let us explain how it works. The first ingredient is the dual of \eqref{eq:intro-bez} which implies that
$$\int_\R e^{-i\tau\Delta_x}h(\tau)e^{i\tau\Delta_x}\,d\tau\sd^{-1}\in\gS^{8/3}$$
if $h\in L^2_{t,x}$. The second ingredient is the fact that  $V\in L^2_t H^1_x\hookrightarrow L^2_t L^4_x$ so that
$$\int_\R e^{-i\tau\Delta_x}V(\tau)e^{i\tau\Delta_x}\,d\tau\in\gS^8.$$
Hence, writing
$$
\begin{multlined}
\left(\int_\R e^{-i\tau\Delta_x}V(\tau)e^{i\tau\Delta_x}\,d\tau\right)^n \sd g(-i\nabla_x)\sd \\
= \left(\int_\R e^{-i\tau\Delta_x}V(\tau)e^{i\tau\Delta_x}\,d\tau\right)^{n-1} \int_\R e^{-i\tau\Delta_x}V(\tau)e^{i\tau\Delta_x}\,d\tau\sd^{-1}\sd^2g(-i\nabla_x)\sd
\end{multlined}
$$
and using the Hölder inequality in Schatten spaces, we deduce that
$$\left(\int_\R e^{-i\tau\Delta_x}V(\tau)e^{i\tau\Delta_x}\,d\tau\right)^n \sd g(-i\nabla_x)\sd\in\gS^{8/5}$$
if $n\ge3$ and $\langle\xi\rangle^3g(\xi)\in L^\ii_\xi$. If $n=2$, we introduce another derivative by writing
$$
\begin{multlined}
\left(\int_\R e^{-i\tau\Delta_x}V(\tau)e^{i\tau\Delta_x}\,d\tau\right)^2 \sd g(-i\nabla_x)\sd \\
= \int_\R e^{-i\tau\Delta_x}V(\tau)e^{i\tau\Delta_x}\,d\tau \sd^{-1}\int_\R e^{-i\tau\Delta_x}\sd V(\tau)e^{i\tau\Delta_x}\,d\tau\sd^{-1}\sd^2 g(-i\nabla_x)\sd.
\end{multlined}
$$
Expanding $\sd V\sim V+\sum_j \partial_j V$ by a Leibniz rule and using that both terms involving $V$ belong to $\gS^{8/3}$, we deduce again by the Hölder inequality in Schatten spaces that
$$\left(\int_\R e^{-i\tau\Delta_x}V(\tau)e^{i\tau\Delta_x}\,d\tau\right)^2 \sd g(-i\nabla_x)\sd \in \gS^{8/5}.$$
The terms
$$
\begin{multlined}
\left(\int_\R e^{-i\tau\Delta_x}V(\tau)e^{i\tau\Delta_x}\,d\tau\right)^{k} \left(\int_\R e^{-i\tau\Delta_x}(\partial_jV)(\tau)e^{i\tau\Delta_x}\,d\tau\right)\times\\
\times\left(\int_\R e^{-i\tau\Delta_x}V(\tau)e^{i\tau\Delta_x}\,d\tau\right)^{n-1-k} g(-i\nabla_x)\sd
\end{multlined}
$$
can be estimated using the same ideas, except for one term: the one with $n=2$ and $k=1$ given by
$$
\left(\int_\R e^{-i\tau\Delta_x}V(\tau)e^{i\tau\Delta_x}\,d\tau\right) \left(\int_\R e^{-i\tau\Delta_x}(\partial_jV)(\tau)e^{i\tau\Delta_x}\,d\tau\right)\sd^{-1}\sd g(-i\nabla_x)\sd
$$
which only belongs to $\gS^2$ (since the first $V$ contributes to $\gS^8$ and the second $V$ contributes to $\gS^{8/3}$) and not to $\gS^{8/5}$. To get around this problem, we cannot introduce a $\sd^{-1}\sd$ between the $V$ and $\partial_j V$ as before since $\partial_j V$ belongs only to $L^2_{t,x}$ and we cannot take another derivative of it. Here, the idea is to introduce half a derivative on $\partial_j V$ by writing
$$\partial_j V = \sd^{-1/2} \sd^{1/2}\partial_j V \sim \sd^{-1/2} (\partial_j V + |D|^{1/2}\partial_j V).$$
Using that $V\in L^2_t H^1_x \hookrightarrow L^2_t L^{8/3}_x$ we have
$$\int_\R e^{-i\tau\Delta_x}V(\tau)e^{i\tau\Delta_x}\,d\tau\sd^{-1/2}\in\gS^4.$$
The term involving $|D|^{1/2}\partial_jV$ is peculiar because $\partial_j V$ is merely in $L^2_x$ so that $|D|^{1/2}\partial_jV\in\dot{H}^{-1/2}_x$. Then, we may use \cite[Thm. 3.1]{CheHonPav-17} which implies that
$$\int_\R e^{-i\tau\Delta_x} (|D|^{1/2}h(\tau))e^{i\tau\Delta_x}\,d\tau\sd^{-3/2-\epsilon}\in\gS^2$$
for any $\epsilon>0$ if $h\in L^2_{t,x}$ which concludes the argument. The rigorous argument in any dimension is carried out in Section \ref{sec:reaction}, with the additional difficulty that the time integral are not over all $\R$ but rather from $0$ to some $t$ and one again has to use some Christ-Kiselev ideas. This last point is particularly relevant for the term in $\dot{H}^{-1/2}$ that appeared at the very end of the argument. Indeed, as we argued above an important point of the Christ-Kiselev argument above is to write the potential terms as products of two functions/operators. For a potential in $\dot{H}^{-1/2}_x$, we did not find a way to factorize it properly so we had to resort to the fact that it has good Schatten properties (as we said, it belongs to $\gS^2$) so that a weaker Christ-Kiselev lemma may be used (see Corollary \ref{coro:CK-2} and its application in Step 6 in the proof of Proposition \ref{prop:rho22}).

\subsection{Structure of the article}

In Section \ref{sec:prelim}, we begin with setting the fixed point argument by which we prove Theorem \ref{thm:main1}. We also discuss the Hartree equation linearized around $g(-i\nabla_x)$ and in particular we state the quantum version of the Penrose criterion for linear stability. Finally, we recall the known Strichartz estimates for $e^{it\Delta_x}$ that we will use.

In Section \ref{sec:gene-stri}, we prove Strichartz estimates which are generalized in the sense that standard (dual) Strichartz estimates like \cite[Thm. 2]{FraLewLieSei-13} deal with
multiplication operators by functions $V\in L^\mu_t L^\nu_x$. We consider more general operators with some $L^q_x$-mapping properties, and this will be useful in conjunction with the fractional Leibniz rules of Section \ref{sec:leibniz}. We state these generalized Strichartz estimates not only for $e^{it\Delta_x}$ but for any propagator $\cU(t)$ which satisfies standard Strichartz estimates (we will apply it later on to $\cU(t)=\sd^s U_V(t)\sd^{-s}$).

In Section \ref{sec:leibniz} we prove fractional Leibniz rules adapted to our setting, in particular to estimate quantities like $|D|^s\rho_\gamma$. We follow the proof of \cite{Li-19}, and in particular there is no essential novelty except the way we write the results. This section can be seen as independent from the rest of the article and may have other applications to any problem related to the analysis of density matrices. Notice also that the results of this section are useful only to work at the critical regularity $s=d/2-1$ (which is not an integer if $d$ is odd). If one accepts to work with non-optimal regularity, one can use our argument with $s=\lceil d/2-1\rceil$ for instance and apply standard Leibniz rules to differentiate a product/density instead.

In Section \ref{sec:CK} we state and prove the Christ-Kiselev lemmas that are key to deal with retarded integrals appearing in (iterated) Duhamel formulas. We prove them extending the proof of \cite{Tao-00} to Schatten spaces.

In Section \ref{sec:strichartz-UV}, we provide Strichartz estimates for the propagator $U_V(t)$ for general $V\in L^\mu_t W^{s,\nu}_x$ that are key to our argument but may also be of independent interest. To prove them, we use the fractional Leibniz rules and Christ-Kiselev lemmas of the preceding sections.

In Section \ref{sec:reaction} we estimate all the terms coming from the reaction of the background $g(-i\nabla_x)$ to an external potential $V\in L^2_t H^s_x$. This is the core of our argument, which brings together all the results of the preceding sections.

In the final Section \ref{sec:proof-thm}, we conclude by providing the proof of Theorem \ref{thm:main1}.

\bigskip

\noindent\textbf{Acknowledgements.} The authors want to thank Dong Li for his explanations on his article on fractional Leibniz rules. S.H. was supported by JSPS KAKENHI Grant Number 24KJ1338. A.B. and J. S. were supported by the ANR CPJ grant ANR-22-CPJ1-0068-01. This project has received financial support from the CNRS through the MITI interdisciplinary programs.

\newpage

\section{Preliminaries}\label{sec:prelim}

\subsection{Notations}

\begin{itemize}

 \item If $\gH$ and $\gK$ are two Hilbert spaces, we denote by $\cB(\gH,\gK)$ the space of all bounded operators from $\gH$ to $\gK$. We will also abbreviate $\cB(\gH):=\cB(\gH,\gH)$. For $\alpha\in[1,+\ii)$, we will also write $\gS^\alpha(\gH\to\gK)$ or $\gS^\alpha(\gH,\gK)$ the space of all compact operators $A:\gH\to\gK$ such that $\|A\|_{\gS^\alpha(\gH\to\gK)}^\alpha:=\tr_{\gH}(A^*A)^{\alpha/2}<+\ii$.

 \item For any $s\in\R$ and $q\in(1,+\ii)$, we denote by $W^{s,q}_x$ or $L^q_x$ the space $W^{s,q}(\R^d)$ or $L^q(\R^d)$. The index $x$ emphasizes that these are functions of the space variable $x\in\R^d$. Similarly, for any $p\in(1,+\ii)$, $L^p_t$ denotes the space $L^p(\R)$ of functions of the time variable $t\in\R$, and $L^p_t W^{s,q}_x$, $L^p_t L^q_x$ denote the spaces $L^p(\R,W^{s,q}(\R^d))$, $L^p(\R,L^q(\R^d))$.

 \item For any $z\in\C$ and any $\theta\in(0,1)$, we denote by $z^\theta:=0$ if $z=0$ and $z^\theta:=z/|z|^{1-\theta}$ if $z\neq0$, so that we always have $z=z^\theta|z|^{1-\theta}$. We define $\sqrt{z}:=z/\sqrt{|z|}$ similarly.

 \item For any $\xi\in\R^d$ we use the notation $\langle \xi\rangle:=\sqrt{1+|\xi|^2}$, so that by functional calculus we have $\sd=(1-\Delta_x)^{1/2}$.

 \item We will use the notation $A\lesssim\|f\|_X$ as an abbreviation of $A\le C\|f\|_X$ for some $C>0$ which is independent of $f$ (which belongs to some normed space $(X,\|\cdot\|_X)$).

\end{itemize}

\subsection{Fixed point setting}\label{sec:fixed}

As we mentioned in the introduction, the key to prove Theorem~\ref{thm:main1} will be to first build the density $\rho_Q$ with $Q=\gamma-g(-i\nabla_x)$ by a fixed point argument. It will rely on the fact that the solution to
\begin{equation}\label{eq:hartree-Q}
\begin{cases}
i\partial_tQ = [-\Delta+V,g(-i\nabla_x)+Q],\\
Q_{|t=0} =  Q_{\text{in}}
\end{cases}
\end{equation}
is given by
$$Q(t) = U_V(t,0)Q_{\text{in}}U_V(t,0)^*-i\int_0^tU_V(t,\tau)[V(\tau),g(-i\nabla_x)]U_V(t,\tau)^*\,d\tau ,$$
where $U_V(t,\tau)$ is the propagator of $-\Delta+V(t)$ on $L^2(\R^d)$, that is the solution to
$$
\begin{cases}
 i\partial_t U_V(t,\tau) = (-\Delta+V(t))U_V(t,\tau),\\
 U_V(\tau,\tau)=\text{id}_{L^2(\R^d)}.
\end{cases}
$$
For now, this argument is formal since we did not specify the assumptions on $V$ but the goal of Section \ref{sec:strichartz-UV} is to build the propagator $U_V(t,\tau)$ with the adequate functional properties for (small enough) $V\in L^\mu_t W^{s,\nu}_x$. As a consequence, we deduce that
$$\rho_{Q(t)} = \rho_{U_{w*\rho_Q}(t,0)Q_{\text{in}}U_{w*\rho_Q}(t,0)^*}+\rho\left[ -i\int_0^tU_{w*\rho_Q}(t,\tau)[w*\rho_Q(\tau),g(-i\nabla_x)]U_{w*\rho_Q}(t,\tau)^*\,d\tau \right],$$
an equation which depends only on $\rho_Q$ (and also on $g$ and $Q_{\text{in}}$), and which we see as a fixed point equation $\rho_Q=\Phi(\rho_Q)$. The strategy is then to apply Banach's fixed point theorem to $\Phi$. The fixed point equation that we solve is not exactly this one, because we first need to extract the linear term in $\Phi$ to properly set the fixed point argument. In \cite{LewSab-13b}, this was done by a full Dyson expansion of $U_V(t)$ in powers of $V$, while here we rather extract the first linear term and deal with the higher orders terms self-consistently (in the end, both arguments would lead to the same result so we could also use full Dyson expansions). By the Duhamel formula, we further expand
$$U_V(t,\tau) = e^{i(t-\tau)\Delta_x} + D_V(t,\tau),
$$
where
$$D_V(t,\tau):=-i\int_\tau^t e^{i(t-t_1)\Delta_x}V(t_1)U_V(t_1,\tau)\,dt_1 .$$
We can thus write
\begin{equation*}
    \begin{multlined}
     -i\int_0^tU_V(t,\tau)[V(\tau),g(-i\nabla_x)]U_V(t,\tau)^*\,d\tau  = -i\int_0^te^{i(t-\tau)\Delta_x}[V(\tau),g(-i\nabla_x)]e^{i(\tau-t)\Delta_x}\,d\tau \\
    +   -i\int_0^te^{i(t-\tau)\Delta_x}[V(\tau),g(-i\nabla_x)]D_V(t,\tau)^*\,d\tau \\
    + -i\int_0^tD_V(t,\tau)[V(\tau),g(-i\nabla_x)]e^{i(\tau-t)\Delta_x}\,d\tau \\
    + -i\int_0^tD_V(t,\tau)[V(\tau),g(-i\nabla_x)]D_V(t,\tau)^*\,d\tau ,
    \end{multlined}
\end{equation*}
and we recall the definition of the linear response
\begin{equation}\label{eq:def-cL}
-\cL[\varrho](t):=  \rho\left[ -i\int_0^te^{i(t-\tau)\Delta_x}[w*\varrho(\tau),g(-i\nabla_x)]e^{i(\tau-t)\Delta_x}\,d\tau \right].
\end{equation}
We thus define
\begin{equation}
\begin{multlined}\label{eq:Phi}
 \Phi(\varrho):=(1+\cL)^{-1}\left(\rho_{U_{w*\varrho}(t,0)Q_{\text{in}}U_{w*\varrho}(t,0)^*}+ \rho\left[ -i\int_0^te^{i(t-\tau)\Delta_x}[w*\varrho(\tau),g(-i\nabla_x)]D_{w*\varrho}(t,\tau)^*\,d\tau \right] \right. \\
 +\left. \rho\left[ -i\int_0^tD_{w*\varrho}(t,\tau)[w*\varrho(\tau),g(-i\nabla_x)]e^{i(\tau-t)\Delta_x}\,d\tau \right]\right.\\
 \left.+
 \rho\left[ -i\int_0^tD_{w*\varrho}(t,\tau)[w*\varrho(\tau),g(-i\nabla_x)]D_{w*\varrho}(t,\tau)^*\,d\tau \right]
 \right),
\end{multlined}
\end{equation}
and we will prove in Section \ref{sec:proof-thm} that $\Phi$ is a contraction on some small ball in $L^2_t H^s_x$ if $Q_{\text{in}}$ is small enough in $\cH^{d/2-1,2d/(d+1)}$.

\subsection{Linear stability}\label{sec:penrose}

As is shown in \eqref{eq:Phi}, we will need that the operator $1+\cL$ is invertible on $L^2_t H^s_x$, where $\cL$ is defined in \eqref{eq:def-cL}. This invertibility will be ensured by the following condition which is the quantum analogue of the Penrose condition for the Vlasov equation \cite{Penrose-60} (see \cite{MouVil-11} for additional details).

\begin{definition}[Quantum Penrose stability condition]\label{def:penrose}

Let $d\ge1$, $g\in L^1(\R^d,\R_+)$ be such that
\begin{equation}\label{eq:penrose-bounded}
\sup_{\omega\in\Sph^{d-1}}\int_0^\ii t|\hat{g}(t\omega)|\,dt<+\ii,
\end{equation}
and $w\in\cS'(\R^d,\R)$ be such that $\hat{w}$ is continuous and bounded. We say that $(w,g)$ is \emph{Penrose stable} if
$$\inf_{(\tau,\omega,\xi)\in(0,+\ii)\times\R\times\R^d}\left|1+2\hat{w}(\xi)\int_0^\ii e^{-t\tau}e^{-it\omega}\sin(t|\xi|^2)\hat{g}(2t\xi)\,dt\right|>0.$$

\end{definition}

\begin{remark}
 The conditions on $w$ and $g$ ensure that the function
 \begin{equation}\label{eq:penrose-integral}
     (0,+\ii)\times\R\times\R^d\ni(\tau,\omega,\xi) \mapsto \hat{w}(\xi)\int_0^\ii e^{-t\tau}e^{-it\omega}\sin(t|\xi|^2)\hat{g}(2t\xi)\,dt
 \end{equation}
 is well-defined and bounded (changing variables $t'=t|\xi|$ in the integral).
\end{remark}

\begin{remark}
  In \cite[Cor. 1]{LewSab-13b}, the factor $e^{-t\tau}$ was not included (equivalently, the condition only mentioned $\tau=0$), because the argument below including the Paley-Wiener theorem was overlooked. This was indicated to us by D. Han Kwan and F. Rousset and we are grateful to them for this remark. Despite this overlook, the results of \cite{LewSab-13b} are still valid since their conditions imply that $(w,g)$ is Penrose stable, as can for instance be seen from our argument below. Let us also notice that this correction was also introduced in \cite{Maleze-23}.
\end{remark}

Penrose stability implies the invertibility of $1+\cL$ on $L^2_t H^s_x$ as is shown below.

\begin{proposition}\label{prop:penrose-inverse}
 Let $d\ge1$, $g\in L^1(\R^d,\R_+)$ be such that
$$\sup_{\omega\in\Sph^{d-1}}\int_0^\ii t|\hat{g}(t\omega)|\,dt<+\ii,$$
and $w\in\cS'(\R^d,\R)$ be such that $\hat{w}$ is continuous and bounded. If $(w,g)$ is Penrose stable, then $1+\cL$ is invertible on $L^2_t(\R_+,H^s_x(\R^d))$ for any $s\in\R$.
\end{proposition}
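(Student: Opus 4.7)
The strategy is to diagonalize $1+\cL$ via the spatial Fourier transform and the Laplace (in time) transform, exploiting the fact that the time integral in \eqref{eq:def-cL} is a \emph{causal} convolution. First I would compute the symbol: for $\varrho\in L^2_t(\R_+,H^s_x)$ extended by zero to $t<0$, plug $V(\tau)=w*\varrho(\tau)$ into \eqref{eq:def-cL} and pass to the Fourier variable $\eta$ in $x$. A direct computation of the integral kernel of $[V(\tau),g(-i\nabla_x)]$ conjugated by $e^{i(t-\tau)\Delta_x}$, and then taking the density, yields
\[
-\widehat{\cL[\varrho]}(t,\eta) \;=\; -2i\,\hat w(\eta)\int_0^t \sin\bigl((t-\tau)|\eta|^2\bigr)\,\hat g\bigl(2(t-\tau)\eta\bigr)\,\hat\varrho(\tau,\eta)\,d\tau ,
\]
(up to an absolute constant absorbable into $w$), which exhibits $-\cL$ as a convolution in $t$ by a causal kernel whose $\eta$-dependent symbol is exactly the quantity appearing in Definition \ref{def:penrose}.

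Next I would take the Laplace transform in $t$ with parameter $s=\tau+i\omega$, $\tau>0$. For such $s$ and $\eta\in\R^d$, set
\[
P(s,\eta)\;:=\;1+2\hat w(\eta)\int_0^\ii e^{-ts}\sin(t|\eta|^2)\,\hat g(2t\eta)\,dt ,
\]
so that $\widehat{(1+\cL)\varrho}(s,\eta)=P(s,\eta)\,\tilde\varrho(s,\eta)$, where $\tilde\varrho$ is the Fourier–Laplace transform of $\varrho$. Two bounds on $P$ are needed. For boundedness, write $|\sin(t|\eta|^2)|\le t|\eta|^2$ and change variables $t'=2t|\eta|$ to obtain, uniformly in $s$ in the right half-plane and $\eta\in\R^d$,
\[
|P(s,\eta)|\;\le\;1+\tfrac12\|\hat w\|_{L^\ii}\sup_{\omega\in\Sph^{d-1}}\int_0^\ii t\,|\hat g(t\omega)|\,dt ,
\]
which is finite by \eqref{eq:penrose-bounded} and the assumption on $w$. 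For analyticity, the same domination shows that the integral defining $P$ is holomorphic in $\{\re s>0\}$ and continuous up to $\{\re s=0\}$. The Penrose stability assumption gives $\inf_{s,\eta}|P(s,\eta)|>0$, so $1/P$ shares the same $H^\infty$ bound on the right half-plane.

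I would then invoke Paley--Wiener to identify $L^2_t(\R_+,H^s_x)$ with the space of functions $F(s,\eta)$ analytic in $\{\re s>0\}$ such that $\sup_{\tau>0}\int\!\!\int\langle\eta\rangle^{2s}|F(\tau+i\omega,\eta)|^2\,d\omega\,d\eta<\ii$, with norm equal to the sup. Causality of the convolution kernel ensures that $(1+\cL)$ acts on this Hardy-type space as pointwise multiplication by $P(s,\eta)$, and the two $H^\infty$ bounds above show that both multiplication by $P$ and by $1/P$ are bounded endomorphisms. Undoing Paley--Wiener, $1/P$ corresponds to a causal convolution operator on $L^2_t(\R_+,H^s_x)$ which is the desired inverse of $1+\cL$.

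The main subtlety is the causality issue: one must check that the inverse of the multiplier $P$ in $H^\infty$ of the half-plane actually corresponds, after inverse Fourier–Laplace, to a causal operator on $L^2_t(\R_+)$; this is exactly the content of Paley--Wiener and is the reason why the Penrose condition is imposed on the whole right half-plane $\tau>0$ rather than only on the imaginary axis $\tau=0$ (as recalled in the remark following Definition \ref{def:penrose}). Verifying boundedness and bounded invertibility of $P$ uniformly in $\eta$ is otherwise straightforward from the standing assumptions on $g$ and $w$.
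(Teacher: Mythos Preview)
Your proposal is correct and follows essentially the same route as the paper's proof: diagonalize in the space variable, recognize $\cL$ as a causal convolution in $t$ with kernel $K(t,\xi)=2\hat w(\xi)\1_{t\ge0}\sin(t|\xi|^2)\hat g(2t\xi)$, pass to the Laplace side, and use Paley--Wiener together with the Penrose lower bound on the symbol to produce a bounded causal inverse uniformly in $\xi$. The paper works $\xi$-by-$\xi$ and then reassembles, whereas you phrase it as a single multiplier on a vector-valued Hardy space, but this is only a cosmetic difference. One small caveat: the constant in your kernel computation is not ``absorbable into $w$'', since $w$ and the Penrose condition are fixed data; done carefully the computation yields exactly the function in Definition~\ref{def:penrose}, with no stray factor of $i$.
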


\begin{proof}
 By \cite[Prop. 1]{LewSab-13b}, for any $\varrho\in \cD_{t,x}(\R_+\times\R^d)$ and any $(t,\xi)\in\R_+\times\R^d$ we have
 $$\hat{\cL[\varrho]}(t,\xi)=\int_0^\ii K(t-t',\xi)\hat{\varrho(t')}(\xi)\,dt'=K(\cdot,\xi)*_t (\hat{\varrho_0(\cdot)}(\xi))(t),$$
 where $\varrho_0(t)$ extends $\varrho(t)$ by $0$ for $t\le0$ and with
 $$K(t,\xi):=2\hat{w}(\xi)\1_{t\ge0}\sin(t|\xi|^2)\hat{g}(2t\xi).$$
 In particular, $\cL$ is a bounded operator on $L^2_t(\R_+,H^s_x(\R^d))$, with
 $$\|\cL\|_{L^2_t(\R_+,H^s_x(\R^d))\to L^2_t(\R_+,H^s_x(\R^d))}\lesssim\|\hat{w}\|_{L^\ii}\sup_{\omega\in\Sph^{d-1}}\int_0^\ii t|\hat{g}(t\omega)|\,dt.$$
 For any fixed $\xi\in\R^d$, we define $\cL_\xi:L^2_t(\R_+)\to L^2_t(\R_+)$ by
 $$(\cL_\xi f)(t)=\int_0^\ii K(t-t',\xi)f(t')\,dt'.$$
Let us first show that $1+\cL_\xi$ is invertible on $L^2_t(\R_+)$ with
 $$\sup_{\xi\in\R^d}\|(1+\cL_\xi)^{-1}\|_{L^2_t(\R_+)\to L^2_t(\R_+)}<\ii.$$
 For fixed $\xi\in\R^d$, the function $t\mapsto K(t,\xi)$ is integrable on $\R$ and supported on $[0,+\ii)$ hence its Fourier transform in time $\tilde{K}(z,\xi)$ is analytic on $\C_-:=\{z\in\C,\ \im z<0\}$. The Penrose stability condition implies that
 $$\kappa_\xi:=\inf_{z\in\C_-}|1+\tilde{K}(z,\xi)|>0,$$
 so that $z\mapsto (1+\tilde{K}(z,\xi))^{-1}$ is a bounded analytic function on $\C_-$ (bounded by $\kappa_\xi^{-1}$). For any $h\in L^2_t(\R_+)$, $\tilde{h}$ is an analytic function on $\C_-$ (where $\tilde{h}$ is computed by extending $h$ by $0$ on $\R_-$) with
 $$\|h\|_{L^2_t(\R_+)}^2 = \sup_{\tau>0}\int_\R|\tilde{h}(\omega-i\tau)|^2\,d\omega.$$
 Hence, the function $z\mapsto (1+\tilde{K}(z,\xi))^{-1}\tilde{h}(z)$ is analytic on $\C_-$ and verifies the bound
 $$\sup_{\tau>0}\int_\R|(1+\tilde{K}(\omega-i\tau,\xi))^{-1}\tilde{h}(\omega-i\tau)|^2\,d\omega\le\kappa_\xi^{-2}\|h\|_{L^2_t(\R_+)}^2.$$
 The Paley-Wiener theorem \cite[Thm. 19.2]{Rudin} implies that there exists $f\in L^2_t(\R_+)$ such that for any $z\in\C_-$,
 $$\tilde{f}(z)=(1+\tilde{K}(z,\xi))^{-1}\tilde{h}(z),$$
 (where again $\tilde{f}$ is computed by extending $f$ by $0$ on $\R_-$). In particular, we have
 $$\|f\|_{L^2_t(\R_+)}\le\kappa_\xi^{-1}\|h\|_{L^2_t(\R_+)}.$$
 By computing their Fourier transform in time, one can check that $(1+\cL_\xi)f=h$. Furthermore, $f$ is the only such function by the same arguments. Hence, we deduce that $f=(1+\cL_\xi)^{-1}h$ and $\|(1+\cL_\xi)^{-1}\|_{L^2_t(\R_+)\to L^2_t(\R_+)}\le\kappa_\xi^{-1}$. Finally, we deduce by the Penrose stability that
 $$\sup_{\xi\in\R^d}\|(1+\cL_\xi)^{-1}\|_{L^2_t(\R_+)\to L^2_t(\R_+)}\le\sup_{\xi\in\R^d}\kappa_\xi^{-1}=\frac{1}{\inf_{\xi\in\R^d}\kappa_\xi}<+\ii.
 $$
 Let us now show that $1+\cL$ is invertible on $L^2_t H^s_x$. To do so, let $W\in L^2_t H^s_x$ and let us show that there exists a unique $\varrho\in L^2_t H^s_x$ such that $(1+\cL)\varrho=0$. To prove uniqueness, notice that $(1+\cL)\varrho=0$ implies that for a.e. $\xi\in\R^d$, $(1+\cL_\xi)\hat{\varrho}(\cdot,\xi)=0$. Since $\varrho\in L^2_t H^s_x$, $\hat{\varrho}(\cdot,\xi)$ belongs to $L^2_t$ for a.e. $\xi\in\R^d$. By the invertibility of $1+\cL_\xi$ on $L^2_t$, we deduce that $\hat{\varrho}(\cdot,\xi)=0$ for a.e. $\xi$ and hence that $\varrho=0$. Let us now turn to the existence of $\varrho$ such that $(1+\cL)\varrho=W$. For a.e. $\xi$, $\hat{W}(\cdot,\xi)\in L^2_t$. Hence, let us define $h:=\cF^{-1}(1+\cL_\xi)^{-1}\hat{W}(\cdot,\xi)$. Then, we have
 \begin{align*}
 \|h\|_{L^2_t H^s_x} &= \|\langle\xi\rangle^s(1+\cL_\xi)^{-1}\hat{W}(\cdot,\xi)\|_{L^2_t L^2_\xi} \\
 &\le\sup_{\xi\in\R^d}\|(1+\cL_\xi)^{-1}\|_{L^2_t(\R_+)\to L^2_t(\R_+)}\|\langle\xi\rangle^s\hat{W}(\cdot,\xi)\|_{L^2_t L^2_\xi}\\
 &\lesssim \|W\|_{L^2_t H^s_x}
 \end{align*}
 so that $h\in L^2_t H^s_x$ and indeed verifies that $(1+\cL)h=W$ since for a.e. $\xi$, $(1+\cL_\xi)\hat{h}(\cdot,\xi)=\hat{W}(\cdot,\xi)$.
\end{proof}

\begin{remark}\label{rk:penrose-negative-times}
 One can show by the same argument that $1+\cL$ in invertible on $L^2_t(\R_-,H^s_x)$. Indeed, mapping $f\in L^2_t(\R_-)$ to $t\mapsto f(-t)\in L^2_t(\R_+)$, one can compute that $1+\cL$ on $L^2(\R_-)$ corresponds to the operator on $L^2(\R_+)$ with kernel $K(t-t',-\xi)$. Since the Penrose criterion is invariant by the change $\xi\to -\xi$ (since $\hat{w}$ is even), this shows that $1+\cL$ in invertible on $L^2_t(\R_-,H^s_x)$
\end{remark}

Finally, we provide sufficient conditions on $(w,g)$ for Penrose stability which are more general than the ones of \cite{LewSab-13b,Maleze-23}. This is because the argument of \cite[Prop. 2.1]{MouVil-11} extends to the quantum case (in particular, we do not need that $g$ is radially decreasing for $d\ge3$). Since it is new in the quantum case, we provide the proof for completeness.

\begin{proposition}\label{prop:penrose-sufficient}
 Let $d\ge1$, $g\in L^1(\R^d,\R_+)$ and $w\in\cS'(\R^d,\R)$ be such that $\hat{w}$ is continuous and bounded. Assume that $g$ is a radial function and that $\int_{\R^d}|\hat{g}(x)||x|^{2-d}\,dx<+\ii$.
 \begin{enumerate}
  \item (\cite[Prop. 1]{LewSab-13b}) If
  $$\frac{\|\hat{w}\|_{L^\ii}}{2|\Sph^{d-1}|}\int_{\R^d}\frac{|\hat{g}(x)|}{|x|^{d-2}}\,dx<1,$$
  then $(w,g)$ is Penrose stable.
  \item (\cite[Cor. 1]{LewSab-13b} and \cite[Prop. 5.2]{Maleze-23}) Assume that $g$ is such that $|\xi|g(\xi)\in L^1_\xi(\R^d)$, $\int_{\R^d}|\nabla\hat{g}(x)||x|^{2-d}\,dx<+\ii$, and such that $\partial_r g$ is continuous on $(0,+\ii)$. Assume furthermore that $\partial_r g<0$ if $d=1,2$, and that $$\frac{\|\hat{w}_-\|_{L^\ii}}{2|\Sph^{d-1}|}\int_{\R^d}\frac{|\hat{g}(x)|}{|x|^{d-2}}\,dx<1.$$
  Then, $(w,g)$ is Penrose stable.
 \end{enumerate}

\end{proposition}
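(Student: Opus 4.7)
The plan is to estimate the Penrose integrand directly, adapting the strategy of \cite[Prop. 1]{LewSab-13b} for part (1) and that of \cite[Prop. 2.1]{MouVil-11} for part (2).

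For part (1), I apply the triangle inequality to get
\begin{equation*}
\left| 1 + 2\hat{w}(\xi)\int_0^\ii e^{-t\tau}e^{-it\omega}\sin(t|\xi|^2)\hat{g}(2t\xi)\,dt \right| \ge 1 - 2|\hat{w}(\xi)|\int_0^\ii |\sin(t|\xi|^2)||\hat{g}(2t\xi)|\,dt.
\end{equation*}
For $\xi \ne 0$, I use $|\sin(t|\xi|^2)| \le t|\xi|^2$ and change variables $s = 2t|\xi|$ to obtain
\begin{equation*}
2|\hat{w}(\xi)|\int_0^\ii |\sin(t|\xi|^2)||\hat{g}(2t\xi)|\,dt \le \frac{\|\hat{w}\|_{L^\ii}}{2}\int_0^\ii s|\hat{g}(s\omega)|\,ds,
\end{equation*}
where $\omega = \xi/|\xi|$. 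Since $g$ is radial, so is $\hat{g}$, and spherical coordinates give $\int_0^\ii s|\hat{g}(s\omega)|\,ds = |\Sph^{d-1}|^{-1}\int_{\R^d} |\hat{g}(x)||x|^{2-d}\,dx$. Combined with the hypothesis of part (1), this yields a lower bound strictly greater than $0$ independent of $(\tau,\omega,\xi)$.

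For part (2), I split the argument according to the sign of $\hat{w}(\xi)$. When $\hat{w}(\xi) \le 0$, the triangle-inequality estimate above applies with $\|\hat{w}_-\|_{L^\ii}$ in place of $\|\hat{w}\|_{L^\ii}$, and the conclusion follows from the hypothesis. When $\hat{w}(\xi) > 0$, I adapt \cite[Prop. 2.1]{MouVil-11}: for fixed $\xi \ne 0$, I view the map $(\tau,\omega) \mapsto 1 + 2\hat{w}(\xi) I(\tau,\omega,\xi)$ (where $I$ is the integral in Definition \ref{def:penrose}) as a continuous curve in $\C$ parameterized by $(\tau,\omega) \in (0,+\ii) \times \R$, and show it stays away from $0$ by analyzing its real and imaginary parts separately. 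The product-to-sum identity
\begin{equation*}
e^{-it\omega}\sin(t|\xi|^2) = \frac{1}{2i}\bigl( e^{-it(\omega - |\xi|^2)} - e^{-it(\omega + |\xi|^2)} \bigr)
\end{equation*}
rewrites $I$ as a difference of two one-sided Laplace--Fourier transforms of $\hat{g}(2t\xi)$. Passing to the boundary $\tau \to 0^+$ via the Sokhotski--Plemelj formula, these decompose into a principal-value piece (contributing to $\re I$) and a $\delta$-piece (contributing to $\im I$). In dimensions $d \ge 3$, the extra integrability $\int |\hat{g}(x)||x|^{2-d}\,dx < \ii$ should allow me to conclude without monotonicity. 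For $d \le 2$, the hypothesis $\partial_r g < 0$ forces $\im I$ to have a definite sign whenever $\re(2\hat{w}(\xi) I) = -1$, preventing $1 + 2\hat{w}(\xi) I$ from vanishing.

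The main obstacle I anticipate is the $\hat{w}(\xi) > 0$ case in part (2), where the naive triangle inequality fails and one must exploit fine cancellation properties of the dispersion integrand. In particular, I expect the dimensional dichotomy (monotonicity needed only for $d \le 2$) to emerge from the better regularity of the relevant principal-value integrals in higher dimensions, a feature of the classical Vlasov analysis that we inherit by analogy in the quantum setting.
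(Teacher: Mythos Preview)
Your argument for part (1) is correct and matches the paper's approach.

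For part (2), your split by the sign of $\hat{w}(\xi)$ is a reasonable simplification for the case $\hat{w}(\xi)\le 0$, where the triangle inequality combined with the $\hat{w}_-$ hypothesis indeed suffices. However, your treatment of the case $\hat{w}(\xi)>0$ has two concrete gaps.

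First, your explanation of why monotonicity is unnecessary for $d\ge 3$ is wrong. The integrability condition $\int|\hat g(x)||x|^{2-d}\,dx<\infty$ is a hypothesis in \emph{all} dimensions (it sits in the preamble of the proposition) and does not by itself prevent the dispersion function from vanishing. The actual mechanism, used in the paper and in \cite{MouVil-11}, is a reduction to the one-dimensional case: since $g$ is radial, $\hat g(2t\xi)=\hat\phi(2t|\xi|)$ where $\phi(r)=\int_{\R^{d-1}}g(\sqrt{r^2+|x'|^2})\,dx'$. The key geometric fact is that for $d\ge 3$ this projection $\phi$ is \emph{automatically} strictly decreasing on $(0,\infty)$ regardless of whether $g$ itself is (this is \cite[Remark 2.2]{MouVil-11}), so the $d=1$ argument with $\phi'<0$ applies verbatim. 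Your ``better regularity of principal-value integrals'' does not capture this, and a direct $d$-dimensional Plemelj analysis would in any case reduce to the sign of the marginal of $g$ along $\xi$, which is exactly $\phi$.

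Second, your Sokhotski--Plemelj step only addresses the boundary $\tau\to 0^+$, whereas Penrose stability requires a uniform lower bound over all $\tau>0$ and all $\xi$. The paper instead computes $\im m(\tau,\omega,\xi)$ explicitly for \emph{every} $\tau>0$ (by undoing the Fourier transform of $g$ and integrating by parts against $g'$) and shows it has a definite sign whenever $\omega\ne 0$; the case $\omega=0$ is handled by showing $\re m(\tau,0,\xi)\ge 0$, which is precisely where the $\hat{w}_-$ hypothesis enters. To make your outline work you would need either an analogous computation valid for all $\tau>0$, or an analyticity argument to propagate non-vanishing from the boundary into the half-plane, together with a compactness argument (the paper shows $m\to 0$ as $|\tau|+|\omega|+|\xi|\to\infty$) to upgrade pointwise non-vanishing to a positive infimum.
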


\begin{proof}
The proof of (1) is identical to \cite[Prop. 1]{LewSab-13b} so we omit it. To prove (2), we first treat the case $d=1$. Since $g$ is even, $\hat{g}$ is also even so that for $\xi\neq0$ we have
$$2\int_0^\ii e^{-t\tau}e^{-it\omega}\sin(t|\xi|^2)\hat{g}(2t\xi)\,dt = \int_0^\ii e^{-t\tau/(2|\xi|)}e^{-it\omega/(2|\xi|)}\frac{\sin(t|\xi|/2)}{|\xi|}\hat{g}(t)\,dt.$$
Notice that for $\xi=0$, the left side is equal to $0$. Defining
$$m(\tau,\omega,\xi):=\int_0^\ii e^{-t\tau}e^{-it\omega}\frac{\sin(t|\xi|)}{|\xi|}\hat{g}(t)\,dt,$$
we thus have to prove that
$$\inf_{(\tau,\omega,\xi)\in\R_+^*\times\R\times\R^*}|1+\hat{w}(2\xi)m(\tau,\omega,\xi)/2|>0.$$
Since $t\hat{g}(t)\in L^1(\R_+)$, $m$ is continuous on $\R_+\times\R\times\R$. Furthermore, due to the bounds
$$|m(\tau,\omega,\xi)|\le \int_0^\ii e^{-t\tau} t|\hat{g}(t)|\,dt,$$
$$|m(\tau,\omega,\xi)|\le \frac{1}{|\xi|}\int_0^\ii |\hat{g}(t)|\,dt,$$
$$|m(\tau,\omega,\xi)|\le\frac{C}{|\omega|}(1+\tau)\int_0^\ii(1+|t|)(|\hat{g}(t)|+|\hat{g}'(t)|)\,dt,$$
we have that $m(\tau,\omega,\xi)\to0$ as $\tau+|\omega|+|\xi|\to\ii$. As a consequence, there exists $R>0$ such that for all $\tau+|\omega|+|\xi|>R$ we have
$$|1+\frac12\hat{w}(2\xi)m(\tau,\omega,\xi)|\ge\frac12.$$
By continuity of $m$, it thus remains to show that for each $\tau+|\omega|+|\xi|\le R$, we have $1+\hat{w}(2\xi)m(\tau,\omega,\xi)/2\neq0$. To show this, notice that we may assume that $\xi$ is such that $\hat{w}(2\xi)\neq0$ (for otherwise $1+\hat{w}(\xi)m(\tau,\omega,\xi)/2=1\neq0$). Let us compute explicity the imaginary part of $m$ by undoing the Fourier transform of $g$: for any $\tau>0$ and $\xi\neq0$,
$$
m(\tau,\omega,\xi)=\frac{1}{2i|\xi|\sqrt{2\pi}}\int_\R g(r)\left(\frac{1}{\tau+i\omega-i|\xi|+ir}-\frac{1}{\tau+i\omega+i|\xi|+ir}\right)\,dr,
$$
so that
$$
\im m(\tau,\omega,\xi)=\frac{1}{2|\xi|\sqrt{2\pi}}\int_\R g(r)\left(\frac{\tau}{\tau^2+(\omega+|\xi|+r)^2}-\frac{\tau}{\tau^2+(\omega-|\xi|+r)^2}\right)\,dr.
$$
By integration by parts, we have
\begin{align*}
  \im m(\tau,\omega,\xi) &= -\frac{1}{2|\xi|\sqrt{2\pi}}\int_\R g'(r)\int_0^r\left(\frac{\tau}{\tau^2+(\omega+|\xi|+r')^2}-\frac{\tau}{\tau^2+(\omega-|\xi|+r')^2}\right)\,dr \\
  &= -\frac{1}{2|\xi|\sqrt{2\pi}}\int_\R g'(r)\left(\int_{|\xi|}^{r+|\xi|}-\int_{-|\xi|}^{r-|\xi|}\right)\frac{\tau}{\tau^2+(\omega+z)^2}\,dz\,dr.
\end{align*}
Since $g'$ is odd, we deduce that
\begin{align*}
  \im m(\tau,\omega,\xi) &= -\frac{1}{2|\xi|\sqrt{2\pi}}\int_0^\ii g'(r)\left(\int_{|\xi|}^{r+|\xi|}-\int_{-|\xi|}^{r-|\xi|}-\int_{|\xi|}^{-r+|\xi|}+\int_{-|\xi|}^{-r-|\xi|}\right)\frac{\tau}{\tau^2+(\omega+z)^2}\,dz\,dr \\
  &= -\frac{1}{2|\xi|\sqrt{2\pi}}\int_0^\ii g'(r)\left(\int_{-r+|\xi|}^{r+|\xi|}-\int_{-r-|\xi|}^{r-|\xi|}\right)\frac{\tau}{\tau^2+(\omega+z)^2}\,dz\,dr \\
  &= -\frac{1}{2|\xi|\sqrt{2\pi}}\int_0^\ii g'(r)\int_{-r+|\xi|}^{r+|\xi|}\left(\frac{\tau}{\tau^2+(\omega+z)^2}-\frac{\tau}{\tau^2+(\omega-z)^2}\right)\,dz\,dr \\
  &= -\frac{1}{2|\xi|\sqrt{2\pi}}\int_0^\ii g'(r)\int_{||\xi|-r|}^{r+|\xi|}\left(\frac{\tau}{\tau^2+(\omega+z)^2}-\frac{\tau}{\tau^2+(\omega-z)^2}\right)\,dz\,dr \\
  &= \frac{2}{|\xi|\sqrt{2\pi}}\int_0^\ii g'(r)\int_{||\xi|-r|}^{r+|\xi|}\frac{\tau\omega z}{(\tau^2+(\omega+z)^2)(\tau^2+(\omega-z)^2)}\,dz\,dr.
\end{align*}
From this relation and the fact that $g'<0$, we find that $\im m(\tau,\omega,\xi)\neq0$ (and thus $1+\hat{w}(2\xi)m(\tau,\omega,\xi)/2\neq0$) if $\xi\neq0$, $\tau>0$, and $\omega\neq0$. Let us now examine the regimes in which one of these parameters vanishes. Since $\im m(\tau,0,\xi)=0$, let us first assume that $\omega\neq0$. Since $\im m(\tau,\omega,\xi)$ is odd in $\omega$, we may also assume that $\omega>0$. First, when $\tau>0$ we thus have to examine $\xi=0$. We have
$$
\im m(\tau,\omega,0) = \frac{4}{\sqrt{2\pi}}\int_0^\ii g'(r)\frac{\tau\omega r}{(\tau^2+(\omega+r)^2)(\tau^2+(\omega-r)^2)}\,dr,
$$
which again does not vanish if $\omega>0$ and $\tau>0$. We now turn to the case $\tau=0$. To compute $\im m(0,\omega,\xi)$, we use that
$$
\int_{-r+|\xi|}^{r+|\xi|}\left(\frac{\tau}{\tau^2+(\omega+z)^2}-\frac{\tau}{\tau^2+(\omega-z)^2}\right)\,dz
=\int^{\tfrac{|\xi|+\omega+r}{\tau}}_{\tfrac{|\xi|+\omega-r}{\tau}}\frac{dz}{1+z^2}-\int^{\tfrac{|\xi|-\omega+r}{\tau}}_{\tfrac{|\xi|-\omega-r}{\tau}}\frac{dz}{1+z^2}
$$
which converges to $-\pi\1_{|\omega-|\xi||<r<\omega+|\xi|}$ almost everywhere in $r$, so that for all $\omega>0$ and $\xi\neq0$ we have
$$
\im m(0,\omega,\xi) = \frac{\sqrt{\pi}}{2|\xi|\sqrt{2}}\int_{|\omega-|\xi||}^{\omega+|\xi|} g'(r)\,dr
$$
which does not vanish since $g'<0$. When $\xi=0$, we find that
$$\im m(0,\omega,0) = \sqrt{\frac{\pi}{2}}g'(\omega)<0$$
for all $\omega>0$. We thus have proved that in the case $\omega>0$ and $\tau=0$, $\im(0,\omega,\xi)\neq0$ for all $\xi$. It thus remains to examine the case $\omega=0$, where the non-vanishing will come from the real part. Repeating the above computations but for the real part, we find
$$\re m(\tau,0,\xi) = -\frac{1}{2|\xi|\sqrt{2\pi}}\int_0^\ii g'(r)\int_{|r-|\xi||}^{r+|\xi|}\frac{z}{\tau^2+z^2}\,dz.$$
We deduce that $\re m(\tau,0,\xi)\ge0$
for all $\tau>0$ and $\xi\neq0$, and thus for all $\tau\ge0$ and $\xi\in\R$ by continuity. As a consequence,
$$1+\hat{w}(2\xi)\re m(\tau,0,\xi)/2 \ge 1-\frac{\|\hat{w}_-\|_{L^\ii}}{2}\int_0^\ii t|\hat{g}(t)|\,dt>0
$$
for all $\tau\ge0$ and $\xi\in\R$. We have finished the proof of (2) in the case $d=1$. When $d\ge2$, we use the idea, already used in \cite[Proof of Prop. 2.1]{MouVil-11}, that since $g$ is radial we have $\hat{g}(2t\xi)=\hat{\phi}(2t|\xi|)$ where $\phi:\R\to\R$ is defined by
$$\phi(r)=\int_{\R^{d-1}}g(\sqrt{r^2+|x'|^2})\,dx'.$$
Notice that $\phi$ is an even function, and that $\phi\in L^1(\R)$ if $g\in L^1(\R^d)$. If $g$ is radially decreasing, then $\phi$ decreases for $r>0$. This last fact is true even if $g$ is not radially decreasing in $d\ge3$, as explained in \cite[Remark  2.2]{MouVil-11}. Hence, we may apply the case $d=1$ to the function $\phi$, leading to the result in dimensions $d\ge2$.

\end{proof}

\begin{remark}
Another interesting condition ensuring the Penrose stability of $(w,g)$ is given by \cite[Prop. 1.6]{Hadama-23b}. It allows to consider $g(\xi)=\1(|\xi|^2\le\mu)$ (the Free fermi sea), and has no analogue for the Vlasov equation. For this choice of $g$, notice that we have $|\hat{g}(x)|\lesssim\langle x\rangle^{-(d+1)/2}$ so that \eqref{eq:penrose-bounded} fails if $d\le3$. The condition \eqref{eq:penrose-bounded} is needed to ensure that the integral in \eqref{eq:penrose-integral} is well-defined, and to ensure that $\cL$ is bounded on $L^2_t H^s_x$. When \eqref{eq:penrose-bounded} is not satisfied, one can still give a meaning to the integral in \eqref{eq:penrose-integral} as a semi-convergent integral if $g\in L^1(\R^d)$, for instance by the formula
$$2\int_0^\ii e^{-t\tau}e^{-it\omega}\sin(t|\xi|^2)\hat{g}(2t\xi)\,dt
=\frac{1}{(2\pi)^{d/2}}\int_{\R^d}\frac{g(\eta+\tfrac{\xi}{2})-g(\eta-\tfrac{\xi}{2})}{2\eta\cdot\xi+\omega-i\lambda}\,d\eta.$$
However, when \eqref{eq:penrose-bounded} is not satisfied, it is unclear whether $\cL$ is bounded on $L^2_t H^s_x$ or not. Under the Penrose condition, the operator $(1+\cL)^{-1}$ still makes sense as a bounded operator on $L^2_t H^s_x$ and one can still define the function $\Phi$ to perform the fixed point. In the case $g(\xi)=\1(|\xi|^2\le\mu)$ and $d=3$, $\cL$ is bounded from $L^2_t L^2_x$ to $L^2_t \dot{H}^{1}_x$ by the bound
$$\left|\int_0^\ii e^{-t\tau}e^{-it\omega}\sin(t|\xi|^2)\hat{g}(2t\xi)\,dt\right|\lesssim \int_0^\ii (t|\xi|^2)^\theta|\hat{g}(2t\xi)|\,dt\lesssim\frac{1}{|\xi|^{1-\theta}}\sup_{\omega\in\Sph^{d-1}}\int_0^\ii t^\theta |\hat{g}(2t\omega)|\,dt,
$$
for any $\theta\in[0,1]$. One also has $(1+\cL)(1+\cL)^{-1}\varrho=\varrho$ for any $\varrho$ in $L^2_t H^{1/2}_x$ (as an identity in $L^2_t \dot{H}^{1}_x$), which is enough for our applications (see the proof of Theorem \ref{thm:main1} below).
\end{remark}

\subsection{Strichartz estimates for the free Laplacian}\label{sec:free}

In this section we recall the Strichartz estimates for $e^{it\Delta_x}$ that we will use throughout the article. We also extend them slightly.  A first important tool will be \cite[Thm. 1.5]{BezHonLeeNakSaw-19}, which we cite in the version of \cite[Thm. 3.5]{Hadama-23}.

\begin{proposition}\label{prop:bez}

 Let $d\ge1$, $\sigma\ge0$ and $p,q,\alpha\in(1,+\ii)$ be such that $2/p+d/q=d-2\sigma$, $1/\alpha\ge 1/(dp)+1/q$ and $\alpha<p$. Then, there exists $C>0$ such that for all $\gamma\in\cH^{\sigma,\alpha}$ we have
 $$\| \rho_{e^{it\Delta_x}\gamma e^{-it\Delta_x}} \|_{L^p_t L^q_x} \le C\| \sd^\sigma \gamma \sd^\sigma\|_{\gS^\alpha(L^2_x)}.$$

\end{proposition}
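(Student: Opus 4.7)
The plan is to dualize and reduce the inequality to a weighted Schatten bound on a Duhamel-type operator. By the duality between $L^p_t L^q_x$ and $L^{p'}_t L^{q'}_x$ together with Schatten duality, the claim is equivalent to showing
$$\left\|\sd^{-\sigma}\left(\int_\R e^{-it\Delta_x}V(t)e^{it\Delta_x}\,dt\right)\sd^{-\sigma}\right\|_{\gS^{\alpha'}(L^2_x)} \lesssim \|V\|_{L^{p'}_t L^{q'}_x}$$
for every $V\in L^{p'}_t L^{q'}_x$, since the pairing $\int V\rho_{e^{it\Delta_x}\gamma e^{-it\Delta_x}}\,dt\,dx$ equals $\tr(\sd^\sigma\gamma\sd^\sigma\cdot\sd^{-\sigma}\int e^{-it\Delta_x}V e^{it\Delta_x}\,dt\,\sd^{-\sigma})$ by cyclicity of the trace.

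To prove the dual bound, I would factor $V=\bar h_1 h_2$ with $h_1:=\overline{\sgn(V)}|V|^{1/2}$ and $h_2:=|V|^{1/2}$, so that the integral operator factors as $B^*A$, where $Au(t,x):=h_2(t,x)(e^{it\Delta_x}u)(x)$ and $Bu(t,x):=h_1(t,x)(e^{it\Delta_x}u)(x)$ are viewed as operators $L^2_x\to L^2_{t,x}$. Because $\sd$ commutes with $e^{it\Delta_x}$, one has $\sd^{-\sigma}B^*A\sd^{-\sigma}=(B\sd^{-\sigma})^*(A\sd^{-\sigma})$. Hölder in Schatten spaces combined with the identity $\|h_j\|_{L^{2p'}_t L^{2q'}_x}^2=\|V\|_{L^{p'}_t L^{q'}_x}$ then reduces the problem to the weighted Schatten--Strichartz bound
$$\|h\, e^{it\Delta_x}\sd^{-\sigma}\|_{\gS^{2\alpha'}(L^2_x\to L^2_{t,x})} \lesssim \|h\|_{L^{2p'}_t L^{2q'}_x}.$$

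To establish this last inequality, I would use complex analytic interpolation on a suitable family of operators in a strip of $\C$, interpolating between two endpoints: (i) a Hilbert--Schmidt endpoint ($\gS^2$), whose norm is computed directly from the explicit kernel using the dispersive bound $\|e^{it\Delta_x}\|_{L^1_x\to L^\infty_x}\lesssim t^{-d/2}$ together with Sobolev embedding of $H^\sigma_x$ to absorb the weight $\sd^{-\sigma}$; and (ii) the operator-norm endpoint ($\cB$), which follows from Hölder in $(t,x)$ and the classical Strichartz inequality $\|e^{it\Delta_x}v\|_{L^p_t L^q_x}\lesssim \|v\|_{H^\sigma_x}$ corresponding to the pair determined by $2/p+d/q=d-2\sigma$. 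Stein's analytic interpolation theorem then yields the intermediate Schatten exponent $\gS^{2\alpha'}$.

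The principal obstacle is selecting the interpolation family so that every intermediate exponent remains admissible: the scaling condition $2/p+d/q=d-2\sigma$ fixes the time-space integrability, while the constraints $1/\alpha\ge 1/(dp)+1/q$ and $\alpha<p$ are precisely what guarantees $2\alpha'\in(2,\infty)$ and keeps both endpoints in the valid range of Schatten and Strichartz exponents. A secondary technical point is the measurable selection of $\sgn(V)$ when $V$ is merely in $L^{p'}_t L^{q'}_x$, so that $A$ and $B$ are genuinely well-defined bounded operators into $L^2_{t,x}$; this is standard once $V$ is fixed, but deserves care when tracking the dependence of the constant on $V$.
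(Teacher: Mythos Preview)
The paper does not prove Proposition~\ref{prop:bez}; it is quoted from \cite{BezHonLeeNakSaw-19} (with the $\sigma=0$ case from \cite{FraLewLieSei-13,FraSab-17}), and only the dual and factorized reformulations are recorded in Remark~\ref{rk:stri-dual}. Your first two paragraphs reproduce that remark exactly, so up to the reduction to the weighted Schatten--Strichartz bound there is nothing further in the paper to compare against.

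Your interpolation outline is in the spirit of \cite{FraLewLieSei-13} but has two gaps. First, the $\gS^2$ endpoint cannot be reached by bounding $\|h\,e^{it\Delta}\sd^{-\sigma}\|_{\gS^2}$ directly from the kernel: for $\sigma\le d/2$ the multiplier $e^{-it|\xi|^2}\langle\xi\rangle^{-\sigma}$ is not in $L^2_\xi$, so that operator is never Hilbert--Schmidt regardless of $h$. The actual argument in \cite{FraLewLieSei-13} embeds the problem in an analytic family built from complex powers and obtains the $\gS^2$ bound at the edge of the strip for the $TT^*$-type operator, using the dispersive kernel $|t-s|^{-d/2}$ together with Hardy--Littlewood--Sobolev in time; this construction is absent from your sketch. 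Second, and more seriously for $\sigma>0$, ``Sobolev embedding of $H^\sigma_x$ to absorb the weight'' does not recover the sharp Schatten exponent: composing the $\sigma=0$ Schatten bound with $\sd^{-\sigma}\in\cB(L^2_x)$ yields no gain in the Schatten class, and shifting space exponents via Sobolev changes $(p,q)$ without improving $\alpha$. The full range $1/\alpha\ge 1/(dp)+1/q$ for $\sigma>0$ in \cite{BezHonLeeNakSaw-19} requires a frequency-localized (Littlewood--Paley) argument that goes genuinely beyond the two endpoints you name.
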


Proposition \ref{prop:bez} was first proved in the case $\sigma=0$ and $1\le q<(d+1)/(d-1)$ in \cite{FraLewLieSei-13,FraSab-17}.

\begin{remark}\label{rk:stri-dual}
 Proposition \ref{prop:bez} has a dual formulation that was first discovered in \cite{FraLewLieSei-13}: there exists $C>0$ such that for all $V\in L^{p'}_t L^{q'}_x$ we have
 $$\left\| \sd^{-\sigma}\int_\R e^{-it\Delta_x}V(t,x)e^{it\Delta_x}\,dt\sd^{-\sigma}\right\|_{\gS^{\alpha'}(L^2_x)}\le C\|V\|_{L^{p'}_t L^{q'}_x},
 $$
 or equivalently (taking $V=|W|^2$)
 \begin{equation}\label{eq:stri-bez-xtotx}
    \|W(t,x)e^{it\Delta_x}\sd^{-\sigma}\|_{\gS^{2\alpha'}(L^2_x\to L^2_{t,x})} \le \sqrt{C}\|W\|_{L^{2p'}_t L^{2q'}_x}.
 \end{equation}
 These estimates all have a version which is localized on a time interval $I$: an estimation of $\rho$ in $L^p_t(I)$  is equivalent to Schatten  estimates with $V\in L^{p'}_t(I)$ or $W\in L^{2p'}_t(I)$ (in this last case, $L^2_{t,x}$ is also replaced by $L^2_t(I,L^2_x)$). We will use heavily these various formulations throughout the article.
\end{remark}

\begin{remark}
 In the case $\sigma=0$, $\alpha=1$, and $\gamma=|u\rangle\langle u|$ for $u\in L^2_x$, Proposition \ref{prop:UV-bez} reduces to the (non-endpoint) standard Strichartz estimates
 \begin{equation}\label{eq:stri-hom}
    \|e^{it\Delta_x}u\|_{L^{2p}_t L^{2q}_x} \lesssim \|u\|_{L^2_x}
 \end{equation}
 which go back to Strichartz \cite{Strichartz-77} and Ginibre-Velo \cite{GinVel-92}. We also recall the associated inhomogeneous Strichartz estimates
 \begin{equation}\label{eq:stri-inhom}
  \left\|\int_0^t e^{i(t-\tau)\Delta_x}F(\tau)\,d\tau\right\|_{L^{2p}_t L^{2q}_x}\lesssim \|F\|_{L^{(2\tilde{p})'}_t L^{(2\tilde{q})'}_x}
 \end{equation}
 if $(p,\tilde{p},q,\tilde{q})\in(1,+\ii)$ are such that $2/p+d/q=d=2/\tilde{p}+d/\tilde{q}$.
\end{remark}

We also recall a recent result of Chen, Hong and Pavlovic which will be useful to us in the proof of Proposition \ref{prop:rho22}. We provide an alternative proof of this result in Appendix \ref{app:CHP}.

\begin{theorem}[Thm. 3.1 in \cite{CheHonPav-17}]\label{thm:strichartz-HS}
  Let $d\ge1$ and $\alpha_0,\alpha_1,\alpha_2\ge0$ be such that $\alpha_1+\alpha_2>(d-1)/2$ and
   $$
  \begin{cases}
   \alpha_0=\alpha_1+\alpha_2-\tfrac{d-1}{2} &\text{if}\ \max(\alpha_1,\alpha_2)<\tfrac{d-1}{2},\\
   \alpha_0<\min(\alpha_1,\alpha_2) &\text{if}\ \max(\alpha_1,\alpha_2)=\tfrac{d-1}{2},\\
   \alpha_0=\min(\alpha_1,\alpha_2) &\text{if}\ \max(\alpha_1,\alpha_2)>\tfrac{d-1}{2}.\\
  \end{cases}
  $$
  Then, there exists $C>0$ such that for all $V\in L^2_t H^{-\alpha_0}_x$ we have
  $$\left\|\sd^{-\alpha_1}\int_\R dt\,e^{-it\Delta_x} (|\nabla_x|^{1/2}V)(t,x) e^{it\Delta_x} \sd^{-\alpha_2}\right\|_{\gS^2} \le C\| \sd^{-\alpha_0}V\|_{L^2_{t,x}}.$$
 \end{theorem}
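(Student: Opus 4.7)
The plan is to compute the Hilbert--Schmidt norm of
\[
T := \sd^{-\alpha_1}\int_\R dt\, e^{-it\Delta_x}(|\nabla_x|^{1/2}V)(t,\cdot)\,e^{it\Delta_x}\sd^{-\alpha_2}
\]
directly on the Fourier side. Since $\sd$ and $|\nabla_x|^{1/2}$ commute with $e^{it\Delta_x}$, we may bring the weights inside the time integral. A short computation in momentum variables $(\xi,\eta)$, using that multiplication by a function $f$ has momentum-space kernel $\hat f(\xi-\eta)$ (up to a universal constant), yields
\[
\hat T(\xi,\eta) \;=\; c\,\langle\xi\rangle^{-\alpha_1}|\xi-\eta|^{1/2}\langle\eta\rangle^{-\alpha_2}\,\tilde V\!\bigl(|\eta|^2-|\xi|^2,\,\xi-\eta\bigr),
\]
where $\tilde V(\tau,k)$ denotes the full space-time Fourier transform of $V$; the time integral produces exactly the $\tau$-Fourier transform of $\hat V(t,\xi-\eta)$ evaluated at the resonance $\tau=|\eta|^2-|\xi|^2$.

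Next, by Plancherel, $\|T\|_{\gS^2}^2=\int|\hat T(\xi,\eta)|^2\,d\xi\,d\eta$. The next step is to change variables to $(k,\eta)$ with $k:=\xi-\eta$ and decompose $\eta=s\hat k+\eta^\perp$ along and orthogonal to $k$. Since $\tau=-|k|^2-2s|k|$ depends linearly on $s$ with $|ds|=d\tau/(2|k|)$, the factor $|\xi-\eta|=|k|$ from the integrand exactly cancels the Jacobian, and after integrating over the $(d-1)$-dimensional hyperplane we arrive at
\[
\|T\|_{\gS^2}^2 \;=\; \tfrac{c}{2}\int_\R d\tau \int_{\R^d} dk\;|\tilde V(\tau,k)|^2\, I(\tau,k),
\]
where, setting $a:=(\tau+|k|^2)/(2|k|)$, $b:=|k|-a$ and using $|\eta|^2=a^2+|\eta^\perp|^2$, $|\eta+k|^2=b^2+|\eta^\perp|^2$,
\[
I(\tau,k) \;:=\; \int_{\R^{d-1}}\frac{d\eta^\perp}{(1+b^2+|\eta^\perp|^2)^{\alpha_1}(1+a^2+|\eta^\perp|^2)^{\alpha_2}}.
\]
Once this is in place, the theorem reduces to proving the pointwise bound $I(\tau,k)\lesssim\langle k\rangle^{-2\alpha_0}$ uniformly in $\tau$, because then Plancherel in $(\tau,k)$ directly identifies the right side with $\|\sd^{-\alpha_0}V\|_{L^2_{t,x}}^2$.

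For this pointwise bound the key observation is $a+b=|k|$, which forces $\max(1+a^2,1+b^2)\gtrsim\langle k\rangle^2$. Using the symmetry of $I$ under $(a,\alpha_2)\leftrightarrow(b,\alpha_1)$, I may assume $A:=1+a^2\le B:=1+b^2$, so $A\ge 1$ and $B\gtrsim\langle k\rangle^2$. I would then split the radial $\eta^\perp$-integral into the three zones $|\eta^\perp|^2\lesssim A$, $A\lesssim|\eta^\perp|^2\lesssim B$ and $|\eta^\perp|^2\gtrsim B$. The outer zone converges precisely thanks to $\alpha_1+\alpha_2>(d-1)/2$ and contributes $\sim B^{(d-1)/2-\alpha_1-\alpha_2}$; the middle zone contributes $\sim B^{-\alpha_1}\int_{\sqrt A}^{\sqrt B}r^{d-2-2\alpha_2}\,dr$, whose scaling flips across $\alpha_2=(d-1)/2$; the inner zone contributes $\sim A^{(d-1)/2-\alpha_2}B^{-\alpha_1}$. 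Combining these three and using $A\ge1$, $B\gtrsim\langle k\rangle^2$ recovers exactly the trichotomy in the statement: when $\max(\alpha_1,\alpha_2)<(d-1)/2$ the outer/middle zones dominate, giving $B^{(d-1)/2-\alpha_1-\alpha_2}\lesssim\langle k\rangle^{-2\alpha_0}$ with $\alpha_0=\alpha_1+\alpha_2-(d-1)/2$; when $\max(\alpha_1,\alpha_2)>(d-1)/2$ the inner/middle zones dominate, yielding $B^{-\min(\alpha_1,\alpha_2)}\lesssim\langle k\rangle^{-2\alpha_0}$; and the borderline case $\max(\alpha_1,\alpha_2)=(d-1)/2$ produces a $\log(B/A)$ factor, absorbed by any strictly smaller $\alpha_0<\min(\alpha_1,\alpha_2)$.

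The main obstacle is keeping the case analysis for $I(\tau,k)$ clean, in particular verifying the borderline regime carefully, since it is precisely the logarithmic loss on the middle zone that dictates the strict inequality $\alpha_0<\min(\alpha_1,\alpha_2)$ in the second clause of the hypothesis. Beyond this, no further ingredients other than Plancherel and elementary one-dimensional radial integration are required once the kernel identity for $\hat T$ is established; this differs from the $TT^\ast$/oscillatory-integral route of \cite{CheHonPav-17} in that it never forms $TT^\ast$ but rather places everything on the space-time Fourier side of $V$ from the outset, after which only the geometry of the paraboloid slice plays a role.
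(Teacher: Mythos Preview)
Your proposal is correct and follows essentially the same route as the paper's appendix: both compute the $\gS^2$ norm directly on the Fourier side, change variables to isolate $\tilde V(\tau,k)$, and reduce to bounding the same one-dimensional radial integral in the direction orthogonal to $k$ (your $I(\tau,k)$ coincides with the paper's $\int_0^\infty s^{d-2}\,ds/(\langle\rho'/|r|+|r|,s\rangle^{2\alpha_1}\langle\rho'/|r|-|r|,s\rangle^{2\alpha_2})$ once one identifies $a,b$ with $\tfrac12(\rho'/|r|\pm|r|)$). The only cosmetic difference is in how that last integral is bounded---you use a three-zone split in $|\eta^\perp|$, while the paper uses the monotone replacement $\langle\rho'/|r|+|r|,s\rangle\ge\langle|r|,s\rangle$, $\langle\rho'/|r|-|r|,s\rangle\ge\langle s\rangle$ followed by the rescaling $s\mapsto s/(1+|r|)$---but both yield the identical trichotomy in $\alpha_1,\alpha_2$ versus $(d-1)/2$.
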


We extend Proposition \ref{prop:bez} by allowing different powers of $\sd$ on the right side, which will be quite important for us as we explained in the introduction.

\begin{corollary}\label{coro:deriv-distrib}
 Let $d\ge1$ and  $p,q,\alpha\in(1,+\ii)$ be such that $\alpha<p$, $1/\alpha\ge1/(dp)+1/q$. Let $\sigma_1,\sigma_2\ge0$ be such that $\sigma_1+\sigma_2=d-2/p-d/q$ and
 $$\sigma_1,\sigma_2<\min(d/2,d/2-2/p+1)$$
Then, there exists $C>0$ such that for all $\gamma\in\cH^{s,\alpha}$ we have
 $$\| \rho_{e^{it\Delta_x} \gamma  e^{-it\Delta_x}} \|_{L^p_t L^{q}_x} \le C\| \sd^{\sigma_1} \gamma \sd^{\sigma_2}\|_{\gS^\alpha}.$$
\end{corollary}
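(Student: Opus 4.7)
\noindent The strategy is to reduce the asymmetric estimate to two applications of Proposition~\ref{prop:bez} at different symmetric regularities via a bilinear Cauchy--Schwarz factorization. Let $A := \sd^{\sigma_1}\gamma\sd^{\sigma_2}$, write its polar decomposition $A=U|A|$, and fix $\theta\in(0,1)$. Since $A=(U|A|^\theta)(|A|^{1-\theta})$, the conjugate factors as $e^{it\Delta_x}\gamma e^{-it\Delta_x}=B_1(t)B_2(t)$ with $B_1(t):=e^{it\Delta_x}\sd^{-\sigma_1}U|A|^\theta$ and $B_2(t):=|A|^{1-\theta}\sd^{-\sigma_2}e^{-it\Delta_x}$. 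A pointwise Cauchy--Schwarz applied to $(B_1B_2)(x,x)=\int B_1(x,y)B_2(y,x)\,dy$ then yields
\[
\bigl|\rho_{e^{it\Delta_x}\gamma e^{-it\Delta_x}}(t,x)\bigr|^{2}\le \rho_{B_1B_1^*}(t,x)\,\rho_{B_2^*B_2}(t,x),
\]
and the identity $U|A|^{2s}U^*=|A^*|^{2s}$ gives
\[
B_1B_1^*=e^{it\Delta_x}\sd^{-\sigma_1}|A^*|^{2\theta}\sd^{-\sigma_1}e^{-it\Delta_x},\quad B_2^*B_2=e^{it\Delta_x}\sd^{-\sigma_2}|A|^{2(1-\theta)}\sd^{-\sigma_2}e^{-it\Delta_x}.
\]

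\noindent Hölder in $L^p_tL^q_x$ with auxiliary Lebesgue exponents $(p_j,q_j)$ satisfying $\tfrac{1}{p}=\tfrac{1}{2}\left(\tfrac{1}{p_1}+\tfrac{1}{p_2}\right)$ and $\tfrac{1}{q}=\tfrac{1}{2}\left(\tfrac{1}{q_1}+\tfrac{1}{q_2}\right)$ then gives
\[
\|\rho_{e^{it\Delta_x}\gamma e^{-it\Delta_x}}\|_{L^p_tL^q_x}\le\|\rho_{B_1B_1^*}\|_{L^{p_1}_tL^{q_1}_x}^{1/2}\,\|\rho_{B_2^*B_2}\|_{L^{p_2}_tL^{q_2}_x}^{1/2}.
\]
Applying Proposition~\ref{prop:bez} to each density factor, with symmetric regularity $\sigma=\sigma_j$ and Schatten index $\alpha_j$, bounds $\|\rho_{B_1B_1^*}\|_{L^{p_1}_tL^{q_1}_x}\le C\,\bigl\||A^*|^{2\theta}\bigr\|_{\gS^{\alpha_1}}=C\|A\|_{\gS^{2\theta\alpha_1}}^{2\theta}$, and analogously for $B_2^*B_2$. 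Setting $\alpha_1:=\alpha/(2\theta)$ and $\alpha_2:=\alpha/(2(1-\theta))$ so that $2\theta\alpha_1=2(1-\theta)\alpha_2=\alpha$, the product collapses to $C\|A\|_{\gS^\alpha}^{\theta+(1-\theta)}=C\|\sd^{\sigma_1}\gamma\sd^{\sigma_2}\|_{\gS^\alpha}$, which is the claimed bound.

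\noindent The principal technical step, which I expect to be the main obstacle, is choosing $(\theta,p_1,q_1,p_2,q_2)$ that simultaneously satisfy the two exponent relations $2/p_j+d/q_j=d-2\sigma_j$, the admissibility conditions $\alpha_j<p_j$ and $1/\alpha_j\ge 1/(dp_j)+1/q_j$ required by Proposition~\ref{prop:bez}, and the Hölder interpolation identities above. This reduces to a finite-dimensional feasibility problem with $\theta$ and one further Lebesgue parameter as the free variables; the hypotheses $\sigma_1,\sigma_2<\min(d/2,\,d/2-2/p+1)$ are precisely what is needed to guarantee that an admissible choice exists for any target $(\sigma_1,\sigma_2,p,q,\alpha)$ meeting the other conditions of the corollary. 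When $\sigma_1=\sigma_2$ one can simply take $\theta=1/2$ and $p_1=p_2=p$, $q_1=q_2=q$ and recover Proposition~\ref{prop:bez} directly; the asymmetric case requires the above splitting in an essential way.
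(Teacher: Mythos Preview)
Your approach is correct and is essentially the primal counterpart of the paper's proof: the paper works by duality, factorizing the test function $W=W_2W_1$ and applying the dual Strichartz bound \eqref{eq:stri-bez-xtotx} to each piece, which yields exactly the same parameter feasibility problem (find $p_j,q_j,\alpha_j$ with $2/p_j+d/q_j=d-2\sigma_j$, $\alpha_j<p_j$, $1/\alpha_j\ge 1/(dp_j)+1/q_j$, and $1/p=1/(2p_1)+1/(2p_2)$, $1/\alpha=1/(2\alpha_1)+1/(2\alpha_2)$) that your polar decomposition and kernel Cauchy--Schwarz reduce to. The paper carries out that feasibility argument explicitly by taking $p_1$ (which then fixes $q_1,p_2,q_2$) and $\alpha_1$ as the free variables and splitting into the two regimes $1/p\gtrless (d-s)/(d+1)$ with $s=\sigma_1+\sigma_2$; your $\theta$ corresponds to the choice of $\alpha_1$ via $\alpha_1=\alpha/(2\theta)$.
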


\begin{remark}
In the special case $\sigma_1=\sigma_2=\sigma$, we recover Proposition \ref{prop:bez} in the sense that the condition $\sigma<\min(d/2,d/2-2/p+1)$ is a consequence of the relation $2\sigma=d-2/p-d/q$.
\end{remark}

\begin{proof}

The dual version of the inequality is
 $$\left\| \int_\R \langle\nabla\rangle^{-\sigma_2} e^{-it\Delta_x} W(t)e^{it\Delta_x}\langle\nabla\rangle^{-\sigma_1}\right\|_{\gS^{\alpha'}(L^2_x)} \le C\| W\|_{L^{p'}_t L^{q'}_x}.$$
 We show that under the above assumptions on $p,q,\alpha,\sigma_1,\sigma_2$, one can then find $p_1,p_2,q_1,q_2,\alpha_1,\alpha_2\in(1,+\ii)$ such that $2/p_j+d/q_j=d-2\sigma_j$,
 $1/\alpha_j\ge1/(dp_j)+1/q_j$, $\alpha_j<p_j$, and $1/\alpha=1/(2\alpha_1)+1/(2\alpha_2)$, $1/p=1/(2p_1)+1/(2p_2)$. Once the existence of these exponents is obtained, we write $W=W_2W_1$ with
 $$W_2(t,x) = \frac{W(t,x)^{q'/(2q_2')}}{\|W(t,\cdot)\|_{L^{q'}_x}^{q'/(2q_2')-p'/(2p_2')}},\ W_1(t,x) = \frac{|W(t,x)|^{q'/(2q_1')}}{\|W(t,\cdot)\|_{L^{q'}_x}^{q'/(2q_1')-p'/(2p_1')}},$$
 so that by the dual version of Proposition \ref{prop:bez} we have
 $$\|W_j e^{it\Delta_x} \langle\nabla\rangle^{-\sigma_j} \|_{\gS^{2\alpha_j'}(L^2_x\to L^2_{t,x})} \le C \|W_j\|_{L^{2p_j'}_t L^{2q_j'}_x}=C\|W\|_{L^{p'}_t L^{q'}_x}^{p'/(2p_j')}.$$
 We then conclude by H\"older's inequality in Schatten spaces. To show the existence of such parameters, we state all conditions as a condition on $p_1$. Hence, let $p_1\in(1,+\ii)$. Then, there exists $q_1\in(1,+\ii)$ such that $2/p_1+d/q_1=d-2\sigma_1$ if and only if $1/p_1<d/2-\sigma_1$, which is compatible with  $1/p_1>0$ (meaning that the set of $p_1$ satisfying these conditions is non-empty) since $\sigma_1<d/2$. Next, we define $p_2$ by the relation $1/p_2=2/p-1/p_1$. We have $p_2\in(1,+\ii)$ if and only if $2/p-1<1/p_1<2/p$. This is compatible with $0<1/p_1<1$ and $1/p_1<d/2-\sigma_1$ if $\sigma_1<d/2-2/p+1$. Next, we define $q_2$ by the relation $2/p_2+d/q_2=d-2\sigma_2$. We have $q_2\in(1,+\ii)$ if and only if
 $$2/p+\sigma_2-d/2<1/p_1<2/p+\sigma_2.$$
 The second inequality holds because $1/p_1<2/p$ and $\sigma_2\ge0$. The first
 inequality is compatible with $1/p_1<1$, $1/p_1<d/2-\sigma_1$, and $1/p_1<2/p$ since $\sigma_2<\min(d/2,d/2-2/p+1)$ and since $\sigma_1+\sigma_2<d-2/p$, which we can infer from the relation $\sigma_1+\sigma_2=d-2/p-d/q$. To sum up, under the conditions
 $$\max(0,2/p-1,2/p+\sigma_2-d/2)<1/p_1<\min(1,2/p,d/2-\sigma_1),$$
 (which is a non-empty set of $1/p_1$), we have built $q_1,p_2,q_2\in(1,+\ii)$ such that $2/p_j+d/q_j=d-2\sigma_j$. Let us now construct the $\alpha_j$. For this we distinguish two cases. First, if $1/p\ge(d-s)/(d+1)$ (where we used the notation $s=\sigma_1+\sigma_2$), the condition $1/\alpha\ge1/(dp)+1/q$ is implied by $1/\alpha>1/p$. Hence, let us find $\alpha_j$ such that $1/\alpha=1/(2\alpha_1)+1/(2\alpha_2)$ and $1/\alpha_j>1/p_j$ with the additional condition that $1/p_j\ge(d-2\sigma_j)/(d+1)$ (so that $1/\alpha_j>1/p_j$ implies $1/\alpha_j\ge1/(dp_j)+1/q_j$). The condition $1/p_1\ge(d-2\sigma_1)/(d+1)$ is compatible with $1/p_1<\min(1,2/p,d/2-\sigma_1)$ since $\sigma_1,\sigma_2<d/2$ and since $1/p\ge(d-s)/(d+1)$. We then let $\alpha_1\in(1,+\ii)$ such that $1/\alpha_1$. Next, we define $\alpha_2$ by the relation $1/\alpha_2=2/\alpha-1/\alpha_1$. We want $1/\alpha_2>1/p_2$ where we recall that $1/p_2=2/p-1/p_1$, as well as $1/p_2\ge(d-2\sigma_2)/(d+1)$. This last relation is equivalent to $1/p_1\le2/p-(d-2\sigma_2)/(d+1)$, which is compatible with $\max(0,2/p-1,2/p+\sigma_2-d/2)<1/p_1$ since $\sigma_1,\sigma_2<d/2$ and since $1/p\ge(d-s)/(d+1)$. Finally, $1/\alpha_2>1/p_2$ is equivalent to $1/\alpha_1<2/\alpha-2/p+1/p_1$, which is compatible with $1/\alpha_1>1/p_1$ since $1/\alpha>1/p$. We thus have built the $\alpha_j$, which concludes this case. In the second case $1/p<(d-s)/(d+1)$, the condition $1/\alpha\ge1/(dp)+1/q$ implies $1/\alpha>1/p$ so we again build $\alpha_j$ such that $1/\alpha=1/(2\alpha_1)+1/(2\alpha_2)$ and $1/\alpha_j\ge1/(dp_j)+1/q_j$. Under the additional requirement that $1/p_j<(d-2\sigma_j)/(d+1)$, this last inequality implies that $1/\alpha_j>1/p_j$. We argue as in the first case to show that one can always build such $\alpha_j$'s under our conditions.

\end{proof}

\begin{remark}\label{rk:integer-derivatives}
 A first consequence of Corollary \ref{coro:deriv-distrib} is to give Strichartz estimates in (integer) Sobolev spaces for operator densities. Indeed, given $n\in\N$, by the Leibniz rule for differentiation, to estimate $\rho_\gamma$ in $W^{n,q}_x$ it is enough to estimate $\rho_{\partial^\alpha\gamma\partial^\beta}$ in $L^q_x$ for all multi-indices $\alpha,\beta$ with $|\alpha|+|\beta|\le n$. Applying Corollary \ref{coro:deriv-distrib} to estimate $\rho_{e^{it\Delta_x}\partial^\alpha\gamma\partial^\beta e^{-it\Delta_x}}$ with $\sigma_1=n-|\alpha|$ and $\sigma_2=|\alpha|$ (under the assumption that $n<\min(d/2,d/2-2/p+1)$), we get that
 $$\|\rho_{e^{it\Delta_x}\gamma e^{-it\Delta_x}}\|_{L^p_t W^{n,q}_x} \lesssim \|\sd^n\gamma\sd^n\|_{\gS^\alpha}$$
 if $2/p+d/q=d-n$, $1/\alpha\ge1/(dp)+1/q$, $\alpha<p$. In Theorem \ref{thm:stri-potential} below, we will extend this inequality to non-integer $n$ (we will also prove it for $U_V(t,0)$ instead of $e^{it\Delta_x}$), using fractional Leibniz rules. This will be one of the key results of this work, since by the fixed point strategy above we need to estimate $\rho_{U_V(t,0)Q_{\text{in}}U_V(t,0)^*}$ in $L^2_t H^s_x$.
\end{remark}

\section{Generalized Strichartz estimates}\label{sec:gene-stri}

As we mentioned in Remark \ref{rk:stri-dual}, the standard Strichartz estimates for orthonormal systems provides Schatten bounds for operators of the type $W(t,x)e^{it\Delta_x}\sd^{-\sigma}:L^2_x\to L^2_{t,x}$, where $W(t,x)$ is a multiplication operator. Below we provide a slightly more general version of these results.

\begin{lemma}\label{lem:strichartz-A}
 Let $d\ge1$, $p,q\in(1,+\ii)$, and $\alpha\in(1,+\ii)$. Assume that $(\cU(t))_{t\in\R}$ is a family of operators on $L^2_x$ such that for any $W\in L^{2p'}_tL^{2q'}_x$ we have
 $$\|W(t)\cU(t)\|_{\gS^{2\alpha'}(L^2_x\to L^2_{t,x})} \lesssim \|W\|_{L^{2p'}_tL^{2q'}_x}.$$
 Then, for any family $(A(t))_{t\in\R}$ of linear operators from $L^{2q}_x$ to $L^2(X)$ for some measure space $X$, we have
 $$\| A(t)\cU(t)\|_{\gS^{2\alpha'}(L^2_x\to L^2_tL^2(X))}^2 \lesssim \left(\int_\R\|A(t)\|_{L^{2q}_x\to L^2(X)}^{2p'}\,dt\right)^{\tfrac{1}{p'}}.$$
\end{lemma}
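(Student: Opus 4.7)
The plan is to use Schatten duality to reduce the estimate to a trace inequality, then apply the pointwise operator-norm bound on $A(t)$ combined with H\"older's inequality and the orthonormal Strichartz estimate implied by the hypothesis.

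First, using the identity $\|T\|_{\gS^{2\alpha'}}^2 = \|T^*T\|_{\gS^{\alpha'}}$ with $T = A(\cdot)\cU(\cdot)$, I would write
\[
\|A(t)\cU(t)\|^2_{\gS^{2\alpha'}(L^2_x \to L^2_t L^2(X))} = \Bigl\|\int_\R \cU(t)^* A(t)^* A(t) \cU(t)\,dt\Bigr\|_{\gS^{\alpha'}(L^2_x)}.
\]
By Schatten duality $(\gS^{\alpha'})^* = \gS^{\alpha}$, this equals
\[
\sup_{K \ge 0,\ \|K\|_{\gS^{\alpha}(L^2_x)}\le 1} \int_\R \tr_{L^2(X)}\bigl(A(t)\cU(t)\, K\, \cU(t)^* A(t)^*\bigr)\,dt.
\]
For such a $K$, spectrally decompose $K = \sum_n \lambda_n |u_n\rangle\langle u_n|$ with $(u_n)$ orthonormal in $L^2_x$, $\lambda_n \ge 0$, and $\sum_n \lambda_n^{\alpha} \le 1$. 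The integrand then becomes $\sum_n \lambda_n \|A(t)\cU(t) u_n\|^2_{L^2(X)}$.

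The next step uses the elementary pointwise bound
\[
\|A(t)\cU(t) u_n\|_{L^2(X)} \le \|A(t)\|_{L^{2q}_x \to L^2(X)}\,\|\cU(t) u_n\|_{L^{2q}_x} = m(t)\,\|\cU(t) u_n\|_{L^{2q}_x},
\]
where $m(t) := \|A(t)\|_{L^{2q}_x \to L^2(X)}$, combined with H\"older's inequality in $t$ with conjugate exponents $p'$ and $p$:
\[
\sum_n \lambda_n \int_\R m(t)^2 \|\cU(t) u_n\|_{L^{2q}_x}^2\,dt \le \|m\|_{L^{2p'}_t}^2 \cdot \Bigl\|\sum_n \lambda_n \|\cU(\cdot) u_n\|_{L^{2q}_x}^2\Bigr\|_{L^p_t}.
\]
The first factor is exactly the right-hand side of the claimed estimate, so it only remains to show that the second factor is $\lesssim 1$ uniformly over admissible $K$. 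This should follow from the hypothesis in its equivalent dual formulation as an orthonormal Strichartz estimate $\|\rho_{\cU(\cdot) K \cU(\cdot)^*}\|_{L^p_t L^q_x} \lesssim \|K\|_{\gS^\alpha}$.

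The main obstacle will be precisely this last step. Indeed, since $\sum_n \lambda_n \|\cU(t) u_n\|_{L^{2q}_x}^2 = \sum_n \lambda_n \||\cU(t) u_n|^2\|_{L^q_x}$, Minkowski's inequality in $L^q_x$ (valid since $q \ge 1$) gives
\[
\sum_n \lambda_n \|\cU(t) u_n\|_{L^{2q}_x}^2 \ge \Bigl\|\sum_n \lambda_n |\cU(t) u_n|^2\Bigr\|_{L^q_x} = \|\rho_{\cU(t) K \cU(t)^*}\|_{L^q_x},
\]
so the sum \emph{majorizes} (rather than being majorized by) the quantity directly controlled by the orthonormal Strichartz estimate. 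Closing this gap will therefore require a more refined argument---exploiting the hypothesis beyond the density bound, for instance through an operator-theoretic reformulation or a further factorization $K = L L^*$ with $L \in \gS^{2\alpha}$ that allows bounding $\|A(t)\cU(t) L\|_{\gS^2(\ell^2 \to L^2(X))}^2$ directly via a Hilbert intermediate space.
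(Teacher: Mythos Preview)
Your setup via Schatten duality and spectral decomposition is correct, and you have correctly diagnosed the obstacle: after the scalar bound $\|A(t)\cU(t)u_n\|_{L^2(X)}\le m(t)\|\cU(t)u_n\|_{L^{2q}_x}$, Minkowski in $L^q_x$ goes the wrong way. The gap is genuine, and the paper closes it with precisely the ingredient your last paragraph gestures toward but does not name.

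The missing idea is to reverse the order of the $\ell^2_n$-sum and the $L^{2q}_x$-norm \emph{before} applying $A(t)$. Writing $f_n(t,x):=\sqrt{\lambda_n}\,\cU(t)u_n(x)$, the density identity
\[
\rho_{\cU(t)K\cU(t)^*}(x)=\sum_n\lambda_n|\cU(t)u_n(x)|^2=\|(f_n(t,x))_n\|_{\ell^2_n}^2
\]
shows that the hypothesis is exactly $\|(f_n)\|_{L^{2p}_tL^{2q}_x\ell^2_n}\lesssim\|K\|_{\gS^\alpha}^{1/2}$, with $\ell^2_n$ \emph{innermost}. One therefore applies $A(t)$ as an operator on $\ell^2_n$-valued functions:
\[
\sum_n\lambda_n\|A(t)\cU(t)u_n\|_{L^2(X)}^2
=\bigl\|A(t)(f_n(t,\cdot))_n\bigr\|_{L^2(X,\ell^2_n)}^2
\le\|A(t)\|_{L^{2q}_x(\ell^2_n)\to L^2(X,\ell^2_n)}^2\,\|(f_n(t,\cdot))_n\|_{L^{2q}_x(\ell^2_n)}^2,
\]
after which H\"older in $t$ produces the correct second factor $\|(f_n)\|_{L^{2p}_tL^{2q}_x\ell^2_n}^2$ rather than your $\|\sum_n\lambda_n\|\cU(\cdot)u_n\|_{L^{2q}_x}^2\|_{L^p_t}$. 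The remaining point---that the $\ell^2$-extension norm $\|A(t)\|_{L^{2q}_x(\ell^2)\to L^2(X,\ell^2)}$ is controlled by the scalar norm $\|A(t)\|_{L^{2q}_x\to L^2(X)}$---is a Marcinkiewicz--Zygmund type inequality; the paper invokes \cite[Thm.~5.5.1]{Grafakos-book} for it. Your closing suggestion about factoring $K=LL^*$ and working in $\gS^2(\ell^2\to L^2(X))$ is pointing in this direction but stops short of the vector-valued extension of $A(t)$ that actually closes the argument.
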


\begin{proof}
 By Remark \ref{rk:stri-dual}, the assumption on $\cU(t)$ is equivalent to the estimate
 $$\|\rho_{\cU(t)\gamma\cU(t)^*}\|_{L^p_tL^q_x}\lesssim\|\gamma\|_{\gS^\alpha(L^2_x)}$$
 for all $\gamma\in\gS^\alpha(L^2_x)$. We have to estimate
 $$\left\|\int_\R \cU(t)^*A(t)^* A(t)\cU(t)\,dt \right\|_{\gS^{\alpha'}(L^2_x)}.$$
 By duality (notice that the operator we have to estimate is self-adjoint), we test it against a self-adjoint $\Gamma\in\gS^{\alpha}(L^2_x)$ which has the decomposition $\Gamma=\sum_k\lambda_k|u_k\rangle\langle u_k|$, we get
 $$
 \begin{multlined}
     \tr_{L^2_x}\int_\R \cU(t)^*A(t)^* A(t)\cU(t)\,dt\Gamma \\
     = \int_\R \langle A(t)\cU(t)\sqrt{\lambda_k}u_k,A(t)\cU(t)\sqrt{|\lambda_k|}u_k\rangle_{L^2(X,\ell^2_k)}\,dt.
 \end{multlined}
 $$
 As a consequence, by the Hölder inequality we get
 $$
 \begin{multlined}
    |\tr_{L^2_x}\int_\R \cU(t)^*A(t)^* A(t)\cU(t)\,dt\Gamma|\\
    \le \|A(t)\cU(t)\sqrt{\lambda_k}u_k\|_{L^2(\R_t\times X,\ell^2_k)}\|A(t)\cU(t)\sqrt{|\lambda_k|}u_k\|_{L^2(\R_t\times X,\ell^2_k)}.
 \end{multlined}
 $$
 Now we write
 $$
 \|A(t)\cU(t)\sqrt{\lambda_k}u_k\|_{L^2(\R_t\times X,\ell^2_k)} \le \|A(t)\|_{L^{2p}_tL^{2q}_{x}\ell^2_k \to L^2(\R_t\times X,\ell^2_k)}\| \cU(t)\sqrt{\lambda_k}u_k \|_{L^{2p}_tL^{2q}_{x}\ell^2_k},
 $$
 and we know by assumption that
 \begin{equation}\label{eq:app-bez}
    \| \cU(t)\sqrt{\lambda_k}u_k \|_{L^{2p}_tL^{2q}_{x}\ell^2_k}^2\lesssim \|\lambda_k\|_{\ell^{\alpha}_k}=\|\Gamma\|_{\gS^\alpha(L^2_x)}.
 \end{equation}
 As a consequence, we have proved by duality that
 $$\| A(t)\cU(t)\|_{\gS^{2\alpha'}(L^2_x\to L^2(\R_t\times X))} \lesssim \|A(t)\|_{L^{2p}_tL^{2q}_{x}\ell^2_k \to L^2(\R_t\times X,\ell^2_k)}.$$
Using \cite[Theorem 5.5.1]{Grafakos-book} (which is stated for $p=q$, but the proof also works for $p\neq q$), we have
$$\|A(t)\|_{L^{2p}_tL^{2q}_{x}\ell^2_k \to L^2(\R_t\times X,\ell^2_k)} \lesssim \|A(t)\|_{L^{2p}_tL^{2q}_{x} \to L^2(\R_t\times X)}.$$
Finally, recalling that $A(t)$ acts by multiplication in $t$, we have for any $f\in L^{2p}_tL^{2q}_{x}$,
\begin{align*}
  \|A(t)f\|_{L^2(\R_t\times X)}^2 &=
  \int_\R \|A(t)f(t)\|_{L^2(X)}^2\,dt \\
  &\le \int_\R \|A(t)\|^2_{L^{2q}_x\to L^2(X)} \|f(t)\|_{L^{2q}_x}^2\,dt\\
  &\le \left(\int_\R\|A(t)\|_{L^{2q}_x\to L^2(X)}^{2p'}\,dt\right)^{\tfrac{1}{p'}}\|f\|_{L^{2p}_tL^{2q}_{x}}^2,
\end{align*}
which concludes the proof.
\end{proof}

Combining Lemma \ref{lem:strichartz-A} (applied to $\cU(t):=e^{it\Delta_x}\sd^{-\sigma}$) with Proposition \ref{prop:bez}, we obtain:

\begin{corollary}\label{coro:general-schatten}
 Let $d\ge1$, $\sigma\ge0$, and $p,q,\alpha\in(1,+\ii)$ be such that
 $$
 \frac{2}{p}+\frac dq = d-2\sigma,\quad \frac{1}{\alpha}\ge\frac{1}{dp}+\frac1q,\quad\alpha<p.
 $$
 Let $(A(t))_{t\in\R}$ be a family of linear operators from $L^{2q}_x$ to $L^2(X)$ for some measure space $X$. Then, we have
 $$\| A(t)e^{it\Delta_x}\langle\nabla_x\rangle^{-\sigma}\|_{\gS^{2\alpha'}(L^2_x\to L^2_tL^2(X))}^2 \lesssim \left(\int_\R\|A(t)\|_{L^{2q}_x\to L^2(X)}^{2p'}\,dt\right)^{\tfrac{1}{p'}}.$$
\end{corollary}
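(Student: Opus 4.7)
The plan is to apply Lemma \ref{lem:strichartz-A} to the specific choice $\cU(t) := e^{it\Delta_x}\sd^{-\sigma}$. The only thing to verify is that this family $\cU$ satisfies the hypothesis of the lemma, namely the dual Strichartz-type bound
$$\|W(t)\cU(t)\|_{\gS^{2\alpha'}(L^2_x\to L^2_{t,x})} \lesssim \|W\|_{L^{2p'}_t L^{2q'}_x}$$
for all $W\in L^{2p'}_t L^{2q'}_x$. But this is precisely the dual formulation of Proposition \ref{prop:bez} recorded as \eqref{eq:stri-bez-xtotx} in Remark \ref{rk:stri-dual}: the relations $2/p+d/q = d-2\sigma$, $1/\alpha \ge 1/(dp)+1/q$, and $\alpha<p$ are exactly the hypotheses of Proposition \ref{prop:bez}, and that proposition's dual form gives the required Schatten bound with the correct exponent $2\alpha'$.

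Once this hypothesis is in place, the conclusion follows by direct invocation of Lemma \ref{lem:strichartz-A}, which yields
$$\| A(t)e^{it\Delta_x}\sd^{-\sigma}\|_{\gS^{2\alpha'}(L^2_x\to L^2_tL^2(X))}^2 \lesssim \left(\int_\R\|A(t)\|_{L^{2q}_x\to L^2(X)}^{2p'}\,dt\right)^{1/p'}.$$
There is no substantial obstacle here: the content of the result sits entirely in Lemma \ref{lem:strichartz-A}, whose proof uses the vector-valued extension \cite[Thm.~5.5.1]{Grafakos-book} to absorb the sequence index $k$ coming from a spectral decomposition of the test operator $\Gamma$. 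The present corollary is simply the specialization to the free Schrödinger propagator with loss of derivatives $\sd^{-\sigma}$, so no additional arguments beyond checking the compatibility of exponents are needed.
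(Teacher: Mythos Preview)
Your proof is correct and matches the paper's approach exactly: the paper simply states that the corollary follows by combining Lemma \ref{lem:strichartz-A} (applied to $\cU(t):=e^{it\Delta_x}\sd^{-\sigma}$) with Proposition \ref{prop:bez}, which is precisely what you do via the dual formulation \eqref{eq:stri-bez-xtotx}.
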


\section{Fractional Leibniz rules}\label{sec:leibniz}

A key point of our proof is to distribute fractional derivatives. A way to do it is to use fractional Leibniz rules. While there are many references on the topic, we follow the approach of \cite{Li-19}. One of the versions of fractional Leibniz rules is the following estimate \cite[Eq. (1.15)]{Li-19}:
 \begin{equation}\label{eq:leibniz-standard}
    \||D|^\sigma(Wf)\|_{L^q_x}\lesssim \||D|^\sigma W\|_{L^{q_1}_x}\|f\|_{L^{q_2}_x}+\|W\|_{L^{r_1}_x}\||D|^\sigma f\|_{L^{r_2}_x},
 \end{equation}
 if $\sigma\ge0$ and $q_1,q_2,r_1,r_2\in(1,+\ii)$ are such that $1/q_1+1/q_2=1/q=1/r_1+1/r_2$. If we furthermore have $1/q_1-\sigma/d\le 1/r_1\le 1/q_1$, then $W^{\sigma,q_1}\hookrightarrow L^{r_1}$ and $W^{\sigma,r_2}\hookrightarrow L^{q_2}$ so that \eqref{eq:leibniz-standard} implies
 \begin{equation}\label{eq:leibniz-standard-2}
    \|Wf\|_{W^{\sigma,q}_x}\lesssim \|W\|_{W^{\sigma,q_1}_x}\|f\|_{W^{\sigma,r_2}_x},
 \end{equation}
 an estimate which can be obtained directly by interpolation from the case $\sigma\in\N$ and standard Leibniz rules. In applications to nonlinear Schrödinger equations, \eqref{eq:leibniz-standard-2} is often enough and thus precise fractional Leibniz rules such as \eqref{eq:leibniz-standard} or the results of \cite{Li-19} are not needed. For us, estimates of the type \eqref{eq:leibniz-standard-2} will not be enough (see for instance the proof of Proposition \ref{prop:UV-bez} below) and we need the full version of fractional Leibniz rules.

 Another way to state the estimate \eqref{eq:leibniz-standard} is that for any $W\in \dot{W}^{\sigma,q_1}\cap L^{r_1}$, the operator $f\mapsto |D|^\sigma(Wf)$ is bounded from $\dot{W}^{\sigma,r_2}\cap L^{q_2}$ to $L^q$. In Lemma \ref{lem:leibniz-op-general} below, we show that this operator can be written as a sum of terms that are each factorized. Our results do not provide an essential novelty compared to \cite{Li-19} but rather another way to write them, which is useful for our purposes. Hence, we first recall the notations of \cite{Li-19} before stating our results.

\subsection{Notations and key results}

Let us start with recalling facts about Littlewood-Paley multipliers. Let $\phi_0:\R^d\to[0,1]$ be a smooth function such that $\phi_0\equiv1$ on $B(0,1)$ and $\phi_0\equiv0$ away from $B(0,\tfrac76)$. Defining $\phi(\xi)=\phi_0(\xi)-\phi_0(2\xi)$ and for $j\in\Z$ the Littlewood-Paley multiplier for any $f\in\cS(\R^d)$
$$\hat{P_j f}(\xi) := \phi(\tfrac{\xi}{2^j})\hat{f}(\xi),$$
we have $\sum_{j\in\Z} P_j=1$ while $\phi$ is supported on $\{\tfrac12\le|\xi|\le\tfrac76 \}$. We also have $P_j P_{j'}=0$ if $|j-j'|>1$. Let us also define
$$\hat{P_{\le j}f}(\xi):=\phi_0(\tfrac{\xi}{2^j})\hat{f}(\xi).$$
We also use the shortcut notations
$$f_j:=P_j f,\quad \tilde{g_j}:=\tilde{P_j}g,\quad \tilde{P_j}:= P_{j-1}+P_j+P_{j+1}.$$
For any $\sigma\in\R$, we denote by $|D|^\sigma$ the Fourier multiplier by $\xi\in\R^d\mapsto|\xi|^\sigma$. We use this notation rather than $|\nabla_x|^\sigma$ that we use in other sections to stick with the notations of~ \cite{Li-19}.  Similarly, for any $\alpha\in\N^d$, $D^{\sigma ,\alpha}$ is defined as the Fourier multiplier by the function $(\tfrac1i\partial_\xi)^\alpha(|\xi|^\sigma)$. Following the argument of \cite[Sec. 4]{Li-19}, we get the following decomposition:

\begin{lemma}\label{lem:bony-op}

Let $d\ge1$, $W\in\cS(\R^d)$, and $\sigma>0$. Then, for any $\sigma_1,\sigma_2\ge0$ such that $\sigma_1+\sigma_2=\sigma$ we have, as an identity between operators mapping $\cS(\R^d)$ to $L^2(\R^d)$ (see Remark \ref{rk-meaning-leibniz-op} below),

 \begin{equation}\label{eq:bony-op}
\begin{multlined}
|D|^\sigma W = \sum_{|\alpha|\le\lfloor \sigma_1\rfloor}\frac{1}{\alpha!}(\partial^\alpha W) D^{\sigma ,\alpha}+\sum_{|\beta|\le\lfloor \sigma_2\rfloor}\frac{1}{\beta!}(D^{\sigma ,\beta} W)\partial^\beta
+ \sum_{j\in\Z}|D|^\sigma W_j \tilde{P_j}\\
- \sum_{|\alpha|\le\lfloor \sigma_1\rfloor}\frac{1}{\alpha!}\sum_{j\in\Z}(\partial^\alpha W_j) D^{\sigma ,\alpha}\tilde{P_j}
-\sum_{|\alpha|\le\lfloor \sigma_1\rfloor}\frac{1}{\alpha!}\sum_{j\in\Z}(\partial^\alpha W_j) D^{\sigma ,\alpha}P_{\le j-2}\\
- \sum_{|\beta|\le\lfloor \sigma_2\rfloor}\frac{1}{\beta!}\sum_{j\in\Z}(D^{\sigma,\beta}\tilde{P_j}W)P_j\partial^\beta
-\sum_{|\beta|\le\lfloor \sigma_2\rfloor}\frac{1}{\beta!}\sum_{j\in\Z}(D^{\sigma,\beta}P_{\le j-2}W)P_j\partial^\beta+\cA_W^{(\sigma_1)}+\cB_W^{(\sigma_2)}
\end{multlined}
\end{equation}
with the notation
$$
\begin{multlined}
  (\cA_W^{(\sigma_1)}g)(x) = \sum_{j\in\Z} 2^{2jd}\int \Phi_\cA(2^jy,2^jy')\times\\
  \times\sum_{|\alpha|=\lfloor\sigma_1\rfloor+1}C_\alpha (2^jy)^{\alpha} 2^{j(\sigma-\lfloor\sigma_1\rfloor-1)}(\partial^\alpha P_{\le j-2}W)(x-y')(P_jg)(x-y)\,dy\,dy',
\end{multlined}
$$
$$
\begin{multlined}
  (\cB_W^{(\sigma_2)}g)(x) = \sum_{j\in\Z} 2^{2jd}\int \Phi_\cB(2^jy,2^jy')\times\\
  \times\sum_{|\beta|=\lfloor\sigma_2\rfloor+1}C_\alpha (2^jy)^{\beta} 2^{j(\sigma-\lfloor\sigma_2\rfloor-1)}(P_jW)(x-y')(\partial^\beta P_{\le j-2}g)(x-y)\,dy\,dy',
\end{multlined}
$$
for any $g\in\cS(\R^d)$, where $\Phi_\cA,\Phi_\cB$ are Schwartz functions and $C_\alpha$ are some real constants.

\end{lemma}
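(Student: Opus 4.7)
The strategy is the standard Bony paraproduct decomposition of the pointwise product $Wg$, combined with a Taylor expansion of the Fourier symbol $|\xi+\eta|^\sigma$ in the low-frequency variable. I would start from the identity (obtained by splitting the double sum $Wg=\sum_{j,k}W_j g_k$ according to whether $j\le k-2$, $|j-k|\le 1$, or $j\ge k+2$)
$$Wg=\sum_{j\in\Z}W_{\le j-2}g_j+\sum_{j\in\Z}W_j\tilde{P_j}g+\sum_{j\in\Z}W_jP_{\le j-2}g,$$
then apply $|D|^\sigma$ to each of the three pieces separately. The middle (high-high) piece yields immediately the term $\sum_j|D|^\sigma W_j\tilde{P_j}$, matching the third summand in \eqref{eq:bony-op}. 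The low-high and high-low pieces will be treated by Taylor-expanding the symbol in the small-frequency variable to orders $\lfloor\sigma_1\rfloor$ and $\lfloor\sigma_2\rfloor$ respectively.

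For the low-high piece $\sum_j W_{\le j-2}g_j$, the Fourier-side integrand is $|\xi+\eta|^\sigma\hat{W_{\le j-2}}(\xi)\hat{g_j}(\eta)$ with $|\xi|<2^{j-2}$ and $|\eta|\sim 2^j$. Taylor expanding $|\xi+\eta|^\sigma$ in $\xi$ around $\eta$,
$$|\xi+\eta|^\sigma=\sum_{|\alpha|\le\lfloor\sigma_1\rfloor}\frac{\xi^\alpha}{\alpha!}\partial_\eta^\alpha|\eta|^\sigma+R_{\sigma_1}(\xi,\eta),$$
and using the integral form of the remainder with $|\alpha|=\lfloor\sigma_1\rfloor+1$, the leading terms translate on the position side into $\sum_{|\alpha|\le\lfloor\sigma_1\rfloor}\tfrac{1}{\alpha!}(\partial^\alpha W_{\le j-2})(D^{\sigma,\alpha}g_j)$. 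Re-summing in $j$ and swapping summation order,
$$\sum_{j}(\partial^\alpha W_{\le j-2})D^{\sigma,\alpha}P_j=\sum_{k}(\partial^\alpha W_k)D^{\sigma,\alpha}P_{\ge k+2}=(\partial^\alpha W)D^{\sigma,\alpha}-\sum_{k}(\partial^\alpha W_k)D^{\sigma,\alpha}(\tilde{P_k}+P_{\le k-2}),$$
where I used $P_{\ge k+2}=\mathrm{id}-\tilde{P_k}-P_{\le k-2}$ and $\sum_k\partial^\alpha W_k=\partial^\alpha W$. This produces exactly the first, fourth, and fifth terms of \eqref{eq:bony-op}, and the remainder piece will be gathered into $\cA_W^{(\sigma_1)}$.

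The high-low piece $\sum_jW_jP_{\le j-2}g$ is handled by an entirely symmetric argument: expand $|\xi+\eta|^\sigma$ in $\eta$ around $\xi$ to order $\lfloor\sigma_2\rfloor$, producing terms $\tfrac{1}{\beta!}(D^{\sigma,\beta}W_j)(\partial^\beta P_{\le j-2}g)$, then resum using $P_{\ge k+2}W=W-\tilde{P_k}W-P_{\le k-2}W$ to obtain the second, sixth, and seventh summands of \eqref{eq:bony-op}. The remainder is packaged into $\cB_W^{(\sigma_2)}$.

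The most delicate step is identifying the two Taylor remainders with operators of the prescribed form. For $\cA_W^{(\sigma_1)}$, the remainder is of the schematic form
$$\sum_j\sum_{|\alpha|=\lfloor\sigma_1\rfloor+1}C_\alpha\int_0^1(1-t)^{\lfloor\sigma_1\rfloor}\iint e^{ix(\xi+\eta)}\xi^\alpha(\partial^\alpha|\cdot|^\sigma)(\eta+t\xi)\hat{W_{\le j-2}}(\xi)\hat{g_j}(\eta)\,d\xi\,d\eta\,dt.$$
Rescaling $\xi=2^j\xi'$ and $\eta=2^j\eta'$, the factor $(\partial^\alpha|\cdot|^\sigma)(\eta+t\xi)$ produces the scaling weight $2^{j(\sigma-\lfloor\sigma_1\rfloor-1)}$, the factor $\xi^\alpha$ produces $(2^j\xi')^\alpha$, and the whole integrand becomes, after inverse Fourier transform, a convolution of $\partial^\alpha P_{\le j-2}W$ against $P_jg$ with a kernel $\Phi_{\cA}(2^jy,2^jy')$ whose inverse Fourier transform is Schwartz (since the localizer of $P_j$ is smooth compactly supported in an annulus around $|\eta'|\sim 1$, away from the singularity of $\partial^\alpha|\eta|^\sigma$). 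The Jacobian of the rescaling gives the factor $2^{2jd}$. The analogous computation yields $\cB_W^{(\sigma_2)}$. Since $W\in\cS(\R^d)$ and all other operators in \eqref{eq:bony-op} act continuously on $\cS$ with absolutely convergent frequency sums when tested against Schwartz functions, the identity holds as an operator identity on $\cS(\R^d)$, as stated. The main obstacle is purely bookkeeping: keeping track of the constants/signs arising from the Fourier conventions and verifying that the Schwartz property of $\Phi_\cA,\Phi_\cB$ is preserved under the rescaling.
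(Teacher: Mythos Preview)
Your approach is correct and is precisely the argument from \cite[Sec.~4]{Li-19} that the paper invokes: Bony paraproduct decomposition of $Wg$, Taylor expansion of $|\xi+\eta|^\sigma$ in the low-frequency variable to orders $\lfloor\sigma_1\rfloor$ and $\lfloor\sigma_2\rfloor$ on the low-high and high-low pieces respectively, and the resummation identity $P_{\ge k+2}=\mathrm{id}-\tilde{P_k}-P_{\le k-2}$ to produce the explicit terms in \eqref{eq:bony-op}. One minor point of bookkeeping: in your description of the remainder you simultaneously track $\xi^\alpha$ as $(2^j\xi')^\alpha$ in the kernel \emph{and} as $\partial^\alpha P_{\le j-2}W$ on the function, which looks double-counted; but this is exactly the (harmlessly redundant) form the lemma states, and the paper later remarks that the factor $C_\alpha(2^jy)^\alpha$ can be absorbed into the Schwartz kernel $\Phi_\cA$ without loss, so either bookkeeping convention lands on an equivalent expression.
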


\begin{remark}\label{rk-meaning-leibniz-op}
 To emphasize the meaning of the identity \eqref{eq:bony-op}, $|D|^\sigma W$ is the operator $f\in\cS(\R^d)\mapsto |D|^\sigma (Wf)$ (we first take the product of $W$ and $f$, and then take the fractional derivative). On the right side, terms like $(\partial^\alpha W)D^{\sigma,\alpha}$ are interpreted as the operator $f\in\cS(\R^d)\mapsto (\partial^\alpha W)(D^{\sigma,\alpha}f)$. As a general rule, we make a difference between the operators $|D|^\sigma W$ and $(|D|^\sigma W)$: while we already defined the former, the latter is defined as the multiplication operator by the function $|D|^\sigma W$.
\end{remark}

\begin{remark}
 In \cite{Li-19}, only the first two sums in the right side of \eqref{eq:bony-op} are kept and the others are considered as a remainder term which is estimated for instance in \cite[Thm. 1.2(1)]{Li-19}. For our purposes, we will need to keep the precise form of these terms and this is why we state Lemma \ref{lem:bony-op} in this way.
\end{remark}

We now provide a similar decomposition for $|D|^\sigma\rho_\gamma$ where $\gamma$ is an operator on $L^2_x$, which is derived in the same way as \eqref{eq:bony-op}. Indeed, the starting point to derive \eqref{eq:bony-op} is the Bony decomposition valid for any $W,f\in\cS(\R^d)$:
$$Wf=\sum_{j\in\Z}W_j \tilde{f_j}+\sum_{j\in\Z} W_j(P_{\le j-2}f)+\sum_{j\in\Z}(P_{\le j-2}W)f_j,$$
which just follows from writing $W=\sum_j W_j$ and $f=\sum_j f_j$ and regrouping terms. For density matrices $\gamma$ that have an integral kernel $\gamma(x,y)$, we can similarly write
$$\gamma(x,x)=\sum_{j,k\in\Z} ((P_j)_x (P_k)_y\gamma)(x,x),$$
where $(P_j)_x$ acts in the $x$ variable and $(P_k)_y$ acts in the $y$ variable. Regrouping terms similarly as in the Bony decomposition, we obtain the following result.

\begin{lemma}\label{lem:bony-density-matrix-general}
 Let $d\ge1$, $\gamma$ an operator on $L^2_x$ with integral kernel $\gamma(x,y)\in\cS_{x,y}(\R^d\times\R^d)$, and $\sigma>0$. Then, for any $\sigma_1,\sigma_2\ge0$ such that $\sigma_1+\sigma_2=\sigma$ we have,
\begin{equation}\label{eq:bony-density-matrix-general}
 \begin{multlined}
 |D|^\sigma \rho_\gamma=\sum_{|\alpha|\le \sigma_1}\frac{(-1)^{|\alpha|}}{\alpha!}\rho_{\partial^\alpha\gamma D^{\sigma ,\alpha}}+\sum_{|\beta|\le \sigma_2}\frac{(-1)^{|\beta|}}{\beta!}\rho_{ D^{\sigma ,\beta}\gamma\partial^\beta}+\sum_{j\in\Z}|D|^\sigma \rho_{P_j\gamma\tilde{P_j}}\\
 -\sum_{|\alpha|\le \sigma_1}\frac{(-1)^{|\alpha|}}{\alpha!}\sum_{j\in\Z}\rho_{P_j \partial^\alpha \gamma D^{\sigma ,\alpha}\tilde{P_j}}-\sum_{|\alpha|\le \sigma_1}\frac{(-1)^{|\alpha|}}{\alpha!}\sum_{j\in\Z} \rho_{P_j\partial^\alpha\gamma D^{\sigma ,\alpha}P_{\le j-2}}+\cA_\gamma^{(\sigma_1)}\\
-\sum_{|\beta|\le \sigma_2}\frac{(-1)^{|\beta|}}{\beta!}\sum_{j\in\Z}\rho_{ D^{\sigma ,\beta}\tilde{P_j}\gamma P_j \partial^\beta}-\sum_{|\beta|\le \sigma_2}\frac{(-1)^{|\beta|}}{\beta!}\sum_{j\in\Z} \rho_{ D^{\sigma ,\beta}P_{\le j-2}\gamma P_j\partial^\beta}+\cB_\gamma^{(\sigma_2)},
\end{multlined}
\end{equation}
where $\sigma_1,\sigma_2\ge0$ verify $\sigma_1+\sigma_2=\sigma$ and where
$$
\cA_\gamma^{(\sigma_1)}(x)=\sum_{j\in\Z}\sum_{|\alpha|=\lfloor \sigma_1\rfloor+1} 2^{2jd}\int\Phi_\alpha(2^j y, 2^j y')2^{j(\sigma-\lfloor \sigma_1\rfloor-1)}(\partial^\alpha P_{\le j-2}\gamma P_j)(x-y',x-y)\,dy\,dy',
$$
$$
\cB_\gamma^{(\sigma_2)}(x)=\sum_{j\in\Z}\sum_{|\beta|=\lfloor \sigma_2\rfloor+1} 2^{2jd}\int\Phi_\beta(2^j y, 2^j y')2^{j(\sigma-\lfloor \sigma_2\rfloor-1)}(P_j\gamma \partial^\beta P_{\le j-2} )(x-y',x-y)\,dy\,dy'.
$$
\end{lemma}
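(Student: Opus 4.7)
The plan is to transpose the derivation of Lemma~\ref{lem:bony-op} from \cite{Li-19} to the density-matrix setting. The starting point is the density-matrix Bony decomposition, obtained by inserting $1 = \sum_j P_j$ on both sides of $\gamma$ and grouping the resulting double sum $\sum_{j,k}\rho_{P_j\gamma P_k}$ according to whether $|j-k|\le 1$, $k \le j-2$, or $j \le k-2$:
$$\rho_\gamma = \sum_{j\in\Z}\rho_{P_j\gamma\tilde{P}_j} + \sum_{j\in\Z}\rho_{P_j\gamma P_{\le j-2}} + \sum_{j\in\Z}\rho_{P_{\le j-2}\gamma P_j}.$$
Applying $|D|^\sigma$ term by term, the diagonal piece is left untouched and yields the third term of \eqref{eq:bony-density-matrix-general}.

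The two off-diagonal paraproducts are handled by Taylor expansion on the Fourier side. For the high-low piece, the Fourier transform of $\rho_{P_j\gamma P_{\le j-2}}$ at frequency $\zeta$ reads $\int \phi(\tfrac{\zeta+\eta}{2^j})\phi_0(\tfrac{\eta}{2^{j-2}})\hat{\gamma}(\zeta+\eta,\eta)\,d\eta$, supported where $|\zeta+\eta|\sim 2^j$ and $|\eta|\lesssim 2^{j-2}$. Writing $\zeta = (\zeta+\eta)-\eta$ and Taylor expanding $|\zeta|^\sigma$ around the dominant frequency $\zeta+\eta$ to order $\lfloor\sigma_2\rfloor$ in the small variable $\eta$ produces leading terms of the form $\sum_j\tfrac{(-1)^{|\beta|}}{\beta!}\rho_{D^{\sigma,\beta}P_j\gamma\partial^\beta P_{\le j-2}}$; the $(-1)^{|\beta|}$ factor arises by combining the $(-\eta)^\beta$ from Taylor with the observation that $\eta^\beta\hat{\gamma}(\xi,\eta)$ is the Fourier symbol of $(-i)^{|\beta|}\gamma\partial^\beta$ (integration by parts in the $y$-variable of the kernel). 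The combinatorial identity
$$D^{\sigma,\beta}\gamma\partial^\beta = \sum_j D^{\sigma,\beta}P_j\gamma\partial^\beta\tilde{P}_j + \sum_j D^{\sigma,\beta}P_j\gamma\partial^\beta P_{\le j-2} + \sum_j D^{\sigma,\beta}P_{\le j-2}\gamma\partial^\beta P_j,$$
which is itself the Bony split of $\bigl(\sum_j P_j\bigr)\gamma\bigl(\sum_k P_k\bigr)$, then lets me isolate the main term $\rho_{D^{\sigma,\beta}\gamma\partial^\beta}$ together with the two $\cB$-type correction sums of \eqref{eq:bony-density-matrix-general}, after using that $D^{\sigma,\beta}$, $P_j$, $\tilde{P}_j$, $P_{\le j-2}$ and $\partial^\beta$ are all Fourier multipliers in $x$ and commute freely, so that the orderings $D^{\sigma,\beta}\tilde{P}_j\gamma P_j\partial^\beta$ and $D^{\sigma,\beta}P_{\le j-2}\gamma P_j\partial^\beta$ displayed in the statement are recovered by a symmetric reindexing. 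The low-high piece $\sum_j\rho_{P_{\le j-2}\gamma P_j}$ is treated symmetrically: Taylor expanding around the other large frequency to order $\lfloor\sigma_1\rfloor$ and applying the mirror-image Bony identity gives the first main sum $\rho_{\partial^\alpha\gamma D^{\sigma,\alpha}}$ and the two $\cA$-type corrections.

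Finally, identifying the Taylor remainders with $\cA^{(\sigma_1)}_\gamma$ and $\cB^{(\sigma_2)}_\gamma$ is essentially a scaling calculation. One writes the integral form of the Taylor remainder as a sum over $|\beta|=\lfloor\sigma_2\rfloor+1$ (resp.\ $|\alpha|=\lfloor\sigma_1\rfloor+1$) of an oscillatory integral whose integrand is the product of the Littlewood-Paley cutoffs $\phi$, $\phi_0$ and a normalized remainder of $|\cdot|^\sigma$; after rescaling $(\xi,\eta) = 2^j(\xi',\eta')$ inside the cutoffs and inverting the Fourier transform in $(\xi',\eta')$, the outcome is a single Schwartz kernel $\Phi_\cB$ (independent of $j$) evaluated at $(2^jy, 2^jy')$, together with the prefactors $2^{2jd}$ and $2^{j(\sigma-\lfloor\sigma_2\rfloor-1)}$ appearing in the statement. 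Since all manipulations are algebraic once the Fourier picture is set up, the main obstacle is bookkeeping: tracking the sign combinatorics, choosing consistent reindexings in the Bony identity, and verifying that the rescaled remainder kernels are Schwartz uniformly in $j$. As announced in the remark following Lemma~\ref{lem:bony-op}, no ingredient beyond~\cite{Li-19} is needed.
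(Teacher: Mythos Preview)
Your proposal is correct and follows the same approach the paper indicates: the paper states that the decomposition ``is derived in the same way as \eqref{eq:bony-op}'', namely by writing $\gamma(x,x)=\sum_{j,k}((P_j)_x(P_k)_y\gamma)(x,x)$, regrouping into diagonal, high-low, and low-high pieces, and then running the Taylor-expansion argument of \cite[Sec.~4]{Li-19} on each off-diagonal paraproduct. Your treatment of the sign factors via $\partial_{y_j}\gamma(x,y)=-(\gamma\partial_j)(x,y)$ and your reindexing to match the displayed orderings (e.g., $\sum_j\rho_{D^{\sigma,\beta}P_j\gamma\partial^\beta\tilde P_j}=\sum_j\rho_{D^{\sigma,\beta}\tilde P_j\gamma P_j\partial^\beta}$) are exactly the bookkeeping steps needed, and the identification of the remainders $\cA_\gamma^{(\sigma_1)}$, $\cB_\gamma^{(\sigma_2)}$ via rescaling is the standard Li-19 computation.
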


\begin{remark}
 We may rewrite
$$\cA_\gamma^{(\sigma_1)}=\sum_{j\in\Z}\sum_{|\alpha|=\lfloor \sigma_1\rfloor+1} 2^{2jd}\int\Phi_\alpha(2^j y, 2^j y')2^{j(\sigma-\lfloor \sigma_1\rfloor-1)}\rho_{\partial^\alpha P_{\le j-2}\tau_{y'}\gamma\tau_y^* P_j}\,dy\,dy',$$
$$\cB_\gamma^{(\sigma_2)}=\sum_{j\in\Z}\sum_{|\beta|=\lfloor \sigma_2\rfloor+1} 2^{2jd}\int\Phi_\beta(2^j y, 2^j y')2^{j(\sigma-\lfloor \sigma_2\rfloor-1)}\rho_{P_j\tau_{y'}\gamma\tau_y^* \partial^\beta P_{\le j-2}}\,dy\,dy',$$
where $\tau_y$ is the translation operator $(\tau_y f)(x):=f(x-y)$.
\end{remark}

\begin{remark}
 The additional factors $(-1)^{|\alpha|}$ and $(-1)^{|\beta|}$ in \eqref{eq:bony-density-matrix-general} compared to \eqref{eq:bony-op} come from the fact that $\partial_{y_j}\gamma(x,y)=-(\gamma\partial_j)(x,y)$ since the operator $\partial_j$ is antisymmetric.
\end{remark}

We also gather several estimates used in \cite{Li-19} that we will need.

\begin{proposition}\label{prop:est-leibniz}
 \begin{enumerate}

  \item (Fefferman-Stein \cite{FefSte-71}) Let $1<p<+\ii$ and $1<r\le+\ii$. Then, for any sequence of functions $(f^{(k)})$ we have
  \begin{equation}\label{eq:fefferman-stein}
    \| Mf^{(k)} \|_{L^p\ell^r_k} \lesssim \| f^{(k)} \|_{L^p\ell^r_k},
  \end{equation}
 where we defined the Hardy-Littlewood maximal function
$$Mf(x)=\sup_{r>0}\frac{1}{r^d}\int_{B(x,r)}|f(y)|\,dy = \sup_{r>0}\int_{B(0,1)}|f(x+ry)|\,dy.$$

 \item For any $\psi:\R^d\to\R$ such that $|\psi(x)|\lesssim(1+|x|)^{-(d+2)}$, any $\lambda>0$, any $f\in\cS(\R^d)$ and any $x\in\R^d$ we have
 \begin{equation}\label{eq:control-convolution-maximal}
    |((\lambda^d\psi(\lambda\cdot))*f)(x)| \le C Mf(x),
 \end{equation}
 with $C>0$ that depends only on $\psi$ and not on $\lambda$, $x$, nor $f$. In particular, for all $j\in\Z$ we have
 \begin{equation}\label{eq:control-LP-maximal}
    |P_j f|\lesssim Mf.
 \end{equation}

 \item Let $1<p<+\ii$ and $1<r\le+\ii$. Then, for any sequence of functions $(f^{(k)})$ we have
 \begin{equation}\label{eq:Pkfk}
    \| P_k f^{(k)} \|_{L^p\ell^r_k} \lesssim \| f^{(k)} \|_{L^p\ell^r_k}.
 \end{equation}

 \item Let $1<p<+\ii$ and $\sigma\in\R$. Then, we have
 \begin{equation}\label{eq:bernstein}
     \| 2^{\sigma k} P_k f\|_{L^p\ell^2_k} \lesssim \| |D|^\sigma f\|_{L^p} \lesssim \| 2^{\sigma k} P_k f\|_{L^p\ell^2_k}.
 \end{equation}

 \item \cite[Lemma 2.13]{Li-19} Let $1<p<+\ii$ and $\sigma>0$. Then, we have
 \begin{equation}\label{eq:bernsteinLeq}
  \| 2^{-j\sigma}|D|^\sigma P_{\le j} f\|_{L^p\ell^2_j}\lesssim \|f\|_{L^p}.
 \end{equation}

 \end{enumerate}

\end{proposition}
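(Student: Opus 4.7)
The plan is to treat the five items in turn, since they are a collection of classical harmonic-analytic estimates rather than a single coherent claim. For (1), I would simply invoke the Fefferman--Stein vector-valued maximal inequality from the cited paper as a black box. For (2), after a change of variables $z = \lambda(x-y)$, the expression becomes $\int \psi(z)\,f(x - z/\lambda)\,dz$ independently of $\lambda$. I would then decompose the integral over the dyadic annuli $\{2^{k-1} \le |z| \le 2^k\}$ and exploit the decay $|\psi(z)| \lesssim (1+|z|)^{-(d+2)}$: on each such annulus the integral is controlled by $2^{-k(d+2)} \cdot \vol(B(x,2^k/\lambda)) \cdot M f(x) \lesssim 2^{-2k} Mf(x)$, and summing a geometric series in $k$ gives the desired pointwise bound. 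Applying this to $\psi=\check{\phi}$ and $\lambda=2^j$ yields \eqref{eq:control-LP-maximal}.

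Items (3) and (4) should be direct consequences. For (3), the pointwise bound $|P_k f^{(k)}(x)| \lesssim M f^{(k)}(x)$ from (2) combined with Fefferman--Stein settles it. For (4), the Littlewood--Paley characterization of $L^p$, I would use that the symbol $|\xi|^\sigma \phi(\xi/2^k)$ equals $2^{k\sigma} m(\xi/2^k)$ with $m(\zeta) = |\zeta|^\sigma\phi(\zeta)$ smooth and compactly supported away from the origin; hence $\check m$ is Schwartz and (2) yields the pointwise control $|2^{-k\sigma}|D|^\sigma P_k f| \lesssim M(\tilde P_k f)$. The reverse inequality is the standard Littlewood--Paley square function bound, which again reduces to vector-valued Mikhlin multiplier theory and Fefferman--Stein.

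The most substantive item is (5). Here the plan is to decompose $P_{\le j} = \sum_{k \le j+1} P_{\le j}\,P_k$, using that $P_{\le j} P_k = 0$ whenever $k \ge j+2$ because of the disjoint Fourier supports. For $k \le j-1$ one has $P_{\le j} P_k = P_k$, and the two boundary values $k \in \{j, j+1\}$ give multipliers still localized at frequency $\sim 2^j$. Invoking the analysis from (4), each piece satisfies $|2^{-k\sigma}|D|^\sigma P_k f(x)| \lesssim M(\tilde P_k f)(x)$, so
\begin{equation*}
    \bigl|2^{-j\sigma}|D|^\sigma P_{\le j} f(x)\bigr| \lesssim \sum_{k \le j+1} 2^{(k-j)\sigma} M(\tilde P_k f)(x).
\end{equation*}
Since $\sigma > 0$, the sequence $m \mapsto 2^{-m\sigma}\mathbf{1}_{m\ge -1}$ belongs to $\ell^1(\Z)$, so Young's convolution inequality for sequences gives the pointwise bound $\|2^{-j\sigma}|D|^\sigma P_{\le j} f\|_{\ell^2_j} \lesssim \|M(\tilde P_k f)\|_{\ell^2_k}$; taking $L^p$ norms and applying Fefferman--Stein together with (4) then concludes.

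The main obstacle, as far as I can see, is the bookkeeping: ensuring in (5) that the decomposition around the boundary indices $k \in \{j,j+1\}$ does not lose the decay factor $2^{(k-j)\sigma}$, and checking that the implicit constant in (2) is genuinely uniform in $\lambda$ (which it is, because the change of variables removes the $\lambda$-dependence before the annular decomposition). Otherwise the proof is a routine assembly of Littlewood--Paley tools and carries no conceptual surprise.
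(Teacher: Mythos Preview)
Your sketch is correct and essentially standard. The paper does not actually prove this proposition: it states the five items as known facts, citing Fefferman--Stein for (1) and \cite[Lemma 2.13]{Li-19} for (5), and the only argument supplied is the remark immediately following the statement, which observes that (3) in the case $r=+\infty$ follows by combining (2) and (1) --- exactly the route you take for (3) in general.

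Your more detailed treatment of (5) via the decomposition $P_{\le j} = \sum_{k\le j+1} P_{\le j}P_k$, the pointwise bound $|2^{-k\sigma}|D|^\sigma P_{\le j}P_k f|\lesssim M(\tilde P_k f)$, and Young's inequality in the discrete index is the standard proof of \cite[Lemma 2.13]{Li-19}, so there is no divergence from the literature the paper relies on. The bookkeeping concerns you flag (the boundary indices $k\in\{j,j+1\}$ and uniformity in $\lambda$) are both fine for the reasons you give.
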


\begin{remark}
 In \cite[Lemma 2.5]{Li-19}, Point (3) of Proposition \ref{prop:est-leibniz} is stated only for $r<+\ii$. The case $r=+\ii$ follows from Points (2) and (1) of Proposition \ref{prop:est-leibniz}.
\end{remark}

\subsection{Fractional Leibniz decompositions}

In this section, we derive functional properties of the remainder terms of Lemma \ref{lem:bony-op} and Lemma \ref{lem:bony-density-matrix-general} that will be key to our applications. In particular, it will be important that the remainders are factorized as $A^*B$ for some operators $A$ and $B$.

\begin{lemma}[Fractional Leibniz decomposition of $|D|^\sigma W$]\label{lem:leibniz-op-general}

 Let $d\ge1$ and $\sigma>0$. Let $\sigma_1,\sigma_2\ge0$ be such that $\sigma_1+\sigma_2=\sigma$.  Let $q\in(1,2)$, $q_1\in(1,+\ii)$, and $q_2\in(2,+\ii)$ be such that $\tfrac1q=\tfrac{1}{q_1}+\tfrac{1}{q_2}$. Then, for any $W\in W^{\sigma,q_1}(\R^d)$ we can write, as an identity between operators from $W^{\sigma,q_2}(\R^d)$ to $L^q(\R^d)$,
 \begin{equation}\label{eq:leibniz-op-lemma-general}
    |D|^\sigma  W = \sum_{|\alpha|\le\lfloor \sigma_1\rfloor}\frac{1}{\alpha!}(\partial^\alpha W) D^{\sigma ,\alpha}+\sum_{|\beta|\le\lfloor \sigma_2\rfloor}\frac{1}{\beta!}(D^{\sigma ,\beta} W)\partial^\beta + L^{(1)}(g^{(1)}(W))^* L^{(2)}(g^{(2)}(W)),
 \end{equation}
where $L^{(1)}:B^{(1)}\to\cB(L^{q'}_x,L^2(X))$ and $L^{(2)}:B^{(2)}\to\cB(\dot{W}^{\sigma_2,q_2}_x,L^2(X))$ are linear and continuous, $B^{(1)}$ and $B^{(2)}$ are some Banach space and $X$ some measure space, and $g^{(\ell)}: \dot{W}^{\sigma_1,q_1}_x\to B^{(\ell)}$ are such that
$$\|g^{(1)}(W)\|_{B^{(1)}}\lesssim \||D|^{\sigma_1}W\|_{L^{q_1}_x}^{\theta},\quad \|g^{(2)}(W)\|_{B^{(2)}}\lesssim \||D|^{\sigma_1}W\|_{L^{q_1}_x}^{1-\theta},$$
where $\theta:=\tfrac{q_1(q'-2)}{2q'}\in(0,1)$ verifies $1-\theta=\tfrac{q_1(q_2-2)}{2q_2}$.
\end{lemma}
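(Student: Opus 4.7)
My plan is to build directly on Lemma \ref{lem:bony-op}, which already produces the two explicit sums in \eqref{eq:leibniz-op-lemma-general} and reduces the task to reformulating the remainder---six Littlewood--Paley sums plus the tail operators $\cA_W^{(\sigma_1)}, \cB_W^{(\sigma_2)}$---as a single composition $L^{(1)}(g^{(1)}(W))^* L^{(2)}(g^{(2)}(W))$. I would take the Hilbert space $\cH := L^2(X)$ with $X = \Z\times\R^d$ carrying counting-times-Lebesgue measure, enlarged by a further $\R^d\times\R^d$ factor for the tail operators so as to absorb the $(y,y')$-integration against $\Phi_\cA, \Phi_\cB$. For each residual piece---schematically of the form $R = \sum_{j\in\Z} h_j T_j$ with $h_j$ a function of $x$ depending on $W$ (such as $\partial^\alpha W_j$ or $D^{\sigma,\beta}P_{\le j-2}W$) and $T_j$ an operator on the test function $f$ (such as $D^{\sigma,\alpha}P_{\le j-2}$ or $P_j\partial^\beta$)---I would set, on the $j$-slice,
\[
(Ag)_j(x) := \overline{(h_j(x))^\theta}\,a_j(x)\,g(x), \qquad (Bf)_j(x) := a_j(x)^{-1}\,|h_j(x)|^{1-\theta}\,(T_jf)(x),
\]
for a weight $a_j(x)$ to be chosen. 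The paper's convention $z = z^\theta|z|^{1-\theta}$ then yields $(A^*Bf)(x) = \sum_j h_j(x)(T_jf)(x) = (Rf)(x)$, so the decomposition is algebraic; everything else is the verification of the operator norms.

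The operator norm of $A:L^{q'}_x\to L^2(X)$ is controlled via H\"older in $L^{q'/2}_x\times L^{q'/(q'-2)}_x$:
\[
\|Ag\|_{L^2(X)}^2 \,\le\, \|g\|_{L^{q'}_x}^2\,\Big\|\sum_j |h_j|^{2\theta}|a_j|^2\Big\|_{L^{q'/(q'-2)}_x},
\]
with a symmetric bound for $B:\dot W^{\sigma_2,q_2}_x\to L^2(X)$ using $L^{q_2/2}_x\times L^{q_2/(q_2-2)}_x$. The numerical identities $\theta = q_1(q'-2)/(2q')$ and $1-\theta = q_1(q_2-2)/(2q_2)$, both forced by $1/q = 1/q_1+1/q_2$, are precisely what make the H\"older exponents produced on each side align with $L^{q_1}_x$, which is the target arena of the Littlewood--Paley square-function estimate \eqref{eq:bernstein} for $\||D|^{\sigma_1}W\|_{L^{q_1}_x}$. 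Bernstein's inequality (to exchange $\partial^\alpha$ for a factor $2^{j|\alpha|}$), Fefferman--Stein \eqref{eq:fefferman-stein} combined with \eqref{eq:control-LP-maximal} (to treat the low-frequency operators $P_{\le j-2}$ and the Schwartz-kernel convolutions in $\cA_W, \cB_W$ via vector-valued maximal bounds), and the low-frequency estimate \eqref{eq:bernsteinLeq} then complete the $B$-side estimate. The Banach spaces $B^{(1)}, B^{(2)}$ can accordingly be taken to be direct sums of $L^{q_1/\theta}_x\ell^2_j$-type spaces indexed by the residual-term type.

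The step I expect to be the main obstacle is the reconciliation of the $\ell^2_j$-summability available on the $W$-side (from Littlewood--Paley orthogonality of $|D|^{\sigma_1}W$) with the $\ell^{2\theta}_j$ and $\ell^{2(1-\theta)}_j$ structure produced naively by the $z^\theta|z|^{1-\theta}$ splitting. The key device, imported from \cite{Li-19} and adapted here to the operator-valued setting, is to let the weight $a_j$ depend on $W$ and to build in compensating Littlewood--Paley rescalings $2^{\pm j(\sigma_1-|\alpha|)}$ (which cancel between $A$ and $B$), so that after H\"older the sum $\sum_j|h_j|^{2\theta}|a_j|^2$ reassembles into a genuine $\ell^2_j$-square function of $2^{j(|\alpha|-\sigma_1)}\partial^\alpha W_j \sim |D|^{\sigma_1}W_j$, to which \eqref{eq:bernstein} applies verbatim. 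Carrying this out uniformly across all eight residual terms---including the $\cA_W, \cB_W$ operators, where the monomial $y^\alpha$-factors in the integrand against $\Phi_\cA, \Phi_\cB$ supply the missing derivatives on $W$---is the technical bulk of the argument, and is structurally the operator-valued analogue of the paraproduct estimates of \cite{Li-19}.
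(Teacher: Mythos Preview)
Your proposal is correct and follows essentially the same approach as the paper: start from the Bony decomposition of Lemma~\ref{lem:bony-op}, and for each residual term write it as $(L^{(1)}(g^{(1)}(W)))^*L^{(2)}(g^{(2)}(W))$ with the $W$-dependent normalization trick you identify---dividing $h_j$ by $\|h\|_{\ell^2_j}^{1-\theta}$ so that one factor carries the full $\ell^2_j$ structure while the other becomes $j$-independent---which is exactly what the paper does (e.g.\ $g^{(1)}(W)(x,k)=2^{\sigma_1 k}(P_kW)(x)/\|2^{\sigma_1 j}(P_jW)(x)\|_{\ell^2_j}^{1-\theta}$ in Part~1). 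The only organizational difference is that the paper treats the term types one by one and, depending on the term, sometimes places the $j$-dependent multiplier on the $L^{(2)}$ side instead (as in Part~4, where an extra $P_j$ is first inserted via Fourier support and the Littlewood--Paley $\ell^2_j$ is then borrowed from $P_jf$ rather than from $h_j$); your unified schematic with the weight $a_j$ accommodates this once you allow the roles of $A$ and $B$ to swap.
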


\begin{remark}

 The constraints $q<2$ and $q_2>2$ are absent for the standard fractional Leibniz rule \cite[Thm. 1.2(1)]{Li-19}. Here, they are needed to ensure the existence of the operators $L^{(1)}$ and $L^{(2)}$. For instance, $L^{(2)}(g)$ maps $L^{q_2}$ to $L^2$ and for this we need $q_2>2$ (because $L^{(2)}(g)$ is a kind of multiplication operator). The similar remark applied to $L^{(1)}(g)$ which maps $L^{q'}$ to $L^2$, forcing $q'>2$ and hence $q<2$. This being said, notice that Lemma \ref{lem:leibniz-op-general} implies the more standard Leibniz rule of \cite[Thm. 1.2(1)]{Li-19} in this range of parameters:
 $$
 \begin{multlined}
 \left\||D|^\sigma  (Wf) - \sum_{|\alpha|\le\lfloor \sigma_1\rfloor}\frac{1}{\alpha!}(\partial^\alpha W) (D^{\sigma ,\alpha}f)-\sum_{|\beta|\le\lfloor \sigma_2\rfloor}\frac{1}{\beta!}(D^{\sigma ,\beta} W)(\partial^\beta f)\right\|_{L^q_x} \\
 \lesssim \||D|^{\sigma_1}W\|_{L^{q_1}_x}\||D|^{\sigma_2}f\|_{L^{q_2}_x},
 \end{multlined}
 $$
 which is coherent with the fact that our proof is just a reformulation of the proof of \cite{Li-19}.
\end{remark}

\begin{remark}
 The term $(L^{(1)})^* L^{(2)}$ can be interpreted as behaving like $(|D|^{\sigma_1}W)|D|^{\sigma_2}$, which can indeed be written as $(L^{(1)})^* L^{(2)}$ with $L^{(1)}=(|D|^{\sigma_1}W)^{\theta}$ which sends, as a multiplication operator, $L^{q'}(\R^d)$ to $L^2(\R^d)$ and with $L^{(2)}=(|D|^{\sigma_1}W)^{1-\theta}|D|^{\sigma_2}$ which sends $\dot{W}^{\sigma_2,q_2}(\R^d)$ to $L^2(\R^d)$.
\end{remark}

We write a similar decomposition for $|D|^\sigma\rho_\gamma$ when $\gamma$ is a density matrix.

\begin{lemma}[Fractional Leibniz decomposition of $|D|^\sigma\rho_\gamma$]\label{lem:leibniz-gamma}
Let $d\ge1$ and $\sigma>0$. Let $\sigma_1,\sigma_2\ge0$ be such that $\sigma_1+\sigma_2=\sigma$.  Let $q\in(1,+\ii)$ and $q_1,q_2\in(2,+\ii)$ be such that $\tfrac1q=\tfrac{1}{q_1}+\tfrac{1}{q_2}$. Then, for any bounded operator $\gamma$ on $L^2(\R^d)$ such that $\langle\nabla\rangle^{\sigma+d(\frac12-\frac{1}{q_1})}\gamma\langle\nabla\rangle^{\sigma+d(\frac12-\frac{1}{q_2})}$ is trace-class and for any $W\in L^{q'}(\R^d)$ we can write
\begin{multline}\label{eq:leibniz-gamma}
 \int_{\R^d}W(x)\left((|D|^\sigma\rho_\gamma)(x)-\sum_{|\alpha|\le \sigma_1}\frac{(-1)^{|\alpha|}}{\alpha!}\rho_{\partial^\alpha\gamma D^{\sigma ,\alpha}}(x) -\sum_{|\beta|\le \sigma_2}\frac{(-1)^{|\beta|}}{\beta!}\rho_{D^{\sigma ,\beta}\gamma\partial^\beta}(x)\right)\,dx \\
 =\tr_{L^2_x}A^{(2)}(h^{(2)}(W))^* A^{(1)}(h^{(1)}(W))\gamma
\end{multline}
where $A^{(1)}:B^{(1)}\to\cB(\dot{W}^{\sigma_1,q_1}_x,L^2(X))$ and $A^{(2)}:B^{(2)}\to\cB(\dot{W}^{\sigma_2,q_2}_x,L^2(X))$ are linear and continuous, $B^{(1)}$ and $B^{(2)}$ are some Banach space and $X$ some measure space, and $h^{(\ell)}: L^{q'}_x\to B^{(\ell)}$ are such that
$$\|h^{(1)}(W)\|_{B^{(1)}}\lesssim \|W\|_{L^{q'}_x}^{\theta},\quad \|h^{(2)}(W)\|_{B^{(2)}}\lesssim \|W\|_{L^{q'}_x}^{1-\theta},$$
where $\theta:=\tfrac{q'(q_1-2)}{2q_1}\in(0,1)$.
\end{lemma}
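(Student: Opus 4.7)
The starting point is Lemma \ref{lem:bony-density-matrix-general}, which decomposes $|D|^\sigma\rho_\gamma$ into the two ``leading'' sums---exactly those subtracted on the left-hand side of \eqref{eq:leibniz-gamma}---plus three qualitative types of remainder: (i) the balanced high--high piece $\sum_j|D|^\sigma\rho_{P_j\gamma\tilde P_j}$; (ii) four series of high--low paraproduct corrections of the form $\sum_j\rho_{P_j\partial^\alpha\gamma\,D^{\sigma,\alpha}P_{\le j-2}}$ and its three variants; and (iii) the Schwartz-tail terms $\cA_\gamma^{(\sigma_1)}$ and $\cB_\gamma^{(\sigma_2)}$. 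The plan is to pair each remainder with $W\in L^{q'}_x$ and exhibit the result as $\tr(T_W\gamma)$ for some $T_W$ factorizing as $A^{(2)}(h^{(2)}(W))^*A^{(1)}(h^{(1)}(W))$. I would take $X$ to be a disjoint union of copies of $\Z\times\R^d$ to accommodate the Littlewood--Paley index, and the target Banach spaces as $B^{(\ell)}=L^{r_\ell}_x\ell^\infty_j$ where $r_\ell:=2q_\ell/(q_\ell-2)\in(2,+\ii)$, chosen so that H\"older gives $\tfrac{1}{r_\ell}+\tfrac{1}{q_\ell}=\tfrac12$. This is where the hypothesis $q_1,q_2>2$ enters, in exact analogy with Lemma \ref{lem:leibniz-op-general}.

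For the prototypical balanced term, the kernel $(P_j\gamma\tilde P_j)(x,y)$ has both Fourier variables localized at scale $2^j$, so the density $\rho_{P_j\gamma\tilde P_j}$ has Fourier support in $\{|\xi|\lesssim 2^j\}$. This lets me transfer $|D|^\sigma$ from the density onto $W$ despite $W$ having no Sobolev regularity, by replacing $|D|^\sigma$ with a Fourier multiplier $S_j$ of symbol $|\xi|^\sigma\chi(2^{-j}\xi)$ for a fattened bump $\chi$, and using Parseval. The output satisfies $S_jW=2^{j\sigma}V_j(W)$ where $V_j(W)=\psi_j*W$ for a Schwartz convolution kernel $\psi_j$ at scale $2^{-j}$, hence pointwise bounded by the Hardy--Littlewood maximal function $M(W)$ via \eqref{eq:control-convolution-maximal}. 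Splitting $2^{j\sigma}=2^{j\sigma_1}2^{j\sigma_2}$ and $V_j(W)=\sgn(V_j(W))\,|V_j(W)|^{1-\theta}\,|V_j(W)|^{\theta}$ reduces the pairing to
\[
\tr\Bigl(\sum_j 2^{j\sigma_2}\tilde P_j\, M_{|V_j(W)|^{1-\theta}\sgn(V_j(W))}\, M_{|V_j(W)|^{\theta}}\, 2^{j\sigma_1}P_j\,\gamma\Bigr).
\]
One reads off $h^{(1)}(W)_j:=|V_j(W)|^{\theta}$, $h^{(2)}(W)_j:=|V_j(W)|^{1-\theta}\sgn(V_j(W))$, and
\[
A^{(1)}(g)(f)_j:=g_j\cdot 2^{j\sigma_1}P_jf,\qquad A^{(2)}(g)(f)_j:=g_j\cdot 2^{j\sigma_2}\tilde P_jf.
\]
The mapping property $A^{(1)}(h^{(1)}(W)):\dot W^{\sigma_1,q_1}_x\to L^2(X)$ follows from H\"older ($L^{r_1}\cdot L^{q_1}\hookrightarrow L^2$, $\ell^\infty\cdot\ell^2\hookrightarrow\ell^2$) combined with the Littlewood--Paley characterization \eqref{eq:bernstein}, and symmetrically for $A^{(2)}$. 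The bound $\|h^{(1)}(W)\|_{B^{(1)}}\lesssim \|M(W)\|_{L^{\theta r_1}}^{\theta}\lesssim\|W\|_{L^{q'}_x}^{\theta}$ then comes from Fefferman--Stein \eqref{eq:fefferman-stein} combined with the identity $\theta r_1=q'$, which is precisely the definition of $\theta$; the corresponding computation for $h^{(2)}$ uses $(1-\theta)r_2=q'$, both of which are simultaneously solvable because $\theta+(1-\theta)=1$ matches $q'(1-1/q_1-1/q_2)=q'(1-1/q)=1$.

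The remaining remainder terms are treated by the same template with minor modifications. For the high--low corrections of type $\sum_j\rho_{P_j\partial^\alpha\gamma D^{\sigma,\alpha}P_{\le j-2}}$, the Fourier multiplier $D^{\sigma,\alpha}$ is bounded on an annulus of scale $2^j$ with operator norm $\lesssim 2^{j(\sigma-|\alpha|)}$ and is absorbed into one of the $2^{j\sigma_\ell}$ factors; the low-frequency piece $P_{\le j-2}$ is kept under control in $L^p\ell^2_j$ via \eqref{eq:bernsteinLeq}. For the tail terms $\cA_\gamma^{(\sigma_1)}$ and $\cB_\gamma^{(\sigma_2)}$, I enlarge the measure space to $X\times\R^d\times\R^d$ so as to integrate against the translation parameters $(y,y')$, and the Schwartz decay of $\Phi_\alpha$, $\Phi_\beta$ is absorbed by a second application of \eqref{eq:control-convolution-maximal}, producing another maximal-function majorant handled by the same Fefferman--Stein inequality. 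The global factorization is obtained by taking $X$ to be the disjoint union of all per-term measure spaces and letting $h^{(\ell)}(W)$ and $A^{(\ell)}$ act block-diagonally. The main obstacle is the bookkeeping: one must arrange every remainder into a single factorized expression with a uniform pair $(h^{(1)}(W),h^{(2)}(W))$ and uniform operators $A^{(\ell)}$, and verify that the $L^{r_\ell}\ell^\infty_j$ target spaces for $h^{(\ell)}$ exactly match the integrability demanded by the target $\dot W^{\sigma_\ell,q_\ell}\to L^2(X)$ on each block. The constraints $q_1,q_2>2$ are precisely what makes the H\"older split $W=|W|^\theta\cdot|W|^{1-\theta}\sgn(W)$ land in the two Lebesgue components needed to realize the factorization.
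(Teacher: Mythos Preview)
Your approach is structurally the same as the paper's: both start from Lemma~\ref{lem:bony-density-matrix-general} and factorize each remainder piece, then combine via the block-diagonal construction. The balanced term and the tail terms $\cA_\gamma,\cB_\gamma$ are handled essentially as you describe. (One small point: for the balanced piece the paper does an additional Littlewood--Paley decomposition of $|D|^\sigma$, introducing an extra index $m$ with geometric decay; your single-cutoff variant works too, but the inverse transform of $|\xi|^\sigma\chi(\xi)$ decays only like $(1+|x|)^{-(d+\sigma)}$, so you must invoke the general maximal inequality for radially dominated $L^1$ kernels rather than \eqref{eq:control-convolution-maximal} as stated.)

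There is however a genuine gap in your treatment of the high--low corrections $\sum_j\rho_{P_j\partial^\alpha\gamma\,D^{\sigma,\alpha}P_{\le j-2}}$ (and their $\beta$-counterparts). Your plan keeps $h^{(\ell)}(W)\in L^{r_\ell}_x\ell^\infty_j$ and puts the $\ell^2_j$ summability on the $f$-side via \eqref{eq:bernsteinLeq}. This works when $|\alpha|<\sigma_1$, since then $2^{j(|\alpha|-\sigma_1)}P_{\le j-2}D^{\sigma,\alpha}f$ is of the form covered by \eqref{eq:bernsteinLeq} with exponent $\sigma_1-|\alpha|>0$. But when $\sigma_1\in\N_0$ and $|\alpha|=\sigma_1$ (in particular $\sigma_1=0$, a case that actually occurs in the paper's applications, e.g.\ Lemma~\ref{lem:leibniz-gamma-derivative}), the prefactor is $1$ and $\|P_{\le j-2}D^{\sigma,\alpha}f\|_{L^{q_2}\ell^2_j}$ diverges, since $P_{\le j-2}D^{\sigma,\alpha}f\to D^{\sigma,\alpha}f\ne0$ as $j\to+\infty$. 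Your remark that ``$D^{\sigma,\alpha}$ is bounded on an annulus of scale $2^j$'' is not applicable here: $D^{\sigma,\alpha}$ sits next to $P_{\le j-2}$, a ball.

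The paper's fix is precisely the Fourier-support idea you already used for the balanced piece: $\rho_{P_j\partial^\alpha\gamma\,D^{\sigma,\alpha}P_{\le j-2}}$ has Fourier support at scale $\sim2^j$ (forced by the left $P_j$), so after pairing with $W$ one may replace $W$ by $P_kW$ with $|k-j|$ bounded. This moves $P_jW$ into $h^{(2)}$, giving it genuine $\ell^2_j$ structure; the factor $P_{\le j-2}D^{\sigma,\alpha}f$ then only needs an $\ell^\infty_j$ bound, which holds even for $|\alpha|=\sigma_1$ via $|P_{\le j-2}D^{\sigma,\alpha}f|\lesssim M(D^{\sigma,\alpha}f)$. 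Your uniform choice $B^{(\ell)}=L^{r_\ell}_x\ell^\infty_j$ must therefore be relaxed to allow an $\ell^2_j$ target on one side for these terms.
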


\begin{remark}
 The assumption $\langle\nabla\rangle^{\sigma+d(\frac12-\frac{1}{q_1})}\gamma\langle\nabla\rangle^{\sigma+d(\frac12-\frac{1}{q_2})}\in\gS^1(L^2(\R^d))$ implies that one can write $\gamma=\sum_j\lambda_j|u_j\rangle\langle v_j|$ with $u_j\in H^{\sigma+d(\frac12-\frac{1}{q_1})}\hookrightarrow W^{\sigma,q_1}$, $v_j\in H^{\sigma+d(\frac12-\frac{1}{q_2})}\hookrightarrow W^{\sigma,q_2}$, and $\lambda_j\in\ell^1$. In particular, for any $j$, $|D|^\sigma(u_j\bar{v_j})\in L^q_x$ by \eqref{eq:leibniz-standard} and hence $|D|^\sigma\rho_\gamma=\sum_j \lambda_j|D|^\sigma( u_j\bar{v_j})\in L^q$. By the same argument, one can show that $\rho_{\partial^\alpha \gamma D^{\sigma,\alpha}}\in L^q$ and that $\rho_{D^{\sigma,\beta}\gamma\partial^\beta}\in L^q$, so that the left side of \eqref{eq:leibniz-gamma} is well-defined. Similarly, using that $H^{\sigma+d(\frac12-\frac{1}{q_1})}\hookrightarrow W^{\sigma,q_1}$ and $H^{\sigma+d(\frac12-\frac{1}{q_2})}\hookrightarrow W^{\sigma,q_2}$, we deduce that the operator
 $$\langle\nabla\rangle^{-\sigma-d(\frac12-\frac{1}{q_2})}A^{(2)}(h^{(2)}(W))^* A^{(1)}(h^{(1)}(W))\langle\nabla\rangle^{-\sigma-d(\frac12-\frac{1}{q_1})}$$
 is bounded on $L^2$, so that the right side of \eqref{eq:leibniz-gamma} is also well-defined. In the following sections, we will exploit some Schatten class properties of the operators $A^{(1)}$ and $A^{(2)}$ to estimate $|D|^\sigma\rho_\gamma$ in $L^q$ for a larger class of $\gamma$.
\end{remark}

\begin{remark}
 The interpretation of \eqref{eq:leibniz-gamma} is that we want to estimate $|D|^\sigma\rho_\gamma$ in $L^q_x$ by testing it against $W\in L^{q'}_x$. When $\gamma=|u\rangle\langle v|$, we obtain that
 \begin{multline*}
  \int_{\R^d}W(x)\left(|D|^\sigma(uv) - \sum_{|\alpha|\le\lfloor \sigma_1\rfloor}\frac{1}{\alpha!}(\partial^\alpha u) (D^{\sigma ,\alpha}v)-\sum_{|\beta|\le\lfloor \sigma_2\rfloor}\frac{1}{\beta!}(D^{\sigma ,\beta} u)(\partial^\beta v) \right)\,dx \\
  = \langle A^{(1)}(h^{(1)}(W))u,A^{(2)}(h^{(2)}(W))v\rangle_{L^2(X)},
 \end{multline*}
 so that we recover the estimate
 $$
 \left\| |D|^\sigma(uv) - \sum_{|\alpha|\le\lfloor \sigma_1\rfloor}\frac{1}{\alpha!}(\partial^\alpha u) (D^{\sigma ,\alpha}v)-\sum_{|\beta|\le\lfloor \sigma_2\rfloor}\frac{1}{\beta!}(D^{\sigma ,\beta} u)(\partial^\beta v) \right\|_{L^q_x} \lesssim \||D|^{\sigma_1}u\|_{L^{q_1}_x}\||D|^{\sigma_2}v\|_{L^{q_2}_x}.
 $$
\end{remark}

\begin{proof}[Proof of Lemma \ref{lem:leibniz-op-general} and Lemma \ref{lem:leibniz-gamma}]
 As a first general remark to obtain the decompositions \eqref{eq:leibniz-op-lemma-general} and \eqref{eq:leibniz-gamma} from \eqref{eq:bony-op} and \eqref{eq:bony-density-matrix-general} respectively, we will use that a sum of terms of the type $(L^{(1)})^* L^{(2)}$ is still of this form. More precisely, if for $n=1,2$ one has $L_n^{(1)}:B_n^{(1)}\to\cB(L^{q'}_x,L^2(X_n))$ and $L_n^{(2)}:B_n^{(2)}\to\cB(\dot{W}^{\sigma_2,q_2}_x,L^2(X_n))$, with $B_n^{(1)}$ and $B_n^{(2)}$ some Banach space and $X_n$ some measure space, and if $g_n^{(\ell)}: \dot{W}^{\sigma_1,q_1}_x\to B_n^{(\ell)}$ satisfy the properties of Lemma \ref{lem:leibniz-op-general}, one can then write
$$(L_1^{(1)}(g_1^{(1)}(W))^* L_1^{(2)}(g_1^{(2)}(W)) + (L_2^{(1)}(g_2^{(1)}(W))^* L_2^{(2)}(g_2^{(2)}(W)) = (L^{(1)}(g^{(1)}(W))^* L^{(2)}(g^{(2)}(W)),$$
with $B^{(\ell)}:=B_1^{(\ell)}\times B_2^{(\ell)}$, $g^{(\ell)}=(g^{(\ell)}_1,g^{(\ell)}_2)$, $X=X_1\sqcup X_2$, and for any $f\in L^{q'}_x$ one has
$$L^{(1)}(g_1,g_2)(f)(x):=
\begin{cases}
 L^{(1)}_1(g_1)(f)(x) & \text{if}\ x\in X_1,\\
 L^{(1)}_2(g_2)(f)(x) & \text{if}\ x\in X_2,
\end{cases}
$$
and the same definition for $L^{(2)}$. In the rest of the proof, we will thus write each of the terms of the right side of \eqref{eq:bony-op} (except the first two) as $(L^{(1)})^* L^{(2)}$, and we can sum all these different terms to obtain a single $(L^{(1)})^* L^{(2)}$ by the argument we just mentioned. Similarly, we write each of the terms of the right side of \eqref{eq:bony-density-matrix-general} (except the first two), tested against a $W(x)$, as $\tr (A^{(1)})^*A^{(2)}\gamma$. Since there are similar terms in \eqref{eq:bony-op} and \eqref{eq:bony-density-matrix-general} (for instance $\sum_{j\in\Z}|D|^\sigma  (P_j W)\tilde{P_j}$ and $\sum_{j\in\Z}|D|^\sigma \rho_{P_j\gamma\tilde{P_j}}$), we will treat such terms together instead of separating the proofs of Lemma \ref{lem:leibniz-op-general} and Lemma \ref{lem:leibniz-gamma}.

\paragraph{\textbf{Part 1:}} We begin with showing that there exist $L^{(1)}$, $g^{(1)}$, $L^{(2)}$, $g^{(2)}$, $A^{(1)}$, $h^{(1)}$, $A^{(2)}$, $h^{(2)}$ such that we have
$$\sum_{j\in\Z}|D|^\sigma  (P_j W)\tilde{P_j}=L^{(1)}(g^{(1)}(W))^* L^{(2)}(g^{(2)}(W)),$$
as well as
$$\int_{\R^d}W(x) \sum_{j\in\Z}|D|^\sigma \rho_{P_j\gamma\tilde{P_j}}(x)\,dx=\tr_{L^2_x}A^{(2)}(h^{(2)}(W))^* A^{(1)}(h^{(1)}(W))\gamma.$$
Let us start with the construction of $L^{(1)}$, $g^{(1)}$, $L^{(2)}$, $g^{(2)}$. Since $W_j\tilde{P_j}g$ has Fourier support in $\{|\xi| \le \tfrac72 2^j\}$, $P_k (W_j\tilde{P_j}g)$ is non-zero only if $j-k>-\log_27$. Let $Q_k'$ be the Fourier multiplier by the function $\psi(\tfrac{\xi}{2^k})$ with $\psi(\xi):=|\xi|^\sigma\phi(\xi)$. Since $\phi$ is supported away from the origin, $\psi$ is also a Schwartz function.
Then, it satisfies $|D|^\sigma P_k = Q_k' 2^{\sigma k}$. Hence, we rewrite the diagonal term as
\begin{align*}
  \sum_{j\in\Z}|D|^\sigma  (P_j W)\tilde{P_j} &=\sum_{k\in\Z}\sum_{j\in\Z}P_k|D|^\sigma  (P_j W)\tilde{P_j} \\
  &= \sum_{k\in\Z}\sum_{j>k-\log_27}P_k|D|^\sigma  (P_j W)\tilde{P_j} \\
  &= \sum_{k\in\Z}\sum_{m>-\log_2 7} P_k|D|^\sigma (P_{k+m} W)\tilde{P_{k+m}}\\
  &= \sum_{m>-\log_2 7} \sum_{k\in\Z} 2^{\sigma k}Q_k' (P_{k+m} W)\tilde{P_{k+m}}\\
  &= \sum_{m>-\log_2 7} 2^{-\sigma m}\sum_{k\in\Z} Q_k' (2^{(k+m)\sigma}P_{k+m} W)\tilde{P_{k+m}}\\
  &= \sum_{m>-\log_2 7} 2^{-\sigma m}\sum_{k\in\Z} Q_{k-m}'(2^{\sigma k}P_{k} W)\tilde{P_k} \\
  &= L^{(1)}(g^{(1)}(W))^* L^{(2)}(g^{(2)}(W))
\end{align*}
where we defined
$$[g^{(1)}(W)](x,k):=\frac{2^{\sigma_1 k}(P_kW)(x)}{\|2^{\sigma_1 j}(P_jW)(x)\|_{\ell^2_j}^{1-\theta}},\quad [g^{(2)}(W)](x):=\|2^{\sigma_1 j}(P_jW)(x)\|_{\ell^2_j}^{1-\theta},$$
$$[L^{(1)}(g)f](x,k,m):=2^{-\sigma m/2}\1_{m>-\log_27}g(x,k)(Q_{k-m}'f)(x),$$
$$[L^{(2)}(g)f](x,k,m):= 2^{-\sigma m/2}\1_{m>-\log_27}g(x)(2^{\sigma_2k}\tilde{P_k}f)(x).$$
Since $|Q_{k-m'}' f|\lesssim Mf$ by \eqref{eq:control-convolution-maximal}, we deduce that
$$\|L^{(1)}(g)f\|_{L^2_x\ell^2_{k,m}} \lesssim \|g\|_{L^{q_1/\theta}_x\ell^2_k}\|Q_{k-m}' f\|_{L^{q'}_x\ell^\ii_{k,m}}\lesssim \|g\|_{L^{q_1/\theta}_x\ell^2_k}\|f\|_{L^{q'}_x},$$
and hence for any $g\in L^{q_1/\theta}_x\ell^2_k=:B^{(1)}$, $L^{(1)}(g)\in\cB(L^{q'}_x,L^2_x\ell^2_{k,m})$. Here, we thus take $L^2(X):=L^2_x\ell^2_{k,m}$. Similarly, we have by \eqref{eq:bernstein}
$$\|L^{(2)}(g)f\|_{L^2_x\ell^2_{k,m}}\le \|g\|_{L^{q_1/(1-\theta)}_x}\|2^{\sigma_2k}\tilde{P_k}f\|_{L^{q_2}_x\ell^2_k}\lesssim\|g\|_{L^{q_1/(1-\theta)}_x} \||D|^{\sigma_2}f\|_{L^{q_2}_x},$$
and hence for any $g\in L^{q_1/(1-\theta)}_x=:B^{(2)}$, we have $L^{(2)}(g)\in\cB(\dot{W}^{\sigma_2,q_2}_x,L^2_x\ell^2_{k,m})$. Finally, notice that we have by \eqref{eq:bernstein}
$$\|g^{(1)}(W)\|_{L^{q_1/\theta}_x\ell^2_k} \lesssim \| |D|^{\sigma_1}  W\|_{L^{q_1}_x}^{\theta},\quad \|g^{(2)}(W)\|_{L^{q_1/(1-\theta)}_x} \lesssim \||D|^{\sigma_1}  W\|_{L^{q_1}_x}^{1-\theta}.$$
We now turn to the construction of $A^{(1)}$, $h^{(1)}$, $A^{(2)}$, $h^{(2)}$. Using the formula
$$\hat{\rho_\gamma}(\xi)=\frac{1}{(2\pi)^{d/2}}\int_{\R^d}\hat{\gamma}(\eta,\xi-\eta)\,d\eta,$$
we deduce that
$$\supp\hat{\rho_\gamma} \subset \bigcup_{q\in\R^d}\supp\hat{\gamma}(\cdot,q)+\bigcup_{p\in\R^d}\supp\hat{\gamma}(p,\cdot),$$
a similar Fourier support property as the product of two functions. Hence, we deduce as above that
$$\sum_{j\in\Z}|D|^\sigma \rho_{P_j\gamma\tilde{P_j}} =
\sum_{m>-\log_2 7} 2^{-\sigma m}\sum_{k\in\Z} 2^{\sigma k}Q_{k-m}' \rho_{P_k \gamma \tilde{P_k}},
$$
and hence
\begin{align*}
 \int_{\R^d}W(x) \sum_{j\in\Z}|D|^\sigma \rho_{P_j\gamma\tilde{P_j}}(x)\,dx &= \sum_{m>-\log_2 7} 2^{-\sigma m}\sum_{k\in\Z} 2^{\sigma k}\tr_{L^2_x}\tilde{P_k}(Q_{k-m}'W)P_k \gamma \\
 &= \tr_{L^2_x}A^{(2)}(h^{(2)}(W))^* A^{(1)}(h^{(1)}(W))\gamma,
\end{align*}
where
$$[h^{(1)}(W)](x,k,m)=|(Q_{k-m}'W)(x)|^{\theta},\quad [h^{(2)}(W)](x,k,m)=\frac{(Q_{k-m}'W)(x)}{|(Q_{k-m}'W)(x)|^{\theta}},$$
 $$[A^{(1)}(h)f](x,k)=2^{-\sigma m/2}\1_{m>-\log_27}2^{k\sigma_1}h(x,k,m)(P_kf)(x),$$
 $$[A^{(2)}(h)f](x,k)=2^{-\sigma m/2}\1_{m>-\log_27}2^{k\sigma_2}h(x,k,m)(\tilde{P_k}f)(x).$$
 Since we have
 $$\|A^{(1)}(h)f\|_{L^2_x\ell^2_{k,m}}\lesssim \|h\|_{L^{q'/\theta}_x\ell^\ii_{k,m}} \||D|^{\sigma_1}f\|_{L^{q_1}_x}$$
we deduce that for any $h\in L^{q'/\theta}_x\ell^\ii_{k,m}=:B^{(1)}$, we have $A^{(1)}(h)\in \cB(\dot{W}^{\sigma_1,q_1}_x,L^2(X))$ with $L^2(X):=L^2_x\ell^2_{k,m}$. We also have
 $$\|A^{(2)}(h)f\|_{L^2_x\ell^2_{k,m}}\lesssim \|h\|_{L^{q'/(1-\theta)}_x\ell^\ii_{k,m}} \||D|^{\sigma_2}f\|_{L^{q_2}_x},$$
 so that for any $h\in L^{q'/(1-\theta)}_x\ell^\ii_{k,m}=:B^{(2)}$, we have $A^{(2)}(h)\in\cB(\dot{W}^{\sigma_2,q_2}_x,L^2(X))$. Finally, we have
 $$\|h^{(1)}(W)\|_{L^{q'/\theta}_x\ell^\ii_{k,m}}\lesssim\| W\|_{L^{q'}_x}^{\theta},\quad\|h^{(2)}(W)\|_{L^{q'/(1-\theta)}_x\ell^\ii_{k,m}}\lesssim \| W\|_{L^{q'}_x}^{1-\theta},$$
 which concludes Part 1 of the proof.

 \paragraph{\textbf{Part 2:}} For fixed $\alpha\in\N^d$ with $|\alpha|=\lfloor \sigma_1\rfloor+1$, let us define $F_{\alpha,j} W:=2^{j(\sigma_1-\lfloor \sigma_1\rfloor-1)}\partial^\alpha P_{\le j-2}W$ and
 $$\cA^{(\sigma_1)}_{W,\alpha}(f)(x)=\sum_{j\in\Z} 2^{2jd}\int\Phi_\alpha(2^j y, 2^j y')(F_{\alpha,j}W)(x-y')(2^{j\sigma_2}P_jf)(x-y)\,dy\,dy',$$
 so that the term $\cA_W^{(\sigma_1)}$ in \eqref{eq:bony-op} is $\cA_W^{(\sigma_1)}=\sum_{|\alpha|=\lfloor \sigma_1\rfloor+1}\cA^{(\sigma_1)}_{W,\alpha}$ for some Schwartz function $\Phi_\alpha$ (in this identity, we absorbed the term $C_\alpha(2^jy)^\alpha$ into the function $\Phi$, which does not change its fast decay). Let us show that we can write
 $$\cA^{(\sigma_1)}_{W,\alpha}=L^{(1)}(g^{(1)}(W))^* L^{(2)}(g^{(2)}(W)).$$
 We define
$$[g^{(1)}(W)](x,j)=\frac{(F_{\alpha,j} W)(x)}{\| (F_{\alpha,k} W)(x) \|_{\ell^2_k}^{1-\theta}},\quad [g^{(2)}(W)](x)=\| (F_{\alpha,k} W)(x) \|_{\ell^2_k}^{1-\theta},$$
$$[L^{(1)}(g)f](x,y,y',j)= 2^{jd}\sqrt{|\Phi_\alpha|}(2^j y,2^j y')g(x-y',j)f(x),$$
$$[L^{(2)}(g)f](x,y,y',j)=2^{jd} \sqrt{\Phi_\alpha}(2^j y,2^j y')g(x-y') (2^{j\sigma_2}P_j f)(x-y).$$
We have by \eqref{eq:control-convolution-maximal}
\begin{align*}
  \|L^{(1)}(g)f\|_{L^2_{x,y,y'}\ell^2_{j}}^2 &= \sum_{j\in\Z}\int 2^{2jd}|\Phi_\alpha(2^jy,2^jy')||g(x-y',j)|^2|f(x)|^2\,dx\,dy\,dy'\\
  &= \sum_{j\in\Z}\int 2^{jd}|\Phi_\alpha(y,2^jy')|g(x,j)|^2|f(x+y')|^2\,dx\,dy\,dy' \\
  &\lesssim \int \|g(x)\|_{\ell^2_{j}}^2 M(|f|^2)(x)\,dx \\
  &\lesssim \|g\|_{L^{q_1/\theta}_x\ell^2_{j,\alpha}}^{2}\|f\|_{L^{q'}_x}^2
\end{align*}
and hence $L^{(1)}\in\cB(B^{(1)},\cB(L^{q'}_x,L^2(X)))$ with $B^{(1)}:=L^{q_1/\theta}_x\ell^2_{j}$ and $L^2(X):=L^2_{x,y,y'}\ell^2_{j}$. We also have by \eqref{eq:fefferman-stein} and  \eqref{eq:bernstein}
\begin{align*}
 \|L^{(2)}(g)f\|_{L^2_{x,y,y'}\ell^2_{j}}^2 &= \sum_{j\in\Z} \int 2^{2jd}|\Phi_\alpha(2^j y,2^jy')||g(x-y')|^2 |2^{j\sigma_2}(P_jf)(x-y)|^2\,dx\,dy\,dy' \\
 &= \sum_{j\in\Z} \int 2^{2jd}|\Phi_\alpha(2^j y,2^jy')|g(x+y-y')|^2 |2^{j\sigma_2}(P_jf)(x)|^2\,dx\,dy\,dy'\\
 &\lesssim \int (M|g|^2)(x)\|2^{j\sigma_2}(P_jf)(x)\|_{\ell^2_j}^2\,dx\\
 &\lesssim \|g\|_{L^{q_1/(1-\theta)}_x}^{2}\||D|^{\sigma_2}f\|_{L^{q_2}_x}^2,
\end{align*}
We estimated the $(y,y')$ integral above by changing variables $(c,r)=(\tfrac{y+y'}{2},y'-y)$ so that it can be written as $((2^{jd}\zeta(2^j\cdot))*_r|g|^2)(x)$, with $\zeta(r):=\int |\Phi_\alpha(c-r/2,c+r/2)|\,dc$. Then, we have $|((2^{jd}\zeta(2^j\cdot))*_r|g|^2)(x)|\lesssim M|g|^2(x)$ by \eqref{eq:control-convolution-maximal}. We thus proved that $L^{(2)}\in\cB(B^{(2)},\cB(\dot{W}^{\sigma_2,q_2}_x,L^2(X)))$ with $B^{(2)}:=L^{q_1/(1-\theta)}_x$. Notice that
we indeed have $\cA^{(\sigma_1)}_{W,\alpha}=L^{(1)}(g^{(1)}(W))^* L^{(2)}(g^{(2)}(W))$. Finally, by \eqref{eq:bernsteinLeq} and the boundedness of $\partial^\alpha|D|^{-\lfloor\sigma_1\rfloor-1}$ on $L^{q_1}_w$ we have the estimates
$$\|g^{(1)}(W)\|_{L^{q_1/\theta}_x\ell^2_{j}}\lesssim \|\partial^\alpha|D|^{-\lfloor\sigma_1\rfloor-1}|D|^{\sigma_1}  W\|_{L^{q_1}_x}^{\theta}\lesssim \||D|^{\sigma_1}  W\|_{L^{q_1}_x}^{\theta},$$
$$
\|g^{(2)}(W)\|_{L^{q_1/(1-\theta)}_x}\lesssim \||D|^{\sigma_1}  W\|_{L^{q_1}_x}^{1-\theta}.
$$
The operators coming from the $\cB_W^{(\sigma_2)}$ piece in \eqref{eq:bony-op} can be treated in the same way as the $\cA$ piece.

The analogous term for density matrices is the term $\cA_\gamma^{(\sigma_1)}$ in \eqref{eq:bony-density-matrix-general}. For this term (where again we treat only a fixed $\alpha$), we have
\begin{align*}
\int_{\R^d}W(x)\cA_{\gamma,\alpha}^{(\sigma_1)}(x)\,dx &=
\sum_{j\in\Z}\int \,dy\,dy' 2^{2jd}\Phi_\alpha(2^j y, 2^j y')2^{j(\sigma-\lfloor \sigma_1\rfloor-1)}\tr_{L^2_x} P_j\tau_y^* W \tau_{y'} \partial^\alpha P_{\le j-2} \gamma \\
&= \tr_{L^2_x}A^{(2)}(h^{(2)}(W))^* A^{(1)}(h^{(1)}(W))\gamma,
\end{align*}
where
$$[h^{(1)}(W)](x)=|W(x)|^\theta,\quad [h_2(W)](x)=\frac{W(x)}{|W(x)|^\theta},$$
$$[A^{(2)}(h)f](x,y,y',j)=2^{jd}2^{j\sigma_2}\sqrt{|\Phi_\alpha|}(2^jy,2^jy')h(x)(P_jf)(x-y),$$
 $$[A^{(1)}(h)f](x,y,y',j)=2^{jd}2^{j(\sigma_1-\lfloor \sigma_1\rfloor-1)}\sqrt{\Phi_\alpha}(2^jy,2^jy')h(x)(\partial^\alpha P_{\le j-2}f)(x-y').$$
 Since we have
 \begin{align*}
 \| A^{(2)}(h)f\|_{L^2_{x,y,y'}\ell^2_j}^2 &= \sum_{j}\int dx\,dy\,dy' 2^{2jd}|\Phi_\alpha(2^jy,2^jy')||h(x)|^2|2^{j\sigma_2} (P_jf)(x-y)|^2 \\
  &= \sum_{j}\int dx\,dy\,dy' 2^{2jd}|\Phi_\alpha(2^jy,2^jy')||h(x+y)|^2|2^{j\sigma_2} (P_jf)(x)|^2 \\
  &\lesssim \int dx (M|h|^2)(x) \| 2^{j\sigma_2} (P_jf)(x) \|_{\ell^2_{j}}^2 \\
  &\lesssim \|h\|_{L^{q'/(1-\theta)}_{x}}^2\||D|^{\sigma_2}f\|_{L^{q_2}_{x}}^2,
\end{align*}
 and a similar estimate for $A^{(1)}$, we deduce that $A^{(1)}$ and $A^{(2)}$ have the desired properties with $B^{(1)}:=L^{q'/\theta}_x$, $B^{(2)}=L^{q'/(1-\theta)}_x$, and $L^2(X)=L^2_{x,y,y,'}\ell^2_j$. The term $\cB_\gamma^{(\sigma_2)}$ is treated in the same way.

 \paragraph{\textbf{Part 3:}} Let us now show that we can write, for any $|\alpha|\le\lfloor \sigma_1\rfloor$,
 \begin{equation}\label{eq:leibniz-part3}
    \sum_{j\in\Z}(\partial^\alpha W_j) D^{\sigma ,\alpha}\tilde{P_j}=L^{(1)}(g^{(1)}(W))^* L^{(2)}(g^{(2)}(W)).
 \end{equation}
 Define
$$[g^{(1)}(W)](x,j):=\frac{2^{j(\sigma_1-|\alpha|)}(\partial^\alpha W_j)(x)}{\| 2^{k(\sigma_1-|\alpha|)}(\partial^\alpha W_k)(x)\|_{\ell^2_k}^{1-\theta}},$$
$$[g^{(2)}(W)](x):=\| 2^{k(\sigma_1-|\alpha|)}(\partial^\alpha W_k)(t,x)\|_{\ell^2_k}^{1-\theta},$$
$$[L^{(1)}(g)f](x,j):=g(x,j)f(x),$$
$$[L^{(2)}(g)f](x,j):=g(x)2^{j(|\alpha|-\sigma_1)}(D^{\sigma ,\alpha}\tilde{P_j}f)(x).$$
We then have by \eqref{eq:bernstein}
$$\|L^{(1)}(g)f\|_{L^2_x\ell^2_{j}}\lesssim \|g\|_{L^{q_1/\theta}_x\ell^2_{j}}\|f\|_{L^{q'}_x},
$$
\begin{align*}
\|L^{(2)}(g)f\|_{L^2_x\ell^2_{j}} &\lesssim \|g\|_{L^{q_1/(1-\theta)}_x}\|2^{j(|\alpha|-\sigma_1)}D^{\sigma ,\alpha}\tilde{P_j}f\|_{L^{q_2}_x\ell^2_{j}} \\
&\lesssim \|g\|_{L^{q_1/(1-\theta)}_x}\||D|^{\sigma_2}f\|_{L^{q_2}_x}.
\end{align*}
We thus have proved that $L^{(1)}\in\cB(B^{(1)},\cB(L^{q'}_x,L^2(X)))$ with $B^{(1)}:=L^{q_1/\theta}_x\ell^2_{j}$ and $L^2(X):=L^2_x\ell^2_{j}$, and that $L^{(2)}\in\cB(B^{(2)},\cB(\dot{W}^{\sigma_2,q_2}_x,L^2(X)))$ with $B^{(2)}:=L^{q_1/(1-\theta)}_x$. With these definitions, we indeed have \eqref{eq:leibniz-part3}. Finally, we also have by \eqref{eq:bernstein} that
$$\|g^{(1)}(W)\|_{L^{q_1/\theta}_x\ell^2_{j}}\lesssim \||D|^{\sigma_1} W\|_{L^{q_1}_x}^{\theta},\quad \|g^{(2)}(W)\|_{L^{q_1/(1-\theta)}_x}\lesssim \||D|^{\sigma_1} W\|_{L^{q_1}_x}^{1-\theta}.$$
Summing over $\alpha$ and using the argument as the beginning of the proof, we infer that also have
$$\sum_{|\alpha|\le\lfloor \sigma_1\rfloor}\frac{1}{\alpha!}\sum_{j\in\Z}(\partial^\alpha W_j) D^{\sigma ,\alpha}\tilde{P_j}=L^{(1)}(g^{(1)}(W))^* L^{(2)}(g^{(2)}(W)).$$
By the same argument, we can write
$$\sum_{|\beta|\le\lfloor \sigma_2\rfloor}\frac{1}{\beta!}\sum_{j\in\Z}(D^{s,\beta}\tilde{P_j}W)P_j\partial^\beta=L^{(1)}(g^{(1)}(W))^* L^{(2)}(g^{(2)}(W)).$$
For the density matrix version, we write
\begin{align*}
 \int_{\R^d} W(x)\sum_{j\in\Z}\rho_{P_j \partial^\alpha \gamma D^{\sigma ,\alpha}\tilde{P_j}}(x)\,dx &= \sum_{j\in\Z}\tr_{L^2_x} D^{\sigma ,\alpha}\tilde{P_j}WP_j\partial^\alpha \gamma \\
 &= \tr_{L^2_x}A^{(2)}(h^{(2)}(W))^* A^{(1)}(h^{(1)}(W))\gamma
\end{align*}
where
$$[h^{(2)}(W)](x)=|W(x)|^{1-\theta},\quad [h^{(1)}(W)](x)=\frac{W(x)}{|W(x)|^{1-\theta}},$$
$$[A^{(2)}(h)f](x,j)=2^{j(|\alpha|-\sigma_1)}h(x)(\tilde{P_j}D^{\sigma,\alpha}f)(x),\quad [A^{(1)}(h)f](x,j)=2^{j(\sigma_1-|\alpha|)}h(x)(P_j\partial^\alpha f)(x)$$
and we conclude in the same way with $B^{(1)}:=L^{q'/\theta}_x$, $B^{(2)}=L^{q'/(1-\theta)}_x$, and $L^2(X)=L^2_{x}\ell^2_j$.

\paragraph{\textbf{Part 4:}}

We finally show that, for fixed $|\alpha|\le\lfloor \sigma_1\rfloor$, we can write
$$\sum_{j\in\Z}(\partial^\alpha W_j) D^{\sigma ,\alpha}P_{\le j-2}=L^{(1)}(g^{(1)}(W))^* L^{(2)}(g^{(2)}(W)).$$
Since $(\partial^\alpha W_{j})(D^{\sigma ,\alpha}P_{\le j-2}g)$ has Fourier support on $\{ \tfrac{5}{24}2^j \le |\xi| \le \tfrac{35}{24}2^j\}$, $P_k(\partial^\alpha W_{j}D^{\sigma ,\alpha}P_{\le j-2}g)$ is non-zero only if $k+\log_2\tfrac{12}{35} \le j \le k+\log_2\tfrac{28}{5}$. Hence, we have
$$\sum_{j\in\Z}(\partial^\alpha W_j) D^{\sigma ,\alpha}P_{\le j-2}=\sum_{m\in M}\sum_{j\in\Z} P_j (\partial^\alpha W_{j+m}) D^{\sigma ,\alpha}P_{\le j+m-2}$$
for some finite set $M$. Hence, we may treat these terms by fixing $\alpha$ and $m$. We define
$$[g^{(1)}(W)](x):=\|2^{k(\sigma_1-|\alpha|)}(\partial^\alpha W_k)(x)\|_{\ell^2_k}^{\theta},
$$
$$[g^{(2)}(W)](x,j):=\frac{2^{j(\sigma_1-|\alpha|)}(\partial^\alpha W_{j+m})(x)}{\|2^{k(\sigma_1-|\alpha|)}(\partial^\alpha W_k)(x)\|_{\ell^2_k}^{\theta}},$$
$$[L^{(1)}(g)f](x,j):=\frac{1}{\alpha!}g(x)(P_jf)(x),$$
$$[L^{(2)}(g)f](x,j):=g(x,j)2^{j(|\alpha|-\sigma_1)}(D^{\sigma,\alpha}P_{\le j+m-2}f)(x).$$
We have by \eqref{eq:bernstein}, \eqref{eq:bernsteinLeq}, and \eqref{eq:control-convolution-maximal} that
$$\|L^{(1)}(g)f\|_{L^2_x\ell^2_{j}} \lesssim \|g\|_{L^{q_1/\theta}_x}\|f\|_{L^{q'}_x},$$
\begin{align*}
\|L^{(2)}(g)f\|_{L^2_x\ell^2_{j}} &\lesssim \|g\|_{L^{q_1/(1-\theta)}_x\ell^2_{j}}\| 2^{j(|\alpha|-\sigma_1)}D^{\sigma ,\alpha}P_{\le j+m-2}f\|_{L^{q_2}_x\ell^\ii_{j}} \\
&\lesssim \|g\|_{L^{q_1/(1-\theta)}_x\ell^2_{j}}\||D|^{\sigma_2}f\|_{L^{q_2}_x}.
\end{align*}
To obtain the last inequality, we have distinguished the case $|\alpha|<\sigma_1$, for which we estimate the $\ell^\ii_j$ norm by the $\ell^2_j$ norm and apply \eqref{eq:bernsteinLeq}, and the case $|\alpha|=\sigma_1$, for which we apply \eqref{eq:control-convolution-maximal} to deduce that $|(D^{\sigma ,\alpha}P_{\le j+m-2}f)(x)|\lesssim M(D^{\sigma ,\alpha}f)(x)$. We thus have proved that $L^{(1)}\in\cB(B^{(1)},\cB(L^{q'}_x,L^2(X)))$ and $L^{(2)}\in\cB(B^{(2)},\cB(\dot{W}^{\sigma_2,q_2}_x,L^2(X)))$ with $B^{(1)}:=L^{q_1/\theta}_x$, $B^{(2)}:=L^{q_1/(1-\theta)}_x\ell^2_{j}$ and $L^2(X):=L^2_x\ell^2_{j}$. Notice that we also have
$$\sum_{j\in\Z} P_j (\partial^\alpha W_{j+m}) D^{\sigma ,\alpha}P_{\le j+m-2}=L^{(1)}(g^{(1)}(W))^* L^{(2)}(g^{(2)}(W)).$$
Finally, we have by \eqref{eq:bernsteinLeq}
$$\|g^{(1)}(W)\|_{L^{q_1/\theta}_x}\lesssim \||D|^{\sigma_1}  W\|_{L^{q_1}_x}^{\theta},\quad \|g^{(2)}(W)\|_{L^{q_1/(1-\theta)}_x\ell^2_{j}}\lesssim \||D|^{\sigma_1}W\|_{L^{q_1}_x}^{1-\theta}.$$
Summing over $\alpha$, we also obtain that
$$\sum_{|\alpha|\le\lfloor \sigma_1\rfloor}\frac{1}{\alpha!}\sum_{j\in\Z}(\partial^\alpha W_j) D^{\sigma ,\alpha}P_{\le j-2}=L^{(1)}(g^{(1)}(W))^* L^{(2)}(g^{(2)}(W)).$$
Using the same method, we can also write
$$\sum_{|\beta|\le\lfloor \sigma_2\rfloor}\frac{1}{\beta!}\sum_{j\in\Z}(D^{\sigma,\beta}P_{\le j-2}W)P_j\partial^\beta=L^{(1)}(g^{(1)}(W))^* L^{(2)}(g^{(2)}(W)).$$
For the density matrix version, we first write that
$$
\sum_{|\alpha|\le \sigma_1}\frac{1}{\alpha!}\sum_{j\in\Z} \rho_{P_j\partial^\alpha\gamma D^{\sigma ,\alpha}P_{\le j-2}} = \sum_{|\alpha|\le \sigma_1}\frac{1}{\alpha!}\sum_{j\in\Z}\sum_{m\in M} P_j\rho_{P_{j+m}\partial^\alpha\gamma D^{\sigma ,\alpha}P_{\le j+m-2}},
$$
$$
\sum_{|\beta|\le \sigma_2}\frac{1}{\beta!}\sum_{j\in\Z} \rho_{ D^{\sigma ,\beta}P_{\le j-2}\gamma P_j\partial^\beta}
=\sum_{|\beta|\le \sigma_2}\frac{1}{\beta!}\sum_{j\in\Z}\sum_{m\in M} P_j\rho_{ D^{\sigma ,\beta}P_{\le j+m-2}\gamma P_{j+m}\partial^\beta},
$$
and hence we have
\begin{align*}
 \int_{\R^d}W(x)\sum_{j\in\Z} (P_j\rho_{P_{j+m}\partial^\alpha\gamma D^{\sigma ,\alpha}P_{\le j+m-2}})(x)\,dx &= \sum_{j\in\Z}\tr_{L^2_x}P_{\le j+m-2}D^{\sigma ,\alpha}(P_jW)P_{j+m}\partial^\alpha\gamma \\
 &= \tr_{L^2_x}A^{(2)}(h^{(2)}(W))^* A^{(1)}(h^{(1)}(W))\gamma,
\end{align*}
where
 $$[h_2(W)](x,j)=\frac{(P_jW)(x)}{\|(P_k W)(x)\|_{\ell^2_k}^{\theta}},\quad [h_1(W)](x)=\|(P_k W)(x)\|_{\ell^2_k}^{\theta},$$
 $$[A^{(2)}(h)f](x,j)=2^{j(|\alpha|-\sigma_1)}h(x,j)(P_{\le j+m-2}D^{s,\alpha}f)(x),$$
 $$[A^{(1)}(h)f](x,j)=2^{j(\sigma_1-|\alpha|)}h(x)(P_{j+m}\partial^\alpha f)(x).$$
 Since we have
 $$\|A^{(2)}(h)f\|_{L^2_x\ell^2_j}\lesssim\|h\|_{L^{q'/(1-\theta)}_x\ell^2_j} \||D|^{\sigma_2}f\|_{L^{q_2}_x},\quad\|A^{(1)}(h)f\|_{L^2_x\ell^2_j}\lesssim \|h\|_{L^{q'/\theta}_x} \||D|^{\sigma_1}f\|_{L^{q_1}_x},$$
 and
 $$\|h_2(W)\|_{L^{q'/(1-\theta)}_x\ell^2_j}\lesssim \| W\|_{L^{q'}_x}^{1-\theta},\quad\|h_1(W)\|_{L^{q'/\theta}_x}\lesssim\| W\|_{L^{q'}_x}^{\theta},$$
 we have the desired property for the sum over $\alpha$. The sum over $\beta$ is treated in the same way.

\end{proof}

\subsection{A consequence}

In our applications, we will use \eqref{eq:leibniz-op-lemma-general} to get information on the operator $\sd^s W \sd^{-s}$ when $W\in W^{s,\nu}_x$. By Lemma \ref{lem:leibniz-op-general} applied to $\sigma_1=s$ and $\sigma_2=0$, we may think of the operator $\sd^s W \sd^{-s}$ as a sum of terms behaving like $(\sd^\sigma W)\sd^{-\sigma}$ for $0\le\sigma\le s$. We will then write such terms as $(\sd^\sigma W)^{1-\theta}(\sd^\sigma W)^\theta\sd^{-\sigma}$ for various choices of $\theta$. Since these terms are not exactly $(\sd^\sigma W)\sd^{-\sigma}$ but rather have the same mapping properties on Sobolev spaces, we first state a general decomposition which can be interpreted as including all the various possibilities for the parameter $\theta$. We will then apply this general decomposition to the specific values of $\theta$ that will be useful to us.

\begin{lemma}\label{lem:leibniz-decomp-general}

 Let $d\ge1$, $s\ge0$, $\nu\in(1,+\ii)$. For all $a\in\{0,1,\ldots,\lfloor s\rfloor,s\}$, let $q_a^{(1)}, q_a^{(2)}\in(1,+\ii)$ be such that
 $$\frac1\nu -\frac{s-a}{d}\le\frac{1}{\nu_a}:=\frac{1}{2(q_a^{(1)})'}+\frac{1}{2(q_a^{(2)})'}\le\frac1\nu.$$
 Then, for all $a\in\{0,1,\ldots,\lfloor s\rfloor,s\}$ there exists $L_a^{(1)}:B_a^{(1)}\to\cB(L^{2q_a^{(1)}}_x,L^2(X_a))$ and $L_a^{(2)}:B_a^{(2)}\to\cB(L^{2q_a^{(2)}}_x,L^2(X_a))$ linear and continuous, where $B_a^{(1)}$ and $B_a^{(2)}$ are some Banach space and $X_a$ some measure space, and $g_a^{(\ell)}: W^{s,\nu}_x\to B_a^{(\ell)}$ such that for all $W\in W^{s,\nu}_x$ we have
 \begin{equation}\label{eq:leibniz-decomp-general}
 \sd^s W\sd^{-s}=\sum_{a\in\{0,1,\ldots,\lfloor s\rfloor,s\}} L_a^{(1)}(g_a^{(1)}(W))^* L_a^{(2)}(g_a^{(2)}(W))\sd^{-a},
 \end{equation}
 and
 $$\|g_a^{(1)}(W)\|_{B_a^{(1)}}\lesssim \|W\|_{W^{s,\nu}_x}^{\nu_a/2(q_a^{(1)})'},\quad \|g_a^{(2)}(W)\|_{B_a^{(2)}}\lesssim \|W\|_{W^{s,\nu}_x}^{\nu_a/2(q_a^{(2)})'}.$$
 \end{lemma}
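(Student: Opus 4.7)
The plan is to start from the fractional Leibniz decomposition of Lemma~\ref{lem:leibniz-op-general}, applied to $\sd^s W$ (the proof of Lemma~\ref{lem:bony-op} carries over verbatim to the smoother symbol $\langle\xi\rangle^s$ in place of $|\xi|^s$). Taking $\sigma_1=s$, $\sigma_2=0$ will produce
\begin{align*}
\sd^s W = \sum_{|\alpha|\le\lfloor s\rfloor}\frac{1}{\alpha!}(\partial^\alpha W)\,\sd^{s,\alpha} + (\sd^s W) + \mathcal{R}_s[W],
\end{align*}
where $\sd^{s,\alpha}$ denotes the Fourier multiplier with smooth symbol $(-i\partial_\xi)^\alpha\langle\xi\rangle^s$ and $\mathcal{R}_s[W]=L^{(1)}(g^{(1)})^*L^{(2)}(g^{(2)})$ is the factorized fractional remainder from Lemma~\ref{lem:leibniz-op-general}. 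Right-multiplying by $\sd^{-s}$ and observing that $\sd^{s,\alpha}\sd^{-s}=\tilde M_\alpha\sd^{-|\alpha|}$, with $\tilde M_\alpha$ the Fourier multiplier of smooth bounded symbol $(-i\partial_\xi)^\alpha\langle\xi\rangle^s\cdot\langle\xi\rangle^{|\alpha|-s}$ (hence a Mikhlin--H\"ormander multiplier on $L^p$ for $1<p<\infty$), yields
\begin{align*}
\sd^s W\sd^{-s} = \sum_{a=0}^{\lfloor s\rfloor}\Big(\sum_{|\alpha|=a}\tfrac{1}{\alpha!}(\partial^\alpha W)\tilde M_\alpha\Big)\sd^{-a} + \bigl((\sd^s W)+\mathcal{R}_s[W]\bigr)\sd^{-s},
\end{align*}
which already exhibits the target sum structure indexed by $a\in\{0,\ldots,\lfloor s\rfloor,s\}$.

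For each $a\in\{0,\ldots,\lfloor s\rfloor\}$, the plan is to factor the coefficient of $\sd^{-a}$ as $L_a^{(1)}(g_a^{(1)}(W))^*L_a^{(2)}(g_a^{(2)}(W))$ via the H\"older splitting $(\partial^\alpha W)\tilde M_\alpha=|\partial^\alpha W|^{\theta_a}\cdot\operatorname{sgn}(\partial^\alpha W)|\partial^\alpha W|^{1-\theta_a}\tilde M_\alpha$ with
\begin{align*}
\theta_a := \frac{(q_a^{(2)})'}{(q_a^{(1)})'+(q_a^{(2)})'} = \frac{\nu_a}{2(q_a^{(1)})'},
\end{align*}
so that $2\theta_a(q_a^{(1)})'=2(1-\theta_a)(q_a^{(2)})'=\nu_a$. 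Setting $X_a:=\R^d\times\{\alpha:|\alpha|=a\}$ (product of Lebesgue and counting measures) and defining $L_a^{(1)}(g)f(x,\alpha):=g(x,\alpha)f(x)$, $L_a^{(2)}(g)f(x,\alpha):=\tfrac{1}{\alpha!}g(x,\alpha)(\tilde M_\alpha f)(x)$, together with $g_a^{(1)}(W)(x,\alpha):=|\partial^\alpha W(x)|^{\theta_a}$ and $g_a^{(2)}(W)(x,\alpha):=\operatorname{sgn}(\partial^\alpha W(x))|\partial^\alpha W(x)|^{1-\theta_a}$, the mapping $L_a^{(\ell)}(g_a^{(\ell)}(W))\in\cB(L^{2q_a^{(\ell)}}_x,L^2(X_a))$ and the norm bounds $\|g_a^{(\ell)}(W)\|_{B_a^{(\ell)}}\lesssim\|\partial^\alpha W\|_{L^{\nu_a}}^{\nu_a/2(q_a^{(\ell)})'}$ will follow from H\"older and the Mikhlin boundedness of $\tilde M_\alpha$. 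The hypothesis $1/\nu-(s-a)/d\le 1/\nu_a\le 1/\nu$ is exactly the range in which the Sobolev embedding $W^{s-a,\nu}\hookrightarrow L^{\nu_a}$ gives $\|\partial^\alpha W\|_{L^{\nu_a}}\lesssim\|W\|_{W^{s,\nu}_x}$.

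The $a=s$ term combines the multiplicative piece $(\sd^s W)\sd^{-s}$ (to be handled exactly as the $a=0$ case with $\sd^s W$ in place of $W$, using $\nu_s=\nu$ so that $\|\sd^s W\|_{L^\nu}\le\|W\|_{W^{s,\nu}_x}$) with the remainder $\mathcal{R}_s[W]\sd^{-s}$, already factorized. The two contributions will be merged into a single $L_s^{(1)*}L_s^{(2)}$ via the direct-sum stacking of measure spaces recalled at the beginning of the proof of Lemma~\ref{lem:leibniz-op-general}. The main technical obstacle I anticipate is matching indices in this $a=s$ case: the restriction $q\in(1,2)$ imposed by Lemma~\ref{lem:leibniz-op-general} forces conditions on $\nu$ for a direct application with $(\sigma_1,\sigma_2)=(s,0)$, so reaching the full admissible range of $(q_s^{(1)},q_s^{(2)})$ in the statement will require splitting $(\sigma_1,\sigma_2)$ differently (e.g., $(s-\varepsilon,\varepsilon)$ for small $\varepsilon>0$) and trading derivatives for Lebesgue integrability via Sobolev embedding.
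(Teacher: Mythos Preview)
Your approach is essentially the paper's: apply Lemma~\ref{lem:leibniz-op-general} with $(\sigma_1,\sigma_2)=(s,0)$, keep the sum over $|\alpha|\le\lfloor s\rfloor$ as the terms with $a=|\alpha|$, and package the remainder and the top-order multiplicative piece together as the $a=s$ term, using H\"older splitting and Sobolev embedding $W^{s-a,\nu}\hookrightarrow L^{\nu_a}$ throughout.

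Two comments. First, the paper avoids reproving the Bony machinery for the inhomogeneous symbol $\langle\xi\rangle^s$: it simply writes $\sd^s=\tfrac{\sd^s}{1+|D|^s}(1+|D|^s)$, applies Lemma~\ref{lem:leibniz-op-general} to $|D|^s W$ as stated, and absorbs the Mikhlin multiplier $\tfrac{\sd^s}{1+|D|^s}$ into $L_a^{(1)}$. This is cleaner than claiming the proof of Lemma~\ref{lem:bony-op} carries over ``verbatim'' to $\langle\xi\rangle^s$ --- the argument there uses the exact homogeneous scaling $\partial^\alpha|\xi|^\sigma(2^j\cdot)=2^{j(\sigma-|\alpha|)}(\partial^\alpha|\xi|^\sigma)$, which fails for $\langle\xi\rangle^s$ at low frequencies, so some modification would be needed.

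Second, your anticipated obstacle at $a=s$ is not real. With $\sigma_2=0$, the natural identification is $q'=2q_s^{(1)}$ and $q_2=2q_s^{(2)}$; since $q_s^{(1)},q_s^{(2)}\in(1,\infty)$ these give $q'\in(2,\infty)$ (hence $q\in(1,2)$) and $q_2\in(2,\infty)$ automatically. The H\"older relation $\tfrac1q=\tfrac1{q_1}+\tfrac1{q_2}$ together with $\nu_s=\nu$ then forces $q_1=\nu\in(1,\infty)$, and one checks directly that the $\theta$ of Lemma~\ref{lem:leibniz-op-general} equals $\nu/2(q_s^{(1)})'$. So the full range of $(q_s^{(1)},q_s^{(2)})$ is reached with $(\sigma_1,\sigma_2)=(s,0)$ and no $\varepsilon$-perturbation or extra Sobolev embedding is needed.
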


\begin{remark}
 As we said before, the interpretation of \eqref{eq:leibniz-decomp-general} is the following: the term $(L_a^{(1)})^* L_a^{(2)}$ behaves as the multiplication operator by the function $\sd^{a} W\in W^{s-a,\nu}\hookrightarrow L^{\nu_a}$. Indeed, since $(\sd^{a} W)^{\nu_a/2(q_a^{(1)})'}\in L^{2(q_a^{(1)})'}$, the multiplication by $(\sd^{a} W)^{\nu_a/2(q_a^{(1)})'}$ maps $L^{2q_a^{(1)}}(\R^d)$ to $L^2(\R^d)$, and since $(\sd^{a} W)^{\nu_a/2(q_a^{(2)})'}\in L^{2(q_a^{(2)})'}$,the multiplication by $(\sd^{a} W)^{\nu_a/2(q_a^{(2)})'}$ maps $L^{2q_a^{(2)}}(\R^d)$ to $L^2(\R^d)$.
\end{remark}

\begin{proof}
 The case $s=0$ just corresponds to the splitting $W=|W|^{\nu/2(q_0^{(1)})'}W^{\nu/2(q_0^{(2)})'}$. If $s>0$, we apply Lemma \ref{lem:leibniz-op-general} to $\sigma=s=\sigma_1$, $\sigma_2=0$, $q_1=\nu$, $q_2=2q_s^{(2)}$ (notice that in this case, $q'=2q_s^{(1)}$) and we get
$$
  \begin{multlined}
    \sd^s W\sd^{-s} = \sum_{|\alpha|\le \lfloor s\rfloor}\frac{1}{\alpha!}\frac{\sd^s}{1+|D_x|^s}(\partial^\alpha_x W) D^{s,\alpha}_x\sd^{-s}+\frac{\sd^s}{1+|D_x|^s}((|D|^sW)+W)\sd^{-s}\\
    +  \frac{\sd^s}{1+|D_x|^s}L^{(1)}(g^{(1)}(W))^* L^{(2)}(g^{(2)}(W))\sd^{-s}.
  \end{multlined}
$$
Since $\frac{\sd^s}{1+|D_x|^s}$ is a multiplier on $L^{2q_s^{(1)}}_x$, the term involving $(L^{(1)})^* L^{(2)}$ has the right form for $a=s$ if we replace $L^{(1)}(g)$ by $L^{(1)}(g)\frac{\sd^s}{1+|D_x|^s}$. For the prior term, we split
$$\frac{\sd^s}{1+|D_x|^s}((|D|^sW)+W)\sd^{-s}=L^{(1)}(g^{(1)}(W))^*L^{(2)}(g^{(2)}(W))\sd^{-s}$$
with
$L^{(1)}(g)=\frac{\sd^s}{1+|D_x|^s}g$, $L^{(2)}(g)=g$, $g^{(1)}(W)=((|D|^sW)+W)^{\tfrac{\nu}{2(q_s^{(1)})'}}$, $g^{(2)}(W)=|(|D|^sW)+W|^{\tfrac{\nu}{2(q_s^{(2)})'}}$, which has the right form for $a=s$ since $(|D|^sW)+W\in L^\nu_x$. Notice that several terms corresponding to a fixed value of $a$ can be regrouped as a single term by changing the Banach spaces $B_a$ and the measure space $X_a$, as in the proof of Lemma \ref{lem:leibniz-op-general}. The terms with the sum over $\alpha$ are treated in the sawe way: those with a fixed value of $|\alpha|\le \lfloor s\rfloor$ correspond to $a=|\alpha|$, since one can write
$$\frac{1}{\alpha!}\frac{\sd^s}{1+|D_x|^s}(\partial^\alpha_x W) D^{s,\alpha}_x\sd^{-s}=L^{(1)}(g^{(1)}(W))^*L^{(2)}(g^{(2)}(W))\sd^{-|\alpha|}$$
with $L^{(1)}(g)=\frac{1}{\alpha!}\frac{\sd^s}{1+|D_x|^s}g$, $L^{(2)}(g)=gD^{s,\alpha}\sd^{-(s-|\alpha|)}$, $g^{(1)}(W)=(\partial^\alpha W)^{\tfrac{\nu_{|\alpha|}}{2(q_{|\alpha|}^{(1)})'}}$, $g^{(2)}(W)=|\partial^\alpha W|^{\tfrac{\nu_{|\alpha|}}{2(q_{|\alpha|}^{(2)})'}}$. Since $\partial^\alpha W\in W^{s-|\alpha|,\nu}\hookrightarrow L^{\nu_{|\alpha|}}$, we get the result.

\end{proof}

\section{The Christ-Kiselev lemma in Schatten spaces}\label{sec:CK}

We now provide the other key tool to prove our result, which will allow to estimate operators which are defined as nested integrals $\int_0^t\,dt_1\,\int_0^{t_1}\,dt_2(\cdots)$. It is here that the importance of the factorization of the operators $\sd^s W\sd^{-s}$ obtained in Lemma \ref{lem:leibniz-decomp-general} will become clear (see Corollary \ref{coro:CK} and its first application below in Proposition \ref{prop:UV-bez}).

\subsection{Fully orthogonal version}

  \begin{lemma}[Christ-Kiselev lemma in Schatten spaces, first version]\label{lem:CK}

   Let $\gH_1$, $\gH_2$ be Hilbert spaces, and $X_1$, $X_2$ be Banach spaces. Let $(B(t,s))_{t,s\in\R}$ be a (measurable) family of bilinear maps from $X_1\times X_2$ to $\cB(\gH_1,\gH_2)$. For any $t,s\in\R$ and $x_1\in X_1$, $x_2\in X_2$, we denote by $B(t,s,x_1,x_2)\in\cB(\gH_1,\gH_2)$ the image of $(x_1,x_2)$ by $B(t,s)$ (which is bilinear in $(x_1,x_2)$). For any measurable $g_1:\R\to X_1$ and $g_2:\R\to X_2$, let us denote by $T(g_1,g_2)$ and $T_<(g_1,g_2)$ the linear maps from $L^2_t\gH_1$ to $L^2_t\gH_2$ defined by: for any $u\in L^2_t\gH_1$ and for any $t\in\R$,
   $$(T(g_1,g_2)u)(t) = \int_\R B(t,s,g_1(t),g_2(s))u(s)\,ds\in \gH_2,$$
   $$(T_<(g_1,g_2)u)(t) = \int_{-\ii}^t B(t,s,g_1(t),g_2(s))u(s)\,ds\in \gH_2,$$
   Assume that for some $\alpha\in[1,+\ii]$, $p_1, p_2\in[1,+\ii)$ with $1/\alpha<1/p_1+1/p_2$ there exists $C>0$ such that for any $g_1$, $g_2$ we have
   $$\| T(g_1,g_2) \|_{\gS^\alpha(L^2_t\gH_1,L^2_t\gH_2)}\le C\|g_1\|_{L^{p_1}X_1} \|g_2\|_{L^{p_2} X_2}.$$
   Then, there exists $C>0$ such that for any $g_1$, $g_2$ we have
   $$\| T_<(g_1,g_2) \|_{\gS^\alpha(L^2_t\gH_1,L^2_t\gH_2)}\le C\|g_1\|_{L^{p_1}X_1} \|g_2\|_{L^{p_2} X_2}.$$

  \end{lemma}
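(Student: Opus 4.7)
The plan is to adapt the classical Christ--Kiselev argument from \cite{Tao-00} to the Schatten setting, exploiting bilinearity of $B(t,s,\cdot,\cdot)$ in $(g_1,g_2)$ together with a block-diagonal identity for Schatten norms. By homogeneity I normalize $\|g_1\|_{L^{p_1}X_1}=\|g_2\|_{L^{p_2}X_2}=1$ and introduce the absolutely continuous nondecreasing function
$$\Phi(t) := \int_{-\ii}^t \bigl(\|g_1(\tau)\|_{X_1}^{p_1} + \|g_2(\tau)\|_{X_2}^{p_2}\bigr)\,d\tau,$$
which goes from $0$ to $2$. For each $n\ge 0$ and $0\le k<2^n$ I set $I_{n,k}:=\Phi^{-1}\bigl([2k\cdot 2^{-n},2(k+1)\cdot 2^{-n})\bigr)$ (with the right endpoint closed when $k=2^n-1$). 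These measurable sets partition $\R$ dyadically and satisfy $\|g_j\chi_{I_{n,k}}\|_{L^{p_j}X_j}\le (2\cdot 2^{-n})^{1/p_j}$ for $j=1,2$.

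The crucial geometric fact is that, up to a null set corresponding to plateaus of $\Phi$ (on which both $g_1$ and $g_2$ vanish a.e., so the integrand in $T_<$ is zero by bilinearity of $B$), the half-plane $\{(t,s):s<t\}$ decomposes as the disjoint union
$$\{s<t\} = \bigsqcup_{n\ge 1}\bigsqcup_{k=0}^{2^{n-1}-1} I_{n,2k+1}\times I_{n,2k},$$
since for any such $(s,t)$ there is a smallest level $n$ at which $s$ and $t$ lie in distinct dyadic intervals, forcing $s$ into the left child and $t$ into the right child of their common level-$(n-1)$ parent. By the bilinearity of $B$ in its last two arguments, this yields
$$T_<(g_1,g_2) = \sum_{n\ge 1} T_n,\qquad T_n := \sum_{k=0}^{2^{n-1}-1} T(g_1\chi_{I_{n,2k+1}},\,g_2\chi_{I_{n,2k}}).$$

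For fixed $n$, each summand $T_{n,k}:=T(g_1\chi_{I_{n,2k+1}},g_2\chi_{I_{n,2k}})$ acts on functions supported in $I_{n,2k}$ and has image supported in $I_{n,2k+1}$, so the $T_{n,k}$ have pairwise orthogonal source subspaces and pairwise orthogonal target subspaces. Hence $T_n$ is block-diagonal for the induced orthogonal decompositions of $L^2_t\gH_1$ and $L^2_t\gH_2$, and its singular values are the union of those of the $T_{n,k}$. This gives the Schatten identity
$$\|T_n\|_{\gS^\alpha(L^2_t\gH_1,L^2_t\gH_2)}^\alpha = \sum_{k=0}^{2^{n-1}-1}\|T_{n,k}\|_{\gS^\alpha(L^2_t\gH_1,L^2_t\gH_2)}^\alpha$$
(with the sum replaced by a supremum when $\alpha=+\ii$). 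Applying the hypothesis on $T$ together with the mass bound on each $I_{n,k}$ yields $\|T_{n,k}\|_{\gS^\alpha}\lesssim 2^{-n(1/p_1+1/p_2)}$, whence
$$\|T_n\|_{\gS^\alpha}\lesssim 2^{n/\alpha}\cdot 2^{-n(1/p_1+1/p_2)} = 2^{n(1/\alpha-1/p_1-1/p_2)}.$$
Summing in $n$ produces a geometric series that converges exactly under the hypothesis $1/\alpha<1/p_1+1/p_2$, giving the desired bound.

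The main technical step I expect to need separate justification is the block-diagonal Schatten identity: one must verify that a family of operators with mutually orthogonal source ranges and mutually orthogonal target ranges combines into a single operator whose singular values form the disjoint union of the individual singular value sequences. This is an elementary consequence of the singular value decomposition applied blockwise, but uniformity over $\alpha\in[1,+\ii]$ (including the operator-norm endpoint, where max replaces sum) should be addressed carefully. The measure-theoretic subtleties from possible flat regions of $\Phi$ are harmless for the reason noted above, and bilinearity allows the localized operators $T_{n,k}$ to be defined directly from the hypothesis by replacing $g_j$ with $g_j\chi_{I_{n,k}}$.
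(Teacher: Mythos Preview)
Your proof is correct and follows the same overall strategy as the paper's: a dyadic Whitney-type decomposition of $\{s<t\}$ via the combined distribution function of $\|g_1\|_{X_1}^{p_1}+\|g_2\|_{X_2}^{p_2}$, followed by a block-diagonal Schatten identity at each scale and a geometric summation over scales.

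The only noteworthy difference is the specific Whitney tiling. You pair \emph{sibling} intervals (the two children of a common parent) at each dyadic level; since the source intervals $\{I_{n,2k}\}_k$ are pairwise disjoint and the target intervals $\{I_{n,2k+1}\}_k$ are pairwise disjoint, you get the exact identity $\|T_n\|_{\gS^\alpha}^\alpha=\sum_k\|T_{n,k}\|_{\gS^\alpha}^\alpha$ immediately. The paper instead follows the tiling from \cite{Tao-00}, where $I\sim J$ are non-adjacent intervals with adjacent parents; there a given $I$ may be paired with up to two $J$'s, so the paper first splits the level-$k$ intervals into even- and odd-indexed families (and the odd family further into two sub-sums) before the block-diagonal argument applies, obtaining only $\|\sum_{I\sim J}T_{I,J}\|_{\gS^\alpha}^\alpha\lesssim\sum_{I\sim J}\|T_{I,J}\|_{\gS^\alpha}^\alpha$ with a harmless constant. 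Your choice of tiling is slightly more economical, but the two arguments are equivalent in substance and yield the same conclusion.
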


  The proof is an adaptation of the proof of \cite[Lemma 3.1]{Tao-00} to operators in Schatten spaces.

  \begin{proof}

   By bilinearity we may assume that $\|g_1\|_{L^{p_1}X_1}^{p_1}=1/2=\|g_2\|_{L^{p_2}X_2}^{p_2}$. Define then
   $$F(t)=\int_{-\ii}^t ( \| g_1(s)\|_{X_1}^{p_1} + \|g_2(s)\|_{X_2}^{p_2}) \,ds,$$
   so that $F:\R\to[0,1]$ is non-decreasing. As in \cite[Lemma 3.1]{Tao-00}, we may assume that $F$ is increasing. For dyadic intervals $I,J\subset[0,1]$, we denote by $I\sim J$ if $I$ and $J$ have the same length, are not adjacent but have adjacent parents, and if $I$ is on the right of $J$. Then, almost everywhere on $t,s$ we have
   $$\1(t-s>0) = \sum_{I\sim J}\1_{F^{-1}(J)}(s)\1_{F^{-1}(I)}(t)$$
   and hence
   $$T_<(g_1,g_2)=\sum_{I\sim J}T_{I,J}, \quad T_{I,J}:= T(\1_{F^{-1}(I)}g_1,\1_{F^{-1}(J)}g_2)$$
   so that
   $$\| T_<(g_1,g_2) \|_{\gS^\alpha(L^2_t\gH_1,L^2_t\gH_2)}
   \lesssim \sum_{k\ge2} \left\|\sum_{\substack{I\sim J \\ |I|=2^{-k}}}T_{I,J}\right\|_{\gS^\alpha(L^2_t\gH_1,L^2_t\gH_2)}.$$
   We now argue that, by an orthogonality argument, we have
   \begin{equation}\label{eq:ortho-CK}
   \left\|\sum_{\substack{I\sim J \\ |I|=2^{-k}}}T_{I,J}\right\|_{\gS^\alpha(L^2_t\gH_1,L^2_t\gH_2)}^\alpha \lesssim \sum_{\substack{I\sim J \\ |I|=2^{-k}}} \left\|T_{I,J}\right\|_{\gS^\alpha(L^2_t\gH_1,L^2_t\gH_2)}^\alpha.
   \end{equation}
   To see this property, we partition the family of dyadic intervals $I\subset[0,1]$ with $|I|=2^{-k}$ as two families,
   $$\cI_{\rm even}:=\left\{\left[\frac{2n}{2^k},\frac{2n+1}{2^k} \right],\ n=1,\ldots,2^{k-1}-1 \right\},$$
   $$\cI_{\rm odd}:=\left\{\left[\frac{2n+1}{2^k},\frac{2n+2}{2^k} \right],\ n=1,\ldots,2^{k-1}-1\right\}.$$
   Notice that we start with $n=1$ because the intervals $I=[0,2^{-k}]$ and $I=[2^{-k},2^{-(k-1)}]$ which would correspond to $n=0$ have no $J$ such that $I\sim J$ (they are on the extreme left of $[0,1]$ so there can be no $J$ on their left). Hence, we deduce that
   $$\left\|\sum_{\substack{I\sim J \\ |I|=2^{-k}}}T_{I,J}\right\|_{\gS^\alpha(L^2_t\gH_1,L^2_t\gH_2)} \le \left\|\sum_{\substack{I\sim J \\ I\in\cI_{\rm even}}}T_{I,J}\right\|_{\gS^\alpha(L^2_t\gH_1,L^2_t\gH_2)}
   +\left\|\sum_{\substack{I\sim J \\ I\in\cI_{\rm odd}}}T_{I,J}\right\|_{\gS^\alpha(L^2_t\gH_1,L^2_t\gH_2)}$$
   For any $I\in\cI_{\rm even}$, there exists a unique $J$ such that $I\sim J$, which is $J=I-2\times2^{-k}$. For any $I\in\cI_{\rm odd}$, there are exactly two $J$ such that $I\sim J$, namely $J=I-2\times2^{-k}$ and $J=I-3\times2^{-k}$. Hence, we have
   $$\sum_{\substack{I\sim J \\ I\in\cI_{\rm even}}}T_{I,J}=\sum_{I\in\cI_{\rm even}}T_{I,I-2\times2^{-k}},$$
   $$\left\|\sum_{\substack{I\sim J \\ I\in\cI_{\rm odd}}}T_{I,J}\right\|_{\gS^\alpha(L^2_t\gH_1,L^2_t\gH_2)}\le
   \left\|\sum_{I\in\cI_{\rm odd}}T_{I,I-2\times2^{-k}} \right\|_{\gS^\alpha(L^2_t\gH_1,L^2_t\gH_2)}
   +\left\|\sum_{I\in\cI_{\rm odd}}T_{I,I-3\times2^{-k}} \right\|_{\gS^\alpha(L^2_t\gH_1,L^2_t\gH_2)}.$$
   For any $I\in\cI_{\rm even}$, let us abbreviate $T_I:=T_{I,I-2\times2^{-k}}$. By definition, $(T_Iu)(t)=0$ for all $t\notin F^{-1}(I)$, and hence $T_I^*v=0$ if $v(t)=0$ for all $t\in F^{-1}(I)$. If $I\neq I'$, we have $F^{-1}(I)\cap F^{-1}(I')=\emptyset$ and hence $T_I^* T_{I'}=0$. As a consequence, we deduce that $A:=\sum_{I\in\cI_{\rm even}}T_I$ satisfies $A^*A=\sum_{I\in\cI_{\rm even}}T_I^* T_I$. Similarly, $T_Iu=0$ if $u(s)=0$ for all $s\in F^{-1}(I-2\times2^{-k})$, so that $(T_I^*v)(s)=0$ for all $s\notin F^{-1}(I-2\times2^{-k})$. This means that in the orthogonal decomposition
   $$L^2_t\gH_1=\bigoplus_{K\ {\rm dyadic},\ |K|=2^{-k}}^\perp \cH_K,$$
   where $\cH_K:=\{u\in L^2_t \gH_1,\ u(s)=0\ {\rm for\   all}\ s\notin F^{-1}(K)\}$, the operator $A^*A$ is block-diagonal (since $T_I^*T_I$ stabilizes $\cH_{I-2\times2^{-k}}$ and $(T_I^*T_I)_{|\cH_K}=0$ if $K\neq I-2\times2^{-k}$). As a consequence, we have
   $$\tr(A^*A)^{\alpha/2}=\sum_{I\in\cI_{\rm even}}\tr(T_I^* T_I)^{\alpha/2}=\sum_{I\in\cI_{\rm even}}\|T_I\|_{\gS^\alpha(L^2_t\gH_1,L^2_t\gH_2)}^\alpha.$$
   Performing the same argument for the intervals in $\cI_{\rm odd}$, we obtain \eqref{eq:ortho-CK}. Now using that
   $$\left\|T_{I,J}\right\|_{\gS^\alpha(L^2_t\gH_1,L^2_t\gH_2)}\lesssim \|\1_{F^{-1}(I)}g_1\|_{L^{p_1}X_1}\|\1_{F^{-1}(J)}g_2\|_{L^{p_2}X_2}\lesssim 2^{-k(1/p_1+1/p_2)},$$
   we get that
   $$\| T_<(g_1,g_2) \|_{\gS^\alpha(L^2_t\gH_1,L^2_t\gH_2)}
   \lesssim \sum_{k\ge2}2^{k(1/\alpha-1/p_1-1/p_2)},
   $$
   which is finite if $1/\alpha<1/p_1+1/p_2$.

  \end{proof}

  \begin{remark}\label{rk:reversed-CK}
   Of course, the previous proof is exactly the same if one considers $\int_t^\ii ds$ in the definition of $T_<$ instead of $\int_{-\ii}^tds$.
  \end{remark}

  \begin{remark}
   The same statement holds for operators that map $L^2_t(I,\gH_1)$ to $L^2_t(I,\gH_2)$ for any time interval $I$ (in this case, the integral defining $T$ is restricted to $s\in I$, and the integral defining $T_<$ is restricted to $s\in I$ and $s<t$. The functions $g_1,g_2$ are also defined only on $I$).
  \end{remark}

  \begin{corollary}\label{coro:CK}

   Let $\gH_1$, $\gH_2$, $\gK$ be Hilbert spaces and $X_1$, $X_2$ be Banach spaces.
   For $j\in\{1,2\}$, let $(T_j(t))_{t\in\R}$ be a family of linear maps from $X_j$ to $\cB(\gK,\gH_j)$ (again, and for any $x_j\in X_j$, we use the notation $T_j(t,x_j):=T_j(t)(x_j)\in\cB(\gK,\gH_j)$). For any $g_j:\R\to X_j$, define the operator $[T_1(t,g_1(t))T_2(s,g_2(s))^*]_<:L^2_t\gH_2\to L^2_t\gH_1$ by: for any $u\in L^2_t\gH_2$ and any $t\in\R$,
   $$([T_1(t,g_1(t))T_2(s,g_2(s)^*]_<u)(t) := T_1(t,g_1(t))\int_{-\ii}^t T_2(s,g_2(s))^*u(s)\,ds.$$
   Define also the operators $\tilde{T_j}(g_j):\gK\to L^2_t\gH_j$ by: for any $u\in\gK$ and any $t\in\R$,
   $$(\tilde{T_j}(g_j)u)(t)=T_j(t,g_j(t))u.$$
   Assume that for some $\alpha_j\in[1,+\ii]$, $p_j\in[1,+\ii)$ with $1/\alpha_1+1/\alpha_2<1/p_1+1/p_2$, there exists $C_j>0$ such that for all $g_j$ we have
   $$\|\tilde{T_j}(g_j)\|_{\gS^{\alpha_j}(\gK,L^2_t\gH_j)} \le C\|g_j\|_{L^{p_j}X_j}.$$
   Then, for $1/\alpha=1/\alpha_1+1/\alpha_2$ there exists $C>0$ such that for all $g_1$, $g_2$ we have
   $$\|[T_1(t,g_1(t))T_2(s,g_2(s))^*]_<\|_{\gS^\alpha(L^2_t\gH_2,L^2_t\gH_1)} \le C\|g_1\|_{L^{p_1}X_1}\|g_2\|_{L^{p_2}X_2}.$$

  \end{corollary}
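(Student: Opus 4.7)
The plan is to deduce the corollary from Lemma~\ref{lem:CK} applied to the bilinear kernel
$$B(t,s,x_1,x_2) := T_1(t,x_1) T_2(s,x_2)^* \in \cB(\gH_2, \gH_1),$$
so that the $T_<(g_1,g_2)$ of the lemma coincides exactly with $[T_1(t,g_1(t)) T_2(s,g_2(s))^*]_<$. The index condition $1/\alpha < 1/p_1 + 1/p_2$ required by Lemma~\ref{lem:CK} is exactly our hypothesis (after setting $1/\alpha = 1/\alpha_1 + 1/\alpha_2$). The only thing left to verify is the full-time Schatten hypothesis of that lemma, namely a uniform bound of the form $\|T(g_1,g_2)\|_{\gS^\alpha} \lesssim \|g_1\|_{L^{p_1} X_1} \|g_2\|_{L^{p_2} X_2}$ for the operator $T(g_1,g_2)$ obtained by integrating over all of $\R$ instead of only up to $t$.

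The key observation is the factorization
$$T(g_1,g_2) = \tilde T_1(g_1) \circ \tilde T_2(g_2)^*,$$
where $\tilde T_2(g_2)^*$ denotes the Hilbert-space adjoint, acting from $L^2_t \gH_2$ to $\gK$. A short computation using the definition of the adjoint shows that $\tilde T_2(g_2)^* u = \int_\R T_2(s,g_2(s))^* u(s)\,ds$, so composing with $\tilde T_1(g_1) : \gK \to L^2_t \gH_1$ recovers precisely the unrestricted integral operator appearing in $T(g_1,g_2)$. By H\"older's inequality in Schatten spaces, together with the fact that the Hilbert-space adjoint preserves Schatten norms, I obtain
$$\|T(g_1,g_2)\|_{\gS^\alpha(L^2_t \gH_2, L^2_t \gH_1)} \le \|\tilde T_1(g_1)\|_{\gS^{\alpha_1}} \|\tilde T_2(g_2)\|_{\gS^{\alpha_2}} \le C_1 C_2 \|g_1\|_{L^{p_1} X_1} \|g_2\|_{L^{p_2} X_2},$$
where in the last step I use the two Schatten hypotheses of the corollary.

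The remainder is routine: invoking Lemma~\ref{lem:CK} (with the source and target Hilbert spaces interchanged compared to its statement, which does not affect its proof) converts the full-time bound into the desired retarded bound. I do not expect any substantial obstacle here, since all the nontrivial work---the dyadic decomposition with respect to the distribution function and the orthogonality argument handling the off-diagonal sum---has already been carried out in Lemma~\ref{lem:CK}. A minor conceptual point worth noting is that $x_2 \mapsto T_2(s,x_2)^*$ is conjugate-linear rather than linear, but this is immaterial: only the norm $\|g_2\|_{L^{p_2} X_2}$ enters the bound, and the orthogonality mechanism in the proof of Lemma~\ref{lem:CK} is insensitive to the distinction. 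The content of this corollary is thus essentially a repackaging of Lemma~\ref{lem:CK} into the exact form that will be directly applicable in later sections when verifying Strichartz-type estimates for $U_V(t,\tau)$.
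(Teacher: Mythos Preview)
Your proposal is correct and follows essentially the same approach as the paper: apply Lemma~\ref{lem:CK} with $B(t,s,x_1,x_2)=T_1(t,x_1)T_2(s,x_2)^*$, observe the factorization $T(g_1,g_2)=\tilde{T_1}(g_1)\tilde{T_2}(g_2)^*$, and conclude via H\"older's inequality in Schatten spaces. Your additional remarks on the adjoint computation and the conjugate-linearity issue are accurate and harmless elaborations of the same argument.
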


  \begin{proof}

   We apply Lemma \ref{lem:CK} to $B(t,s,x_1,x_2)=T_1(t,x_1)T_2(s,x_2)^*\in\cB(\gH_2,\gH_1)$. In the notations of Lemma \ref{lem:CK}, we then have
   $$T(g_1,g_2)=\tilde{T_1}(g_1)\tilde{T_2}(g_2)^*,$$
   so that the result follows from the Hölder inequality in Schatten spaces.

  \end{proof}

  \begin{remark}
   The assumption $1/\alpha_1+1/\alpha_2<1/p_1+1/p_2$ is in particular satisfied if $\alpha_1>p_1$ and $\alpha_2>p_2$, a situation we will often encounter.

  \end{remark}

  \begin{remark}\label{rk:reversed-coro-CK}
   From Remark \ref{rk:reversed-CK}, we also deduce that the same result holds if $\int_{-\ii}^t ds$ is replaced by $\int_t^\ii ds$ in the definition of $T_<$.
  \end{remark}

  \subsection{Partially orthogonal version}

  The above version of the Christ-Kiselev will not always be enough for our purposes. The version below will be key to treat the most singular term in the reaction of the background (see the proof of Proposition \ref{prop:rho22} of below).

  \begin{lemma}[Christ-Kiselev lemma in Schatten spaces, second version]

   Let $\gH_1$, $\gH_2$ be Hilbert spaces, and $X_1$, $X_2$ be Banach spaces. Let $(B(t,s))_{t,s\in\R}$ be a (measurable) family of bilinear maps from $X_1\times X_2$ to $\cB(\gH_1,\gH_2)$. For any $t,s\in\R$ and $x_1\in X_1$, $x_2\in X_2$, we denote by $B(t,s,x_1,x_2)\in\cB(\gH_1,\gH_2)$ the image of $(x_1,x_2)$ by $B(t,s)$ (which is bilinear in $(x_1,x_2)$). For any measurable $g_1:\R\to X_1$ and $g_2:\R\to X_2$, let us denote by $T(g_1,g_2)$ and $T_<(g_1,g_2)$ the linear maps from $L^2_t\gH_1$ to $\gH_2$ defined by: for any $u\in L^2_t\gH_1$,
   $$T(g_1,g_2)u = \int_\R\int_\R B(t,s,g_1(t),g_2(s))u(s)\,ds\,dt\in \gH_2,$$
   $$T_<(g_1,g_2)u = \int_\R\int_{-\ii}^t B(t,s,g_1(t),g_2(s))u(s)\,ds\,dt\in \gH_2,$$
   Assume that for some $\alpha\in[1,+\ii]$, $p_1, p_2\in[1,+\ii)$ with $\max(1/2,1/\alpha)<1/p_1+1/p_2$ there exists $C>0$ such that for all $g_1$, $g_2$ we have
   $$\| T(g_1,g_2) \|_{\gS^\alpha(L^2_t\gH_1,\gH_2)}\le C\|g_1\|_{L^{p_1}X_1} \|g_2\|_{L^{p_2} X_2}.$$
   Then, there exists $C>0$ such that for all $g_1$, $g_2$ we have
   $$\| T_<(g_1,g_2) \|_{\gS^\alpha(L^2_t\gH_1,\gH_2)}\le C\|g_1\|_{L^{p_1}X_1} \|g_2\|_{L^{p_2} X_2}.$$

  \end{lemma}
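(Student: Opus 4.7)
\medskip

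The plan is to reuse the first half of the proof of the previous lemma verbatim and then deviate at the orthogonality step. By bilinearity, normalize $\|g_1\|_{L^{p_1}X_1}^{p_1}=\|g_2\|_{L^{p_2}X_2}^{p_2}=1/2$, introduce
$$F(t):=\int_{-\ii}^t\big(\|g_1(s)\|_{X_1}^{p_1}+\|g_2(s)\|_{X_2}^{p_2}\big)\,ds\in[0,1],$$
which we may assume strictly increasing, and decompose $\1(t>s)=\sum_{I\sim J}\1_{F^{-1}(I)}(t)\1_{F^{-1}(J)}(s)$ along the dyadic tree of $[0,1]$ exactly as before. This yields $T_<(g_1,g_2)=\sum_{k\ge2}\sum_{|I|=2^{-k},\,I\sim J}T_{I,J}$ with $T_{I,J}:=T(\1_{F^{-1}(I)}g_1,\1_{F^{-1}(J)}g_2)$. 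Splitting $I$ according to the parity of its label, we reduce to estimating $\|A_k\|_{\gS^\alpha}$ where $A_k:=\sum_{I}T_{I,J_I}$ runs over a family of $O(2^k)$ intervals with $J_I$ all pairwise disjoint.

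The crucial observation is a \emph{one-sided} orthogonality. Since $T_{I,J_I}u$ is obtained by integrating $u$ only against the window $F^{-1}(J_I)$, the adjoint $T_{I,J_I}^*$ has range contained in functions supported on $F^{-1}(J_I)$, so that $T_{I,J_I}T_{I',J_{I'}}^*=0$ whenever $I\neq I'$ belong to the same parity class. Contrary to Lemma 5.1, the complementary identity $T_{I,J_I}^*T_{I',J_{I'}}=0$ is \emph{not} available, because the target $\gH_2$ carries no time variable along which one could separate the contributions of distinct $I$. Consequently, the only natural quantity to expand orthogonally is $A_kA_k^*=\sum_I T_{I,J_I}T_{I,J_I}^*$, and two regimes must be considered.

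When $\alpha\le 2$, the function $x\mapsto x^{\alpha/2}$ is concave on $[0,+\ii)$ with $0^{\alpha/2}=0$, so Rotfeld's inequality applied to the positive operators $B_I:=T_{I,J_I}T_{I,J_I}^*$ gives
$$\|A_k\|_{\gS^\alpha}^\alpha=\tr(A_kA_k^*)^{\alpha/2}\le \sum_I\tr\big(T_{I,J_I}T_{I,J_I}^*\big)^{\alpha/2}=\sum_I\|T_{I,J_I}\|_{\gS^\alpha}^\alpha.$$
When $\alpha\ge2$, we use instead the $\gS^{\alpha/2}$-triangle inequality (valid since $\alpha/2\ge1$) together with $\|A_k\|_{\gS^\alpha}^2=\|A_kA_k^*\|_{\gS^{\alpha/2}}$:
$$\|A_k\|_{\gS^\alpha}^2\le\sum_I\|T_{I,J_I}T_{I,J_I}^*\|_{\gS^{\alpha/2}}=\sum_I\|T_{I,J_I}\|_{\gS^\alpha}^2.$$
Inserting the elementary bound $\|T_{I,J_I}\|_{\gS^\alpha}\le C\|\1_{F^{-1}(I)}g_1\|_{L^{p_1}X_1}\|\1_{F^{-1}(J_I)}g_2\|_{L^{p_2}X_2}\lesssim 2^{-k(1/p_1+1/p_2)}$ (where we use the hypothesis and the defining property $|I|=F^{-1}(I)$-mass of $\|g_1\|^{p_1}+\|g_2\|^{p_2}$), and the count of $O(2^k)$ intervals at scale $2^{-k}$, both cases yield
$$\|A_k\|_{\gS^\alpha}\lesssim 2^{k(\max(1/\alpha,1/2)-1/p_1-1/p_2)}.$$
The geometric sum over $k\ge2$ then converges precisely under the hypothesis $\max(1/2,1/\alpha)<1/p_1+1/p_2$, and the conclusion follows.

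The main obstacle is the regime $\alpha>2$: here Rotfeld's inequality runs in the wrong direction (since $x^{\alpha/2}$ is convex), and the missing "target-side" orthogonality prevents us from mimicking the block-diagonal argument of Lemma 5.1. The only available substitute is the $\gS^{\alpha/2}$-triangle inequality, which trades the natural $\ell^{\alpha}$-summation in $I$ for an $\ell^2$-summation and hence replaces $1/\alpha$ by $1/2$ in the convergence threshold — this is exactly the origin of the $\max(1/2,1/\alpha)$ appearing in the statement.
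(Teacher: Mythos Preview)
Your proof is correct and follows essentially the same route as the paper: the same dyadic decomposition via $F$, the same one-sided orthogonality $T_{I,J}T_{I',J'}^*=0$ for disjoint $J,J'$, Rotfeld's inequality for $\alpha\le2$, and the $\gS^{\alpha/2}$-triangle inequality for $\alpha\ge2$. Your closing paragraph explaining the origin of the $\max(1/2,1/\alpha)$ threshold is a nice addition that the paper does not spell out.
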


  \begin{proof}

   By bilinearity we may assume that $\|g_1\|_{L^{p_1}X_1}^{p_1}=1/2=\|g_2\|_{L^{p_2}X_2}^{p_2}$. Define then
   $$F(t)=\int_{-\ii}^t ( \| g_1(s)\|_{X_1}^{p_1} + \|g_2(s)\|_{X_2}^{p_2}) \,ds,$$
   so that $F:\R\to[0,1]$ is non-decreasing. For dyadic intervals $I,J\subset[0,1]$, we denote by $I\sim J$ if $I$ and $J$ have the same length, are not adjacent but have adjacent parents, and if $I$ is on the right of $J$. Then, almost everywhere on $t,s$ we have
   $$\1(t-s>0) = \sum_{I\sim J}\1_{F^{-1}(J)}(s)\1_{F^{-1}(I)}(t)$$
   and hence
   $$T_<(g_1,g_2)=\sum_{I\sim J}T_{I,J},\quad T_{I,J}:= T(\1_{F^{-1}(I)}g_1,\1_{F^{-1}(J)}g_2)$$
   so that
   $$\| T_<(g_1,g_2) \|_{\gS^\alpha(L^2_t\gH_1,\gH_2)}
   \lesssim \sum_{k\ge2} \left\|\sum_{\substack{I\sim J \\ |I|=2^{-k}}}T_{I,J}\right\|_{\gS^\alpha(L^2_t\gH_1,\gH_2)}.$$
   Since $T_{I,J}T_{I',J'}^*=0$ if $|J|=|J'|$ and $J\neq J'$, we deduce that
   $$\left\|\sum_{\substack{I\sim J \\ |I|=2^{-k}}}T_{I,J}\right\|_{\gS^\alpha(L^2_t\gH_1,\gH_2)}=\left\|\sum_{\substack{I\sim J \\ |I|=2^{-k}}}T_{I,J}T_{I,J}^*\right\|_{\gS^{\alpha/2}(\gH_2,\gH_2)}^{1/2}.$$
   If $\alpha\ge2$, we have
   $$\left\|\sum_{\substack{I\sim J \\ |I|=2^{-k}}}T_{I,J}T_{I,J}^*\right\|_{\gS^{\alpha/2}(\gH_2,\gH_2)}\le \sum_{\substack{I\sim J \\ |I|=2^{-k}}}\|T_{I,J}T_{I,J}^*\|_{\gS^{\alpha/2}(\gH_2,\gH_2)}
   =\sum_{\substack{I\sim J \\ |I|=2^{-k}}}\|T_{I,J}\|_{\gS^\alpha(L^2_t\gH_1,\gH_2)}^2$$
   and using that
   $$\left\|T_{I,J}\right\|_{\gS^\alpha(L^2_t\gH_1,\gH_2)}\lesssim \|\1_{F^{-1}(I)}g_1\|_{L^{p_1}X_1}\|\1_{F^{-1}(J)}g_2\|_{L^{p_2}X_2}\lesssim 2^{-k(1/p_1+1/p_2)},$$
   we get that
   $$\| T_<(g_1,g_2) \|_{\gS^\alpha(L^2_t\gH_1,\gH_2)}
   \lesssim \sum_{k\ge2}2^{k(1/2-1/p_1-1/p_2)},
   $$
   which is finite if $1/2<1/p_1+1/p_2$. If $\alpha<2$, we use that
   $$\left\|\sum_{\substack{I\sim J \\ |I|=2^{-k}}}T_{I,J}T_{I,J}^*\right\|_{\gS^{\alpha/2}(\gH_2,\gH_2)}^{\alpha/2} = \tr_{\gH_2}\left(\sum_{\substack{I\sim J \\ |I|=2^{-k}}}T_{I,J}T_{I,J}^*\right)^{\alpha/2}
   $$
   and since for any non-negative operators $A$ and $B$ we have $\tr (A+B)^{\alpha/2}\le \tr A^{\alpha/2}+\tr B^{\alpha/2}$ (see for instance \cite[Eq. (7)]{BhaKit-00}), we deduce that
   $$
   \tr_{\gH_2}\left(\sum_{\substack{I\sim J \\ |I|=2^{-k}}}T_{I,J}T_{I,J}^*\right)^{\alpha/2} \le \sum_{\substack{I\sim J \\ |I|=2^{-k}}} \tr_{\gH_2}(T_{I,J}T_{I,J}^*)^{\alpha/2} = \sum_{\substack{I\sim J \\ |I|=2^{-k}}} \|T_{I,J}\|_{\gS^\alpha(L^2_t\gH_1,\gH_2)}^\alpha.
   $$
   We conclude as in the fully orthogonal case that
   $$\| T_<(g_1,g_2) \|_{\gS^\alpha(L^2_t\gH_1,L^2_t\gH_2)}
   \lesssim \sum_{k\ge2}2^{k(1/\alpha-1/p_1-1/p_2)},
   $$
   which is finite if $1/\alpha<1/p_1+1/p_2$.

  \end{proof}

  \begin{remark}

   The term ``partially orthogonal'' refers to the fact that in the first version of the Christ-Kiselev lemma we have both $T_{I,J}T_{I',J'}^*=0$ if $J\neq J'$ and $T_{I',J'}^*T_{I,J}=0$ if $I\neq I'$, so that $\sum_{I\sim J} T_{I,J}$ has a block diagonal structure and is ``fully orthogonal''. In the second version, we only have $T_{I,J}T_{I',J'}^*=0$ if $J\neq J'$ and hence we have weaker Schatten bounds, but they are still strong enough for our applications.

  \end{remark}

  \begin{corollary}\label{coro:CK-2}

   Let $\gH_1$, $\gH_2$ be Hilbert spaces and $X_1$, $X_2$ be Banach spaces. For $j\in\{1,2\}$, let $(T_j(t))_{t\in\R}$ be a family of linear maps from $X_j$ to $\cB(\gH_1,\gH_2)$ (again, for any $x_j\in X_j$ we use the notation $T_j(t,x_j)\in\cB(\gH_1,\gH_2)$). For any $g_j:\R\to X_j$, define the operator $T_<(g_1,g_2):L^2_t\gH_2\to\gH_2$ by: for any $u\in L^2_t\gH_2$,
   $$T_<(g_1,g_2)u = \int_\R T_1(t,g_1(t))\int_{-\ii}^t T_2(s,g_2(s))^*u(s)\,ds\,dt.$$
   Define also the operators $\tilde{T_1}(g_1):\gH_1\to \gH_2$ and $\tilde{T_2}(g_2):\gH_1\to L^2_t\gH_2$ by: for any $u\in\gH_1$ and any $t\in\R$,
   $$\tilde{T_1}(g_1)u = \int_\R T_1(t',g_1(t')) u \,dt',\quad (\tilde{T_2}(g_2)u)(t)=T_2(t,g_2(t))u.$$
   Assume that for some $\alpha_j\in[1,+\ii]$, $p_j\in[1,+\ii)$ there exists $C_j>0$ such that for all $g_j$ we have
   $$\|\tilde{T_1}(g_1)\|_{\gS^{\alpha_1}(\gH_1,\gH_2)} \le C_1\|g_1\|_{L^{p_1}X_1},\quad \|\tilde{T_2}(g_2)\|_{\gS^{\alpha_2}(\gH_1,L^2_t\gH_2)} \le C_2\|g_2\|_{L^{p_2}X_2}.$$
   If $\max(1/2,1/\alpha)<1/p_1+1/p_2$ and $1/\alpha\le1/\alpha_1+1/\alpha_2$, then there exists $C>0$ such that for all $g_1$, $g_2$ we have
   $$\|T_<(g_1,g_2)\|_{\gS^\alpha(L^2_t\gH_2\to\gH_2)} \le C\|g_1\|_{L^{p_1}X_1}\|g_2\|_{L^{p_2}X_2}.$$

  \end{corollary}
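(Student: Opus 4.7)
The plan is to apply the partially orthogonal Christ-Kiselev lemma (stated just above) to the bilinear family
$$B(t,s,x_1,x_2) := T_1(t,x_1) T_2(s,x_2)^* \in \cB(\gH_2,\gH_2),$$
taking $\gH_1^{\text{lem}} = \gH_2^{\text{lem}} = \gH_2$ in the notation of the lemma. With this choice, the retarded operator $T_<(g_1,g_2)$ of the corollary is exactly the operator $T_<(g_1,g_2)$ produced by the lemma, so it suffices to verify the non-retarded Schatten bound
$$\|T(g_1,g_2)\|_{\gS^\alpha(L^2_t\gH_2,\gH_2)} \lesssim \|g_1\|_{L^{p_1}X_1}\|g_2\|_{L^{p_2}X_2}$$
and check that the exponent condition $\max(1/2,1/\alpha) < 1/p_1+1/p_2$ of the lemma is in force (which is precisely the assumption of the corollary).

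The key observation is the factorization
$$T(g_1,g_2) = \tilde{T_1}(g_1)\,\tilde{T_2}(g_2)^*,$$
which one reads off directly from the definitions: for $u\in L^2_t\gH_2$,
$$\tilde{T_2}(g_2)^* u = \int_\R T_2(s,g_2(s))^* u(s)\,ds \in \gH_1,$$
and then applying $\tilde{T_1}(g_1) = \int_\R T_1(t',g_1(t'))\,dt'$ gives exactly
$$\int_\R\int_\R T_1(t,g_1(t))T_2(s,g_2(s))^* u(s)\,ds\,dt = T(g_1,g_2)u.$$

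With the factorization in hand, the Hölder inequality in Schatten spaces yields
$$\|T(g_1,g_2)\|_{\gS^\alpha(L^2_t\gH_2,\gH_2)} \le \|\tilde{T_1}(g_1)\|_{\gS^{\alpha_1}(\gH_1,\gH_2)} \|\tilde{T_2}(g_2)^*\|_{\gS^{\alpha_2}(L^2_t\gH_2,\gH_1)},$$
provided $1/\alpha\le 1/\alpha_1+1/\alpha_2$, which is the second assumption of the corollary. Using the hypotheses on $\tilde{T_j}(g_j)$ (and the fact that $\|\tilde{T_2}(g_2)^*\|_{\gS^{\alpha_2}} = \|\tilde{T_2}(g_2)\|_{\gS^{\alpha_2}}$), this bound becomes $C_1 C_2 \|g_1\|_{L^{p_1}X_1}\|g_2\|_{L^{p_2}X_2}$, which is precisely the non-retarded estimate required by the partially orthogonal Christ-Kiselev lemma.

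There is no genuine obstacle: the proof is essentially a bookkeeping step, the only subtlety being the choice of identifying both source and target Hilbert spaces in the lemma with $\gH_2$ (so that $B(t,s,\cdot,\cdot)$ acts on $\gH_2$ and the integral kernel is of the Volterra form $T_1 T_2^*$). Once this setup is fixed, the factorization plus Hölder plus direct invocation of the lemma gives the result immediately.
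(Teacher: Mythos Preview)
Your proof is correct and matches the paper's approach: the corollary is stated without an explicit proof because it follows from the partially orthogonal Christ-Kiselev lemma in exactly the same way that Corollary~\ref{coro:CK} follows from Lemma~\ref{lem:CK}, namely by setting $B(t,s,x_1,x_2)=T_1(t,x_1)T_2(s,x_2)^*$, observing the factorization $T(g_1,g_2)=\tilde{T_1}(g_1)\tilde{T_2}(g_2)^*$, and applying H\"older in Schatten spaces.
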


\section{Strichartz estimates for $U_V$}\label{sec:strichartz-UV}

In this section, we extend the Strichartz estimates of Section \ref{sec:free} which were valid for the free propagator $e^{it\Delta_x}$ to the propagator of $-\Delta+V(t)$. To prove them, we rely on the fractional Leibniz rules of Section \ref{sec:leibniz} as well as the Christ-Kiselev lemma in Schatten spaces of Section \ref{sec:CK}. Furthermore, we not only estimate $\rho$ in $L^p_t L^q_x$ but also in $L^p_t W^{s,q}_x$, generalizing Remark \ref{rk:integer-derivatives} to fractional derivatives. It is thanks to these estimates that we will be able to treat the first term $\rho_{U_V(t,0)Q_{\text{in}}U_V(t,0)^*}$ in \eqref{eq:Phi}. As in the introduction, we will use the notation
$$U_V(t):=U_V(t,0).$$
The first result that we state is the fact that $U_V(t)$ satisfies the same Strichartz estimates for functions as $e^{it\Delta_x}$. In the case $s_0=0$, the proposition below has been proved for instance in \cite[Theorem 1.1]{Yajima-87}, \cite{AncPieVis-05}, and in the case $s_0\ge0$ in \cite[Lemma 3.9]{Hadama-23}. The proof we propose is standard and relies on inhomogeneous Strichartz estimates as well as Leibniz rules to estimate a product of functions in Sobolev spaces. We provide the proof to explain the origin of the conditions on the various exponents.

\begin{proposition}\label{prop:UV-Sonae}

 Let $d\ge2$, $s\ge0$ and $\mu,\nu\in(1,+\ii)$ be such that $2\le 2/\mu+d/\nu\le2+s$, $1/\nu<1/2+1/d$, and $2/\mu+d/\nu<2+d/2$. Let $p,q\in[1,+\ii]$ be such that $2/p+d/q=d$, with furthermore $(p,q)\neq(1,+\ii)$ if $d=2$. Let $s_0\in[0,s]$. Then, there exists $\delta>0$ and $C>0$ such that for any $V,\tilde{V}\in L^\mu_t W^{s,\nu}_x$ such that $\|V\|_{L^\mu_t W^{s,\nu}_x}\le\delta$, $\|\tilde{V}\|_{L^\mu_t W^{s,\nu}_x}\le\delta$ and any $u_0\in L^2_x$ we have
$$\| \sd^{s_0} U_V(t)\sd^{-s_0} u_0\|_{L^{2p}_t L^{2q}_x}\le C\|u_0\|_{L^2_x},$$
$$\| \sd^{s_0} (U_V(t)-U_{\tilde{V}}(t))\sd^{-s_0} u_0\|_{L^{2p}_t L^{2q}_x}\le C\|u_0\|_{L^2_x}\|V-\tilde{V}\|_{L^\mu_t W^{s,\nu}_x}.$$

\end{proposition}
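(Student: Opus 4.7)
The plan is to construct $U_V(t)$ by a Banach fixed point argument for the Duhamel formulation and then read off the Strichartz estimate on $\cU_{s_0}(t):=\sd^{s_0}U_V(t)\sd^{-s_0}$. Since $\sd^{s_0}$ commutes with $e^{it\Delta_x}$, the Duhamel identity $U_V(t)=e^{it\Delta_x}-i\int_0^t e^{i(t-\tau)\Delta_x}V(\tau)U_V(\tau)\,d\tau$ conjugates to
\[
 \cU_{s_0}(t)u_0 = e^{it\Delta_x}u_0 - i\int_0^t e^{i(t-\tau)\Delta_x}\,\sd^{s_0}\!\bigl(V(\tau)\sd^{-s_0}\cU_{s_0}(\tau)u_0\bigr)\,d\tau,
\]
and I would solve this equation in the Strichartz space $L^{2p}_tL^{2q}_x$ by contraction (for $s_0=0$ this recovers $U_V$ itself, and one then replaces $u_0$ by $\sd^{-s_0}u_0$ to get the statement).

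To estimate the right-hand side, the free piece is controlled by the homogeneous Strichartz estimate \eqref{eq:stri-hom}. For the Duhamel piece, I pick an admissible pair $(\tilde p,\tilde q)$ and apply inhomogeneous Strichartz \eqref{eq:stri-inhom} (the standard Christ-Kiselev argument for functions, not the operator-valued version of Section \ref{sec:CK}) to reduce to bounding
\[
 \bigl\|\sd^{s_0}\bigl(V\cdot \sd^{-s_0}\cU_{s_0}u_0\bigr)\bigr\|_{L^{(2\tilde p)'}_tL^{(2\tilde q)'}_x}.
\]
Here I would apply a Kato-Ponce fractional Leibniz estimate (the classical \eqref{eq:leibniz-standard-2} if $s_0\in\mathbb N$, or Lemma \ref{lem:leibniz-op-general} with $\sigma=s_0$ otherwise) to distribute $s_0$ derivatives between $V$ and $f:=\sd^{-s_0}\cU_{s_0}u_0$, and then invoke Sobolev embedding to upgrade $V\in W^{s,\nu}_x$ to the $W^{s_0,\nu_1}_x$ and $L^{\nu_2}_x$ norms required by Leibniz, and to relate $\|f\|_{W^{s_0,r}_x}$ to $\|\cU_{s_0}u_0\|_{L^{2q}_x}$. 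A final Hölder step in $t$ yields a factor $\|V\|_{L^\mu_tW^{s,\nu}_x}\le\delta$, which closes the contraction.

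The conditions on $(p,q,\mu,\nu,s,s_0)$ arise precisely from the bookkeeping of this chain. The scaling identity $2/p+d/q=d$ (and its analogue for $(\tilde p,\tilde q)$) together with Hölder in $t,x$ imposes, at the undifferentiated level $s_0=0$, the equality $2/\mu+d/\nu=2$, whence the lower bound in $2\le 2/\mu+d/\nu\le 2+s$; the upper bound reflects the total Sobolev budget available when up to $s$ derivatives are absorbed through Sobolev embeddings of $W^{s,\nu}_x$. The constraint $1/\nu<1/2+1/d$ guarantees that $W^{1,\nu}_x\hookrightarrow L^{\nu^\ast}_x$ with $\nu^\ast\ge 2$ so that the dual Strichartz space $L^{(2\tilde q)'}_x$ is reachable, and $2/\mu+d/\nu<2+d/2$ keeps us off the Keel-Tao endpoint (as does excluding $(p,q)=(1,+\infty)$ when $d=2$). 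The Lipschitz part follows from the standard propagator-difference Duhamel identity $U_V(t)-U_{\tilde V}(t)=-i\int_0^t U_V(t,\tau)(V-\tilde V)(\tau)U_{\tilde V}(\tau)\,d\tau$, conjugated by $\sd^{s_0}$, with the two propagators controlled by the bound just proved and the middle factor $(V-\tilde V)$ absorbed via the same Leibniz-Hölder estimate.

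The main technical obstacle I expect is the verification that the various constraints from Leibniz, Sobolev, and admissibility are simultaneously solvable inside the stated parameter region, especially in the borderline regime $s_0=s$ or when $1/\nu$ is close to $1/2+1/d$; this is a purely algebraic check, but it is where the precise list of hypotheses on $\mu$, $\nu$, $p$, $q$ is forced, and it explains why the proof is presented in detail even though none of its individual ingredients is new.
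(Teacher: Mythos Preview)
Your approach is the paper's: conjugated Duhamel, inhomogeneous Strichartz, Kato--Ponce, Sobolev embedding, and contraction for small $V$. Two minor deviations worth noting: the classical Leibniz rule \eqref{eq:leibniz-standard} already suffices here (Lemma~\ref{lem:leibniz-op-general} is overkill and is reserved for the later Schatten-space arguments), and the paper runs the fixed point in one carefully chosen intermediate pair $(\tilde p,\tilde q)$---where the parameter bookkeeping you anticipate is actually carried out---and only afterwards transfers to arbitrary admissible $(p,q)$ via one more application of Strichartz.
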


\begin{proof}

Under our assumptions on $(d,s,\mu,\nu,s_0)$, there exists $\nu_0\in(1,+\ii)$ such that
$$2\le\frac{2}{\mu}+\frac{d}{\nu_0}\le2+s_0,\
\frac{1}{\nu_0}<\frac12+\frac1d,\
\frac{2}{\mu}+\frac{d}{\nu_0}<2+\frac d2,\
\frac{1}{\nu}-\frac{s-s_0}{d}\le\frac{1}{\nu_0}\le\frac{1}{\nu}.
$$
Let us then define $\tilde{\nu}$ by the relation
$$\frac{2}{\mu}+\frac{d}{\tilde{\nu}}=2,$$
equivalently $\tilde{\nu}=d\mu/(2(\mu-1))$. Since $\mu\in(1,+\ii)$ and $d\ge2$, we deduce that $\tilde{\nu}\in(1,+\ii)$. Defining $\zeta_0=2/\mu+d/\nu_0-2\in[0,s_0]$, we have the relation
$$\frac{1}{\tilde{\nu}}=\frac{1}{\nu_0}-\frac{\zeta_0}{d},$$
so that in particular $W^{s_0,\nu_0}\hookrightarrow L^{\tilde{\nu}}$. Let us now show that there exists $(\tilde{p},\tilde{q})\in(2,+\ii)$ such that
$$\frac{2}{\tilde{p}}+\frac{d}{\tilde{q}}=\frac d2,\quad \frac12<\frac{1}{\mu}+\frac{1}{\tilde{p}}<1,\quad \frac{1}{\tilde{p}}<\frac{d}{4}+1-\frac1\mu-\frac{d}{2\nu_0}.
$$
To do so, we just pick any $\tilde{p}\in(2,+\ii)$ such that
$$\frac12-\frac{1}{\mu}<\frac{1}{\tilde{p}}< 1-\frac{1}{\mu},\quad \frac{1}{\tilde{p}}<\frac{d}{4}+1-\frac1\mu-\frac{d}{2\nu_0}.$$
This is possible due to the bounds $1/\nu_0<1/2+1/d$ and $2/\mu+d/\nu_0<2+d/2$. Now given such a $(\tilde{p},\tilde{q})$, define $(r,c)\in(2,+\ii)$ by
$$\frac{1}{r'}=\frac{1}{\mu}+\frac{1}{\tilde{p}},\quad \frac{1}{c'}=\frac{1}{\tilde{\nu}}+\frac{1}{\tilde{q}}.$$
Notice that $2/r+d/c=d/2$. Finally, let us define $k$ by the relation $1/k+1/\nu_0=1/c'$, that is
$$\frac1k=\frac{1}{\tilde{q}}+\frac{1}{\tilde{\nu}}-\frac{1}{\nu_0}.$$
The fact that $1/k>0$ follows from the relation $1/\tilde{p}<d/4+1-1/\mu-d/(2\nu_0)$. Since $\tilde{\nu}\ge\nu_0$, we deduce that $k\in[\tilde{q},+\ii)$. Since $1/\nu_0-1/\tilde{\nu}\le s_0/d$, we also deduce that $W^{s_0,\tilde{q}}_x\hookrightarrow L^k_x$. Now by the Duhamel formula, we have
$$
\langle\nabla\rangle^{s_0} U_V(t) \langle\nabla\rangle^{-s_0}u_0 =
e^{it\Delta_x}u_0
- i \int_0^{t}
e^{i(t-t')\Delta_x}\langle\nabla_x\rangle^{s_0} V(t')U_V(t')\langle\nabla_x\rangle^{-s_0}u_0\,dt'.
$$
By the homogeneous \eqref{eq:stri-hom} and inhomogeneous \eqref{eq:stri-inhom} Strichartz estimates, we have
$$\|\langle\nabla\rangle^{s_0} U_V(t) \langle\nabla\rangle^{-s_0}u_0\|_{L^{\tilde{p}}_t L^{\tilde{q}}_x} \lesssim \|u_0\|_{L^2_x}+ \|V(t')U_V(t')\langle\nabla_x\rangle^{-s_0}u_0\|_{L^{r'}_{t'} W^{s_0,c'}_x}
$$
By the Leibniz rule \eqref{eq:leibniz-standard} and using that $W^{s_0,\nu_0}_x\hookrightarrow L^{\tilde{\nu}}_x$ as well as $W^{s_0,\tilde{q}}\hookrightarrow L^k_x$, we have
$$\|V(t')U_V(t')\langle\nabla_x\rangle^{-s_0}u_0\|_{L^{r'}_{t'} W^{s_0,c'}_x }\lesssim \|V\|_{L^\mu_t W^{s_0,\nu_0}_x} \| \langle\nabla_x\rangle^{s_0} U_V(t')\langle\nabla_x\rangle^{-s_0}u_0\|_{L^{\tilde{p}}_t L^{\tilde{q}}_x}.$$
Note that $W^{s,\nu}_x\hookrightarrow W^{s_0,\nu_0}_x$, hence choosing $C\|V\|_{L^\mu_t W^{s,\nu}_x}\le1/2$, we get that
$$\|\langle\nabla\rangle^{s_0} U_V(t) \langle\nabla\rangle^{-s_0}u_0\|_{L^{\tilde{p}}_t L^{\tilde{q}}_x} \lesssim \|u_0\|_{L^2_x}.$$
The estimate in $L^{2p}_tL^{2q}_x$ follows from the one in $L^{\tilde{p}}_t L^{\tilde{q}}_x$ using the same inhomogeneous Strichartz estimate with a right side estimated in $L^{r'}_{t'} W^{s_0,c'}_x$, for the same value of $(r,c)$ defined in terms of $(\tilde{p},\tilde{q})$. The proof of the estimate for the difference $U_V-U_{\tilde{V}}$ follows from the same proof, using the identity
$$V(t')U_V(t')-\tilde{V}(t')U_{\tilde{V}}(t')=(V(t')-\tilde{V}(t'))U_V(t')+\tilde{V}(t')(U_V(t')-U_{\tilde{V}}(t')).$$

\end{proof}

\begin{remark}
 The above proof should be understood as a construction of $U_V(t)$. Indeed, since we did not appeal to any prior result, an expression as $\|\langle\nabla\rangle^{s_0} U_V(t) \langle\nabla\rangle^{-s_0}u_0\|_{L^{\tilde{p}}_t L^{\tilde{q}}_x}$ is merely formal and could very well be $+\ii$. Hence, the above proof gives a construction of $U_V(t)$ in the following sense: if we define for any $A\in\cB(H^{s_0}_x\to L^{\tilde{p}}_tW^{s_0,\tilde{q}}_x)$ the map
 $$\Psi(A):u_0\in H^{s_0}_x\mapsto e^{it\Delta_x}u_0-i\int_0^t e^{i(t-t')\Delta}V(t')(Au_0)(t')\,dt',$$
 the above proof shows that $\Psi$ is a contraction on the Banach space $\cB(H^{s_0}_x\to L^{\tilde{p}}_tW^{s_0,\tilde{q}}_x)$ and hence as a unique fixed point on it, that we define as $U_V(t)$. Using Strichartz estimates, we then deduce that $U_V(t)\in \cB(H^{s_0}_x\to L^{2p}_tW^{s_0,2q}_x)$ for any $(p,q)$ as in Proposition \ref{prop:UV-Sonae}, and that for any $u_0$, $u(t):=\Psi(u_0)(t)$ solves $i\partial_tu=(-\Delta+V(t))u$.
\end{remark}

\begin{remark}\label{rk:negative-sobolev}
 The conclusion of Proposition \ref{prop:UV-Sonae} still holds if $\sd^{s_0} U_V(t)\sd^{-s_0}$ is replaced by $\sd^{-s_0}U_V(t)\sd^{s_0}$. Since there is no Leibniz rule for the derivative of negative order of a product, the above proof has to be modified. One can do so by rather working on its dual $\sd^{s_0} U_V(t)^*\sd^{-s_0}$, which by the (dual) Duhamel formulation satisfies for any $F\in L^{(2p)'}_t L^{(2q)'}_x$,
 \begin{align*}
  \int_0^\ii U_V(t)^* F(t)\,dt &= \int_0^\ii e^{-it\Delta_x}F(t)\,dt + i\int_0^\ii \int_0^t U_V(t')^* V(t')e^{i(t'-t)\Delta}F(t)\,dt'\,dt \\
  &= \int_0^\ii e^{-it\Delta_x}F(t)\,dt + i\int_0^\ii U_V(t')^* V(t') \left(\int_{t'}^\ii e^{i(t'-t)\Delta}F(t)\,dt\right) \,dt'.
 \end{align*}
 With the same estimates as in the proof of Proposition \ref{prop:UV-Sonae}, one can use a fixed point argument in $\cB(L^{r'}_t W^{s_0,c'}_x\to H^{s_0}_x)$ to construct $U_V(t)^*$ (where $(r,c)$ is given in the proof of Proposition \ref{prop:UV-Sonae}).
\end{remark}

\begin{remark}
 As explained in the proof of Lemma 3.9 in \cite{Hadama-23}, the smallness condition $\|V\|_{L^\mu_t W^{s,\nu}_x}\le\delta$ in Proposition \ref{prop:UV-Sonae} can be removed by iterating this result on a finite number of time intervals where we have the smallness condition. If $\|V\|_{L^\mu_t W^{s,\nu}_x}\le R$ for some $R>0$, the constant $C>0$ appearing in Proposition \ref{prop:UV-Sonae} then depends only on $R$.
\end{remark}

\begin{proposition}\label{prop:UV-bez}

 Let $d\ge2$, $s\ge0$, and $\mu,\nu\in(1,+\ii)$. Define $\zeta:=2/\mu+d/\nu-2$ and assume that $0\le\zeta\le s$, $1/\nu<\min(1/2+1/d,(\zeta+2)/(2(\zeta+1)))$, and $(\zeta+2)/(2(d+1))<1/\mu<(\zeta+2)/(2(\zeta+1))$.
 Let $s_0\in[-s,s]$ and $\sigma\ge0$. Let $p,q,\alpha\in(1,+\ii)$ be such that
 $$\frac2p+\frac dq=d-2\sigma,\quad\frac{1}{\alpha}\ge\frac{1}{dp}+\frac1q,\quad \alpha<p.$$
 Let $m:\R^d\to\R$ be such that $|m(\xi)|\lesssim\langle\xi\rangle^{s_0-\sigma}$. Then, there exists $\delta>0$ and $C>0$ such that for any $V,\tilde{V}\in L^\mu_t W^{s,\nu}_x$ such that $\|V\|_{L^\mu_t W^{s,\nu}_x}\le\delta$, $\|\tilde{V}\|_{L^\mu_t W^{s,\nu}_x}\le\delta$, and any $g\in L^{2p'}_t(\R_+,L^{2q'}_x)$ we have
 $$\|g(t) m(-i\nabla_x)U_V(t)\sd^{-s_0}\|_{\gS^{2\alpha'}(L^2_x\to L^2(\R_+,L^2_x))}\le C\|g\|_{L^{2p'}_t(\R_+,L^{2q'}_x)},$$
 $$\|g(t) m(-i\nabla_x)(U_V(t)-U_{\tilde{V}}(t))\sd^{-s_0}\|_{\gS^{2\alpha'}(L^2_x\to L^2(\R_+,L^2_x))}\le C\|g\|_{L^{2p'}_t(\R_+,L^{2q'}_x)}\|V-\tilde{V}\|_{L^\mu_t W^{s,\nu}_x}.$$
\end{proposition}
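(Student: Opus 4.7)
The plan is to combine the Duhamel formula for $U_V$ with the fractional Leibniz decomposition of Lemma~\ref{lem:leibniz-decomp-general} and the Christ--Kiselev lemma in Schatten spaces (Corollary~\ref{coro:CK}). By Duhamel,
\begin{multline*}
g(t)m(-i\nabla_x)U_V(t)\sd^{-s_0}=g(t)m(-i\nabla_x)e^{it\Delta_x}\sd^{-s_0}\\
-ig(t)m(-i\nabla_x)e^{it\Delta_x}\int_0^t e^{-i\tau\Delta_x}V(\tau)U_V(\tau)\sd^{-s_0}\,d\tau.
\end{multline*}
For the free term, since $m(-i\nabla_x)$ commutes with $e^{it\Delta_x}$ and $|m(\xi)|\lesssim\langle\xi\rangle^{s_0-\sigma}$ implies that $\sd^{\sigma}m(-i\nabla_x)\sd^{-s_0}$ is a bounded Fourier multiplier on $L^2_x$, I would factor it as $(g(t)e^{it\Delta_x}\sd^{-\sigma})\cdot(\sd^{\sigma}m(-i\nabla_x)\sd^{-s_0})$ and apply the dual form of Corollary~\ref{coro:deriv-distrib} (see Remark~\ref{rk:stri-dual}) to the first factor, obtaining the $\gS^{2\alpha'}$-bound $\lesssim\|g\|_{L^{2p'}_tL^{2q'}_x}$.

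For the Duhamel correction I would focus on the case $s_0\ge 0$ (the case $s_0<0$ being analogous via Remark~\ref{rk:negative-sobolev}). Writing $V(\tau)U_V(\tau)\sd^{-s_0}=(V(\tau)\sd^{-s_0})(\sd^{s_0}U_V(\tau)\sd^{-s_0})$ and applying Lemma~\ref{lem:leibniz-decomp-general} with $s$ replaced by $s_0$, I decompose $\sd^{s_0}V(\tau)\sd^{-s_0}=\sum_a(A_a(\tau))^*B_a(\tau)\sd^{-a}$ with $A_a(\tau)=L_a^{(1)}(g_a^{(1)}(V(\tau)))$ and $B_a(\tau)=L_a^{(2)}(g_a^{(2)}(V(\tau)))$. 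Commuting the Fourier multiplier $\sd^{-s_0}$ past $e^{-i\tau\Delta_x}$, the correction reduces to a sum over $a$ of terms
\begin{equation*}
-ig(t)m(-i\nabla_x)\sd^{-s_0}e^{it\Delta_x}\int_0^t e^{-i\tau\Delta_x}(A_a(\tau))^*B_a(\tau)\sd^{-a}\sd^{s_0}U_V(\tau)\sd^{-s_0}\,d\tau.
\end{equation*}

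Each such term I view as the composition of the map $u_0\mapsto B_a(\tau)\sd^{-a}\sd^{s_0}U_V(\tau)\sd^{-s_0}u_0\in L^2_\tau L^2(X_a)$ from $L^2_x$, and the retarded Christ--Kiselev operator $\cS_<:L^2_\tau L^2(X_a)\to L^2_tL^2_x$ defined by $\tilde u\mapsto g(t)m(-i\nabla_x)\sd^{-s_0}e^{it\Delta_x}\int_0^t e^{-i\tau\Delta_x}(A_a(\tau))^*\tilde u(\tau)\,d\tau$. The first map is controlled in operator norm via Lemma~\ref{lem:strichartz-A} applied to $\cU(\tau)=\sd^{s_0}U_V(\tau)\sd^{-s_0}$, whose rank-one Strichartz estimate is Proposition~\ref{prop:UV-Sonae}, with factor $\lesssim\|V\|^{\nu_a/(2(q_a^{(2)})')}_{L^\mu_tW^{s,\nu}_x}$ coming from $B_a$. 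For $\cS_<$, the non-retarded analogue factorises as the composition of $(g(t)e^{it\Delta_x}\sd^{-\sigma})(\sd^\sigma m(-i\nabla_x)\sd^{-s_0})$ on the left (in $\gS^{2\alpha'}$ by the free-term argument) with $\int_0^\ii e^{-i\tau\Delta_x}(A_a(\tau))^*\,d\tau$ on the right (the adjoint of $A_a(\tau)e^{i\tau\Delta_x}$, controlled in operator norm by Corollary~\ref{coro:general-schatten} with factor $\lesssim\|V\|^{\nu_a/(2(q_a^{(1)})')}_{L^\mu_tW^{s,\nu}_x}$). Corollary~\ref{coro:CK} then transfers the bound from $\int_0^\ii$ to $\int_0^t$. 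H\"older in Schatten spaces combines the two factors; using that $1/(2(q_a^{(1)})')+1/(2(q_a^{(2)})')=1/\nu_a$ from Lemma~\ref{lem:leibniz-decomp-general}, the total power of $\|V\|_{L^\mu_tW^{s,\nu}_x}$ equals one. Summing over $a$, the Duhamel correction is bounded in $\gS^{2\alpha'}$ by $C\|g\|_{L^{2p'}_tL^{2q'}_x}\cdot\|V\|_{L^\mu_tW^{s,\nu}_x}$, absorbed into the free-term bound for $\|V\|_{L^\mu_tW^{s,\nu}_x}\le\delta$ small.

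The difference estimate follows by expanding $U_V-U_{\tilde V}=-i\int_0^t U_V(t,\tau)(V-\tilde V)(\tau)U_{\tilde V}(\tau)\,d\tau$ and running the same argument, with $V-\tilde V$ appearing linearly. The hardest part will be the simultaneous choice of parameters $(q_a^{(1)},q_a^{(2)})$ in Lemma~\ref{lem:leibniz-decomp-general}, of the Strichartz-admissible triples invoked in Corollary~\ref{coro:general-schatten}, and of the Schatten exponents $(\alpha_1,\alpha_2)$ in Corollary~\ref{coro:CK}, so that the H\"older relation on Schatten exponents, the Christ--Kiselev admissibility $1/\alpha_1+1/\alpha_2<1/p_1+1/p_2$, and the range constraint $1/\nu-(s_0-a)/d\le 1/\nu_a\le 1/\nu$ of Lemma~\ref{lem:leibniz-decomp-general} can all be met simultaneously for each $a\in\{0,1,\ldots,\lfloor s_0\rfloor,s_0\}$; the hypotheses on $(p,q,\alpha,\mu,\nu,\zeta,s_0,\sigma)$ in the proposition are precisely what is needed for such a choice to exist.
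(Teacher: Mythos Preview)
Your proposal is correct and follows essentially the same route as the paper: Duhamel expansion, fractional Leibniz decomposition of $\sd^{s_0}V(\tau)\sd^{-s_0}$, the Christ--Kiselev lemma in Schatten spaces (Corollary~\ref{coro:CK}), and Strichartz bounds for $\cU(\tau)=\sd^{s_0}U_V(\tau)\sd^{-s_0}$ via Proposition~\ref{prop:UV-Sonae} and Lemma~\ref{lem:strichartz-A}. The only difference is organizational: what you flag as ``the hardest part'' (the simultaneous choice of $(q_a^{(1)},q_a^{(2)})$, Strichartz triples, and Schatten exponents compatible with both Corollary~\ref{coro:CK} and the constraints of Lemma~\ref{lem:leibniz-decomp-general}) is precisely the content the paper isolates into the auxiliary Lemma~\ref{lem:leibniz-without-loss}, which carries out that parameter selection explicitly and absorbs the $\sd^{-a}$ into the $J_n^{(2)}$ factor so that it maps $L^{2q_n^{(2)}}_x\to L^2(X_n)$ with a clean Strichartz-admissible $q_n^{(2)}$.
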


To prove Proposition \ref{prop:UV-bez} we will need the following version of fractional Leibniz rule, whose proof will be done after the one of Proposition \ref{prop:UV-bez}.

\begin{lemma}\label{lem:leibniz-without-loss}
 Let $d\ge2$, $s_0\ge0$ and $\mu,\nu_0\in(1,+\ii)$. Define $\zeta_0:=2/\mu+d/\nu_0-2$ and assume that $0\le\zeta_0\le s_0$, $(\zeta_0+2)/(2(d+1))<1/\mu<(\zeta_0+2)/(2(\zeta_0+1))$, $1/\nu_0<(\zeta_0+2)/(2(\zeta_0+1))$. Then, there exists $N\in\N\setminus\{0\}$ and for all $n=1,\ldots,N$ and $\ell\in\{1,2\}$ there exist  $p_n^{(\ell)},q_n^{(\ell)}\in(1,+\ii)$, $J_n^{(\ell)}:B_n^{(\ell)}\to\cB(L^{2q_n^{(\ell)}}_x,L^2(X_n))$ linear and continuous, where $B_n^{(\ell)}$ is some Banach space and $X_n$ some measure space, $g_n^{(\ell)}: W^{s_0,\nu_0}_x\to B_n^{(\ell)}$, and $\theta_n^{(\ell)}\in[0,1]$ with $\theta_n^{(1)}+\theta_n^{(2)}=1$ such that for all $V\in L^\mu_t W^{s_0,\nu_0}_x$ one can decompose
 \begin{equation}\label{eq:decomp-leibniz-schatten-J}
    \sd^{s_0} V(t)\sd^{-s_0} = \sum_{n=1}^N J_n^{(1)}(g_n^{(1)}(V(t)))^*J_n^{(2)}(g_n^{(2)}(V(t))),
 \end{equation}
with for all $n=1,\ldots,N$ and all $\ell\in\{1,2\}$,
 \begin{equation}\label{eq:est-gnj}
    \|g_n^{(\ell)}(V(t))\|_{L^{2(p_n^{(\ell)})'}_t B_n^{(\ell)}}\lesssim \|V\|_{L^\mu_t W^{s_0,\nu_0}_x}^{\theta_n^{(\ell)}}
 \end{equation}
 $$\frac{2}{p_n^{(\ell)}}+\frac{d}{q_n^{(\ell)}}=d,$$
 and defining $\alpha_n^{(\ell)}$ by $1/\alpha_n^{(\ell)}=1/(dp_n^{(\ell)})+1/q_n^{(\ell)}$, we have
 $$\alpha_n^{(\ell)}<p_n^{(\ell)},\quad\frac{1}{2(\alpha_n^{(1)})'}+\frac{1}{2(\alpha_n^{(2)})'}=\frac{1}{d\mu'}.$$
 In particular, note that
 \begin{equation}\label{eq:est-Jng}
    \|J_n^{(\ell)}(g(t))\|_{L^{2(p_n^{(\ell)})'}_t\cB(L^{2q_n^{(\ell)}}_x\to L^2(X_n))}\lesssim \|g\|_{L^{2(p_n^{(\ell)})'}_t B_n^{(\ell)}}.
 \end{equation}

\end{lemma}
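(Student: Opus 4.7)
The strategy is to deduce \eqref{eq:decomp-leibniz-schatten-J} from the pointwise-in-$t$ fractional Leibniz decomposition of Lemma \ref{lem:leibniz-decomp-general}, applied to $W=V(t)$ with $s=s_0$ and $\nu=\nu_0$. This produces
\[
\sd^{s_0}V(t)\sd^{-s_0}=\sum_{a\in\{0,1,\ldots,\lfloor s_0\rfloor,s_0\}}L_a^{(1)}(g_a^{(1)}(V(t)))^*L_a^{(2)}(g_a^{(2)}(V(t)))\sd^{-a},
\]
and the only remaining task is to absorb the trailing $\sd^{-a}$ into the right factor via a Sobolev embedding while matching the outer exponents to Strichartz-admissible ones.

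For each $a$, I introduce a slack parameter $c_a\in[\max(0,\zeta_0-s_0+a),\min(a,\zeta_0)]$, a non-empty interval precisely because $0\le\zeta_0\le s_0$. Setting $1/\nu_a:=(2/\mu'+c_a)/d$, the range constraint $1/\nu_0-(s_0-a)/d\le 1/\nu_a\le 1/\nu_0$ required by Lemma \ref{lem:leibniz-decomp-general} is exactly this interval condition on $c_a$. I then pick $q_a^{(1)},q_a^{(2)}$ satisfying $1/(2(q_a^{(1)})')+1/(2(q_a^{(2)})')=1/\nu_a$, define $q_n^{(1)}:=q_a^{(1)}$ and $1/q_n^{(2)}:=1/q_a^{(2)}+2c_a/d$, and determine $p_n^{(\ell)}$ by Strichartz admissibility $2/p_n^{(\ell)}+d/q_n^{(\ell)}=d$. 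A direct computation yields the clean formulas $(p_n^{(1)})'=(c_a+2)\mu/2$ and $(p_n^{(2)})'=(c_a+2)\mu/(2(c_a+1))$. Since $c_a\le a$ and $q_a^{(2)}\ge q_n^{(2)}$, the Sobolev embedding $\sd^{-a}\colon L^{2q_n^{(2)}}_x\to L^{2q_a^{(2)}}_x$ is continuous, so I may set $J_n^{(1)}:=L_a^{(1)}$ on $X_n:=X_a$ and $J_n^{(2)}(g):=L_a^{(2)}(g)\sd^{-a}$, landing in $\cB(L^{2q_n^{(2)}}_x,L^2(X_n))$.

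Verifying the conditions is pure algebra. The identity $1/(2(\alpha_n^{(1)})')+1/(2(\alpha_n^{(2)})')=1/(d\mu')$ reduces, through $1/(\alpha_n^{(\ell)})'=1/(dp_n^{(\ell)})$ (forced by Strichartz admissibility at $\sigma=0$), to $1/p_n^{(1)}+1/p_n^{(2)}=2/\mu'$, which follows by direct substitution into the explicit formulas. The sharpness $\alpha_n^{(\ell)}<p_n^{(\ell)}$, equivalent to $(p_n^{(\ell)})'<d+1$, is worst at $c_a=\zeta_0$ for $\ell=1$ and yields $(\zeta_0+2)\mu<2(d+1)$: precisely the lower hypothesis $(\zeta_0+2)/(2(d+1))<1/\mu$. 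The condition $p_n^{(\ell)}>1$ is worst for $\ell=2$, $c_a=\zeta_0$, yielding $(\zeta_0+2)\mu>2(\zeta_0+1)$: precisely the upper hypothesis $1/\mu<(\zeta_0+2)/(2(\zeta_0+1))$. Finally, the admissibility $q_a^{(2)}<+\infty$ (required by Lemma \ref{lem:leibniz-decomp-general}) is automatic when $\zeta_0\le d/2-1$; in the complementary regime $\zeta_0>d/2-1$ it rearranges, using $1/\mu=(\zeta_0+2-d/\nu_0)/2$, to the third hypothesis $1/\nu_0<(\zeta_0+2)/(2(\zeta_0+1))$.

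The time integrability \eqref{eq:est-gnj} combines the pointwise bound provided by Lemma \ref{lem:leibniz-decomp-general}, $\|g_a^{(\ell)}(V(t))\|_{B_a^{(\ell)}}\lesssim\|V(t)\|_{W^{s_0,\nu_0}_x}^{\theta_n^{(\ell)}}$ with $\theta_n^{(\ell)}:=\nu_a/(2(q_a^{(\ell)})')$, and the crucial matching identity $\theta_n^{(\ell)}\cdot 2(p_n^{(\ell)})'=\mu$, itself a short computation from the explicit formulas. Raising to the $(2(p_n^{(\ell)})')$-th power and integrating in $t$ produces the bound with the desired $\theta_n^{(\ell)}$-power of $\|V\|_{L^\mu_t W^{s_0,\nu_0}_x}$, and $\theta_n^{(1)}+\theta_n^{(2)}=1$ follows directly from Lemma \ref{lem:leibniz-decomp-general}. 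The bound \eqref{eq:est-Jng} is immediate from Minkowski and the uniform-in-$t$ operator norm bound on $J_n^{(\ell)}$. The main obstacle is the exponent bookkeeping: each of the three structural hypotheses on $\mu$ and $\nu_0$ corresponds to one sharp admissibility check, and tracking the free parameter $c_a$ as it propagates through the chain of definitions requires care.
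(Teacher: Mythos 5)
Your proposal is correct and follows essentially the same route as the paper's proof: apply Lemma \ref{lem:leibniz-decomp-general} pointwise in $t$, absorb the trailing $\sd^{-a}$ via Sobolev embedding, and tune exponents so each hypothesis on $(\mu,\nu_0)$ yields one sharp admissibility check, with your $c_a$ playing the role of the paper's $z_a=\min(\zeta_0,a)$ and your $(\nu_a,q_a^{(2)})$ matching the paper's $(q_a,\tilde{q_a}^{(2)})$. One small presentational point: the single constraint $1/(2(q_a^{(1)})')+1/(2(q_a^{(2)})')=1/\nu_a$ does not determine the split, so the explicit formulas $(p_n^{(1)})'=(c_a+2)\mu/2$, $(p_n^{(2)})'=(c_a+2)\mu/(2(c_a+1))$ should be read as \emph{defining} the choice of $q_a^{(\ell)}$ (as the paper does by writing $1/(q_a^{(\ell)})'$ explicitly) rather than as a consequence of a generic pick.
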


\begin{proof}[Proof of Proposition \ref{prop:UV-bez} assuming Lemma \ref{lem:leibniz-without-loss}]
 Assume first that $s_0\ge0$. Notice that the parameters $(d,s,\mu,\nu,s_0)$ satisfy the assumptions of Proposition \ref{prop:UV-Sonae}: the only non-explicit condition is $\zeta<d/2$, which follow from the assumptions $1/\mu,1/\nu<(\zeta+2)/(2(\zeta+1))$. Let $g\in L^{2p'}_t(\R_+,L^{2q'}_x)$ and define
 $$\cU(t):=\sd^{s_0} U_V(t)\sd^{-s_0}.$$
 By the Duhamel formula, we have
$$
\begin{multlined}
 g(t)m(-i\nabla_x)U_V(t)\sd^{-s_0} = g(t)e^{it\Delta_x}m(-i\nabla_x)\sd^{-s_0} \\
 -i g(t)e^{it\Delta_x}m(-i\nabla_x)\sd^{-s_0}\int_0^t e^{-it_1\Delta}\sd^{s_0} V(t_1)\sd^{-s_0}\cU(t_1)\,dt_1.
\end{multlined}
$$
As in the proof of Proposition \ref{prop:UV-Sonae}, let $\nu_0\in(1,+\ii)$ be such that
$$2\le\frac{2}{\mu}+\frac{d}{\nu_0}\le2+s_0,\
\frac{1}{\nu_0}<\frac12+\frac1d,\
\frac{2}{\mu}+\frac{d}{\nu_0}<2+\frac d2,\
\frac{1}{\nu}-\frac{s-s_0}{d}\le\frac{1}{\nu_0}\le\frac{1}{\nu}.
$$
In particular, we have $W^{s,\nu}_x\hookrightarrow W^{s_0,\nu_0}_x$. Define $\zeta_0:=2/\mu+d/\nu_0-2$. Since $1/\nu_0\le1/\nu$, we have $\zeta_0\le\zeta$.
Since $(\zeta,\nu)$ satisfies that
$$\frac1\nu<\frac{\zeta+2}{2(\zeta+1)},\quad \frac{\zeta+2}{2(d+1)}<\frac1\mu<\frac{\zeta+2}{2(\zeta+1)},$$
and since $1/\nu_0\le1/\nu$ and $\zeta_0\le\zeta$, we have by monotoniticy that
$$\frac{1}{\nu_0}<\frac{\zeta_0+2}{2(\zeta_0+1)},\quad \frac{\zeta_0+2}{2(d+1)}<\frac1\mu<\frac{\zeta_0+2}{2(\zeta_0+1)}.$$
We can thus apply Lemma \ref{lem:leibniz-without-loss} with the parameters $(d,s_0,\mu,\nu_0)$ to $V\in L^\mu_t W^{s_0,\nu_0}_x$ to infer that
$$
\begin{multlined}
 g(t)e^{it\Delta_x}m(-i\nabla_x)\sd^{-s_0}\int_0^t e^{-it_1\Delta}\sd^{s_0} V(t_1)\sd^{-s_0}\cU(t_1)\,dt_1 \\
 =\sum_{n=1}^N [g(t)e^{it\Delta_x}m(-i\nabla_x)\sd^{-s_0}(J_n^{(1)}(g_n^{(1)}(V(t_1)))e^{it_1\Delta})^*]_<J_n^{(2)}(g_n^{(2)}(V(t_1)))\cU(t_1),
\end{multlined}
$$
where the notation $[\cdot]_<$ is the same as in Corollary \ref{coro:CK}. By \eqref{eq:stri-bez-xtotx}, we have
\begin{align*}
\|g(t)e^{it\Delta_x}m(-i\nabla_x)\sd^{-s_0}\|_{\gS^{2\alpha'}(L^2_x\to L^2_t(\R_+,L^2_x))}&\lesssim\|g(t)e^{it\Delta_x}\sd^{-\sigma}\|_{\gS^{2\alpha'}(L^2_x\to L^2_t(\R_+,L^2_x))}\\
&\lesssim \|g\|_{L^{2p'}_t(\R_+,L^{2q'}_x)},
\end{align*}
and combining Corollary \ref{coro:general-schatten} and \eqref{eq:est-Jng}, we have
$$\|J_n^{(1)}(g_1)e^{it_1\Delta}\|_{L^2_x\to L^2_{t_1}(\R_+,L^2(X_n))} \lesssim  \|g_1\|_{L^{2(p_n^{(1)})'}_{t_1}(\R_+,B_n^{(1)})}.$$
Both Schatten exponents are bigger than the Lebesgue time exponent on the right side. Hence, by Corollary \ref{coro:CK} we deduce that
\begin{align*}
    \|[g(t)e^{it\Delta_x}m(-i\nabla_x)\sd^{-s_0}(J_n^{(1)}(g_n^{(1)}(V(t_1)))&e^{it_1\Delta})^*]_<\|_{\gS^{2\alpha'}(L^2_{t_1}(\R_+,L^2(X_n))\to L^2_t(\R_+,L^2_x))}\\
    &\lesssim \|g\|_{L^{2p'}_t(\R_+,L^{2q'}_x)}\|g_n^{(1)}(V(t_1))\|_{L^{2(p_n^{(1)})'}_{t_1}(\R_+, B_n^{(1)})} \\
    &\lesssim \|g\|_{L^{2p'}_t(\R_+,L^{2q'}_x)} \|V\|_{L^\mu_t(\R_+,W^{s,\nu}_x)}^{\theta_n^{(1)}}
\end{align*}
where in the last inequality we used \eqref{eq:est-gnj} and $W^{s,\nu}_x\hookrightarrow W^{s_0,\nu_0}_x$. Combining Proposition \ref{prop:UV-Sonae}, Lemma \ref{lem:strichartz-A}, \eqref{eq:est-Jng}, and \eqref{eq:est-gnj} we also have
$$
 \|J_n^{(2)}(g_n^{(2)}(V(t_1)))\cU(t_1)\|_{L^2_x\to L^2_{t_1}(\R_+,L^2(X_n))} \lesssim \|V\|_{L^\mu_t(\R_+,W^{s,\nu}_x)}^{\theta_n^{(2)}}.
$$
We deduce that
$$\|g(t)m(-i\nabla_x)U_V(t)\sd^{-s_0}\|_{\gS^{2\alpha'}(L^2_x\to L^2_t(\R_+,L^2_x))}\lesssim \|g\|_{L^{2p'}_t(\R_+, L^{2q'}_x)},$$
which is the first desired result. The estimate for $U_V-U_{\tilde{V}}$ follows from the same argument, using the same identity as in the end of the proof of Proposition \ref{prop:UV-Sonae}. Finally, in the case $s_0\le0$, one can use the same argument by working on $U_V(t)^*$ as in Remark \ref{rk:negative-sobolev}.
\end{proof}

\begin{proof}[Proof of Lemma \ref{lem:leibniz-without-loss}]
For any $a\in\{0,1,\ldots,\lfloor s_0\rfloor,s_0\}$, define $z_a:=\min(\zeta_0,a)$ and
$$\frac{1}{(q_a^{(1)})'}:=\frac2d\left(1-\frac{2}{\mu(z_a+2)}\right),\quad \frac{1}{(q_a^{(2)})'}:=\frac2d\left(1-\frac{2(z_a+1)}{\mu(z_a+2)}\right).$$
Since $2/\mu<(\zeta_0+2)/(\zeta_0+1)$ and $z_a\le\zeta_0$, we deduce that $0<1/(q_a^{(\ell)})'<2/d$ and thus defining $p_a^{(\ell)}$ by the relation
$$\frac{2}{p_a^{(\ell)}}+\frac{d}{q_a^{(\ell)}}=d,$$
we have $p_a^{(\ell)}\in(1,+\ii)$. We also define
$$\frac{1}{(\tilde{q_a}^{(2)})'}:=\frac{1}{(q_a^{(2)})'}+\frac{2z_a}{d}>0.$$
Notice that we have
$$\frac{1}{q_a}:=\frac{1}{2(q_a^{(1)})'}+\frac{1}{2(\tilde{q_a}^{(2)})'}=\frac{1}{\nu_0}-\frac{\zeta_0-z_a}{d}\in\left[\frac{1}{\nu_0}-\frac{s_0-a}{d}, \frac{1}{\nu_0}\right].$$
The condition $1/\nu_0<(\zeta_0+2)/(2(\zeta_0+1))$ implies that $\tilde{q_a}^{(2)}\in(1,+\ii)$: $1/(\tilde{q_a}^{(2)})'$ is an increasing function of $z_a$ and its value at $z_a=\zeta_0$ is $<1$. We thus apply Lemma \ref{lem:leibniz-decomp-general} to the parameters $(q_a^{(1)},\tilde{q_a}^{(2)})$, and we take $J_a^{(1)}:=L_a^{(1)}$,  $J_a^{(2)}:=L_a^{(2)}\sd^{-a}$. Notice that if $\theta_a^{(1)}:=q_a/(2(q_a^{(1)})')\in[0,1]$ and $\theta_a^{(2)}:=q_a/(2(\tilde{q_a}^{(2)})')\in[0,1]$, we have $\theta_a^{(1)}+\theta_a^{(2)}=1$ and $2(p_a^{(\ell)})'\theta_a^{(\ell)}=\mu$, so that
$$\|g_a^{(\ell)}(V(t))\|_{L^{2(p_a^{(\ell)})'}_t B_a^{(\ell)}}\lesssim \|V\|_{L^\mu_t W^{s,\nu}_x}^{\theta_a^{(\ell)}}.$$
Furthermore, since
$$\|J_a^{(1)}(g)\|_{L^{2q_a^{(1)}}_x\to L^2(X_a)} =\|L_a^{(1)}(g)\|_{L^{2q_a^{(1)}}_x\to L^2(X_a)} \lesssim \|g\|_{B_a^{(1)}},$$
we deduce that
$$\|J_a^{(1)}(g(t))\|_{L^{2(p_a^{(1)})'}_t\cB(L^{2q_a^{(1)}}_x\to L^2(X_a))} \lesssim \|g\|_{L^{2(p_a^{(1)})'}_tB_a^{(1)}}.$$
Furthermore, since $W^{z_a,2q_a^{(2)}}_x \hookrightarrow L^{2\tilde{q_a}^{(2)}}_x$ and $z_a\le a$, we have
\begin{align*}
 \|J_a^{(2)}(g)\|_{L^{2q_a^{(2)}}_x\to L^2(X_a)} &= \|L_a^{(2)}(g)\sd^{-a}\|_{L^{2q_a^{(2)}}_x\to L^2(X_a)} \\
 &= \|L_a^{(2)}(g)\|_{W^{a,2q_a^{(2)}}_x\to L^2(X_a)} \\
 &\lesssim \|L_a^{(2)}(g)\|_{L^{2\tilde{q_a}^{(2)}}_x\to L^2(X_a)} \\
 &\lesssim \|g\|_{B_a^{(2)}},
\end{align*}
and hence
$$\|J_a^{(2)}(g(t))\|_{L^{2(p_a^{(2)})'}_t\cB(L^{2q_a^{(2)}}_x\to L^2(X_a))} \lesssim \|g\|_{L^{2(p_a^{(2)})'}_tB_a^{(2)}}.$$
Finally, defining $\alpha_a^{(\ell)}$ by
$$\frac{1}{\alpha_a^{(\ell)}}=\frac{1}{dp_a^{(\ell)}}+\frac{1}{q_a^{(\ell)}},$$
we have $(\alpha_a^{(\ell)})'=2(q_a^{(\ell)})'$ and hence
$$\frac{1}{2(\alpha_a^{(1)})'}+\frac{1}{2(\alpha_a^{(2)})'}=\frac{1}{d\mu'}.$$
The condition $1/\mu>(\zeta+2)/(2(d+1))$ ensures that $\alpha_a^{(\ell)}<p_a^{(\ell)}$.

\end{proof}

\begin{corollary}
  Let $d\ge2$, $s\ge0$, and $\mu,\nu\in(1,+\ii)$. Define $\zeta:=2/\mu+d/\nu-2$ and assume that $0\le\zeta\le s$, $1/\nu<\min(1/2+1/d,(\zeta+2)/(2(\zeta+1)))$, and $(\zeta+2)/(2(d+1))<1/\mu<(\zeta+2)/(2(\zeta+1))$. Let $s_0\in[0,s]$. Let $p,q,\alpha\in(1,+\ii)$ be such that $1/\alpha\ge1/(dp)+1/q$, $\alpha<p$. Let $\sigma_1,\sigma_2\ge0$ be such that $\sigma_1+\sigma_2=d-2/p-d/q$ and
 $$\sigma_1,\sigma_2<\min(d/2,d/2-2/p+1).$$
  Let $m_1,m_2:\R^d\to\R$ be such that $|m_j(\xi)|\lesssim\langle\xi\rangle^{s_0-\sigma_j}$
 Then, there exists $\delta>0$ and $C>0$ such that for any $V_1,V_2,V_3\in L^\mu_t W^{s,\nu}_x$ such that $\|V_j\|_{L^\mu_t W^{s,\nu}_x}\le\delta$, and any $\gamma\in\cH^{s_0,\alpha}$ we have
 \begin{equation}\label{eq:bez-UV-m1m2}
  \|\rho_{m_1(-i\nabla_x)U_{V_1}(t)\gamma U_{V_1}(t)^*m_2(-i\nabla_x)}\|_{L^p_t L^q_x} \lesssim \|\sd^{s_0}\gamma\sd^{s_0}\|_{\gS^\alpha},
 \end{equation}
 \begin{equation}
  \|\rho_{m_1(-i\nabla_x)U_{V_1}(t)\gamma (U_{V_2}(t)-U_{V_3}(t))^*m_2(-i\nabla_x)}\|_{L^p_t L^q_x} \lesssim \|\sd^{s_0}\gamma\sd^{s_0}\|_{\gS^\alpha}\|V_2-V_3\|_{L^\mu_tW^{s,\nu}_x}.
 \end{equation}

\end{corollary}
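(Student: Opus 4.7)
The strategy is to mimic the proof of Corollary \ref{coro:deriv-distrib}, using Proposition \ref{prop:UV-bez} in place of the dual estimate \eqref{eq:stri-bez-xtotx} for the free Laplacian. The first step is to dualize: writing $\gamma = \sd^{-s_0}\Gamma\sd^{-s_0}$ with $\Gamma := \sd^{s_0}\gamma\sd^{s_0}\in\gS^\alpha$, for any test function $W\in L^{p'}_tL^{q'}_x$ we have, using cyclicity of trace,
$$\int_\R \int_{\R^d} W(t,x)\rho_{m_1(-i\nabla_x)U_{V_1}(t)\gamma U_{V_1}(t)^*m_2(-i\nabla_x)}(t,x)\,dx\,dt=\int_\R \tr_{L^2_x}\left[\Gamma\, A_1(t)^* A_2(t)\right]dt,$$
where formally $A_j(t) := W_j(t)m_j(-i\nabla_x)U_{V_1}(t)\sd^{-s_0}$ for some splitting $W=W_1^*W_2$ (pointwise in $(t,x)$) to be specified. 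Viewing the $A_j(t)$ as operators from $L^2_x$ to $L^2_tL^2_x$, the problem reduces by H\"older in Schatten spaces to estimating each $A_j$ in some Schatten class $\gS^{2\alpha_j'}(L^2_x\to L^2_tL^2_x)$ with $1/(2\alpha_1')+1/(2\alpha_2')=1/\alpha'$.

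The parameter construction is carried out exactly as in the proof of Corollary \ref{coro:deriv-distrib}: under the assumptions of the corollary, one can find $p_1,q_1,\alpha_1,p_2,q_2,\alpha_2\in(1,+\ii)$ verifying
$$\frac{2}{p_j}+\frac{d}{q_j}=d-2\sigma_j,\quad\frac{1}{\alpha_j}\ge\frac{1}{dp_j}+\frac{1}{q_j},\quad\alpha_j<p_j,$$
together with the combining relations $1/p=1/(2p_1)+1/(2p_2)$, $1/q=1/(2q_1)+1/(2q_2)$ and $1/\alpha=1/(2\alpha_1)+1/(2\alpha_2)$. Given such exponents, I split
$$W_j(t,x)=\frac{|W(t,x)|^{q'/(2q_j')}}{\|W(t,\cdot)\|_{L^{q'}_x}^{q'/(2q_j')-p'/(2p_j')}}\ (\text{up to the phase of }W),$$
so that $\|W_j\|_{L^{2p_j'}_tL^{2q_j'}_x}=\|W\|_{L^{p'}_tL^{q'}_x}^{p'/(2p_j')}$ and $W_1^* W_2=W$.

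Since $|m_j(\xi)|\lesssim\langle\xi\rangle^{s_0-\sigma_j}$, Proposition \ref{prop:UV-bez} applied with derivative $\sigma=\sigma_j$ and exponents $(p_j,q_j,\alpha_j)$ (whose hypotheses on $\mu,\nu,s$ are exactly those assumed here) yields, for $V_1$ with $\|V_1\|_{L^\mu_tW^{s,\nu}_x}\le\delta$ small enough,
$$\|A_j\|_{\gS^{2\alpha_j'}(L^2_x\to L^2_tL^2_x)}\lesssim \|W_j\|_{L^{2p_j'}_tL^{2q_j'}_x}=\|W\|_{L^{p'}_tL^{q'}_x}^{p'/(2p_j')}.$$
Combining the two bounds through H\"older in Schatten spaces and taking the supremum over $W$ of norm $1$ in $L^{p'}_tL^{q'}_x$ gives \eqref{eq:bez-UV-m1m2}. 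For the difference estimate, the same splitting produces $A_1$ associated to $U_{V_1}$ and an operator $A_2'$ associated to $U_{V_2}-U_{V_3}$; the difference version of Proposition \ref{prop:UV-bez} bounds $\|A_2'\|_{\gS^{2\alpha_2'}}$ by $\|W_2\|_{L^{2p_2'}_tL^{2q_2'}_x}\|V_2-V_3\|_{L^\mu_tW^{s,\nu}_x}$, while $A_1$ is controlled by the homogeneous estimate, and H\"older concludes.

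The only mildly delicate point is the parameter search, but it is essentially the same as in the proof of Corollary \ref{coro:deriv-distrib} (with the additional feedback from the hypotheses of Proposition \ref{prop:UV-bez} on $\mu,\nu,s$, which are global and do not interfere), so no new obstruction arises.
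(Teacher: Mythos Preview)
Your proposal is correct and follows exactly the paper's approach: the paper's proof consists of the single sentence ``The proof is the same as the one of Corollary \ref{coro:deriv-distrib}, starting from Proposition \ref{prop:UV-bez},'' which is precisely what you have written out in detail.
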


\begin{proof}
 The proof is the same as the one of Corollary \ref{coro:deriv-distrib}, starting from Proposition \ref{prop:UV-bez}.
\end{proof}

 \begin{theorem}\label{thm:stri-potential}

 Let $d\ge2$, $s\ge0$ and $\mu,\nu\in(1,+\ii)$. Define $\zeta:=2/\mu+d/\nu-2$ and assume that $0\le\zeta\le s$, $1/\nu<\min(1/2+1/d,(\zeta+2)/(2(\zeta+1)))$, and $(\zeta+2)/(2(d+1))<1/\mu<(\zeta+2)/(2(\zeta+1))$. Let $p,q,\alpha\in(1,+\ii)$ be such that
 $$\frac 2p+\frac dq=d-s,\ s<\min\left(\frac d2, \frac d2 - \frac2p + 1 \right),\ \frac 1\alpha \ge \frac{1}{dp}+\frac 1q,\ \alpha<p.$$
Then, there exists $\delta>0$ and $C>0$ such that for any $V_1,V_2,V_3\in L^\mu_t W^{s,\nu}_x$ such that $\|V_j\|_{L^\mu_t W^{s,\nu}_x}\le\delta$ and for all $\gamma\in\cH^{s,\alpha}$ we have
 $$\| \rho_{U_{V_1}(t)\gamma U_{V_1}(t)^*} \|_{L^p_t W^{s,q}_x} \le C\| \langle \nabla\rangle^s \gamma \langle\nabla\rangle^s\|_{\gS^\alpha},$$
 $$\| \rho_{U_{V_1}(t)\gamma (U_{V_2}(t)-U_{V_3}(t))^*} \|_{L^p_t W^{s,q}_x} \le C\| \langle \nabla\rangle^s \gamma \langle\nabla\rangle^s\|_{\gS^\alpha}\|V_2-V_3\|_{L^\mu_t W^{s,\nu}_x}.$$

\end{theorem}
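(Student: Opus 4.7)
The plan is to split the norm $\|\rho_{U_{V_1}(t)\gamma U_{V_1}(t)^*}\|_{L^p_tW^{s,q}_x}$ into its low-order $L^p_tL^q_x$-part and its top-order $|D|^s$-part, and to handle the low-order part directly by the $U_V$-density estimate~\eqref{eq:bez-UV-m1m2} applied with $s_0=s$, $m_1=m_2=1$, $\sigma_1=\sigma_2=s/2$ (all admissibility conditions follow at once from $s<\min(d/2,d/2-2/p+1)$). The bulk of the work is then the top-order part, for which I would apply the fractional Leibniz decomposition of Lemma~\ref{lem:leibniz-gamma} to $\gamma(t):=U_{V_1}(t)\gamma U_{V_1}(t)^*$ at each time $t$ with splitting parameters $\sigma_1+\sigma_2=s$ and auxiliary Leibniz exponents $r_1,r_2>2$ satisfying $1/r_1+1/r_2=1/q$. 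This writes $|D|^s\rho_{\gamma(t)}$ as a finite sum of \emph{main terms} $\rho_{\partial^\alpha U_{V_1}\gamma U_{V_1}^*D^{s,\alpha}}$ and $\rho_{D^{s,\beta}U_{V_1}\gamma U_{V_1}^*\partial^\beta}$ plus a \emph{remainder} $\cR(t)$ characterized by the duality identity $\int W(t)\cR(t)\,dx=\tr_{L^2_x} A^{(2)}(h^{(2)}(W(t)))^*A^{(1)}(h^{(1)}(W(t)))U_{V_1}(t)\gamma U_{V_1}(t)^*$.

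Each main term is of the form $\rho_{m_1(-i\nabla_x)U_{V_1}(t)\gamma U_{V_1}(t)^*m_2(-i\nabla_x)}$ and will be estimated directly by~\eqref{eq:bez-UV-m1m2}: for the $\alpha$-term I would take $m_1=\partial^\alpha$ of order $|\alpha|$, $m_2=D^{s,\alpha}$ of order $s-|\alpha|$, $s_0=s$, and admissibility parameters $\sigma_1=s-|\alpha|$, $\sigma_2=|\alpha|$; the required scaling $\sigma_1+\sigma_2=d-2/p-d/q=s$ holds, and the upper bound $\sigma_j\le s<\min(d/2,d/2-2/p+1)$ is exactly the hypothesis on $s$. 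This yields main-term bounds $\lesssim\|\sd^s\gamma\sd^s\|_{\gS^\alpha}$ in $L^p_tL^q_x$.

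For the remainder I would pass to duality, testing $\cR$ against $W\in L^{p'}_tL^{q'}_x$, so that the key quantity to bound is
$$\int_0^\infty\tr_{L^2_x}\bigl(A^{(2)}(h^{(2)}(W(t)))^*\,A^{(1)}(h^{(1)}(W(t)))\,U_{V_1}(t)\gamma U_{V_1}(t)^*\bigr)\,dt.$$
By cyclicity of the trace and Schatten H\"older, after inserting $\sd^{-s}\sd^s$ on both sides of $\gamma$, this quantity is at most $\|T_1\|_{\gS^{\beta_1}}\|T_2\|_{\gS^{\beta_2}}\|\sd^s\gamma\sd^s\|_{\gS^\alpha}$, where $1/\beta_1+1/\beta_2=1/\alpha'$ and $T_j:L^2_x\to L^2_tL^2(X_j)$ denotes the time-dependent map $[T_ju](t):=A^{(j)}(h^{(j)}(W(t)))U_{V_1}(t)\sd^{-s}u$. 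To bound each $\|T_j\|_{\gS^{\beta_j}}$ I would factor $T_j=\bigl[A_t^{(j)}\sd^{-\sigma_j}\bigr]\circ\bigl[\sd^{\sigma_j}U_{V_1}(t)\sd^{-s}\bigr]$, view the right factor as a $U_{V_1}$-Strichartz family governed by Proposition~\ref{prop:UV-bez} (with $m(\xi)=\langle\xi\rangle^{\sigma_j}$, $s_0=s$, $\sigma=s-\sigma_j$), and then lift this bound from multiplication to the general left factor $A_t^{(j)}\sd^{-\sigma_j}:L^{r_j}_x\to L^2(X_j)$ via the extension of Lemma~\ref{lem:strichartz-A} to $\cU(t)=\sd^{\sigma_j}U_{V_1}(t)\sd^{-s}$; the operator norm of the left factor at each $t$ is $\lesssim\|W(t)\|_{L^{q'}_x}^{\theta_j}$ by the $h^{(j)}$-estimates of Lemma~\ref{lem:leibniz-gamma}.

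The principal obstacle is the simultaneous selection of all the exponents involved: $\sigma_j,r_j,\theta_j$ from Lemma~\ref{lem:leibniz-gamma}, the Strichartz pair $(\tilde p_j,\tilde q_j)$ for $\sd^{\sigma_j}U_{V_1}\sd^{-s}$ with $2/\tilde p_j+d/\tilde q_j=d-2(s-\sigma_j)$ and $2\tilde q_j=r_j$, and the Schatten exponent $\beta_j$ with $1/\beta_j\ge 1/(d\tilde p_j)+1/\tilde q_j$, $\beta_j<\tilde p_j$, balanced by $1/\beta_1+1/\beta_2=1/\alpha'$, together with the time-H\"older identity $2\theta_j\tilde p_j'=p'$. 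Verifying that all these constraints are compatible under the hypotheses $2/p+d/q=d-s$, $s<\min(d/2,d/2-2/p+1)$, $1/\alpha\ge 1/(dp)+1/q$ and $\alpha<p$ (as well as the small-$(\mu,\nu)$ assumption built into Proposition~\ref{prop:UV-bez}) is a careful book-keeping exercise that mirrors the argument of Corollary~\ref{coro:deriv-distrib}. Once it is in place, the difference estimate for $U_{V_2}-U_{V_3}$ follows from the identical chain of steps, substituting at each occurrence the companion difference inequality in~\eqref{eq:bez-UV-m1m2} and Proposition~\ref{prop:UV-bez}.
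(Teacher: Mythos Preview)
Your strategy is correct and matches the paper's proof: split into the $L^p_tL^q_x$ part (handled by \eqref{eq:bez-UV-m1m2} with $m_1=m_2=1$, $\sigma_1=\sigma_2=s/2$), then apply Lemma~\ref{lem:leibniz-gamma} to $|D|^s\rho_{\gamma(t)}$, estimate the main terms via \eqref{eq:bez-UV-m1m2} with $m_1=\partial^\alpha$, $m_2=D^{s,\alpha}$, and treat the remainder by duality using Proposition~\ref{prop:UV-bez} together with Lemma~\ref{lem:strichartz-A}.

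The one place where the paper is cleaner is precisely what you flag as ``the principal obstacle''. You leave $\sigma_1,\sigma_2,r_1,r_2,\beta_1,\beta_2$ free and defer their selection to a Corollary~\ref{coro:deriv-distrib}-type argument. The paper instead fixes from the outset the \emph{symmetric} choice $\sigma_1=\sigma_2=s/2$ and $r_1=r_2=2q$ in Lemma~\ref{lem:leibniz-gamma}. Then $\theta=1/2$, both $A^{(j)}$ map $\dot W^{s/2,2q}_x\to L^2(X)$, and one takes $\cU(t)=\sd^{s/2}U_{V_1}(t)\sd^{-s}$ in Lemma~\ref{lem:strichartz-A} with the \emph{same} Strichartz triple $(p,q,\alpha)$ as in the statement (since $2/p+d/q=d-2(s/2)$). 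Both remainder factors then land in $\gS^{2\alpha'}$ with norm $\lesssim\|W\|_{L^{p'}_tL^{q'}_x}^{1/2}$, and $1/(2\alpha')+1/(2\alpha')=1/\alpha'$ closes H\"older immediately---no additional exponent book-keeping is needed.
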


\begin{proof}
 Let us write $V=V_1$ for brevity. By \eqref{eq:bez-UV-m1m2} applied to $m_j=1$ and $\sigma_j=s/2=s_0$, we have
 $$\|\rho_{U_V(t)\gamma U_V(t)^*}\|_{L^p_t L^q_x} \le C\|\sd^{s/2}\gamma\sd^{s/2}\|_{\gS^\alpha}\le C\|\sd^{s}\gamma\sd^{s}\|_{\gS^\alpha}.$$
It thus remains to estimate $|D|^s\rho_{U_V(t)\gamma U_V(t)^*}$ in $L^p_t L^q_x$. Let $W\in L^{p'}_t L^{q'}_x$. By Lemma \ref{lem:leibniz-gamma} applied with $\sigma_1=\sigma_2=s/2$ and $q_1=q_2=2q$, we have
\begin{multline*}
 \int W(t,x)(|D_x|^s\rho_{U_V(t)\gamma U_V(t)^*})(x)\,dx\,dt = \sum_{|\alpha|\le s/2}\frac{1}{\alpha!}\int W(t,x)\rho_{\partial^\alpha U_V(t)\gamma U_V(t)^*D^{s ,\alpha}}(x)\,dx\,dt\\ +\sum_{|\beta|\le s/2}\frac{1}{\beta!}\int W(t,x)\rho_{ D^{s ,\beta}U_V(t)\gamma U_V(t)^*\partial^\beta}(x)\,dx\,dt
 \\
 +\tr_{L^2_x} \left(\int_\R U_V(t)^*A^{(2)}(h^{(2)}(W(t)))^*A^{(1)}(h^{(1)}(W(t)))U_V(t)\,dt\right)\gamma
\end{multline*}
For any $|\alpha|\le s/2$ we have, applying \eqref{eq:bez-UV-m1m2} to $m_1(\xi)=\xi^\alpha$, $m_2(\xi)=\partial^\alpha(|\xi|^s)$, $s_0=s$, $\sigma_1=s-|\alpha|$, $\sigma_2=|\alpha|$,
\begin{align*}
 \left| \int W(t,x)\rho_{\partial^\alpha U_V(t)\gamma U_V(t)^*D^{s ,\alpha}}(x)\,dx\,dt \right| &\le \|W\|_{L^{p'}_t L^{q'}_x}\|\rho_{\partial^\alpha U_V(t)\gamma U_V(t)^*D^{s ,\alpha}}\|_{L^p_tL^q_x}\\
 &\le C\|W\|_{L^{p'}_t L^{q'}_x}\|\sd^{s}\gamma\sd^{s}\|_{\gS^\alpha}.
\end{align*}
A similar estimate holds for the terms in the sum over $\beta$. For the last term, we use the estimate
\begin{multline*}
 \left|\tr_{L^2_x} \left(\int_\R U_V(t)^*A^{(2)}(h^{(2)}(W(t)))^*A^{(1)}(h^{(1)}(W(t)))U_V(t)\,dt\right)\gamma \right| \\
 \le \prod_{j=1}^2\left\| A^{(j)}(h^{(j)}(W(t)))U_V(t)\sd^{-s}\right\|_{\gS^{2\alpha'}(L^2_x\to L^2_{t,x})}\|\sd^{s}\gamma\sd^{s}\|_{\gS^\alpha}
\end{multline*}
Defining $\cU(t):=\sd^{s/2}U_V(t)\sd^{-s}$, we have by Proposition \ref{prop:UV-bez} applied to $m(\xi)=\langle\xi\rangle^{s/2}$, $\sigma=s/2$, and $s_0=s$ that
$$\|g(t)\cU(t)\|_{\gS^{2\alpha'}(L^2_x\to L^2(\R_+,L^2_x))}\lesssim \|g\|_{L^{2p'}_t(\R_+,L^{2q'}_x)},$$
hence applying Lemma \ref{lem:strichartz-A} we have for $j=1,2$
\begin{align*}
 \left\| A^{(j)}(h^{(j)}(W(t)))U_V(t)\sd^{-s}\right\|_{\gS^{2\alpha'}(L^2_x\to L^2_{t,x})}^{2p'} &\le C\int_\R\|A^{(j)}(h^{(j)}(W(t)))\sd^{-s/2}\|_{L^{2q}_x\to L^2(X)}^{2p'}\,dt \\
 &\le C\int_\R\|A^{(j)}(h^{(j)}(W(t)))\|_{W^{s/2,2q}_x\to L^2(X)}^{2p'}\,dt \\
 &\le C\int_\R \|W(t)\|_{L^{q'}_x}^{p'}\,dt.
\end{align*}
Regrouping all the previous estimates, we deduce that
$$\||D|^s\rho_{U_V(t)\gamma U_V(t)^*}\|_{L^p_t L^q_x} \le C\|\sd^{s}\gamma\sd^{s}\|_{\gS^\alpha},$$
which proves the result.
\end{proof}

We will apply Theorem \ref{thm:stri-potential} to $s=d/2-1$, $\mu=2=\nu$, $p=2=q$, and $\alpha=2d/(d+1)$. We obtain
$$\| \rho_{U_V(t)\gamma U_V(t)^*} \|_{L^2_t H^s_x} \le C\| \langle \nabla\rangle^s \gamma \langle\nabla\rangle^s\|_{\gS^{2d/(d+1)}}$$
if $\|V\|_{L^2_t H^s_x}$ is small enough and if $s=d/2-1$.

\section{Estimating the reaction of the background}\label{sec:reaction}

  We now estimate the term
  $$
  \begin{multlined}
   \rho(V)(t):=\rho\left[ -i\int_0^te^{i(t-\tau)\Delta_x}[V(\tau),g(-i\nabla_x)]D_V(t,\tau)^*\,d\tau \right]\\
   +\rho\left[ -i\int_0^tD_V(t,\tau)[V(\tau),g(-i\nabla_x)]e^{i(\tau-t)\Delta_x}\,d\tau \right]\\
   +\rho\left[ -i\int_0^tD_V(t,\tau)[V(\tau),g(-i\nabla_x)]D_V(t,\tau)^*\,d\tau \right]
  \end{multlined}
  $$
  that arise in \eqref{eq:Phi}.
  \begin{theorem}\label{thm:reaction}
   Let $d\ge2$ and $s=d/2-1$. If $\|V\|_{L^2_t H^s_x}+\|\tilde{V}\|_{L^2_t H^s_x}\le\delta$, then
   $$\left\| \rho(V) \right\|_{L^2_t H^s_x} \le C,$$
   $$\left\| \rho(V) - \rho(\tilde{V}) \right\|_{L^2_t H^s_x} \le \frac12 \|V-\tilde{V}\|_{L^2_t H^s_x}.$$
  \end{theorem}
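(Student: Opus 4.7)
The plan is to decompose $\rho(V) = \rho_1(V) + \rho_2(V) + \rho_3(V)$ into the three terms appearing in its definition and estimate each piece separately in $L^2_t H^s_x$. By duality, testing $|D|^s \rho_j(V)$ against $W \in L^2_t H^{-s}_x$ and applying the density matrix Leibniz rule (Lemma \ref{lem:leibniz-gamma}) with the symmetric split $\sigma_1 = \sigma_2 = s/2$, the bulk of each estimate reduces to showing that certain operator products lie in $\gS^{2d/(d+1)}$. To expose the structure, I would expand $D_V$ via its Dyson series
$$D_V(t,\tau) = \sum_{n\ge 1}(-i)^n\int_{\tau<t_n<\cdots<t_1<t} e^{i(t-t_1)\Delta_x} V(t_1)\cdots V(t_n) e^{i(t_n-\tau)\Delta_x}\, dt_1\cdots dt_n$$
and treat each order $n$ separately. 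For order $n \ge 3$ convergence is easy because each additional $V$ contributes an extra small Schatten-class factor.

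At order $n$ the term to estimate involves a composition of $n+1$ factors of $V$ alternating with free propagators, applied to $g(-i\nabla_x)$ (with the commutator $[V(\tau),g(-i\nabla_x)]$ producing a second $g$-placement). I would use the standard Leibniz rule to push the half-derivatives $\sd^{s/2}$ through these factors so that they eventually land on $g(-i\nabla_x)$, controlled via the assumption $\langle\xi\rangle^{2(d-2)}g(\xi) \in L^\infty_\xi$. For each factor $V$, Lemma \ref{lem:leibniz-decomp-general} writes the resulting $\sd^a V \sd^{-a}$ as a sum of products with a good structure, so that composing with adjacent free propagators and invoking Corollary \ref{coro:general-schatten} puts each such block in a Schatten class. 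A H\"older product of these blocks together with the bounded operator $\sd^{s'} g(-i\nabla_x) \sd^{s'}$ (for $s'\le 2(d-2)$) then yields membership in $\gS^{2d/(d+1)}$. Replacing each retarded integral by an unrestricted $\int_\R$ is justified by the Christ-Kiselev lemma (Corollary \ref{coro:CK}) applied inductively, one adjacent pair at a time; this is made possible precisely by the product form $L^{(1)*}L^{(2)}$ produced by Lemma \ref{lem:leibniz-decomp-general}.

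The hard part is the second-order $n=2$ contribution where, after distributing derivatives, the middle $V$-factor must absorb $\partial_j$ with no propagator left to provide the Schatten decay needed to reach $\gS^{2d/(d+1)}$. Following the strategy outlined in Section \ref{sec:strategy}, I would borrow half a derivative by writing $\partial_j V = \sd^{-1/2}(\partial_j V + |D|^{1/2}\partial_j V)$: the first piece benefits from the Sobolev embedding $H^{s-1}_x \hookrightarrow L^{2d/(d-1)}_x$ and is handled by the previous argument, while the second piece lies only in $\dot{H}^{-1/2}_x$ and is controlled through Theorem \ref{thm:strichartz-HS}. Because this singular piece does not admit a clean product factorization, the passage from full to retarded integrals requires the weaker partially-orthogonal variant of Christ-Kiselev (Corollary \ref{coro:CK-2}) in place of Corollary \ref{coro:CK}. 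Finally, the Lipschitz estimate $\|\rho(V)-\rho(\tilde V)\|_{L^2_t H^s_x} \le \tfrac12\|V-\tilde V\|_{L^2_t H^s_x}$ follows from exactly the same decomposition by multilinearity: each term is polynomial in $V$, so its difference decomposes as a sum of products containing one factor $V-\tilde V$ and other factors of norm at most $\delta$, and the overall Lipschitz constant is made $\le 1/2$ by choosing $\delta$ small enough.
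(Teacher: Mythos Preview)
Your strategy is sound and relies on the same key ingredients as the paper (fractional Leibniz for $\rho_\gamma$, Christ--Kiselev in Schatten spaces, generalized Strichartz, and Theorem~\ref{thm:strichartz-HS} with Corollary~\ref{coro:CK-2} for the singular quadratic piece), but the organization differs. The paper does \emph{not} carry out a full Dyson expansion: it expands $D_V$ exactly once, separating purely quadratic terms $\rho_{2,1}$, $\rho_{2,2}$ (free propagators only) from ``cubic'' terms $\rho_{3,1}$, $\rho_{3,2}$, $\rho_3$ that still retain one factor $U_V$. These cubic pieces are then estimated directly via the Strichartz estimates for $U_V$ from Proposition~\ref{prop:UV-bez}, so no series summation is needed. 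The paper remarks that the Dyson route would also work, but your sketch would still owe the uniform-in-$n$ Schatten bounds and the convergence of the series; the self-consistent route buys you exactly that bookkeeping.

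Two more substantive points. First, you treat the quadratic contribution as a single hard term, but the two placements of $g(-i\nabla_x)$ behave differently. When $g$ sits \emph{between} the two $V$'s (the term $\rho_{2,1}$), the symmetric split $\sigma_1=\sigma_2=s/2$ in Lemma~\ref{lem:leibniz-gamma} suffices and no half-derivative trick is needed (Proposition~\ref{prop:rho21}). It is only when $g$ sits at the \emph{end} (the term $\rho_{2,2}$) that the singular piece arises; there the paper uses the \emph{asymmetric} split $\sigma_1=0$, $\sigma_2=s$, and the half-derivative lands on the test function $W$ rather than on $V$, so it is $(|D|^{1/2}W)\in L^2_t\dot H^{-1/2}_x$ that is controlled through Theorem~\ref{thm:strichartz-HS} and Corollary~\ref{coro:CK-2}---dual to your description but with different bookkeeping. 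Second, the embedding $H^{s-1}_x\hookrightarrow L^{2d/(d-1)}_x$ that you invoke fails for $d\le 4$ (it requires $s\ge 3/2$); the paper handles the non-singular half of that split instead via the tailor-made decomposition of Lemma~\ref{lem:leibniz-op-4}, which is designed precisely to exploit the extra $\sd^{-1/2}$ gained.
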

  To prove Theorem \ref{thm:reaction}, we write the three terms composing $\rho(V)$ as
  $$\rho(V)=\bar{\rho_2(V)}+\rho_2(V)+\rho_3(V),$$
  and we estimate $\rho_2(V)$ and $\rho_3(V)$. Expanding the commutator in the second term, we have
  \begin{multline*}
   \rho_2(V)(t)=\rho\left[ -i\int_0^tD_V(t,\tau)[V(\tau),g(-i\nabla_x)]e^{i(\tau-t)\Delta_x}\,d\tau \right] \\
   = \rho\left[ -i\int_0^tD_V(t,\tau)V(\tau)g(-i\nabla_x)e^{i(\tau-t)\Delta_x}\,d\tau \right]-\rho\left[ -i\int_0^tD_V(t,\tau)g(-i\nabla_x)V(\tau)e^{i(\tau-t)\Delta_x}\,d\tau \right].
  \end{multline*}
  Recall that $D_V$ was defined as
  $$D_V(t,\tau)=U_V(t,\tau)-e^{i(t-\tau)\Delta_x}=-i\int_\tau^t e^{i(t-t_1)\Delta_x}V(t_1)U_V(t_1,\tau)\,dt_1.$$
  Taking the adjoint of this relation and exchanging $t$ and $\tau$ we also have that
  $$D_V(t,\tau)=-i\int_\tau^t U_V(t,t_1)V(t_1)e^{i(t_1-\tau)\Delta}\,dt_1.$$
  Using these relations, we obtain that
  \begin{multline*}
   \rho\left[ -i\int_0^tD_V(t,\tau)V(\tau)g(-i\nabla_x)e^{i(\tau-t)\Delta_x}\,d\tau \right] \\
   =-\rho\left[\int_0^t\,dt_1\int_0^{t_1}\,dt_2\, e^{i(t-t_1)\Delta_x}V(t_1)U_V(t_1,t_2)V(t_2)g(-i\nabla_x)e^{i(t_2-t)\Delta}\right],
  \end{multline*}
  \begin{multline*}
   \rho\left[ -i\int_0^tD_V(t,\tau)g(-i\nabla_x)V(\tau)e^{i(\tau-t)\Delta_x}\,d\tau \right] \\
   =-\rho\left[\int_0^t\,dt_1\int_0^{t_1}\,dt_2\, U_V(t,t_1)V(t_1)e^{i(t_1-t_2)\Delta}g(-i\nabla_x)V(t_2)e^{i(t_2-t)\Delta}\right].
  \end{multline*}
  To extract the most singular terms in this expression, we expand $U_V$ once more:
  \begin{multline*}
   \rho\left[ -i\int_0^tD_V(t,\tau)V(\tau)g(-i\nabla_x)e^{i(\tau-t)\Delta_x}\,d\tau \right] \\
   =-\rho\left[\int_0^t\,dt_1\int_0^{t_1}\,dt_2\, e^{i(t-t_1)\Delta_x} V(t_1)e^{i(t_1-t_2)\Delta}V(t_2)g(-i\nabla_x)e^{i(t_2-t)\Delta}\right]\\
   +\rho\left[i\int_0^t\,dt_1\int_0^{t_1}\,dt_2\int_0^{t_2}\,dt_3\, e^{i(t-t_1)\Delta_x} V(t_1)U_V(t_1,t_2)V(t_2)e^{i(t_2-t_3)\Delta}V(t_3)g(-i\nabla_x)e^{i(t_3-t)\Delta}\right],
  \end{multline*}
  \begin{multline*}
   \rho\left[ -i\int_0^tD_V(t,\tau)g(-i\nabla_x)V(\tau)e^{i(\tau-t)\Delta_x}\,d\tau \right] \\
   =-\rho\left[\int_0^t\,dt_1\int_0^{t_1}\,dt_2\, e^{i(t-t_1)\Delta_x} V(t_1)e^{i(t_1-t_2)\Delta}g(-i\nabla_x)V(t_2)e^{i(t_2-t)\Delta}\right]\\
   +\rho\left[i\int_0^t\,dt_1\int_0^{t_1}\,dt_2\int_0^{t_2}\,dt_3\, e^{i(t-t_1)\Delta_x} V(t_1)U_V(t_1,t_2)V(t_2)e^{i(t_2-t_3)\Delta}g(-i\nabla_x)V(t_3)e^{i(t_3-t)\Delta}\right],
  \end{multline*}
  The terms with only free propagators (and no $U_V$) are the most singular and will be treated separately. For the other two terms, what is important in their expression is that the time integrals are ordered and that $g(-i\nabla_x)$ has a free propagator next to it (and not a $U_V$). To sum up, we have decomposed
  $$\rho_2(V)=-\rho_{2,1}(V)+i\rho_{3,1}(V)+\rho_{2,2}(V)-i\rho_{3,2}(V),$$
  with
   $$\rho_{2,1}(V)(t) := \rho\left[\int_0^t\,dt_1\int_0^{t_1}\,dt_2\, e^{i(t-t_1)\Delta_x} V(t_1)e^{i(t_1-t_2)\Delta}g(-i\nabla_x)V(t_2)e^{i(t_2-t)\Delta}\right],$$
   $$\rho_{2,2}(V)(t) := \rho\left[\int_0^t\,dt_1\int_0^{t_1}\,dt_2\, e^{i(t-t_1)\Delta_x} V(t_1)e^{i(t_1-t_2)\Delta}V(t_2)g(-i\nabla_x)e^{i(t_2-t)\Delta}\right],$$
   $$
   \begin{multlined}
   \rho_{3,1}(V)(t):=\\
   \rho\left[\int_0^t\,dt_1\int_0^{t_1}\,dt_2\int_0^{t_2}\,dt_3\, e^{i(t-t_1)\Delta_x} V(t_1)U_V(t_1,t_2)V(t_2)e^{i(t_2-t_3)\Delta}V(t_3)g(-i\nabla_x)e^{i(t_3-t)\Delta}\right],
   \end{multlined}
   $$
   $$
   \begin{multlined}
   \rho_{3,2}(V)(t):=\\
   \rho\left[\int_0^t\,dt_1\int_0^{t_1}\,dt_2\int_0^{t_2}\,dt_3\, e^{i(t-t_1)\Delta_x} V(t_1)U_V(t_1,t_2)V(t_2)e^{i(t_2-t_3)\Delta}g(-i\nabla_x)V(t_3)e^{i(t_3-t)\Delta}\right].
   \end{multlined}
   $$
   We estimate $\rho_{2,1}$ in Proposition \ref{prop:rho21}, $\rho_{2,2}$ in Proposition \ref{prop:rho22}, $\rho_{3,1}$ in Proposition \ref{prop:rho31}, $\rho_{3,2}$ in Proposition \ref{prop:rho32}, and $\rho_{3}$ in Proposition \ref{prop:rho33}. Together, they imply Theorem \ref{thm:reaction}.

  \subsection{The less singular terms}

  In this section, we estimate $\rho_{3,1}$, $\rho_{3,2}$, and $\rho_3$. They all have in common that three $V(t)$ appear in their definition, which makes them the easiest to estimate.

  \begin{proposition}\label{prop:rho31}

   Let $d\ge2$ and $s=d/2-1$. If $\|V\|_{L^2_t H^s_x}+\|\tilde{V}\|_{L^2_t H^s_x}\le\delta$, then
   $$\left\| \rho_{3,1}(V) \right\|_{L^2_t H^s_x} \le C,$$
   $$\left\| \rho_{3,1}(V) - \rho_{3,1}(\tilde{V}) \right\|_{L^2_t H^s_x} \le \frac12 \|V-\tilde{V}\|_{L^2_t H^s_x}.$$

\end{proposition}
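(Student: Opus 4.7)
The plan is to prove both estimates together by a trilinear-in-$V$ duality argument, using Schatten-H\"older and Christ-Kiselev. The key preliminary observation is the unitary cocycle identity $U_V(t_1,t_2) = U_V(t_1)U_V(t_2)^*$, which makes the integrand in $\rho_{3,1}(V)$ factorize cleanly in its four time variables. For any $W\in L^2_tH^{-s}_x$, trace cyclicity then gives
$$
\int W\,\rho_{3,1}(V)\,dx\,dt
= \int_0^\infty\! dt\int_0^t\! dt_1\int_0^{t_1}\! dt_2\int_0^{t_2}\! dt_3\; \tr\bigl[M(t)\,B_1(t_1)\,B_2(t_2)\,B_3(t_3)\bigr]
$$
where $M(t) := e^{-it\Delta_x}W(t)e^{it\Delta_x}$, $B_1(t_1) := e^{-it_1\Delta_x}V(t_1)U_V(t_1)$, $B_2(t_2) := U_V(t_2)^*V(t_2)e^{it_2\Delta_x}$, and $B_3(t_3) := e^{-it_3\Delta_x}V(t_3)g(-i\nabla_x)e^{it_3\Delta_x}$.

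The next step is to distribute the $s$ derivatives needed to turn $W\in H^{-s}_x$ into $L^2_x$. Inserting commuting factors $\sd^{\pm\sigma}$ between the $B_j$'s, one lands on products of the form $\sd^\sigma V(t_j)\sd^{-\sigma}$, to which Lemma \ref{lem:leibniz-without-loss} applies, writing each as a sum $\sum_n J_n^{(1)}(g_n^{(1)}(V(t_j)))^*J_n^{(2)}(g_n^{(2)}(V(t_j)))$ of Strichartz-compatible factorizations. Derivatives then travel through the $U_V$'s by Proposition \ref{prop:UV-bez} (which controls $\sd^{s_0}U_V(t)\sd^{-s_0}$ and its adjoint), and any derivatives landing on $g(-i\nabla_x)$ are absorbed because $\sd^{2s}g(-i\nabla_x)\sd^{2s}$ is bounded: this is where the assumption $\langle\xi\rangle^{2(d-2)}g\in L^\infty$ is used, noting that $4s=2(d-2)$.

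The third step is Schatten-H\"older on the four chunks. The $W$-chunk $\int e^{-it\Delta_x}W(t)e^{it\Delta_x}\,dt$ enters a Schatten class by dual Strichartz (Corollary \ref{coro:general-schatten}) with weight $\|W\|_{L^2_tH^{-s}_x}$, while each of the three $V$-chunks splits, via its Leibniz decomposition, into two Strichartz-compatible operators whose composition with $e^{\pm it_j\Delta_x}$ and $U_V(t_j)^{\pm 1}$ yields a Schatten bound with weight $\|V\|_{L^2_tH^s_x}$ after integration against $t_j$. Because there are three copies of $V$ rather than two, the sum of reciprocals of the resulting Schatten exponents stays strictly below $1$ and H\"older closes the estimate. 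To convert the unordered trace bound into the ordered one, Corollary \ref{coro:CK} is applied sequentially for each of the nested integrals $t\ge t_1\ge t_2\ge t_3$; the strict condition $1/\alpha_1+1/\alpha_2<1/p_1+1/p_2$ holds at each step, which is precisely the slack afforded by having a third $V$.

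For the Lipschitz bound, I would write $\rho_{3,1}(V)-\rho_{3,1}(\tilde V)$ as a telescoping sum of four terms in which either one of the three $V$'s is replaced by $V-\tilde V$ or $U_V$ is replaced by $U_V-U_{\tilde V}$ (the latter difference controlled by Propositions \ref{prop:UV-Sonae} and \ref{prop:UV-bez}). Each term is then bounded by the same multilinear estimate, giving $C\delta^2\|V-\tilde V\|_{L^2_tH^s_x}$, which is at most $\tfrac12\|V-\tilde V\|_{L^2_tH^s_x}$ for $\delta$ small enough. The main technical obstacle throughout is the bookkeeping: verifying that the indices produced by the branches of Lemma \ref{lem:leibniz-without-loss} applied at three different $t_j$'s are jointly compatible with Schatten-H\"older and with the strict admissibility of Corollary \ref{coro:CK} at each application. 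Since $\rho_{3,1}$ carries three $V$'s rather than only two (as in $\rho_{2,1}$, $\rho_{2,2}$), there is enough slack in the exponents that a valid choice can always be made, which is why this is the easier of the reaction-term estimates.
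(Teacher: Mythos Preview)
Your overall architecture---duality, factorization via $U_V(t_1,t_2)=U_V(t_1)U_V(t_2)^*$, iterated Christ--Kiselev, Schatten--H\"older, telescoping for the Lipschitz bound---matches the paper's. But two steps do not close as written. First, the $W$-pairing: you propose to handle $W\in L^2_tH^{-s}_x$ by ``inserting commuting factors $\sd^{\pm\sigma}$,'' but $\sd^{-\sigma_1}W(t)\sd^{-\sigma_2}$ is not a multiplication operator, so dual Strichartz in the form of Corollary~\ref{coro:general-schatten} does not apply to it. The paper instead tests against $W\in L^2_{t,x}$ paired with $\sd^s\rho_\gamma$ and invokes the density Leibniz rule, Lemma~\ref{lem:leibniz-gamma-derivative}, to write $\int W\,\sd^s\rho_\gamma = \tr\bigl(A^{(2)*}A^{(1)}\sd^s\gamma\sd^s\bigr)$ with $A^{(1)}:L^{2+4/d}_x\to L^2(X)$ and $A^{(2)}:L^{d+2}_x\to L^2(X)$ genuinely Strichartz-compatible. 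This is the step that makes the $W$-chunk tractable, and it contributes a Schatten-reciprocal of $(d-1)/(2d)$.

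Second, the Schatten arithmetic. Lemma~\ref{lem:leibniz-without-loss} at $\mu=\nu=2$ contributes only $1/(d\mu')=1/(2d)$ to the reciprocal sum per $V$. The paper uses Lemma~\ref{lem:leibniz-op-2} for $V(t_3)$ (the one adjacent to $g$) precisely to exploit the extra $\sd^{-s}$ that $g(-i\nabla_x)$ can absorb, boosting that term's contribution to $(d-1)/(2d)$. The total is then exactly $(d-1)/(2d)+1/(2d)+1/(2d)+(d-1)/(2d)=1$, which is what H\"older requires. Your phrase ``stays strictly below $1$'' has the inequality reversed---for a trace bound the reciprocals must sum to at least $1$---and with only Lemma~\ref{lem:leibniz-without-loss} for all three $V$'s (even granting a correct $W$-handling), the sum would be $(d-1)/(2d)+3/(2d)=(d+2)/(2d)<1$ for $d\ge3$, so the estimate does not close. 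The ``slack'' you refer to is real, but it is realized by choosing the asymmetric Lemmas~\ref{lem:leibniz-gamma-derivative} and~\ref{lem:leibniz-op-2} at the two ends adjacent to $g$, not by Lemma~\ref{lem:leibniz-without-loss} alone.
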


\begin{lemma}\label{lem:leibniz-op-2}

Let $d\ge2$ and $s=d/2-1$. Then, there exist $L^{(1)}:B^{(1)}\to\cB(L^{2+4/d}_x,L^2(X))$, $L^{(2)}:B^{(2)}\to\cB(L^{d+2}_x,L^2(X))$ linear and continuous, where $X$ is some measure space, and for $\ell\in\{1,2\}$, $B^{(\ell)}$ are some Banach spaces and $g^{(\ell)}: H^s_x\to B^{(\ell)}$ are such that for all $W\in H^s_x$ one can decompose
 \begin{equation}\label{eq:decomp-leibniz-schatten}
    \nsd^s  W\nsd^{-s} = L^{(1)}(g^{(1)}(W))^* L^{(2)}(g^{(2)}(W)),
 \end{equation}
 \begin{equation}\label{eq:est-gnL}
  \|g^{(1)}(W)\|_{B^{(1)}}\lesssim \|W\|_{H^s_x}^{\tfrac{2}{d+2}},\quad \|g^{(2)}(W)\|_{B^{(2)}}\lesssim \|W\|_{H^s_x}^{\tfrac{d}{d+2}}.
\end{equation}
In particular, we have
\begin{equation}\label{eq:est-Lg}
\begin{aligned}
  \|L^{(1)}(g(t))e^{it\Delta_x}\|_{\gS^{2(d+2)}(L^2_x\to L^2_{t}L^2(X))}&\lesssim \|g\|_{L^{d+2}_t B^{(1)}},\\
  \|L^{(2)}(g(t))e^{it\Delta_x}\sd^{-s}\|_{\gS^{2d(d+2)/(d^2-2)}(L^2_x\to L^2_{t}L^2(X))}&\lesssim \|g\|_{L^{2(d+2)/d}_t B^{(2)}}.
\end{aligned}
\end{equation}

\end{lemma}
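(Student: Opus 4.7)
The plan is to specialize Lemma \ref{lem:leibniz-decomp-general} with an endpoint, uniform-in-$a$ choice of exponents, then collapse the resulting finite sum into a single $L^{(1)*}L^{(2)}$ factorization using the product-space trick from the proof of Lemma \ref{lem:leibniz-op-general}.

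First, I apply Lemma \ref{lem:leibniz-decomp-general} with $\nu=2$, $s_0=s=d/2-1$, and the uniform choices $2q_a^{(1)}=2+4/d$, $2q_a^{(2)}=d+2$ for every $a\in\{0,1,\ldots,\lfloor s\rfloor,s\}$. A direct computation gives
$$
\frac{1}{2(q_a^{(1)})'}+\frac{1}{2(q_a^{(2)})'}=\frac{1}{d+2}+\frac{d}{2(d+2)}=\frac12=\frac1\nu,
$$
so the hypothesis of Lemma \ref{lem:leibniz-decomp-general} holds (with equality at the upper endpoint, independently of $a$). This yields
$$
\sd^s W\sd^{-s}=\sum_{a\in\{0,1,\ldots,\lfloor s\rfloor,s\}} L_a^{(1)}(g_a^{(1)}(W))^*\, L_a^{(2)}(g_a^{(2)}(W))\,\sd^{-a}
$$
with $L_a^{(1)}(g)\in\cB(L^{2+4/d}_x,L^2(X_a))$, $L_a^{(2)}(g)\in\cB(L^{d+2}_x,L^2(X_a))$, and the power bounds $\|g_a^{(1)}(W)\|_{B_a^{(1)}}\lesssim\|W\|_{H^s_x}^{2/(d+2)}$, $\|g_a^{(2)}(W)\|_{B_a^{(2)}}\lesssim\|W\|_{H^s_x}^{d/(d+2)}$.

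Next, I collapse these finitely many pieces into a single factorization. Set $X:=\bigsqcup_a X_a$ (so $L^2(X)=\bigoplus_a L^2(X_a)$), $B^{(\ell)}:=\prod_a B_a^{(\ell)}$ (with the $\ell^\infty$ norm in $a$), $g^{(\ell)}(W):=(g_a^{(\ell)}(W))_a$, and define $L^{(1)}((g_a)_a)f:=\bigoplus_a L_a^{(1)}(g_a)f$ and $L^{(2)}((g_a)_a)f:=\bigoplus_a L_a^{(2)}(g_a)(\sd^{-a}f)$. The crucial observation for $L^{(2)}$ is that $\sd^{-a}$ is convolution against an $L^1$ Bessel kernel for $a>0$ (and the identity for $a=0$), hence bounded on $L^{d+2}_x$; consequently $L^{(2)}$ genuinely maps $L^{d+2}_x\to L^2(X)$. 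This yields \eqref{eq:decomp-leibniz-schatten} together with \eqref{eq:est-gnL}.

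The Schatten estimates \eqref{eq:est-Lg} follow from Lemma \ref{lem:strichartz-A}. For the first, apply it with $\cU(t)=e^{it\Delta_x}$ and the admissible Strichartz pair $p=q=(d+2)/d$ (so $2/p+d/q=d$), producing Schatten index $2\alpha'=2(d+2)$ (from $1/\alpha=1/(dp)+1/q=(d+1)/(d+2)$) and time exponent $2p'=d+2$; combined with the continuity bound $\|L^{(1)}(g(t))\|_{L^{2+4/d}\to L^2(X)}\lesssim\|g(t)\|_{B^{(1)}}$, this delivers the estimate in $\|g\|_{L^{d+2}_t B^{(1)}}$. For the second, apply Lemma \ref{lem:strichartz-A} with $\cU(t)=e^{it\Delta_x}\sd^{-s}$ (which satisfies the requisite Strichartz hypothesis via Proposition \ref{prop:bez} with $\sigma=s$), pair $p=q=(d+2)/2$ (so $2/p+d/q=d-2s=2$), Schatten index $2\alpha'=2d(d+2)/(d^2-2)$, and time exponent $2p'=2(d+2)/d$. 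The main conceptual point is the uniform endpoint choice of $q_a^{(1)}, q_a^{(2)}$, which forces every piece to factor through the same fixed Lebesgue spaces $L^{2+4/d}_x$ and $L^{d+2}_x$; the absorption of $\sd^{-a}$ into $L^{(2)}$ is then immediate since $L^{d+2}_x$ is invariant under each Bessel potential.
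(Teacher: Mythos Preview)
Your proof is correct and follows essentially the same approach as the paper's: both apply Lemma \ref{lem:leibniz-decomp-general} with $\nu=2$, $q_a^{(1)}=(d+2)/d$, $q_a^{(2)}=(d+2)/2$, absorb $\sd^{-a}$ into $L^{(2)}$ via its boundedness on $L^{d+2}_x$, and deduce \eqref{eq:est-Lg} from Corollary \ref{coro:general-schatten} (equivalently Lemma \ref{lem:strichartz-A}) with the pairs $p=q=1+2/d$ and $p=q=1+d/2$. Your explicit collapse of the finite sum via the product-space trick is just making precise what the paper leaves implicit from the proof of Lemma \ref{lem:leibniz-op-general}.
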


\begin{proof}
We apply Lemma \ref{lem:leibniz-decomp-general} with $\nu=2$, $q_a^{(1)}=(d+2)/d$ and $q_a^{(2)}=(d+2)/2$. We also replace $L_a^{(2)}(g)\sd^{-a}$ by $L_a^{(2)}(g)$, using that $\sd^{-a}$ is bounded on $L^{d+2}_x$. Then, \eqref{eq:est-Lg} follows from Corollary \ref{coro:general-schatten} applied to $\sigma=0$, $p=q=1+2/d$, $\alpha=(d+2)/(d+1)$ for $L_n^{(1)}$ and to $\sigma=s$, $p=q=1+d/2$, $\alpha=d(d+2)/(2d+2)$ for $L_n^{(2)}$.
\end{proof}

\begin{lemma}
\label{lem:leibniz-gamma-derivative}
 Let $d\ge2$ and $s=d/2-1$. Then, there exist  $A^{(1)}:B^{(1)}\to\cB(L^{2+4/d}_x,L^2(X))$, $A^{(2)}:B^{(2)}\to\cB(L^{d+2}_x,L^2(X))$ linear and continuous, where $X$ is some measure space, and for $\ell\in\{1,2\}$, $B^{(\ell)}$ are some Banach spaces and $g^{(\ell)}: L^2_x\to B^{(\ell)}$ are such that for all $W\in L^2_{x}$ and all $\gamma$ operator on $L^2_x$ one can decompose
 \begin{equation}
    \int W(x)(\sd^s\rho_{\gamma})(x)\,dx = \tr_{L^2_x}A^{(2)}(g^{(2)}(W(t)))^*A^{(1)}(g^{(1)}(W(t)))\sd^s\gamma\sd^s,
 \end{equation}
with
 \begin{equation}
  \label{eq:est-g-gamma-derivative}
    \|g^{(1)}(W)\|_{B^{(1)}}\lesssim \|W\|_{L^2_x}^{\tfrac{2}{d+2}},\quad \|g^{(2)}(W)\|_{B^{(2)}}\lesssim \|W\|_{L^2_x}^{\tfrac{d}{d+2}}.
 \end{equation}
 In particular, note that
 \begin{equation}
 \label{eq:est-Ag-gamma-derivative}
    \begin{aligned}
  \|A^{(1)}(g(t))e^{it\Delta_x}\|_{\gS^{2(d+2)}(L^2_x\to L^2_{t}L^2(X))}\lesssim \|g\|_{L^{d+2}_t B^{(1)}},\\
  \|A^{(2)}(g(t))e^{it\Delta_x}\sd^{-s}\|_{\gS^{2d(d+2)/(d^2-2)}(L^2_x\to L^2_t L^2(X))}&\lesssim \|g\|_{L^{2+4/d}_t B^{(2)}}.
\end{aligned}
 \end{equation}
\end{lemma}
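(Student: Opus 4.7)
The plan is to follow the same scheme as the proof of Lemma \ref{lem:leibniz-gamma}, applied with parameters $\sigma = s = d/2-1$, $q = 2$ (so that $W \in L^{q'} = L^2_x$), $q_1 = 2+4/d$, $q_2 = d+2$, and any decomposition $\sigma_1 + \sigma_2 = s$ with $\sigma_1, \sigma_2 \in [0,s]$. With these choices one has $1/q_1 + 1/q_2 = 1/2 = 1/q$, the exponent $\theta = q'(q_1-2)/(2q_1) = 2/(d+2)$ matches the expected powers in \eqref{eq:est-g-gamma-derivative}, and the decomposition of Lemma \ref{lem:bony-density-matrix-general} (stated for $|D|^\sigma$ but holding verbatim with $\sd^\sigma$ in place of $|D|^\sigma$, using the Fourier multipliers $\sd^{s,\alpha} := (\tfrac{1}{i}\partial_\xi)^\alpha\langle\xi\rangle^s$) will yield an identity of the form
\begin{equation*}
 \int W \sd^s\rho_\gamma\, dx = \sum_{|\alpha|\le \sigma_1}\tfrac{(-1)^{|\alpha|}}{\alpha!}\int W\rho_{\partial^\alpha\gamma \sd^{s,\alpha}}\,dx + \sum_{|\beta|\le \sigma_2}\tfrac{(-1)^{|\beta|}}{\beta!}\int W\rho_{\sd^{s,\beta}\gamma\partial^\beta}\,dx + \tr\tilde A^{(2)*}(h^{(2)}(W))\tilde A^{(1)}(h^{(1)}(W))\gamma,
\end{equation*}
with $\tilde A^{(1)}(g)\in \cB(\dot W^{\sigma_1, 2+4/d}_x, L^2(X))$ and $\tilde A^{(2)}(g)\in \cB(\dot W^{\sigma_2, d+2}_x, L^2(X))$.

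To convert this into the form required by the statement, I will replace $\gamma$ on the right by $\sd^{-s}(\sd^s\gamma\sd^s)\sd^{-s}$ and use cyclicity of the trace, which amounts to replacing $\tilde A^{(\ell)}$ by $A^{(\ell)}(g) := \tilde A^{(\ell)}(g)\sd^{-s}$. Since $|D|^{\sigma_\ell}\sd^{-s}$ is a Fourier multiplier of non-positive degree and hence bounded on $L^{q_\ell}$, the composition $A^{(\ell)}(g)$ will have the Lebesgue mapping properties of the statement. The leading Leibniz terms are to be placed in the same factored form: writing
\begin{equation*}
\int W\rho_{\partial^\alpha\gamma\sd^{s,\alpha}}\,dx = \tr\bigl(\sd^{-s}\sd^{s,\alpha}M_W\partial^\alpha\sd^{-s}\bigr)(\sd^s\gamma\sd^s),
\end{equation*}
I factor $M_W$ as the composition of multiplications by $|W|^{d/(d+2)}$ and $\sgn(W)|W|^{2/(d+2)}$, absorbing the bounded multipliers $\sd^{-s}\sd^{s,\alpha}$ and $\partial^\alpha\sd^{-s}$ (of non-positive degree since $|\alpha|\le\sigma_1\le s$) into $A^{(2)}$ and $A^{(1)}$ respectively; the norm estimates \eqref{eq:est-g-gamma-derivative} follow from $\||W|^{2/(d+2)}\|_{L^{d+2}}=\|W\|_{L^2}^{2/(d+2)}$ and $\||W|^{d/(d+2)}\|_{L^{2(d+2)/d}}=\|W\|_{L^2}^{d/(d+2)}$. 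An identical treatment of the $\sum_\beta$ terms and a final regrouping of all contributions as in Lemma \ref{lem:leibniz-decomp-general} will yield a single factorization $A^{(2)*}(g^{(2)}(W)) A^{(1)}(g^{(1)}(W))$.

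The estimates \eqref{eq:est-Ag-gamma-derivative} are direct applications of Corollary \ref{coro:general-schatten}. For the first, the parameters $\sigma=0$, $p=q=(d+2)/d$, and $\alpha=(d+2)/(d+1)$ give $2\alpha'=2(d+2)$ and $\|g(t)e^{it\Delta_x}\|_{\gS^{2(d+2)}}\lesssim \|g\|_{L^{d+2}_{t,x}}$, after which the $\sd^{-s}$ factor (bounded on $L^2_x$ and commuting with $e^{it\Delta_x}$) preserves the Schatten class. For the second, $\sigma=s$, $p=q=(d+2)/2$, $\alpha=d(d+2)/(2(d+1))$ give $2\alpha'=2d(d+2)/(d^2-2)$ and directly $\|g(t)e^{it\Delta_x}\sd^{-s}\|_{\gS^{2d(d+2)/(d^2-2)}}\lesssim \|g\|_{L^{2+4/d}_{t,x}}$. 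The main technical obstacle will be the combinatorial bookkeeping of the leading-Leibniz contributions, ensuring that each is placed in the prescribed factored form with norm estimates compatible with \eqref{eq:est-g-gamma-derivative}.
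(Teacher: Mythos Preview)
Your approach is essentially the same as the paper's: apply Lemma~\ref{lem:leibniz-gamma} with $q=2$, $q_1=2(d+2)/d$, $q_2=d+2$ (so $\theta=2/(d+2)$), absorb $\sd^{-s}$ on each side to pass from $\gamma$ to $\sd^s\gamma\sd^s$, and factor the explicit Leibniz terms by splitting $W=|W|^{2/(d+2)}\cdot W^{d/(d+2)}$; the Schatten estimates \eqref{eq:est-Ag-gamma-derivative} then follow from Corollary~\ref{coro:general-schatten} with exactly the parameters you give.

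There is one technical point where your write-up is looser than the paper. You assert that Lemma~\ref{lem:bony-density-matrix-general} and the subsequent estimates of Lemma~\ref{lem:leibniz-gamma} hold ``verbatim'' with $\sd^s$ in place of $|D|^s$. The algebraic Bony identity does hold, but the estimates do not transfer verbatim: several steps in the proof of Lemma~\ref{lem:leibniz-gamma} (e.g.\ in Part~1 one writes $|D|^\sigma P_k = 2^{\sigma k}Q_k'$ with $Q_k'$ given by a \emph{fixed} Schwartz multiplier, and then uses $|Q_k'f|\lesssim Mf$ uniformly in $k$) rely on the homogeneity of $|\xi|^\sigma$. For $\langle\xi\rangle^s$ the analogous multiplier $\psi_k(\eta)=(2^{-2k}+|\eta|^2)^{s/2}\phi(\eta)$ blows up like $2^{-sk}$ as $k\to-\infty$, so the uniform maximal-function bound fails at low frequencies. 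The paper sidesteps this entirely: it first proves the desired factorization for $\int W\,|D_x|^s\rho_\gamma\,dx$ (where Lemma~\ref{lem:leibniz-gamma} applies directly), observes that $\int W\rho_\gamma\,dx=\tr W\gamma$ has the same factored form, and then writes
\[
\int W\,\sd^s\rho_\gamma\,dx=\int\bigl((1+|D_x|^s)^{-1}\sd^s W\bigr)\,(1+|D_x|^s)\rho_\gamma\,dx,
\]
using that $(1+|D_x|^s)^{-1}\sd^s$ is a Mikhlin multiplier bounded on every $L^q_x$. This is a cleaner route than redoing the Littlewood--Paley argument for $\langle\xi\rangle^s$, and you should adopt it.
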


\begin{proof}
 Let us first show that $\int W(x)(|D_x|^s\rho_{\gamma})(x)\,dx$ can be written in this way. To do so, we apply Lemma \ref{lem:leibniz-gamma} to $\sigma=s$, $\sigma_1=s$, $\sigma_2=0$ (here the choice of $\sigma_1$ and $\sigma_2$ does not matter), $q=2$, $q_1=2(d+2)/d$, and $q_2=d+2$. We obtain
 \begin{multline*}
 \int_{\R^d}W(x)(|D_x|^s\rho_\gamma)(x)\,dx = \sum_{|\alpha|\le s}\frac{1}{\alpha!}\tr_{L^2_x}D^{s,\alpha}W\partial^\alpha\gamma +\tr_{L^2_x} W |D_x|^{s}\gamma \\
 +\tr_{L^2_x}A^{(2)}(h^{(2)}(W))^* A^{(1)}(h^{(1)}(W))\gamma.
\end{multline*}
The last term can be written as
\begin{multline*}
  \tr_{L^2_x}A^{(2)}(h^{(2)}(W))^* A^{(1)}(h^{(1)}(W))\gamma
  \\
  =\tr_{L^2_x}\sd^{-s}A^{(2)}(h^{(2)}(W))^* A^{(1)}(h^{(1)}(W))\sd^{-s}\sd^s\gamma\sd^s,
\end{multline*}
where $A^{(1)}\sd^{-s}:B^{(1)}\to\cB(L^{2(d+2)/d}_x,L^2(X))$ and $A^{(2)}\sd^{-s}:B^{(2)}\to\cB(L^{d+2}_x,L^2(X))$ have the desired properties. We then write
$$\tr_{L^2_x}D^{s,\alpha}W\partial^\alpha\gamma = \tr_{L^2_x} A^{(2)}(g^{(2)}(W(t)))^*A^{(1)}(g^{(1)}(W(t)))\sd^s\gamma\sd^s,$$
with $A^{(1)}(g)=g\partial^\alpha\sd^{-s}$ which is bounded from $L^{2(d+2)/d}_x$ to $L^2_x$ for $g\in B^{(1)}:=L^{d+2}_x$ by the Hölder inequality and the fact that $\partial^\alpha\sd^{-s}$ is bounded on $L^{2(d+2)/d}_x$,  $g^{(1)}(W):= W^{\tfrac{2}{d+2}}\in B^{(1)}$, $A^{(2)}(g)=gD^{s,\alpha}\sd^{-s}$ which is bounded from $L^{d+2}_x$ to $L^2_x$ for $g\in B^{(2)}:=L^{2(d+2)/d}_x$, and $g^{(2)}(W):= W^{\tfrac{d}{d+2}}\in B^{(2)}$. The term $\tr_{L^2_x} W |D_x|^{s}\gamma$ can be treated in the same way. Now that we have proved that $\int W(x)(|D_x|^s\rho_{\gamma})(x)\,dx$ has this form, we notice that $\int W(x)(1+|D_x|^s)\rho_{\gamma})(x)\,dx$ can be written in the same way since $\int W\rho_\gamma = \tr_{L^2_x} W \gamma$ and we can use the same idea. Finally, we use that
$$\int W(x)(\sd^s\rho_{\gamma})(x)\,dx = \int ((1+|D_x|^s)^{-1}\sd^s W)(x)((1+|D_x|^s)\rho_\gamma \,dx,$$
and the fact that $(1+|D_x|^s)^{-1}\sd^s$ is bounded on all $L^q_x$ to deduce the result.
\end{proof}

\begin{proof}[Proof of Proposition \ref{prop:rho31}]
 Similarly as in the proof of Proposition \ref{prop:UV-bez}, we estimate only $\rho_{3,1}$ for positive times and the estimate for negative times is done in the same way. Also, we will just write $L^2_t$ for $L^2_t(\R_+)$ everywhere not to overload notations. Let $W\in L^2_{t,x}$. By Lemma \ref{lem:leibniz-gamma-derivative}, we have
 \begin{multline*}
  \int W(t,x)(\sd^s\rho_{3,1}(V))(t,x) \,dx\,dt \\
  =  \tr_{L^2_x}\int_0^\ii dt\, e^{-it\Delta_x}A^{(2)}(g^{(2)}(W(t)))^*A^{(1)}(g^{(1)}(W(t)))e^{it\Delta_x}\times\\
  \times \int_0^t dt_1\, e^{-it_1\Delta}\sd^s V(t_1)\sd^{-s}\sd^sU_V(t_1)\sd^{-s}\times\\
  \times\int_0^{t_1} dt_2\, \sd^s U_V(t_2)^*\sd^{-s}\sd^s V(t_2)\sd^{-s}e^{it_2\Delta}\times\\
  \times\int_0^{t_2} dt_3\, e^{-it_3\Delta}\sd^s V(t_3)\sd^{-s}\sd^s g(-i\nabla_x)\sd^s e^{it_3\Delta}
 \end{multline*}
 We write $\sd^sV(t_3)\sd^{-s}$ according to Lemma \ref{lem:leibniz-op-2} (since it is next to $g(-i\nabla_x)$ which can absorb an additional $\sd^s$) and $\sd^s V(t_1)\sd^{-s}$, $\sd^s V(t_2)\sd^{-s}$ according to Lemma \ref{lem:leibniz-without-loss} (since they do not have an additional $\sd^{-s}$ around them) applied to $s=d/2-1$ and $\mu=\nu=2$. We obtain
 \begin{multline*}
  \int W(t,x)(\sd^s\rho_{3,1}(V))(t,x) \,dx\,dt \\
  = \sum_{n_1,n_2} \tr_{L^2_x}\int_0^\ii dt\, \sd^{-s}e^{-it\Delta_x}A^{(2)}(g^{(2)}(W(t)))^*A^{(1)}(g^{(1)}(W(t)))e^{it\Delta_x}\times\\
  \times \int_0^t dt_1\, e^{-it_1\Delta}J_{n_1}^{(1)}(g_{n_1}^{(1)}(V(t_1)))^*J_{n_1}^{(2)}(g_{n_1}^{(2)}(V(t_1)))\sd^sU_V(t_1)\sd^{-s}\times\\
  \times\int_0^{t_1} dt_2\, \sd^s U_V(t_2)^*\sd^{-s}J_{n_2}^{(1)}(g_{n_2}^{(1)}(V(t_2)))^*J_{n_2}^{(2)}(g_{n_2}^{(2)}(V(t_2)))e^{it_2\Delta}\times\\
  \times\int_0^{t_2} dt_3\, e^{-it_3\Delta}L^{(1)}(g^{(1)}(V(t_3)))^* L^{(2)}(g^{(2)}(V(t_3)))e^{it_3\Delta}\sd^{-s}\sd^{2s} g(-i\nabla_x)\sd^{2s}.
 \end{multline*}
 We rewrite this big operator from $L^2_x$ to $L^2_x$ as a composition of operators
 $$L^2_x \to L^2_{t_3,x} \to L^2_{t_2,x} \to L^2_{t_1,x} \to L^2_{t,x} \to L^2_x$$
 including the time ordered integrals, using the notations of Corollary \ref{coro:CK}:
 \begin{multline*}
  \int W(t,x)(\sd^s\rho_{3,1}(V))(t,x) \,dx\,dt \\
  = \sum_{n_1,n_2} \tr_{L^2_x}(A^{(2)}(g^{(2)}(W(t)))e^{it\Delta_x}\sd^{-s})[A^{(1)}(g^{(1)}(W(t)))e^{it\Delta_x}(J_{n_1}^{(1)}(g_{n_1}^{(1)}(V(t_1)))e^{it_1\Delta})^*]_<\times\\
  \times[J_{n_1}^{(2)}(g_{n_1}^{(2)}(V(t_1)))\sd^sU_V(t_1)\sd^{-s}(J_{n_2}^{(1)}(g_{n_2}^{(1)}(V(t_2)))\sd^{-s}U_V(t_2)\sd^{s})^*]_<\times\\
  \times[J_{n_2}^{(2)}(g_{n_2}^{(2)}(V(t_2)))e^{it_2\Delta}
  (L^{(1)}(g^{(1)}(V(t_3)))e^{it_3\Delta})^*]_<\times\\
  \times L^{(2)}(g^{(2)}(V(t_3)))e^{it_3\Delta}\sd^{-s}\sd^{2s} g(-i\nabla_x)\sd^{2s}.
  \end{multline*}
  By \eqref{eq:est-Ag-gamma-derivative} and \eqref{eq:est-g-gamma-derivative}, we have
  $$\|A^{(2)}(g^{(2)}(W(t)))e^{it\Delta_x}\sd^{-s}\|_{\gS^{2d(d+2)/(d^2-2)}(L^2_x\to L^2_{t}L^2(X))}\lesssim \|W\|_{L^2_{t,x}}^{\tfrac{d}{d+2}}.$$
  By \eqref{eq:est-Ag-gamma-derivative}, we also have
  $$\|A^{(1)}(g(t))e^{it\Delta_x}\|_{\gS^{2(d+2)}(L^2_x\to L^2_{t}L^2(X))}\lesssim \|g\|_{L^{d+2}_t B^{(1)}}.
  $$
  Similarly, by Lemma \ref{lem:strichartz-A} combined with Lemma \ref{lem:leibniz-without-loss} we have
  $$\|J_{n_1}^{(1)}(g(t_1))e^{it_1\Delta}\|_{\gS^{2(\alpha_{n_1}^{(1)})'}(L^2_x\to L^2_{t_1}L^2(X_{n_1}))}\lesssim \|g\|_{L^{2(p_{n_1}^{(1)})'}_{t_1} B_{n_1}^{(1)}}.
  $$
  Combining these last two estimates together with  Corollary \ref{coro:CK}, we get
  \begin{multline*}
    \|[A^{(1)}(g^{(1)}(W(t)))e^{it\Delta_x}(J_{n_1}^{(1)}(g_{n_1}^{(1)}(V(t_1)))e^{it_1\Delta})^*]_<\|_{\gS^{a_1}(L^2_{t_1}L^2(X_{n_1})\to L^2_{t}L^2(X))}\\
    \lesssim \|g^{(1)}(W(t))\|_{L^{d+2}_t B^{(1)}}\|g_{n_1}^{(1)}(V(t_1))\|_{L^{2(p_{n_1}^{(1)})'}_{t_1} B_{n_1}^{(1)}},
  \end{multline*}
  with
  $$\frac{1}{a_1} = \frac{1}{2(d+2)}+\frac{1}{2(\alpha_{n_1}^{(1)})'}.$$
  By \eqref{eq:est-g-gamma-derivative} and Lemma \ref{lem:leibniz-without-loss}, we deduce that
  $$
    \|[A^{(1)}(g^{(1)}(W(t)))e^{it\Delta_x}(J_{n_1}^{(1)}(g_{n_1}^{(1)}(V(t_1)))e^{it_1\Delta})^*]_<\|_{\gS^{a_1}(L^2_{t_1}L^2(X_{n_1})\to L^2_{t}L^2(X))}
    \lesssim \|W\|_{L^2_{t,x}}^{\tfrac{2}{d+2}}\|V\|_{L^2_t H^s_x}^{\theta_{n_1}^{(1)}}.
  $$
  By the same argument, and using that $\sd^s U_V(t)\sd^{-s}$ satisfies the same Strichartz estimates in Schatten spaces as $e^{it\Delta_x}$ by Proposition \ref{prop:UV-bez}, we also have
  \begin{multline*}
    \|[J_{n_1}^{(2)}(g_{n_1}^{(2)}(V(t_1)))\sd^sU_V(t_1)\sd^{-s}\times\\
    \times(J_{n_2}^{(1)}(g_{n_2}^{(1)}(V(t_2)))\sd^{-s}U_V(t_2)\sd^{s})^*]_<\|_{\gS^{a_2}(L^2_{t_2}L^2(X_{n_2})\to L^2_{t_1}L^2(X_{n_1}))}
        \lesssim \|V\|_{L^2_t H^s_x}^{\theta_{n_1}^{(2)}+\theta_{n_2}^{(1)}},
  \end{multline*}
  with
  $$\frac{1}{a_2}=\frac{1}{2(\alpha_{n_1}^{(2)})'}+\frac{1}{2(\alpha_{n_2}^{(1)})'},$$
  as well as
  $$
  \|[J_{n_2}^{(2)}(g_{n_2}^{(2)}(V(t_2)))e^{it_2\Delta}
  (L^{(1)}(g^{(1)}(V(t_3)))e^{it_3\Delta})^*]_<\|_{\gS^{a_3}(L^2_{t_3}L^2(X)\to L^2_{t_2}L^2(X_{n_2}))}
  \lesssim
  \|V\|_{L^2_t H^s_x}^{\theta_{n_2}^{(2)}+\tfrac{2}{d+2}},
  $$
  with
  $$\frac{1}{a_3}=\frac{1}{2(\alpha_{n_2}^{(2)})'}+\frac{1}{2(d+2)}.$$
  Finally, by \eqref{eq:est-Lg} we have
  $$
  \|L^{(2)}(g^{(2)}(V(t_3)))e^{it_3\Delta}\sd^{-s}\|_{\gS^{2d(d+2)/(d^2-2)}(L^2_x\to L^2_{t_3}L^2(X))}\lesssim \|V\|_{L^2_tH^s_x}^{\tfrac{d}{d+2}}.
  $$
  Using that
  $$\frac{d^2-2}{2d(d+2)}+\frac{1}{a_1}+\frac{1}{a_2}+\frac{1}{a_3}+\frac{d^2-2}{2d(d+2)}=\frac{d-1}{2d}+\frac{1}{2d}+\frac{1}{2d}+\frac{d-1}{2d}=1,$$
  we deduce by the Hölder inequality in Schatten spaces that
  \begin{multline*}
   |\tr_{L^2_x}(A^{(2)}(g^{(2)}(W(t)))e^{it\Delta_x}\sd^{-s})[A^{(1)}(g^{(1)}(W(t)))e^{it\Delta_x}(J_{n_1}^{(1)}(g_{n_1}^{(1)}(V(t_1)))e^{it_1\Delta})^*]_<\times\\
  \times[J_{n_1}^{(2)}(g_{n_1}^{(2)}(V(t_1)))\sd^sU_V(t_1)\sd^{-s}(J_{n_2}^{(1)}(g_{n_2}^{(1)}(V(t_2)))\sd^{-s}U_V(t_2)\sd^{s})^*]_<\times\\
  \times[J_{n_2}^{(2)}(g_{n_2}^{(2)}(V(t_2)))e^{it_2\Delta}
  (L^{(1)}(g^{(1)}(V(t_3)))e^{it_3\Delta})^*]_<\times\\
  \times L^{(2)}(g^{(2)}(V(t_3)))e^{it_3\Delta}\sd^{-s}\sd^{2s} g(-i\nabla_x)\sd^{2s}|\lesssim \|W\|_{L^2_{t,x}}\|V\|_{L^2_t H^s_x}^3,
  \end{multline*}
  and hence using that $\|V\|_{L^2_t H^s_x}\le\delta$, we have
  $$
  \left|\int W(t,x)(\sd^s\rho_{3,1}(V))(t,x) \,dx\,dt\right|\lesssim \|W\|_{L^2_{t,x}},
  $$
  proving the first estimate of Proposition \ref{prop:rho31}. The second estimate is obtained in the same exact way, using furthermore the estimate on $\sd^s(U_V(t)-U_{\tilde{V}}(t))\sd^{-s}$ from Proposition \ref{prop:UV-bez}.

\end{proof}

\begin{proposition}\label{prop:rho32}

   Let $d\ge2$ and $s=d/2-1$. If $\|V\|_{L^2_t H^s_x}+\|\tilde{V}\|_{L^2_t H^s_x}\le\delta$, then
   $$\left\| \rho_{3,2}(V) \right\|_{L^2_t H^s_x} \le C,$$
   $$\left\| \rho_{3,2}(V) - \rho_{3,2}(\tilde{V}) \right\|_{L^2_t H^s_x} \le \frac12 \|V-\tilde{V}\|_{L^2_t H^s_x}.$$

\end{proposition}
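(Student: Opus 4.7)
The proof is a direct adaptation of the proof of Proposition~\ref{prop:rho31}. The only structural difference between $\rho_{3,1}$ and $\rho_{3,2}$ is the position of $g(-i\nabla_x)$, which now sits between $e^{i(t_2-t_3)\Delta}$ and $V(t_3)$ rather than at the rightmost slot. Since $g(-i\nabla_x)$ is a Fourier multiplier, it commutes with the intermediate free propagator $e^{i(t_2-t_3)\Delta}$; hence I would first rewrite
\[
\rho_{3,2}(V)(t) = \rho\!\left[\int_0^t\!dt_1\!\int_0^{t_1}\!dt_2\!\int_0^{t_2}\!dt_3\, e^{i(t-t_1)\Delta} V(t_1) U_V(t_1,t_2) V(t_2) g(-i\nabla_x) e^{i(t_2-t_3)\Delta} V(t_3) e^{i(t_3-t)\Delta}\right],
\]
so that $g(-i\nabla_x)$ is now located immediately to the right of $V(t_2)$, playing the role that $g(-i\nabla_x)$ played to the right of $V(t_3)$ in $\rho_{3,1}$.

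From this point the argument proceeds essentially verbatim as for Proposition~\ref{prop:rho31}: test against $W \in L^2_{t,x}$, factorize $\gamma_{3,2}(V)(t) = e^{it\Delta}\Gamma_{3,2}(t)e^{-it\Delta}$, apply Lemma~\ref{lem:leibniz-gamma-derivative} to transfer $\sd^s$ from the density onto an outer $A^{(2)*}A^{(1)}$ factor, and distribute the remaining powers of $\sd^s$ throughout the operator chain via insertions of $\sd^{-s}\sd^s = 1$ between consecutive operators. The privileged $V$-factor is now $V(t_2)$, to which I would apply Lemma~\ref{lem:leibniz-op-2}; the resulting right factor $L^{(2)}$ then composes with $\sd^s g(-i\nabla_x)\sd^s = \sd^{2s}g(-i\nabla_x)$, which is a bounded Fourier multiplier on $L^2$ since the hypothesis $\langle\xi\rangle^{2(d-2)}g(\xi) \in L^\infty$ translates into $\langle\xi\rangle^{4s}g(\xi) \in L^\infty$ (indeed $2s = d-2$). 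The remaining factors $V(t_1)$ and $V(t_3)$ are treated via Lemma~\ref{lem:leibniz-without-loss} applied with $\mu = \nu = 2$ and $s = d/2 - 1$, exactly as the corresponding factors in $\rho_{3,1}$.

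The three nested time integrals are then unwound by three applications of Corollary~\ref{coro:CK} (the Christ--Kiselev lemma in Schatten spaces), combined with the Strichartz-type bounds for $e^{it\Delta}$ from Corollary~\ref{coro:general-schatten} and for $\sd^s U_V(t)\sd^{-s}$ from Proposition~\ref{prop:UV-bez}. The identical Hölder arithmetic on Schatten exponents used in the proof of Proposition~\ref{prop:rho31}---combining one pair $(2(d+2),2d(d+2)/(d^2-2))$ and two intermediate pairs $(2(\alpha_n^{(1)})',2(\alpha_n^{(2)})')$ from Lemma~\ref{lem:leibniz-without-loss}---yields a global bound of the form $\|W\|_{L^2_{t,x}}\|V\|_{L^2_t H^s_x}^3$, and the first inequality follows under the smallness hypothesis $\|V\|_{L^2_t H^s_x}\le\delta$.

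The Lipschitz estimate on $\rho_{3,2}(V)-\rho_{3,2}(\tilde V)$ is obtained by a telescoping decomposition of $V(t_1)U_V(t_1,t_2)V(t_2)\cdots V(t_3)$ minus its analogue with $\tilde V$ into four pieces (one in each of the three $V$-slots and one in the $U_V$-slot); each piece is then bounded by combining the main estimate just proved with the difference bound on $\sd^s U_V(t)\sd^{-s}$ from Proposition~\ref{prop:UV-bez}, and the factor $1/2$ is recovered by taking $\delta$ small. Since the whole argument is a mechanical modification of the proof of Proposition~\ref{prop:rho31} via the initial commutation of $g(-i\nabla_x)$ past $e^{i(t_2-t_3)\Delta}$, no new analytical obstacle appears.
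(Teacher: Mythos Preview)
Your argument has a genuine gap. The commutation of $g(-i\nabla_x)$ past $e^{i(t_2-t_3)\Delta}$ is trivially correct but changes nothing; the real difference between $\rho_{3,1}$ and $\rho_{3,2}$ is the position of $g$ in the cyclic operator chain. In $\rho_{3,1}$ the inner operator ends with $\cdots V(t_3)\,g(-i\nabla_x)$, so that in the cyclic trace $g$ is adjacent to the test factor $A^{(2)}$ from Lemma~\ref{lem:leibniz-gamma-derivative}. That lemma's Schatten bound \eqref{eq:est-Ag-gamma-derivative} for $A^{(2)}$ requires an extra $\sd^{-s}$, which in $\rho_{3,1}$ is supplied by $g$ via $\sd^s g\sd^s=\sd^{-s}\,\sd^{2s}g\sd^{2s}\,\sd^{-s}$, the outer $\sd^{-s}$ wrapping cyclically onto $A^{(2)}$. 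In $\rho_{3,2}$, however, $g$ lies between $V(t_2)$ and $V(t_3)$ and is separated from $A^{(2)}$ by the multiplication operator $V(t_3)$, through which no power of $\sd$ can be commuted. Without that $\sd^{-s}$, the Strichartz pair for $A^{(2)}(h(t))e^{it\Delta}$ forces $p=2(d+2)/d^2<1$ for $d\ge4$, and for $d=3$ the resulting Schatten exponent is too weak to make the H\"older sum reach $1$. The ``identical H\"older arithmetic'' you invoke therefore does not close.

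The paper's fix is to replace Lemma~\ref{lem:leibniz-gamma-derivative} by the new Lemma~\ref{lem:leibniz-gamma-no-derivative}, whose factors $A_n^{(\ell)}$ need no $\sd^{-s}$ (at the cost of a weaker Schatten contribution $\tfrac{1}{2d}$ from $W$), and to treat $V(t_3)$ via the \emph{dual} of Lemma~\ref{lem:leibniz-op-2} rather than Lemma~\ref{lem:leibniz-without-loss}. Both $\sd^{-s}$'s donated by $g$ are then absorbed by its immediate neighbours $L^{(2)}(V(t_2))$ and $L^{(2)}(V(t_3))^*$, and the H\"older count becomes $\tfrac{1}{2d}+\tfrac{1}{2d}+\tfrac{d-1}{2d}+\tfrac{d-1}{2d}=1$, a genuinely different allocation from that of $\rho_{3,1}$.
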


\begin{lemma}
\label{lem:leibniz-gamma-no-derivative}
 Let $d\ge2$ and $s=d/2-1$. Then, there exists $N\in\N\setminus\{0\}$ and for all $n=1,\ldots,N$ and $\ell\in\{1,2\}$ there exist  $p_n^{(\ell)},q_n^{(\ell)}\in(1,+\ii)$, $A_n^{(\ell)}:B_n^{(\ell)}\to\cB(L^{2q_n^{(\ell)}}_x,L^2(X_n))$ linear and continuous, where $B_n^{(\ell)}$ is some Banach space and $X_n$ some measure space, $g_n^{(\ell)}: H^s_x\to B_n^{(\ell)}$, and $\theta_n^{(\ell)}\in[0,1]$ with $\theta_n^{(1)}+\theta_n^{(2)}=1$ such that for all $W\in L^2_{t,x}$ and all $\gamma(t)$ operator on $L^2_x$ one can decompose
 \begin{equation}
 \begin{multlined}
    \int W(t,x)(\sd^s\rho_{\gamma(t)})(x)\,dx\,dt \\
    = \sum_{n=1}^N \int \tr_{L^2_x}A_n^{(2)}(g_n^{(2)}(W(t)))^*A_n^{(1)}(g_n^{(1)}(W(t)))\sd^s\gamma(t)\sd^s\,dt,
 \end{multlined}
 \end{equation}
with for all $n=1,\ldots,N$ and all $\ell\in\{1,2\}$,
 \begin{equation}
    \|g_n^{(\ell)}(W(t))\|_{L^{2(p_n^{(\ell)})'}_t B_n^{(\ell)}}\lesssim \|W\|_{L^2_{t,x}}^{\theta_n^{(\ell)}}
 \end{equation}
 $$\frac{2}{p_n^{(\ell)}}+\frac{d}{q_n^{(\ell)}}=d,$$
 and defining $\alpha_n^{(\ell)}$ by $1/\alpha_n^{(\ell)}=1/(dp_n^{(\ell)})+1/q_n^{(\ell)}$, we have
 $$\alpha_n^{(\ell)}<p_n^{(\ell)},\quad\frac{1}{2(\alpha_n^{(1)})'}+\frac{1}{2(\alpha_n^{(2)})'}=\frac{1}{2d}.$$
 In particular, note that
 \begin{equation}
    \|A_n^{(\ell)}(g(t))\|_{L^{2(p_n^{(\ell)})'}_t\cB(L^{2q_n^{(\ell)}}_x\to L^2(X_n))}\lesssim \|g\|_{L^{2(p_n^{(\ell)})'}_t B_n^{(\ell)}}.
 \end{equation}
\end{lemma}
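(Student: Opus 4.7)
The plan is to follow the proof of Lemma~\ref{lem:leibniz-gamma-derivative} closely, but leaving the exponents $(p_n^{(\ell)},q_n^{(\ell)})$ free within a one-parameter admissible family, in the same spirit in which Lemma~\ref{lem:leibniz-without-loss} generalizes Lemma~\ref{lem:leibniz-op-2} in the operator case. First, write $\sd^s = (1+|D|^s)M$ with $M = \sd^s(1+|D|^s)^{-1}$ a Mikhlin multiplier bounded on every $L^p(\R^d)$, and transfer $M$ onto the test function by duality: setting $\widetilde W := M^* W$ one has $\|\widetilde W\|_{L^2_{t,x}} \lesssim \|W\|_{L^2_{t,x}}$ and
\[
\int W(t,x)\,\sd^s\rho_{\gamma(t)}(x)\,dx\,dt = \int \widetilde W\,\rho_{\gamma(t)}\,dx\,dt + \int \widetilde W\,|D|^s\rho_{\gamma(t)}\,dx\,dt.
\]
The first integral rewrites, by cyclicity of the trace, as $\int \tr\bigl[\sd^{-s}\widetilde W(t)\sd^{-s}\cdot\sd^s\gamma(t)\sd^s\bigr]\,dt$, and the operator $\sd^{-s}\widetilde W(t)\sd^{-s}$ factorizes as $A_n^{(2)*}A_n^{(1)}$ by splitting $\widetilde W = |\widetilde W|^{\theta_n^{(1)}}\cdot\mathrm{sgn}(\widetilde W)\,|\widetilde W|^{\theta_n^{(2)}}$ and setting $A_n^{(\ell)}(g) = g\,\sd^{-s}$.

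For the second integral we invoke Lemma~\ref{lem:leibniz-gamma} with $\sigma=s$, $q=2$ and a parameter choice $(\sigma_1,\sigma_2,q_1,q_2)$ appropriate to the subpiece at hand. This produces leading terms of the form $\tr[D^{s,\alpha}\widetilde W\,\partial^\alpha\gamma]$ and $\tr[\partial^\beta\widetilde W\,D^{s,\beta}\gamma]$, plus a remainder of the form $\tr[A^{(2)}(h^{(2)}(\widetilde W))^* A^{(1)}(h^{(1)}(\widetilde W))\gamma]$. Each such term is rewritten by cyclicity as $\tr[X^*Y\cdot\sd^s\gamma(t)\sd^s]$ for operators $X,Y$ involving $\widetilde W$ and bounded Fourier multipliers,
\[
\tr[D^{s,\alpha}\widetilde W\,\partial^\alpha\gamma] = \tr\bigl[(\sd^{-s}D^{s,\alpha})\,\widetilde W\,(\partial^\alpha\sd^{-s})\cdot\sd^s\gamma\sd^s\bigr],
\]
and is then put into the desired form $\tr[A_n^{(2)}(g_n^{(2)}(\widetilde W))^* A_n^{(1)}(g_n^{(1)}(\widetilde W))\cdot\sd^s\gamma\sd^s]$ by splitting $\widetilde W$ multiplicatively and absorbing the (degree-$\leq 0$) Fourier multipliers into the operators $A_n^{(\ell)}$. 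The remainder from Lemma~\ref{lem:leibniz-gamma} is handled analogously.

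The exponents are dictated by the constraints in the statement: Strichartz admissibility $2/p_n^{(\ell)} + d/q_n^{(\ell)} = d$; the condition $\alpha_n^{(\ell)} < p_n^{(\ell)}$, equivalent (given admissibility) to $1/q_n^{(\ell)} > (d-1)/(d+1)$; the balance $\theta_n^{(1)}+\theta_n^{(2)}=1$; and the sum rule $1/(2(\alpha_n^{(1)})') + 1/(2(\alpha_n^{(2)})') = 1/(2d)$, which combined with Strichartz admissibility is equivalent to $1/q_n^{(1)} + 1/q_n^{(2)} = 2(d-1)/d$. This yields a nonempty one-parameter family of admissible $(q_n^{(1)},q_n^{(2)})$ for $d\geq 3$ (the symmetric value $q_n^{(\ell)} = d/(d-1)$ lies inside the admissible range), while for $d=2$ the claim is trivial since $s=0$. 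The index $N$ is then the total number of subpieces produced.

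The main obstacle is the bookkeeping of the exponents: for each Bony-type subpiece coming out of Lemma~\ref{lem:leibniz-gamma} (parameterized by the multi-index $\alpha$ or $\beta$ in the leading terms, and by the Littlewood--Paley region in the remainder) one must verify that the admissible range provides a pair $(q_n^{(1)},q_n^{(2)})$ for which all the spatial Hölder and Sobolev embeddings needed to realize $A_n^{(\ell)}(g)\colon L^{2q_n^{(\ell)}}_x \to L^2(X_n)$ close, together with the corresponding time Hölder giving the stated bound $\|g_n^{(\ell)}(W(t))\|_{L^{2(p_n^{(\ell)})'}_t B_n^{(\ell)}} \lesssim \|W\|_{L^2_{t,x}}^{\theta_n^{(\ell)}}$. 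This is analogous to (and no harder than) the verification carried out for Lemma~\ref{lem:leibniz-without-loss}.
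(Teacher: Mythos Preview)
Your approach is correct and matches the paper's: the paper omits the proof, saying only that Lemma~\ref{lem:leibniz-gamma-no-derivative} is obtained from Lemma~\ref{lem:leibniz-gamma} in the same way that Lemma~\ref{lem:leibniz-without-loss} was deduced from Lemma~\ref{lem:leibniz-op-general}. Your outline---reduce $\sd^s$ to $1+|D|^s$ via a Mikhlin multiplier, apply Lemma~\ref{lem:leibniz-gamma} for the $|D|^s$ piece, rewrite each leading and remainder term as $\tr[A_n^{(2)*}A_n^{(1)}\sd^s\gamma\sd^s]$ by cyclicity and multiplicative splitting of $W$, then select $(q_n^{(1)},q_n^{(2)})$ in the admissible window $1/q_n^{(1)}+1/q_n^{(2)}=2(d-1)/d$, $1/q_n^{(\ell)}>(d-1)/(d+1)$---is precisely this analogy carried out, and your exponent algebra (in particular $(\alpha_n^{(\ell)})'=2(q_n^{(\ell)})'$) is correct.
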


We can prove Lemma \ref{lem:leibniz-gamma-no-derivative} from Lemma \ref{lem:leibniz-gamma} in the same way that Lemma \ref{lem:leibniz-without-loss} was deduced from Lemma \ref{lem:leibniz-op-general} so we omit the proof.

\begin{proof}[Proof of Proposition \ref{prop:rho32}]
 Let $W\in L^2_{t,x}$. Using Lemma \ref{lem:leibniz-gamma-no-derivative}, we have
 \begin{multline*}
  \int W(t,x)(\sd^s\rho_{3,2}(V))(t,x) \,dx\,dt \\
  =  \sum_{n_1=1}^N\tr_{L^2_x}\int_0^\ii dt\, e^{-it\Delta_x}A_{n_1}^{(2)}(g_{n_1}^{(2)}(W(t)))^*A_{n_1}^{(1)}(g_{n_1}^{(1)}(W(t)))e^{it\Delta_x}\times\\
  \times \int_0^t dt_1\, e^{-it_1\Delta}\sd^s V(t_1)\sd^{-s}\sd^sU_V(t_1)\sd^{-s}\times\\
  \times\int_0^{t_1} dt_2\, \sd^s U_V(t_2)^*\sd^{-s}\sd^s V(t_2)\sd^{-s}e^{it_2\Delta}\times\\
  \times\sd^s g(-i\nabla_x)\sd^s\int_0^{t_2} dt_3\, e^{-it_3\Delta}\sd^{-s} V(t_3)\sd^{s} e^{it_3\Delta}
 \end{multline*}
 We then expand $\sd^s V(t_1)\sd^{-s}$ according to Lemma \ref{lem:leibniz-without-loss} (applied to $s=d/2-1$ and $\mu=\nu=2$), $\sd^s V(t_2)\sd^{-s}$ according to Lemma \ref{lem:leibniz-op-2}, and $\sd^{-s}V(t_3)\sd^s$ according to the dual of Lemma \ref{lem:leibniz-op-2} (writing that $\sd^{-s}V(t_3)\sd^s=(\sd^s V(t_3)\sd^{-s})^*$). We obtain
 \begin{multline*}
  \int W(t,x)(\sd^s\rho_{3,2}(V))(t,x) \,dx\,dt \\
  = \sum_{n_1,n_2} \tr_{L^2_x}\int_0^\ii dt\, e^{-it\Delta_x}A_{n_1}^{(2)}(g_{n_1}^{(2)}(W(t)))^*A_{n_1}^{(1)}(g_{n_1}^{(1)}(W(t)))e^{it\Delta_x}\times\\
  \times \int_0^t dt_1\, e^{-it_1\Delta}J_{n_2}^{(1)}(g_{n_2}^{(1)}(V(t_1)))^*J_{n_2}^{(2)}(g_{n_2}^{(2)}(V(t_1)))\sd^sU_V(t_1)\sd^{-s}\times\\
  \times\int_0^{t_1} dt_2\, \sd^s U_V(t_2)^*\sd^{-s}L^{(1)}(g^{(1)}(V(t_2)))^* L^{(2)}(g^{(2)}(V(t_2)))e^{it_2\Delta}\sd^{-s}\times\\
  \times\sd^{2s} g(-i\nabla_x)\sd^{2s}\int_0^{t_2} dt_3\, \sd^{-s}e^{-it_3\Delta}L^{(2)}(g^{(2)}(V(t_3)))^* L^{(1)}(g^{(1)}(V(t_3)))e^{it_3\Delta}.
 \end{multline*}
 We estimate in the same way as in the proof of Proposition \ref{prop:rho31}.
\end{proof}

\begin{proposition}\label{prop:rho33}

   Let $d\ge2$ and $s=d/2-1$. If $\|V\|_{L^2_t H^s_x}+\|\tilde{V}\|_{L^2_t H^s_x}\le\delta$, then
   $$\left\| \rho_{3}(V) \right\|_{L^2_t H^s_x} \le C,$$
   $$\left\| \rho_{3}(V) - \rho_{3}(\tilde{V}) \right\|_{L^2_t H^s_x} \le \frac12 \|V-\tilde{V}\|_{L^2_t H^s_x}.$$

\end{proposition}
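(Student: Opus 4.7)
The plan is to follow closely the scheme used for Propositions \ref{prop:rho31} and \ref{prop:rho32}, exploiting the fact that $\rho_3(V)$ carries \emph{three} explicit factors of $V$ (one in each $D_V$ and one in the commutator). Expanding both copies of $D_V$ via their Duhamel representations and using the propagator identity $U_V(t_j,\tau)=U_V(t_j)U_V(\tau)^*$, we may rewrite
$$\rho_3(V)(t) = -i\,\rho\!\left[\int_0^t dt_1\int_0^t dt_2\int_0^{\min(t_1,t_2)}\!\!d\tau\, e^{i(t-t_1)\Delta}V(t_1)U_V(t_1)U_V(\tau)^*[V(\tau),g(-i\nabla_x)]U_V(\tau)U_V(t_2)^*V(t_2)e^{-i(t-t_2)\Delta}\right].$$
Splitting the $(t_1,t_2)$ domain according to $t_2<t_1$ or $t_1<t_2$ turns this into a sum of two mirror contributions, each having the fully ordered structure $\int_0^t dt_1\int_0^{t_1}dt_2\int_0^{t_2}d\tau$ (the second one being the adjoint of the first). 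Expanding the commutator $[V(\tau),g(-i\nabla_x)]$ into two terms $V(\tau)g(-i\nabla_x)$ and $g(-i\nabla_x)V(\tau)$ then reduces the problem to estimating four expressions of identical complexity.

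Testing against $W\in L^2_{t,x}$, I will decompose $\sd^s\rho$ by means of Lemma \ref{lem:leibniz-gamma-no-derivative} and write each of the \emph{outer} factors $\sd^s V(t_1)\sd^{-s}$, $\sd^s V(t_2)\sd^{-s}$ via Lemma \ref{lem:leibniz-without-loss} with $s=d/2-1$, $\mu=\nu=2$. The \emph{inner} factor $V(\tau)$ sits next to $g(-i\nabla_x)$, and since the assumption $\langle\xi\rangle^{2(d-2)}g(\xi)\in L^\infty$ allows us to absorb two full powers of $\sd^s$ into $\sd^{2s}g(-i\nabla_x)\sd^{2s}$ as a bounded multiplier (recall $2s=d-2$), this inner potential may be written via Lemma \ref{lem:leibniz-op-2}. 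The resulting expression then factorizes as a composition
$$L^2_x\longrightarrow L^2_{\tau,x}\longrightarrow L^2_{t_2,x}\longrightarrow L^2_{t_1,x}\longrightarrow L^2_{t,x}\longrightarrow L^2_x,$$
where each retarded time integral pairs two successive spaces, and Corollary \ref{coro:CK} applied three times absorbs the three time-orderings into Schatten norms of the pieces. The Schatten bounds of the individual components follow from Proposition \ref{prop:UV-bez} (for each $\sd^{\pm s}U_V(\cdot)\sd^{\mp s}$) and from \eqref{eq:est-Lg}--\eqref{eq:est-Jng} together with Lemma \ref{lem:strichartz-A} (for each $J_n^{(\ell)}$, $L^{(\ell)}$ and $A^{(\ell)}$ composed with a free or perturbed propagator).

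For the final bookkeeping, the sum of reciprocals of the Schatten exponents along the five-term composition must equal $1$; this is the same dimensional identity
$$\tfrac{d^2-2}{2d(d+2)}+\tfrac{1}{a_1}+\tfrac{1}{a_2}+\tfrac{1}{a_3}+\tfrac{d^2-2}{2d(d+2)}=1$$
that appeared at the end of the proof of Proposition \ref{prop:rho31}, now with one of the middle factors $a_j^{-1}$ replaced by a sum $\tfrac{1}{2(\alpha_{n}^{(2)})'}+\tfrac{1}{2(\alpha_{m}^{(1)})'}+\tfrac{1}{2(\alpha_{k}^{(1)})'}$ coming from the three outer $V$'s and the inner $V$ (and similarly for the companion ordering). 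The three $\theta$-exponents coming from the three applications of Lemma \ref{lem:leibniz-without-loss}/Lemma \ref{lem:leibniz-op-2} combine to give total weight $\|V\|_{L^2_tH^s_x}^3\lesssim\delta^3$, which is linear in $\|W\|_{L^2_{t,x}}$; smallness of $\delta$ then closes the first bound. The Lipschitz estimate $\|\rho_3(V)-\rho_3(\tilde V)\|_{L^2_tH^s_x}\le\tfrac12\|V-\tilde V\|_{L^2_tH^s_x}$ follows by the usual telescoping: writing the difference as a sum of terms where exactly one factor of $V$ (or one $U_V$) is replaced by $V-\tilde V$ (resp.\ by $U_V-U_{\tilde V}$), the Strichartz perturbation bound of Proposition \ref{prop:UV-bez} yields the desired factor of $\|V-\tilde V\|_{L^2_tH^s_x}$, while the remaining two factors of $V,\tilde V$ each contribute a factor $\lesssim\delta$.

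The main obstacle is purely bookkeeping: the triple retarded integral forces three successive uses of Corollary \ref{coro:CK}, and at each step one must check that the Schatten exponents strictly dominate the dual Lebesgue time exponents ($1/\alpha_1+1/\alpha_2<1/p_1+1/p_2$). This is what explains why the Schatten index $\alpha=2d/(d+1)$ is borderline and why the simpler Strichartz estimates for orthonormal systems of \cite{FraLewLieSei-13} are not sufficient here; the strict inequality will however always hold with room to spare because each outer $V$ contributes with $(p_n^{(\ell)})',(\alpha_n^{(\ell)})'$ satisfying the gap inherited from $(d+2)/(2(d+1))<1/\mu$ in Lemma \ref{lem:leibniz-without-loss}.
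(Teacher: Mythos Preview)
There is a real gap in your reduction to nested integrals. After expanding both copies of $D_V$, the operator under the trace has the composition order
\[
[\,W(t)\,]\cdot[\,V(t_1)\,]\cdot[\,V(\tau)g(-i\nabla_x)\,]\cdot[\,V(t_2)\,],
\]
so the time variables appear in the cyclic order $(t,t_1,\tau,t_2)$. Your split on $t_1\gtrless t_2$ does produce a totally ordered chain of integration limits, say $\tau<t_2<t_1<t$, but the constraint $t_2<t_1$ links two variables that are \emph{not adjacent} in the composition (they are separated by the $\tau$-factor). Corollary~\ref{coro:CK} can only enforce an inequality between the two time variables carried by the two tensor factors it is applied to; iterating it along the chain $t\to t_1\to\tau\to t_2$ yields only the constraints $t_1<t$, $\tau<t_1$, $\tau<t_2$, which describe a strictly larger domain (allowing $t_2>t_1$ and even $t_2>t$). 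No cyclic permutation of the trace helps, since $t_1$ and $t_2$ are at distance $2$ in the cycle $(t,t_1,\tau,t_2)$ and hence never adjacent.

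The paper resolves this differently: starting from $\int_0^\infty dt\int_0^t dt_1\int_0^{t_1}d\tau\int_\tau^t dt_2$ it writes $\int_\tau^t dt_2=\int_\tau^\infty dt_2-\int_t^\infty dt_2$. In the first piece the only constraint on $t_2$ is $t_2>\tau$, which \emph{is} adjacent, so the reversed version of Corollary~\ref{coro:CK} applies directly. In the second piece the constraint is $t_2>t$; one then relabels $\int_0^\infty dt\int_t^\infty dt_2=\int_0^\infty dt_2\int_0^{t_2}dt$ and uses cyclicity of the trace to bring the $t_2$-factor to the front, restoring a fully nested structure $t_2>t>t_1>\tau$ with all constraints between adjacent factors. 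Once this decomposition is in place, the rest of your outline (Lemma~\ref{lem:leibniz-gamma-no-derivative} for $W$, Lemma~\ref{lem:leibniz-without-loss} for the $V(t_1)$ factor, Lemma~\ref{lem:leibniz-op-2} and its dual for the two $V$'s adjacent to $g(-i\nabla_x)$, and Proposition~\ref{prop:UV-bez} for the propagators) goes through exactly as you describe.
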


\begin{proof}
 We have
 $$\rho_{3}(V) = \rho_{3,3,1}(V) - \bar{\rho_{3,3,1}(V)}$$
 where
 $$\rho_{3,3,1}(V)(t):=\rho\left[\int_0^tD_V(t,\tau)V(\tau)g(-i\nabla_x)D_V(t,\tau)^*\,d\tau\right].$$
 By definition of $D_V(t,\tau)$, we have
 $$
 \begin{multlined}
 \rho_{3,3,1}(V)(t)\\
 =\rho\left[
 \int_0^td\tau\,\int_\tau^t dt_1\,\int_\tau^t\,dt_2 e^{i(t-t_1)\Delta_x} V(t_1)U_V(t_1,\tau)V(\tau)g(-i\nabla_x)e^{i(\tau-t_2)\Delta} V(t_2)U_V(t_2,t)
 \right].
 \end{multlined}
 $$
 Let $W\in L^2_{t,x}$. Using Lemma \ref{lem:leibniz-gamma-no-derivative}, we deduce that
 \begin{multline*}
  \int W(t,x)(\sd^s\rho_{3,3,1}(V))(t,x) \,dx\,dt \\
  =  \sum_{n_1=1}^N\int_0^\ii dt\,\int_0^t d\tau\,\int_\tau^{t}\,dt_1\int_\tau^{t}dt_2\,\tr_{L^2_x} U_V(t)^*\sd^s A_{n_1}^{(2)}(g_{n_1}^{(2)}(W(t)))^*A_{n_1}^{(1)}(g_{n_1}^{(1)}(W(t)))e^{it\Delta_x}\times\\
  \times e^{-it_1\Delta}\sd^s V(t_1)U_V(t_1) U_V(\tau)^* V(\tau)e^{i\tau\Delta_x}g(-i\nabla_x)e^{-it_2\Delta} V(t_2)U_V(t_2)
 \end{multline*}
 Compared to the proofs of Proposition \ref{prop:rho31} and Proposition \ref{prop:rho32}, the time integrals are not nested, which was important to apply Corollary \ref{coro:CK} iteratively. To reduce to nested integrals, we use that
 \begin{align*}
    \int_0^\ii dt\,\int_0^t d\tau\,\int_\tau^{t}\,dt_1\int_\tau^{t}dt_2 &= \int_0^\ii dt\,\int_0^t\,dt_1\,\int_0^{t_1}d\tau\,\int_\tau^t\,dt_2 \\
    &= \int_0^\ii dt\,\int_0^t\,dt_1\,\int_0^{t_1}d\tau\,\int_\tau^\ii\,dt_2-\int_0^\ii dt\,\int_0^t\,dt_1\,\int_0^{t_1}d\tau\,\int_t^\ii\,dt_2.
 \end{align*}
 The first integral has the nested structure (in the last term where we have $\int_\tau^\ii dt_2$ instead of $\int_0^\tau dt_2$, we use Corollary \ref{coro:CK} in the version of Remark \ref{rk:reversed-coro-CK}), so that it can be treated as in the proof of Proposition \ref{prop:rho31} (expanding $\sd^s V(t_1)\sd^{-s}$ according to Lemma \ref{lem:leibniz-without-loss} with $s=d/2-1$ and $\mu=\nu=2$, expanding $\sd^sV(\tau)\sd^{-s}$ according to Lemma \ref{lem:leibniz-op-2}, and expanding $\sd^{-s}V(t_2)\sd^s$ according to the dual of Lemma \ref{lem:leibniz-op-2}). For the second integral, we use that
 $$\int_0^\ii dt\,\int_0^t\,dt_1\,\int_0^{t_1}d\tau\,\int_t^\ii\,dt_2 = \int_0^\ii dt_2\,\int_0^{t_2}\,dt\,\int_0^t\,dt_1\,\int_0^{t_1}d\tau,$$
 as well as the cyclicity of the trace to write
 \begin{multline*}
  \int_0^\ii dt_2\,\int_0^{t_2}\,dt\,\int_0^t\,dt_1\,\int_0^{t_1}d\tau\,\tr_{L^2_x} U_V(t)^*\sd^s A_{n_1}^{(2)}(g_{n_1}^{(2)}(W(t)))^*A_{n_1}^{(1)}(g_{n_1}^{(1)}(W(t)))e^{it\Delta_x}\times\\
  \times e^{-it_1\Delta}\sd^s V(t_1)U_V(t_1) U_V(\tau)^* V(\tau)e^{i\tau\Delta_x}g(-i\nabla_x)e^{-it_2\Delta} V(t_2)U_V(t_2)\\
  = \int_0^\ii dt_2\,\int_0^{t_2}\,dt\,\int_0^t\,dt_1\,\int_0^{t_1}d\tau \tr_{L^2_x} e^{-it_2\Delta} V(t_2)U_V(t_2)U_V(t)^*\sd^s \times\\
  \times A_{n_1}^{(2)}(g_{n_1}^{(2)}(W(t)))^*A_{n_1}^{(1)}(g_{n_1}^{(1)}(W(t)))e^{it\Delta_x}e^{-it_1\Delta}\sd^s V(t_1)U_V(t_1) U_V(\tau)^* V(\tau)e^{i\tau\Delta_x}g(-i\nabla_x),
 \end{multline*}
 which again has the nested structure and can be estimated as the first integral.
\end{proof}

 \subsection{The first quadratic term}

 \begin{proposition}\label{prop:rho21}

   Let $d\ge3$ and $s=d/2-1$. If $\|V\|_{L^2_t H^s_x}+\|\tilde{V}\|_{L^2_t H^s_x}\le\delta$, then
   $$\left\| \rho_{2,1}(V) \right\|_{L^2_t H^s_x} \le C,$$
   $$\left\| \rho_{2,1}(V) - \rho_{2,1}(\tilde{V}) \right\|_{L^2_t H^s_x} \le \frac12 \|V-\tilde{V}\|_{L^2_t H^s_x}.$$

\end{proposition}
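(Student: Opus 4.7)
The plan is to adapt the proof of Proposition \ref{prop:rho31} to the two-potential setting of $\rho_{2,1}$, exploiting a structural feature specific to it: since $g(-i\nabla_x)$ sits between the two $V$'s, and since $\langle\xi\rangle^{2(d-2)}g(\xi)\in L^\ii$ with $2s=d-2$, the operator $G:=\sd^s g(-i\nabla_x)\sd^s$ is bounded on $L^2_x$. This lets us absorb both copies of $\sd^s$ coming from the $H^s_x$ norm into this central factor, bypassing the $\dot H^{-1/2}_x$ surgery that will be unavoidable for $\rho_{2,2}$. I focus on the $\dot H^s_x$ piece; the $L^2_{t,x}$ piece of the $H^s_x$ norm is obtained by the same argument with $s=0$ (no distribution of derivatives is needed and $g(-i\nabla_x)$ itself provides the bounded middle factor).

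Testing $\sd^s\rho_{2,1}(V)$ against $W\in L^2_{t,x}$, Lemma \ref{lem:leibniz-gamma-no-derivative} converts the integral into a sum of traces of the form
$$\sum_n\int_0^{+\ii}dt\,\tr_{L^2_x}\bigl[A_n^{(2)}(g_n^{(2)}(W(t)))^*A_n^{(1)}(g_n^{(1)}(W(t)))\,\sd^sB(t)\sd^s\bigr],$$
where $B(t)$ is the double integral in the definition of $\rho_{2,1}(V)(t)$. Since $[\sd,e^{i\tau\Delta_x}]=[\sd,g(-i\nabla_x)]=0$, one may commute the two outer $\sd^s$ past the free propagators to rewrite
$$\sd^sB(t)\sd^s=\int_0^t\int_0^{t_1}dt_1\,dt_2\,e^{i(t-t_1)\Delta_x}[\sd^sV(t_1)\sd^{-s}]\,e^{i(t_1-t_2)\Delta_x}G\,[\sd^{-s}V(t_2)\sd^s]\,e^{i(t_2-t)\Delta_x}.$$
I would then decompose $\sd^sV(t_1)\sd^{-s}=\sum_{n_1}(J_{n_1}^{(1)})^*J_{n_1}^{(2)}$ via Lemma \ref{lem:leibniz-without-loss} (with $s=d/2-1$, $\mu=\nu=2$) and, by taking the adjoint of the same decomposition, $\sd^{-s}V(t_2)\sd^s=\sum_{n_2}(\tilde J_{n_2}^{(2)})^*\tilde J_{n_2}^{(1)}$. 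Reading the resulting expression as a composition of operators $L^2_x\to L^2_{t_2}L^2(X_{n_2})\to L^2_{t_1}L^2(X_{n_1})\to L^2_tL^2(X_n)\to L^2_x$, apply Corollary \ref{coro:CK} twice to convert the ordered integrals $\int_0^t\int_0^{t_1}$ into unrestricted integrations in $t_1$ and $t_2$, at the price of composing Schatten exponents. The middle factor $G$ enters only through its $\cB(L^2_x)$ operator norm. Hölder in Schatten spaces then closes the trace bound by a product of Schatten norms, each controlled by Corollary \ref{coro:general-schatten} combined with Lemma \ref{lem:strichartz-A}.

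The cumulative dependence comes out to $\|W\|_{L^2_{t,x}}\|V\|_{L^2_tH^s_x}^2$: each $V$ decomposition gives exponents $\theta_{n_j}^{(1)}+\theta_{n_j}^{(2)}=1$ from Lemma \ref{lem:leibniz-without-loss}, and the two $A$-factors combine to give $\|W\|_{L^2_{t,x}}$ from Lemma \ref{lem:leibniz-gamma-no-derivative}. Choosing $\delta$ small then yields the first estimate. The Lipschitz bound follows from bilinearity in $V$ via the identity $V_1gV_2-\tilde V_1g\tilde V_2=(V_1-\tilde V_1)gV_2+\tilde V_1g(V_2-\tilde V_2)$ and the same argument applied to each piece. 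The main anticipated obstacle is combinatorial: verifying the admissibility condition $1/\alpha_1+1/\alpha_2<1/p_1+1/p_2$ of Corollary \ref{coro:CK} at both applications, and checking that the time-Lebesgue exponents $L^{2(p_n^{(\ell)})'}_t$ from Lemma \ref{lem:leibniz-without-loss} pair correctly with $\|V\|_{L^2_tH^s_x}$ via Hölder. Compared to Proposition \ref{prop:rho31}, the fact that only free propagators (no $\sd^sU_V\sd^{-s}$) appear inside $\rho_{2,1}$ simplifies this bookkeeping considerably.
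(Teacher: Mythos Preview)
There is a genuine gap: with your choice of Leibniz lemmas, the Schatten exponents do not sum to at least $1$, so H\"older in Schatten spaces cannot close the trace. Concretely, Lemma~\ref{lem:leibniz-gamma-no-derivative} for the $W$-block yields $A_n^{(\ell)}$ at the $\sigma=0$ Strichartz scale with
\[
\frac{1}{2(\alpha_n^{(1)})'}+\frac{1}{2(\alpha_n^{(2)})'}=\frac{1}{2d},
\]
and Lemma~\ref{lem:leibniz-without-loss} (with $\mu=\nu_0=2$) for each of $\sd^{s}V(t_j)\sd^{-s}$ yields the same total $1/(2d)$. Your bounded factor $G=\sd^{s}g(-i\nabla_x)\sd^{s}$ contributes nothing, so the total is $3/(2d)<1$ for every $d\ge 2$. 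The same deficit occurs for the $L^2_{t,x}$ piece ``with $s=0$''. Your remark that having only free propagators ``simplifies the bookkeeping'' relative to Proposition~\ref{prop:rho31} is inverted: in $\rho_{3,1}$ there are three $V$'s and one $W$, so two of them may be decomposed via the cheap Lemma~\ref{lem:leibniz-without-loss} while the other two (those adjacent to $g$ and to the outer density) use the richer Lemma~\ref{lem:leibniz-op-2}/Lemma~\ref{lem:leibniz-gamma-derivative}, yielding $2\cdot\tfrac{1}{2d}+2\cdot\tfrac{d-1}{2d}=1$. With only two $V$'s in $\rho_{2,1}$, \emph{every} block must contribute $\tfrac{d-1}{2d}$, and your lemmas give only $\tfrac{1}{2d}$.

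The missing idea is that $G=\sd^{s}g\sd^{s}$ wastes half of the available decay: since $\langle\xi\rangle^{2(d-2)}g=\langle\xi\rangle^{4s}g\in L^\infty$, one has $\sd^{2s}g(-i\nabla_x)\sd^{2s}$ bounded, and the extra $\sd^{-s}$ on each side must be \emph{fed to the Strichartz estimates} to raise each block's contribution from $1/(2d)$ to $(d-1)/(2d)$. This is precisely what the paper does: it uses Lemma~\ref{lem:leibniz-op-2} (resp.\ its adjoint) for $\sd^{s}V(t_1)\sd^{-s}$ (resp.\ $\sd^{-s}V(t_2)\sd^{s}$), where the piece $L^{(2)}e^{it\Delta}\sd^{-s}\in\gS^{2d(d+2)/(d^2-2)}$ absorbs one $\sd^{-s}$; and for the $|D|^s$-part it uses Lemma~\ref{lem:leibniz-gamma} with $\sigma_1=\sigma_2=s/2$ (not Lemma~\ref{lem:leibniz-gamma-no-derivative}), so that $A^{(\ell)}\sd^{-s/2}e^{it\Delta}\sd^{-s/2}\in\gS^{4d/(d-1)}$. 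The resulting sum is $3(d-1)/(2d)\ge 1$ exactly when $d\ge 3$, which is why the proposition is stated for $d\ge 3$. Replacing your $G$ by $\sd^{2s}g\sd^{2s}$ alone is still not enough (you would reach $(2d-1)/(2d)<1$); you must also abandon Lemma~\ref{lem:leibniz-gamma-no-derivative} on the $W$-side in favor of a decomposition that makes a $\sd^{-s/2}$ available to each $A^{(\ell)}$'s Strichartz bound.
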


\begin{proof}
Let $W\in L^2_{t,x}$. We have
$$
\begin{multlined}
 \int W(t,x)\rho_{2,1}(V)(t,x)\,dx\,dt= \tr_{L^2_x}\int_0^\ii dt\,\int_0^t dt_1\,\int_0^{t_1}dt_2\,e^{-it\Delta_x}W(t)e^{it\Delta_x}\sd^{-s}\times \\
 \times e^{-it_1\Delta} \sd^sV(t_1)\sd^{-s}e^{i(t_1-t_2)\Delta}\sd^{-s}\sd^{2s}g(-i\nabla_x)\sd^{s}\sd^{-s}V(t_2)e^{it_2\Delta}.
\end{multlined}
$$
We split $W(t)=|W(t)|^{\tfrac{2}{d+2}}W(t)^{\tfrac{d}{d+2}}$, $\sd^sV(t_1)\sd^{-s}$ according to Lemma \ref{lem:leibniz-op-2}, and $V(t_2)=|V(t_2)|^{\tfrac{d}{d+2}}V(t_2)^{\tfrac{2}{d+2}}$. Combining Corollary \ref{coro:general-schatten} and Corollary \ref{coro:CK} as in the previous proofs, we deduce that
$$\left|\int W(t,x)\rho_{2,1}(V)(t,x)\,dx\,dt\right|\lesssim \|W\|_{L^2_{t,x}}\|V\|_{L^2_t H^s_x}^2.$$
Let us now estimate $|D_x|^s\rho_{2,1}(V)$ in $L^2_{t,x}$. By Lemma \ref{lem:leibniz-gamma} applied to $\sigma=s$, $\sigma_1=\sigma_2=s/2$, $q=2$, and $q_1=q_2=4$, we have
\begin{multline*}
 \int W(t,x)(|D_x|^s\rho_{2,1}(V))(t,x)\,dx\,dt = \tr_{L^2_x}\int_0^\ii dt\,\int_0^t dt_1\,\int_0^{t_1}dt_2\,\times\\
 \times e^{-it\Delta_x}\left(\sum_{|\alpha|\le s/2}\frac{1}{\alpha!}(D^{s,\alpha}_xW(t)\partial^\alpha_x+\partial^\alpha_x W(t) D^{s,\alpha}_x)+A^{(2)}(h^{(2)}(W(t)))^* A^{(1)}(h^{(1)}(W(t)))\right)e^{it\Delta_x}\times\\
 \times e^{-it_1\Delta} V(t_1)e^{i(t_1-t_2)\Delta}g(-i\nabla_x)V(t_2)e^{it_2\Delta}.
\end{multline*}
 Let us first treat the term
 \begin{multline*}
 \tr_{L^2_x}\int_0^\ii dt\,\int_0^t dt_1\,\int_0^{t_1}dt_2\,e^{-it\Delta_x}A^{(2)}(h^{(2)}(W(t)))^* A^{(1)}(h^{(1)}(W(t)))e^{it\Delta_x}\times\\
 \times e^{-it_1\Delta} V(t_1)e^{i(t_1-t_2)\Delta}g(-i\nabla_x)V(t_2)e^{it_2\Delta}\\
 =\tr_{L^2_x}\int_0^\ii dt\,\int_0^t dt_1\,\int_0^{t_1}dt_2\,\sd^{-s/2}e^{-it\Delta_x}\sd^{-s/2}A^{(2)}(h^{(2)}(W(t)))^* \times\\
 \times A^{(1)}(h^{(1)}(W(t)))\sd^{-s/2}e^{it\Delta_x}\sd^{-s/2} e^{-it_1\Delta} \sd^s V(t_1)\sd^{-s}e^{it_1\Delta}\sd^{-s}\times\\
 \times\sd^{2s}g(-i\nabla_x)\sd^{2s} \sd^{-s}e^{-it_2\Delta}\sd^{-s}V(t_2)\sd^s e^{it_2\Delta}.
 \end{multline*}
 We expand $\sd^s V(t_1)\sd^{-s}$ according to Lemma \ref{lem:leibniz-op-2} and $\sd^{-s}V(t_2)\sd^s$ according to the dual of \eqref{eq:decomp-leibniz-schatten}. We obtain
 \begin{multline*}
 \tr_{L^2_x}\int_0^\ii dt\,\int_0^t dt_1\,\int_0^{t_1}dt_2\,e^{-it\Delta_x}A^{(2)}(h^{(2)}(W(t)))^* A^{(1)}(h^{(1)}(W(t)))e^{it\Delta_x}\times\\
 \times e^{-it_1\Delta} V(t_1)e^{i(t_1-t_2)\Delta}g(-i\nabla_x)V(t_2)e^{it_2\Delta}\\
 =\tr_{L^2_x}\int_0^\ii dt\,\int_0^t dt_1\,\int_0^{t_1}dt_2\,\sd^{-s/2}e^{-it\Delta_x}\sd^{-s/2}A^{(2)}(h^{(2)}(W(t)))^* \times\\
 \times A^{(1)}(h^{(1)}(W(t)))\sd^{-s/2}e^{it\Delta_x}\sd^{-s/2} e^{-it_1\Delta} L^{(1)}(g^{(1)}(V(t_1)))^* L^{(2)}(g^{(2)}(V(t_1)))e^{it_1\Delta}\sd^{-s}\times\\
 \times\sd^{2s}g(-i\nabla_x)\sd^{2s} \sd^{-s}e^{-it_2\Delta}L^{(2)}(g^{(2)}(V(t_2)))^* L^{(1)}(g^{(1)}(V(t_1))) e^{it_2\Delta}.
 \end{multline*}
 From Lemma \ref{lem:leibniz-gamma} we have that
 $$\|A^{(1)}(h)\sd^{-s/2}\|_{L^4_x\to L^2(X)} \lesssim \|h\|_{B^{(1)}}$$
 hence by Corollary \ref{coro:general-schatten} applied to $\sigma=s/2$, $p=q=2$, and $\alpha=2d/(d+1)$ we deduce that
 $$\|A^{(1)}(h(t))\sd^{-s/2}e^{it\Delta_x}\sd^{-s/2}\|_{\gS^{4d/(d-1)}(L^2_x\to L^2_t L^2(X))}\lesssim
 \int_\R\|h(t)\|_{B^{(1)}}^4\,dt.$$
 In the same way, we have
 $$\|A^{(2)}(h(t))\sd^{-s/2}e^{it\Delta_x}\sd^{-s/2}\|_{\gS^{4d/(d-1)}(L^2_x\to L^2_t L^2(X))}\lesssim
 \int_\R\|h(t)\|_{B^{(2)}}^4\,dt.$$
 Together with \eqref{eq:est-Lg} and
 $$\|L^{(1)}(g(t))e^{it\Delta_x}\|_{\gS^{2(d+2)}(L^2_x\to L^2_{t}L^2(X))}\lesssim \|g\|_{L^{d+2}_t B^{(1)}},
  $$
 we can apply Corollary \ref{coro:CK} used as in the proof of Proposition \ref{prop:rho31}, as well as the Hölder inequality in Schatten spaces that we can use since
 $$\frac{d-1}{4d}+\frac{d-1}{4d}+\frac{1}{2(d+2)}+\frac{d^2-2}{2d(d+2)}+\frac{1}{2(d+2)}+\frac{d^2-2}{2d(d+2)}=3\frac{d-1}{2d}\ge1
 $$
 to deduce that
 \begin{multline*}
 |\tr_{L^2_x}\int_0^\ii dt\,\int_0^t dt_1\,\int_0^{t_1}dt_2\,e^{-it\Delta_x}A^{(2)}(h^{(2)}(W(t)))^* A^{(1)}(h^{(1)}(W(t)))e^{it\Delta_x}\times\\
 \times e^{-it_1\Delta} V(t_1)e^{i(t_1-t_2)\Delta}g(-i\nabla_x)V(t_2)e^{it_2\Delta}|\lesssim \|W\|_{L^2_{t,x}}\|V\|_{L^2_t H^s_x}^2.
 \end{multline*}
 The terms $D^{s,\alpha}_xW(t)\partial^\alpha_x+\partial^\alpha_x W(t) D^{s,\alpha}_x$ for $|\alpha|\le s/2$ are treated in the same way: we split them as
 $$\sd^{-s}\partial^\alpha_x W(t) D^{s,\alpha}_x\sd^{-s}
 =\sd^{-s}\partial^\alpha_x|W(t)|^{\tfrac{d-2|\alpha|}{d+2}}W(t)^{\tfrac{2(1+|\alpha|)}{d+2}}D^{s,\alpha}_x\sd^{-s}.
 $$
 By \eqref{eq:stri-bez-xtotx}, we have
 \begin{align*}
  \|W(t)^{\tfrac{2(1+|\alpha|)}{d+2}}e^{it\Delta_x}D^{s,\alpha}_x\sd^{-s}\|_{\gS^{a_1}(L^2_x\to L^2_{t,x})} &\lesssim \|W(t)^{\tfrac{2(1+|\alpha|)}{d+2}}e^{it\Delta_x}\sd^{-|\alpha|}\|_{\gS^{a_1}(L^2_x\to L^2_{t,x})} \\
  &\lesssim \|W\|_{L^2_{t,x}}^{\tfrac{2(1+|\alpha|)}{d+2}},
 \end{align*}
 \begin{align*}
     \||W(t)|^{\tfrac{d-2|\alpha|}{d+2}}e^{it\Delta_x}\sd^{-s}\partial^\alpha_x\|_{\gS^{a_2}(L^2_x\to L^2_{t,x})} &\lesssim \||W(t)|^{\tfrac{d-2|\alpha|}{d+2}}e^{it\Delta_x}\sd^{-(s-|\alpha|)}\|_{\gS^{a_2}(L^2_x\to L^2_{t,x})}\\
     &\lesssim \|W\|_{L^2_{t,x}}^{\tfrac{d-2|\alpha|}{d+2}},
 \end{align*}
 with
 $$\frac{1}{a_1}=\frac{(d+1)(1+|\alpha|)}{d(d+2)}-\frac{1}{2d},\quad \frac{1}{a_2}=\frac{(d+1)(d-2|\alpha|)}{2d(d+2)}-\frac{1}{2d}.$$
 Since
 $$\frac{1}{a_1}+\frac{1}{a_2} = \frac{d-1}{2d},$$
 we can conclude as for the previous terms, to get the same estimate. This leads to the desired estimate of $|D_x|^s\rho_{2,1}(V)$ in $L^2_{t,x}$, and the estimate of $\rho_{2,1}(V)$ follows from similar arguments so we omit it.
\end{proof}

 \subsection{The second quadratic term}

 \begin{proposition}\label{prop:rho22}

   Let $d\ge3$ and $s=d/2-1$. If $\|V\|_{L^2_t H^s_x}+\|\tilde{V}\|_{L^2_t H^s_x}\le\delta$, then
   $$\left\| \rho_{2,2}(V) \right\|_{L^2_t H^s_x} \le C,$$
   $$\left\| \rho_{2,2}(V) - \rho_{2,2}(\tilde{V}) \right\|_{L^2_t H^s_x} \le \frac12 \|V-\tilde{V}\|_{L^2_t H^s_x}.$$

\end{proposition}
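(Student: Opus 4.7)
The plan is to follow the same broad strategy as Proposition~\ref{prop:rho21}: test $\sd^s\rho_{2,2}(V)$ against $W\in L^2_{t,x}$, apply the fractional Leibniz rule for density matrices (Lemma~\ref{lem:leibniz-gamma}) with $\sigma_1=\sigma_2=s/2$, $q=2$, $q_1=q_2=4$ to distribute the $\sd^s$ across $e^{-it\Delta_x}W(t)e^{it\Delta_x}$, then factor every $\sd^s V(t_j)\sd^{-s}$ into a product $L^{(1)}(g^{(1)}(V))^*L^{(2)}(g^{(2)}(V))$ via Lemma~\ref{lem:leibniz-op-2} (or its dual). The spare $\sd^s$ that previously landed on $e^{-it\Delta_x}W e^{it\Delta_x}$ is absorbed into $\sd^{2s}g(-i\nabla_x)\sd^{2s}$, which is bounded under the assumption $\langle\xi\rangle^{2(d-2)}g(\xi)\in L^\infty$. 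The nested time integrals $\int_0^t dt_1\int_0^{t_1}dt_2$ will then be unwound by Corollary~\ref{coro:CK}, and the resulting Schatten factors glued together via H\"older with exactly the same exponent count $2\cdot\tfrac{d-1}{2d}+2\cdot\tfrac{1}{2(d+2)}+2\cdot\tfrac{d^2-2}{2d(d+2)}=1$ as in Proposition~\ref{prop:rho21}.

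For most of the resulting terms this reasoning closes immediately. The main obstacle, already singled out in Section~\ref{sec:strategy}, is the unique contribution where both $V(t_1)$ and $V(t_2)$ carry a genuine derivative coming from Leibniz expansion, while the spare $\sd$ still has to sit on $g(-i\nabla_x)$. Schematically one is left with
$$\left(\int e^{-i\tau_1\Delta}V(\tau_1)e^{i\tau_1\Delta}\,d\tau_1\right)\left(\int e^{-i\tau_2\Delta}(\partial^\alpha V)(\tau_2)e^{i\tau_2\Delta}\,d\tau_2\right)\sd^{2s}g(-i\nabla_x)\sd^{2s},$$
and the naive Schatten bound on the second bracket is only $\gS^2$, too weak to close the H\"older product. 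To bypass this I will follow the recipe outlined in the introduction: write $\partial^\alpha V=\sd^{-1/2}(\sd^{1/2}\partial^\alpha V)$, split the $\sd^{-1/2}$ piece off using Lemma~\ref{lem:leibniz-op-general} (with $q_2$ chosen so that $\sd^{-1/2}$ embeds $L^{q_2}$ into a better Lebesgue exponent), and for the remaining factor involving $\sd^{1/2}\partial^\alpha V\in\dot H^{-1/2}$ invoke the Chen--Hong--Pavlovi\'c Hilbert--Schmidt Strichartz estimate (Theorem~\ref{thm:strichartz-HS}) with $\alpha_0=1/2$ and $\alpha_1,\alpha_2\ge0$ chosen so that $\alpha_0=\alpha_1+\alpha_2-(d-1)/2$ and the leftover derivatives match the $\sd^{2s}g(-i\nabla_x)\sd^{2s}$ budget. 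This yields a $\gS^2$ bound with the correct factor $\|V\|_{L^2_{t,x}}\le\|V\|_{L^2_tH^s_x}$.

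Because this singular factor only sits in $\gS^2$ rather than in the nicer Schatten class demanded by the fully orthogonal Christ--Kiselev Corollary~\ref{coro:CK}, I cannot use that lemma on the outermost retarded integral. This is precisely where the partially orthogonal version (Corollary~\ref{coro:CK-2}) enters: arranging the composition as $L^2_{t_2}\to\gH\to L^2_{t,x}$ so that the retarded $\int_0^{t_1}\,dt_2$ remains inside a Hilbert--Schmidt block, Corollary~\ref{coro:CK-2} restores the time ordering at the price of a merely $\gS^2$ Schatten bound, which is still sufficient to close the H\"older count by virtue of the pairing with the better-class factors produced by Lemma~\ref{lem:leibniz-op-2}. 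Summing over all the decompositions yields $|\!\int W\,\sd^s\rho_{2,2}(V)|\lesssim\|W\|_{L^2_{t,x}}\|V\|_{L^2_tH^s_x}^2$. Combining this with the obvious $L^2_{t,x}$ bound on $\rho_{2,2}(V)$ itself (proved exactly as in the first step of Proposition~\ref{prop:rho21}) gives the first inequality. The Lipschitz bound follows from the same argument applied to the bilinear-in-$V$ decomposition $\rho_{2,2}(V)-\rho_{2,2}(\tilde V)=\rho_{2,2}(V,V-\tilde V)+\rho_{2,2}(V-\tilde V,\tilde V)$, with the smallness $\|V\|_{L^2_tH^s_x},\|\tilde V\|_{L^2_tH^s_x}\le\delta$ absorbing one factor and delivering the prefactor $1/2$ for $\delta$ small enough. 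The hard part throughout is the bookkeeping that separates the ``generic'' terms (handled by Corollary~\ref{coro:CK}) from the single $\dot H^{-1/2}$ term that forces Theorem~\ref{thm:strichartz-HS} and Corollary~\ref{coro:CK-2}.
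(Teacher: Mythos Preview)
Your proposal correctly identifies the decisive ingredients for the singular term—the half-derivative trick, Theorem~\ref{thm:strichartz-HS}, and the partially orthogonal Christ--Kiselev lemma (Corollary~\ref{coro:CK-2})—and that is indeed where the proof of Proposition~\ref{prop:rho22} departs from Proposition~\ref{prop:rho21}. However, your plan to reuse the \emph{symmetric} density Leibniz split $\sigma_1=\sigma_2=s/2$, $q_1=q_2=4$ from $\rho_{2,1}$ does not match the structure of $\rho_{2,2}$, and the claim that ``for most of the resulting terms this reasoning closes immediately'' with the same H\"older count is where the argument breaks down.

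The point is that in $\rho_{2,1}$ the operator has the shape $V(t_1)\,g(-i\nabla)\,V(t_2)$ with $g$ in the middle, so a symmetric split puts one $\sd^{-s}$ on each side of $g$ and one can apply Lemma~\ref{lem:leibniz-op-2} to $V(t_1)$ and its \emph{dual} to $V(t_2)$. In $\rho_{2,2}$ the shape is $V(t_1)V(t_2)\,g(-i\nabla)$ with $g$ at one end only. With your symmetric split the remainder $A^{(1)},A^{(2)}:W^{s/2,4}\to L^2(X)$ forces a $\sd^{-s/2}$ on the $V_1$ side that cannot be absorbed by $g$; you would then need to factor $\sd^{s/2}V_j\sd^{-s/2}$ rather than $\sd^sV_j\sd^{-s}$, and Lemma~\ref{lem:leibniz-op-2} (which is tuned to the full $\sd^s$ conjugation) no longer gives the Schatten exponents in your count. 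The paper instead takes the \emph{asymmetric} split $\sigma_1=0$, $\sigma_2=s$, $q_1=d+2$, $q_2=2(d+2)/d$, so that all the Sobolev weight sits on the $g$-side. This forces two new factorization lemmas absent from your outline: Lemma~\ref{lem:leibniz-op-3} (the reversal of Lemma~\ref{lem:leibniz-op-2}, to pair $K^{(1)}e^{it_1\Delta}\sd^{-s}$ with $A^{(1)}e^{it\Delta}$ in the correct Schatten classes) for the remainder and $|D|^sW$ terms, and Lemma~\ref{lem:leibniz-op-4} (a factorization of $\sd^sV\sd^{-s}$ carrying a spare $\sd^{-1/2}$ on the $M^{(1)}$ piece) for the $\partial^\beta W\,D^{s,\beta}$ terms with $\beta\neq0$. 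Only after these reductions does a single genuinely singular piece remain, namely the $\beta=0$ contribution $W(t)|D|^s$; the paper then writes $W|D|^{1/2}$ via Lemma~\ref{lem:leibniz-op-general} (so the half-derivative goes on $W$, not on $V$ as you propose) and invokes Theorem~\ref{thm:strichartz-HS} together with Corollary~\ref{coro:CK-2} exactly as you anticipated. Your high-level picture is right, but the asymmetric split and the two auxiliary factorizations are what make the ``non-singular'' bookkeeping close.
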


Following the proof of Lemma \ref{lem:leibniz-op-2} but reversing the roles of $q_n^{(1)}$ and $q_n^{(2)}$, we obtain:

\begin{lemma}\label{lem:leibniz-op-3}

Let $d\ge2$ and $s=d/2-1$. Then, there exist  $K^{(1)}:B^{(1)}\to\cB(L^{d+2}_x,L^2(X))$, $K^{(2)}:B^{(2)}\to\cB(L^{2+4/d}_x,L^2(X))$ linear and continuous, where $X$ is some measure space, and for $\ell\in\{1,2\}$, $B^{(\ell)}$ are some Banach spaces and $g^{(\ell)}: H^s_x\to B^{(\ell)}$ are such that for all $W\in H^s_x$ one can decompose
 \begin{equation}\label{eq:decomp-leibniz-schatten-K}
    \nsd^s  W\nsd^{-s} =  K^{(1)}(g^{(1)}(W))^* K^{(2)}(g^{(2)}(W)),
 \end{equation}
 \begin{equation}\label{eq:est-gnK}
  \|g^{(1)}(W)\|_{B^{(1)}}\lesssim \|W\|_{H^s_x}^{\tfrac{d}{d+2}},\quad \|g^{(2)}(W)\|_{B^{(2)}}\lesssim \|W\|_{H^s_x}^{\tfrac{2}{d+2}}.
\end{equation}
In particular, we have
\begin{equation}\label{eq:est-Kg}
\begin{aligned}
  \|K^{(1)}(g(t))e^{it\Delta_x}\sd^{-s}\|_{\gS^{2d(d+2)/(d^2-2)}(L^2_x\to L^2_tL^2(X))}&\lesssim \|g\|_{L^{2(d+2)/d}_t B^{(1)}},\\
  \|K^{(2)}(g(t))e^{it\Delta_x}\|_{\gS^{2(d+2)}(L^2_x\to L^2_{t}L^2(X))}&\lesssim \|g\|_{L^{d+2}_t B^{(2)}}.
\end{aligned}
\end{equation}

\end{lemma}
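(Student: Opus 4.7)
The plan is to mirror the proof of Lemma \ref{lem:leibniz-op-2}, applying Lemma \ref{lem:leibniz-decomp-general} with the same ambient parameters $\nu = 2$, $s = d/2 - 1$, but with the roles of $q_a^{(1)}$ and $q_a^{(2)}$ interchanged. That is, for every $a \in \{0, 1, \ldots, \lfloor s \rfloor, s\}$, I would set
$$q_a^{(1)} := \frac{d+2}{2}, \qquad q_a^{(2)} := \frac{d+2}{d}.$$
A direct computation gives $(q_a^{(1)})' = (d+2)/d$ and $(q_a^{(2)})' = (d+2)/2$, so that
$$\frac{1}{\nu_a} := \frac{1}{2(q_a^{(1)})'} + \frac{1}{2(q_a^{(2)})'} = \frac{d}{2(d+2)} + \frac{1}{d+2} = \frac12 = \frac{1}{\nu},$$
which lies in the admissible range $[1/\nu - (s-a)/d,\, 1/\nu]$ required by Lemma \ref{lem:leibniz-decomp-general}. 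As in the proof of Lemma \ref{lem:leibniz-op-2}, since $\sd^{-a}$ is bounded on $L^{2+4/d}_x$, one may absorb the trailing $\sd^{-a}$ produced by Lemma \ref{lem:leibniz-decomp-general} into the second factor, obtaining the decomposition \eqref{eq:decomp-leibniz-schatten-K} after regrouping the finitely many terms indexed by $a$ into a single pair $(K^{(1)}, K^{(2)})$ by the device explained in Part~1 of the proof of Lemma \ref{lem:leibniz-op-general}.

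Next, the exponents in \eqref{eq:est-gnK} are read off directly from Lemma \ref{lem:leibniz-decomp-general}:
$$\|g^{(1)}(W)\|_{B^{(1)}} \lesssim \|W\|_{W^{s,\nu}}^{\nu_a/(2(q_a^{(1)})')} = \|W\|_{H^s}^{d/(d+2)},$$
$$\|g^{(2)}(W)\|_{B^{(2)}} \lesssim \|W\|_{W^{s,\nu}}^{\nu_a/(2(q_a^{(2)})')} = \|W\|_{H^s}^{2/(d+2)},$$
which are exactly the bounds stated in Lemma \ref{lem:leibniz-op-3} (with the two exponents interchanged relative to Lemma \ref{lem:leibniz-op-2}, as expected).

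Finally, the two Schatten bounds in \eqref{eq:est-Kg} follow from Corollary \ref{coro:general-schatten}. For the first bound I apply it with $\sigma = s = d/2 - 1$, $p = q = (d+2)/2$, and $\alpha = d(d+2)/(2(d+1))$, which satisfy $2/p + d/q = 2 = d - 2\sigma$, $1/\alpha = 1/(dp) + 1/q$, and $\alpha < p$; this yields $2\alpha' = 2d(d+2)/(d^2-2)$ and time exponent $2p' = 2(d+2)/d$. For the second bound I apply it with $\sigma = 0$, $p = q = (d+2)/d$, and $\alpha = (d+2)/(d+1)$, giving $2\alpha' = 2(d+2)$ and time exponent $2p' = d+2$. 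There is no conceptual obstacle here: the whole point is that Lemma \ref{lem:leibniz-decomp-general} is symmetric in its two index slots, so reversing the roles of $q_a^{(1)}$ and $q_a^{(2)}$ costs nothing and simply produces the dual-type decomposition needed for the most singular term in $\rho_{2,2}(V)$.
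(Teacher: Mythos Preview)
Your proposal is correct and is precisely the approach the paper takes: the paper's proof of Lemma \ref{lem:leibniz-op-3} consists of the single sentence ``Following the proof of Lemma \ref{lem:leibniz-op-2} but reversing the roles of $q_n^{(1)}$ and $q_n^{(2)}$, we obtain,'' and you have carried out exactly that reversal with the correct parameter verifications.
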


 \begin{lemma}\label{lem:leibniz-op-4}

Let $d\ge3$ and $s=d/2-1$. Then, there exists $N\in\N\setminus\{0\}$ and for all $n=1,\ldots,N$ and $\ell\in\{1,2\}$ there exist  $p_n^{(\ell)},q_n^{(\ell)}\in(1,+\ii)$, $L_n^{(\ell)}:B_n^{(\ell)}\to\cB(L^{2q_n^{(\ell)}}_x,L^2(X_n))$ linear and continuous, with $B_n^{(\ell)}$ some Banach space and $X_n$ some measure space, and $g_n^{(\ell)}: H^s_x\to B_n^{(\ell)}$ such that for all $W\in H^s_x$ one can decompose
 \begin{equation}\label{eq:decomp-leibniz-schatten-M}
    \nsd^s  W\nsd^{-s} =  \sum_{n=1}^N M_n^{(1)}(g_n^{(1)}(W))^* M_n^{(2)}(g_n^{(2)}(W)),
 \end{equation}
 with for all $n=1,\ldots,N$ and all $\ell\in\{1,2\}$,
 \begin{equation}\label{eq:est-gnM}
    \|g_n^{(\ell)}(V(t))\|_{L^{2(p_n^{(\ell)})'}_t B_n^{(\ell)}}\lesssim \|V\|_{L^2_t H^s_x}^{\theta_n^{(\ell)}},
 \end{equation}
 as well as
\begin{equation}\label{eq:est-Mg}
\begin{aligned}
  \|M_n^{(1)}(g(t))e^{it\Delta_x}\sd^{-1/2}\|_{\gS^{2(\alpha_n^{(1)})'}(L^2_x\to L^2_{t,x})}&\lesssim \|g\|_{L^{2(p_n^{(1)})'}_t B_n^{(1)}},\\
  \|M_n^{(2)}(g(t))e^{it\Delta_x}\|_{\gS^{2(\alpha_n^{(2)})'}(L^2_x\to L^2_{t,x})}&\lesssim \|g\|_{L^{2(p_n^{(2)})'}_t B_n^{(2)}},
\end{aligned}
\end{equation}
for some $1\le \alpha_n^{(\ell)}<p_n^{(\ell)}$ such that
$$\frac{1}{2(\alpha_n^{(1)})'}+\frac{1}{2(\alpha_n^{(2)})'}\ge\frac{1}{d}.$$
 \end{lemma}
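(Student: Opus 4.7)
The plan is to mirror the proof of Lemma \ref{lem:leibniz-without-loss} with $\mu = \nu = 2$, starting from the decomposition
$$\sd^s W \sd^{-s} = \sum_{a \in \{0, 1, \ldots, \lfloor s \rfloor, s\}} L_a^{(1)}(g_a^{(1)}(W))^* L_a^{(2)}(g_a^{(2)}(W)) \sd^{-a}$$
provided by Lemma \ref{lem:leibniz-decomp-general}. The key novelty is to exploit the $\sd^{-1/2}$ factor appearing on the right of $M_n^{(1)}$ in the target Schatten bound: since $\sd^{-1/2}$ commutes with $e^{it\Delta_x}$, the operator $M_n^{(1)}(g(t)) e^{it\Delta_x} \sd^{-1/2}$ can be estimated via Corollary \ref{coro:general-schatten} applied with $\sigma = 1/2$ instead of $\sigma = 0$, effectively gaining a half-derivative in the Strichartz and producing the stronger lower bound $1/d$ in place of the $1/(2d)$ obtained in Lemma \ref{lem:leibniz-without-loss}. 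Set $M_n^{(1)} := L_a^{(1)}$ and $M_n^{(2)} := L_a^{(2)} \sd^{-a}$, the latter interpreted as a map $L^{2q_a^{(2)}}_x \to L^2(X_a)$ via the Sobolev embedding $W^{a, 2q_a^{(2)}}_x \hookrightarrow L^{2\hat{q}_a^{(2)}}_x$ relating the target $q_a^{(2)}$ to the $\hat{q}_a^{(2)}$ produced by Lemma \ref{lem:leibniz-decomp-general}, with an analogous Sobolev shift for $M_n^{(1)}$ absorbing the $\sd^{-1/2}$.

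The pairs $(p_a^{(\ell)}, q_a^{(\ell)})$ satisfy the Strichartz relation $2/p + d/q = d$ stated in the lemma, while the auxiliary parameters $\nu_a$ and $\hat{q}_a^{(\ell)}$ are chosen so that the Lemma \ref{lem:leibniz-decomp-general} identity $1/(2(\hat{q}_a^{(1)})') + 1/(2(\hat{q}_a^{(2)})') = 1/\nu_a$ holds together with the time-matching condition $(p_a^{(\ell)})' \nu_a = 2(\hat{q}_a^{(\ell)})'$, which ensures $\|g_a^{(\ell)}(V(t))\|_{L^{2(p_a^{(\ell)})'}_t B_a^{(\ell)}} \lesssim \|V\|_{L^2_t H^s_x}^{\theta_a^{(\ell)}}$ with $\theta_a^{(1)} + \theta_a^{(2)} = 1$ arising automatically from the structure of Lemma \ref{lem:leibniz-decomp-general}. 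A direct calculation using Corollary \ref{coro:general-schatten} with $\sigma = 1/2$ for the first factor and $\sigma = 0$ for the second yields $1/\alpha_a^{(1)} = (d-1)/(2d) + 1/(2q_a^{(1)})$ and $1/\alpha_a^{(2)} = 1/2 + 1/(2q_a^{(2)})$, so
$$
\frac{1}{2(\alpha_a^{(1)})'} + \frac{1}{2(\alpha_a^{(2)})'} = \frac{2d+1}{4d} - \frac{1}{4}\bigl(\tfrac{1}{q_a^{(1)}} + \tfrac{1}{q_a^{(2)}}\bigr),
$$
and the combined constraints force $1/q_a^{(1)} + 1/q_a^{(2)} = (2d-3)/d$, so this sum equals exactly $1/d$.

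The main obstacle will be to verify that the admissibility conditions $\alpha_a^{(\ell)} < p_a^{(\ell)}$, which translate into the strict lower bounds $1/q_a^{(1)} > (d-1)^2/(d(d+1))$ and $1/q_a^{(2)} > (d-1)/(d+1)$, are compatible with the target sum equation for every $a$. The sum of these two lower bounds is $(d-1)(2d-1)/(d(d+1))$, and the identity $(2d-3)/d - (d-1)(2d-1)/(d(d+1)) = 2(d-2)/(d(d+1))$ shows that the target equation admits strict interior solutions precisely when $d \ge 3$, which is the origin of the dimension restriction in the statement. The endpoint $a = s$ is the most delicate case because the allowable range for $\nu_a$ collapses to $\{2\}$, so that only the Sobolev window $2a/d = (d-2)/d$ remains as a free parameter for adjusting $\hat{q}_a^{(2)}$ relative to $q_a^{(2)}$; a careful check shows that this residual flexibility suffices. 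Once the parameters have been selected for each $a$, the remainder of the proof is a bookkeeping exercise following the template of Lemma \ref{lem:leibniz-without-loss}.
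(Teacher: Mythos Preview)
Your plan is essentially correct and would lead to a valid proof, but it differs in organization from the paper's argument and contains a minor inconsistency worth flagging.

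\textbf{Comparison with the paper.} The paper also starts from the decomposition of Lemma~\ref{lem:leibniz-decomp-general}, but then splits into two cases rather than treating all $a$ uniformly. For $a\neq 0$ it simply reuses the exponents from Lemma~\ref{lem:leibniz-without-loss}, observing that the $\sd^{-a}$ already yields
\[
\frac{1}{2(\alpha_a^{(1)})'}+\frac{1}{2(\tilde\alpha_a^{(2)})'}=\frac{1}{2d}+\frac{a}{d}\ge\frac{3}{2d}>\frac{1}{d},
\]
so the extra $\sd^{-1/2}$ is unnecessary there. The $\sd^{-1/2}$ is invoked only for the single $a=0$ term (which is just the multiplication operator $W$ followed by the bounded multiplier $|D|^s\sd^{-s}$); there the paper uses the Sobolev embedding $H^s_x\hookrightarrow L^{2d/3}_x$ and the explicit splitting $W=|W|^{3/5}W^{2/5}$ with $(q^{(1)})'=5d/9$, $(q^{(2)})'=5d/6$. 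Your uniform scheme, when specialized to $a=0$, forces $\nu_0=2d/3$ and recovers exactly these exponents, so the two approaches coincide at the one place where the half-derivative actually matters. What your route buys is a single unified computation; what the paper's route buys is that for $a\neq0$ no new parameter verification is needed beyond Lemma~\ref{lem:leibniz-without-loss}.

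\textbf{A small slip.} You assert that both pairs $(p_a^{(\ell)},q_a^{(\ell)})$ satisfy $2/p+d/q=d$, but your own formula $1/\alpha_a^{(1)}=(d-1)/(2d)+1/(2q_a^{(1)})$ comes from applying Corollary~\ref{coro:general-schatten} with $\sigma=1/2$, which forces $2/p_a^{(1)}+d/q_a^{(1)}=d-1$, not $d$. The lemma statement does not impose a Strichartz relation on $(p_n^{(\ell)},q_n^{(\ell)})$, so this is only a labeling issue; your downstream computation of $\tfrac{1}{2(\alpha^{(1)})'}+\tfrac{1}{2(\alpha^{(2)})'}$ and of the admissibility thresholds is consistent with the correct relation and is right. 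Also, once $\nu_a$ is fixed the Sobolev shift $1/\hat q_a^{(2)}-1/q_a^{(2)}$ is determined (equal to $(2d-3\nu_a)/(d\nu_a)$), so the ``residual flexibility'' you mention at $a=s$ is really just the check $0\le (d-3)/d\le (d-2)/d$, which holds for $d\ge3$; and the case $a=0$ is equally rigid, with $\nu_0$ pinned to $2d/3$.
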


 \begin{proof}
 We go back to the proof of Lemma \ref{lem:leibniz-without-loss}, except that we will not absorb $\sd^{-s_n}$ in the term $J_n^{(2)}$. From this proof we obtain
 \begin{equation}\label{eq:decomp-leibniz-schatten-J-2}
    \sd^s W\sd^{-s} = \sum_{a\in\{0,1,\ldots,\lfloor s\rfloor,s\}} L_a^{(1)}(g_a^{(1)}(W))^*L_a^{(2)}(g_a^{(2)}(W))\sd^{-a},
 \end{equation}
 with
 $$\|L_a^{(2)}(g(t))\|_{L^{2(p_a^{(2)})'}_t\cB(L^{2\tilde{q_a}^{(2)}}_x\to L^2(X_a))}\lesssim \|g\|_{L^{2(p_a^{(2)})'}_t B_a^{(2)}},$$
 and
 $$\frac{2}{p_a^{(2)}}+\frac{d}{\tilde{q_a}^{(2)}}=d-2a.$$
 Defining $\tilde{\alpha_a}^{(2)}$ by
 $$\frac{1}{\tilde{\alpha_a}^{(2)}}=\frac{1}{dp_a^{(2)}}+\frac{1}{\tilde{q_a}^{(2)}},$$
 we thus have, by Corollary \ref{coro:general-schatten},
 $$\|L_a^{(1)}(g(t))e^{it\Delta_x}\|_{\gS^{2(\alpha_a^{(1)})'}(L^2_x\to L^2_t L^2(X_a))} \lesssim \|g\|_{L^{2(p_a^{(1)})'}_t B_a^{(1)}},$$
 $$\|L_a^{(2)}(g(t))e^{it\Delta_x}\sd^{-a}\|_{\gS^{2(\tilde{\alpha_a}^{(2)})'}(L^2_x\to L^2_t L^2(X_a))} \lesssim \|g\|_{L^{2(p_a^{(2)})'}_t B_a^{(2)}},$$
 with
 $$\frac{1}{2(\alpha_a^{(1)})'}+\frac{1}{2(\tilde{\alpha_a}^{(2)})'}=\frac{1}{2d}+\frac{a}{d}.$$
 For $a\neq0$, we thus set $M_a^{(1)}=L_a^{(1)}$ and $M_a^{(2)}=L_a^{(2)}\sd^{-a}$ to get the desired properties (even without the additional $\sd^{-1/2}$). The additional $\sd^{-1/2}$ will be useful for the term with $a=0$. From the proof of Lemma \ref{lem:leibniz-decomp-general}, there is only one such term (it corresponds to $\alpha=0$, namely $W|D|^s\sd^{-s}$). For this term, we set $M^{(1)}(g)=g:L^{2q^{(1)}}_x\to L^2_x$ for $g\in B^{(1)}:=L^{2(q^{(1)})'}$ with $(q^{(1)})'=5d/9$, $M^{(2)}(g)=g|D|^s\sd^{-s}:L^{2q^{(2)}}_x\to L^2_x$ for $g\in B^{(2)}:=L^{2(q^{(2)})'}$ with $(q^{(2)})'=5d/6$, $g^{(1)}(W)=|W|^{3/5}$, $g^{(2)}(W)=W^{2/5}$. Notice that $g^{(1)}(W)\in B^{(1)}$ and that $g^{(2)}(W)\in B^{(2)}$ since $W\in H^s_x\hookrightarrow L^{2d/3}_x$ since $d\ge3$. Defining $p^{(1)}$ and $p^{(2)}$ by the relations $2/p_1^{(1)}+d/q_1^{(1)}=d-1$, $2/p_1^{(2)}+d/q_1^{(2)}=d$ (that is, $p^{(1)}=5/2$ and $p^{(2)}=5/3$), we deduce by Corollary \ref{coro:general-schatten} that
  \begin{align*}
   \|M^{(1)}(g(t))e^{it\Delta_x}\sd^{-1/2}\|_{\gS^{10d/7}(L^2_x\to L^2_tL^2(X_1))} &\lesssim \|M_1^{(1)}(g(t))\|_{L^{10/3}_t\cB(L^{10d/9}_x\to L^2(X_1))} \\
   &\lesssim \|g\|_{L^{10/3}_t B_1^{(1)}},
  \end{align*}
  \begin{align*}
   \|M^{(2)}(g(t))e^{it\Delta_x}\|_{\gS^{10d/3}(L^2_x\to L^2_t L^2(X_1))} &\lesssim \|M_1^{(2)}(g(t))\|_{L^{5}_t\cB(L^{5d/3}_x\to L^2(X_1))} \\
   &\lesssim \|g\|_{L^{5}_t B_1^{(2)}}.
  \end{align*}
 \end{proof}

\begin{proof}[Proof of Proposition \ref{prop:rho22}]
 Let $W\in L^2_{t,x}$. We have
$$
\begin{multlined}
 \int W(t,x)\rho_{2,2}(V)(t,x)\,dx\,dt= \tr_{L^2_x}\int_0^\ii dt\,\int_0^t dt_1\,\int_0^{t_1}dt_2\,\sd^{-s}e^{-it\Delta_x}W(t)e^{it\Delta_x}\times \\
 \times\sd^{-s} e^{-it_1\Delta} \sd^sV(t_1)\sd^{-s}e^{i(t_1-t_2)\Delta}\sd^{s}V(t_2)\sd^{-s}e^{it_2\Delta}\sd^{-s}\times\\
 \times\sd^{2s}g(-i\nabla_x)\sd^s.
\end{multlined}
$$
We split $W(t)=|W(t)|^{\tfrac{d}{d+2}}W(t)^{\tfrac{2}{d+2}}$, $\sd^sV(t_1)\sd^{-s}$ according to the dual of Lemma \ref{lem:leibniz-op-2}, and $\sd^sV(t_2)\sd^{-s}$ according to Lemma \ref{lem:leibniz-op-2}. Combining Corollary \ref{coro:general-schatten} and Corollary \ref{coro:CK} as in the previous proofs, we deduce that
$$\left|\int W(t,x)\rho_{2,2}(V)(t,x)\,dx\,dt\right|\lesssim \|W\|_{L^2_{t,x}}\|V\|_{L^2_t H^s_x}^2.$$
Let us now estimate $|D_x|^s\rho_{2,2}(V)$ in $L^2_{t,x}$. By Lemma \ref{lem:leibniz-gamma} applied to $\sigma=s$, $\sigma_1=0$, $\sigma_2=s$, $q=2$, $q_1=d+2$, and $q_2=2(d+2)/d$, we have
\begin{multline*}
 \int W(t,x)(|D_x|^s\rho_{2,2}(V))(t,x)\,dx\,dt = \tr_{L^2_x}\int_0^\ii dt\,\int_0^t dt_1\,\int_0^{t_1}dt_2\,\times\\
 \times e^{-it\Delta_x}\left(|D_x|^sW(t)+\sum_{|\beta|\le s}\frac{1}{\beta!}\partial^\beta_x W(t) D^{s,\beta}_x+A^{(2)}(h^{(2)}(W(t)))^* A^{(1)}(h^{(1)}(W(t)))\right)e^{it\Delta_x}\times\\
 \times e^{-it_1\Delta} V(t_1)e^{i(t_1-t_2)\Delta}V(t_2)g(-i\nabla_x)e^{it_2\Delta}.
\end{multline*}
Let us first treat the term
 \begin{multline*}
 \tr_{L^2_x}\int_0^\ii dt\,\int_0^t dt_1\,\int_0^{t_1}dt_2\,e^{-it\Delta_x}A^{(2)}(h^{(2)}(W(t)))^* A^{(1)}(h^{(1)}(W(t)))e^{it\Delta_x}\times\\
 \times e^{-it_1\Delta} V(t_1)e^{i(t_1-t_2)\Delta}V(t_2)e^{it_2\Delta}g(-i\nabla_x)\\
 =\tr_{L^2_x}\int_0^\ii dt\,\int_0^t dt_1\,\int_0^{t_1}dt_2\,\sd^{-s}e^{-it\Delta_x}\sd^{-s}A^{(2)}(h^{(2)}(W(t)))^* \times\\
 \times A^{(1)}(h^{(1)}(W(t)))e^{it\Delta_x}\sd^{-s} e^{-it_1\Delta} \sd^s V(t_1)\sd^{-s}e^{it_1\Delta}\times\\
 \times e^{-it_2\Delta}\sd^{s}V(t_2)\sd^{-s} e^{it_2\Delta}\sd^{-s}\sd^{2s}g(-i\nabla_x)\sd^{2s}.
 \end{multline*}
 We expand $\sd^s V(t_1)\sd^{-s}$ according to Lemma \ref{lem:leibniz-op-3} and $\sd^{s}V(t_2)\sd^{-s}$ according to Lemma \ref{lem:leibniz-op-2}. We obtain
 \begin{multline*}
 \tr_{L^2_x}\int_0^\ii dt\,\int_0^t dt_1\,\int_0^{t_1}dt_2\,e^{-it\Delta_x}A^{(2)}(h^{(2)}(W(t)))^* A^{(1)}(h^{(1)}(W(t)))e^{it\Delta_x}\times\\
 \times e^{-it_1\Delta} V(t_1)e^{i(t_1-t_2)\Delta}V(t_2)e^{it_2\Delta}g(-i\nabla_x)\\
 =\tr_{L^2_x}\int_0^\ii dt\,\int_0^t dt_1\,\int_0^{t_1}dt_2\,\sd^{-s}e^{-it\Delta_x}\sd^{-s}A^{(2)}(h^{(2)}(W(t)))^* \times\\
 \times A^{(1)}(h^{(1)}(W(t)))e^{it\Delta_x}\sd^{-s} e^{-it_1\Delta} K^{(1)}(g^{(1)}(V(t_1)))^* K^{(2)}(g^{(2)}(V(t_1)))e^{it_1\Delta}\times\\
 \times e^{-it_2\Delta}L^{(1)}(g^{(1)}(V(t_2)))^* L^{(2)}(g^{(2)}(V(t_1)))e^{it_2\Delta}\sd^{-s}\sd^{2s}g(-i\nabla_x)\sd^{2s} .
 \end{multline*}
 Combining Lemma \ref{lem:leibniz-gamma} and Corollary \ref{coro:general-schatten} as in the proof of Proposition \ref{prop:rho21}, we have that
 $$
 \|A^{(1)}(h(t))e^{it\Delta_x}\|_{\gS^{2(d+2)}(L^2_x\to L^2_{t}L^2(X))} \lesssim \|h\|_{L^{2(d+2)/d}_t B^{(1)}},
 $$
 $$
  \|A^{(2)}(h(t))\sd^{-s}e^{it\Delta_x}\sd^{-s}\|_{\gS^{2d(d+2)/(d^2-2)}(L^2_x\to L^2_{t}L^2(X))} \lesssim \|h\|_{L^{d+2}_t B^{(2)}}.
  $$
  As in the proof of Proposition \ref{prop:rho21}, we deduce the bound
  \begin{multline*}
 |\tr_{L^2_x}\int_0^\ii dt\,\int_0^t dt_1\,\int_0^{t_1}dt_2\,e^{-it\Delta_x}A^{(2)}(h^{(2)}(W(t)))^* A^{(1)}(h^{(1)}(W(t)))e^{it\Delta_x}\times\\
 \times e^{-it_1\Delta} V(t_1)e^{i(t_1-t_2)\Delta}V(t_2)e^{it_2\Delta}g(-i\nabla_x)|\lesssim \|W\|_{L^2_{t,x}}\|V\|_{L^2_t H^s_x}^2.
 \end{multline*}
 The same decomposition can be applied to the term $|D_x|^s W(t)$ by splitting it as
 $$|D_x|^s W(t)=|D_x|^s\sd^s \sd^{-s}W(t)^{\tfrac{d}{d+2}}|W(t)|^{\tfrac{2}{d+2}},$$
 so we obtain the same estimate for this term. It remains to treat the terms involving $\partial^\beta_x W(t) D^{s,\beta}_x$ for $|\beta|\le s$. We first write these terms as
 \begin{multline*}
 \tr_{L^2_x}\int_0^\ii dt\,\int_0^t dt_1\,\int_0^{t_1}dt_2\,e^{-it\Delta_x}\partial^\beta_x W(t) D^{s,\beta}_xe^{it\Delta_x}\times\\
 \times e^{-it_1\Delta} V(t_1)e^{i(t_1-t_2)\Delta}V(t_2)e^{it_2\Delta}g(-i\nabla_x)\\
 =\tr_{L^2_x}\int_0^\ii dt\,\int_0^t dt_1\,\int_0^{t_1}dt_2\,\sd^{-s}e^{-it\Delta_x} W(t)  e^{it\Delta_x}D^{s,\beta}_x\sd^{-s} \times\\
 \times  e^{-it_1\Delta} \sd^s V(t_1)\sd^{-s}e^{it_1\Delta}\times\\
 \times e^{-it_2\Delta}\sd^{s}V(t_2)\sd^{-s} e^{it_2\Delta}\sd^{-s}\sd^{2s}g(-i\nabla_x)\sd^{s}\partial_x^\beta.
 \end{multline*}
 We then split $\sd^s V(t_1)\sd^{-s}$ according to Lemma \ref{lem:leibniz-op-4} and $\sd^s  V(t_2)\sd^{-s}$ according to Lemma \ref{lem:leibniz-op-2}. We get
 \begin{multline*}
 \tr_{L^2_x}\int_0^\ii dt\,\int_0^t dt_1\,\int_0^{t_1}dt_2\,e^{-it\Delta_x}\partial^\beta_x W(t) D^{s,\beta}_xe^{it\Delta_x}\times\\
 \times e^{-it_1\Delta} V(t_1)e^{i(t_1-t_2)\Delta}V(t_2)e^{it_2\Delta}g(-i\nabla_x)\\
 =\tr_{L^2_x}\int_0^\ii dt\,\int_0^t dt_1\,\int_0^{t_1}dt_2\,\sd^{-s}e^{-it\Delta_x}|W(t)|^{\tfrac{d}{d+2}}W(t)^{\tfrac{2}{d+2}}e^{it\Delta_x}\times\\
 \times  \sd^{-s}D^{s,\beta}e^{-it_1\Delta} M^{(1)}(g^{(1)}(V(t_1)))^* M^{(2)}(g^{(2)}(V(t_1)))e^{it_1\Delta}\times\\
 \times e^{-it_2\Delta}L^{(1)}(g^{(1)}(V(t_2)))^* L^{(2)}(g^{(2)}(V(t_1)))e^{it_2\Delta}\sd^{-s}\sd^{2s}g(-i\nabla_x)\sd^{s}\partial_x^\beta .
 \end{multline*}
 For $\beta\neq0$, we have
 \begin{align*}
    \|M_n^{(1)}(g(t))e^{it\Delta_x}\sd^{-s}D^{s,\beta}\|_{\gS^{2(\alpha_n^{(1)})'}(L^2_x\to L^2_{t,x})} &\lesssim \|M_n^{(1)}(g(t))e^{it\Delta_x}\sd^{-1/2}\|_{\gS^{2(\alpha_n^{(1)})'}(L^2_x\to L^2_{t,x})} \\
    &\lesssim \|g\|_{L^{2(p_n^{(1)})'}_t B_n^{(1)}},
 \end{align*}
 $$
  \|M_n^{(2)}(g(t))e^{it\Delta_x}\|_{\gS^{2(\alpha_n^{(2)})'}(L^2_x\to L^2_{t,x})}\lesssim \|g\|_{L^{2(p_n^{(2)})'}_t B_n^{(2)}}.
 $$
 Using that
 $$\frac{d-1}{2d}+\frac{1}{2(\alpha_n^{(1)})'}+\frac{1}{2(\alpha_n^{(2)})'}+\frac{d-1}{2d}\ge \frac{d-1}{2d} + \frac1d+\frac{d-1}{2d}=1,$$
 we can use the same strategy as before to estimate the terms with $\beta\neq0$. It remains to estimate the term with $\beta=0$, that is
 \begin{multline*}
  \tr_{L^2_x}\int_0^\ii dt\,\int_0^t dt_1\,\int_0^{t_1}dt_2\,e^{-it\Delta_x}W(t)e^{it\Delta_x}\times\\
 \times  \sd^{-s}|D_x|^{s}e^{-it_1\Delta} M^{(1)}(g^{(1)}(V(t_1)))^* M^{(2)}(g^{(2)}(V(t_1)))e^{it_1\Delta}\times\\
 \times e^{-it_2\Delta}L^{(1)}(g^{(1)}(V(t_2)))^* L^{(2)}(g^{(2)}(V(t_1)))e^{it_2\Delta}\sd^{-s}\sd^{2s}g(-i\nabla_x)\\
 =\tr_{L^2_x}\int_0^\ii dt\,\int_0^t dt_1\,\int_0^{t_1}dt_2\,e^{-it\Delta_x}W(t)|D_x|^{1/2}e^{it\Delta_x}\sd^{-(s-1/2)}|D_x|^{s-1/2}\times\\
 \times\sd^{-1/2}e^{-it_1\Delta} M^{(1)}(g^{(1)}(V(t_1)))^* M^{(2)}(g^{(2)}(V(t_1)))e^{it_1\Delta}\times\\
 \times e^{-it_2\Delta}L^{(1)}(g^{(1)}(V(t_2)))^* L^{(2)}(g^{(2)}(V(t_1)))e^{it_2\Delta}\sd^{-s}\sd^{2s}g(-i\nabla_x)
 \end{multline*}
 We decompose $W(t)|D_x|^{1/2}$ using the adjoint of Lemma \ref{lem:leibniz-op-general} applied to $\sigma=1/2$, $\sigma_1=0$, $\sigma_2=1/2$, and $q'=2(d+2)/d$, $q_1=2$:
 $$W(t)|D_x|^{1/2}=(|D_x|^{1/2}W)(t)+|D_x|^{1/2}W(t)+L^{(2)}(g^{(2)}(W(t)))^* L^{(1)}(g^{(1)}(W(t))).$$
 The last two terms can be treated as before, adding a $\sd^{-s}$ on their left which is absorbed by $g(-i\nabla_x)$. The only term remaining is the one involving $(|D_x|^{1/2}W)(t)$, for which we require a specific tool since $W$ is merely in $L^2_{t,x}$ and hence $|D_x|^{1/2}W\in L^2_t \dot{H}^{-1/2}_x$. We write it as $\tr_{L^2_x} AB$ with
  $$
  \begin{multlined}
  A:=\sd^{-\tfrac{d-1}{2}-\epsilon}\int_0^\ii dt\, e^{-it\Delta_x}(|D_x|^{1/2}W(t)) e^{it\Delta_x}\sd^{-(s-1/2)}|D_x|^{s-1/2}\times\\
  \times\int_0^t\,dt_1  (M_1^{(1)}(g_1^{(1)}(V(t_1))e^{it_1\Delta}\sd^{-1/2})^*:L^2_{t_1}L^2(X_1)\to L^2_x,
  \end{multlined}
  $$
  $$
  \begin{multlined}
  B:=M_1^{(2)}(g_1^{(2)}(V(t_1)))e^{it_1\Delta}\int_0^{t_1}\,dt_2 e^{-it_2\Delta}L^{(1)}(g^{(1)}(V(t_2)))^*\times\\
  \times L^{(2)}(g^{(2)}(V(t_1)))e^{it_2\Delta}\sd^{-s}\sd^{2s}g(-i\nabla_x)\sd^{\tfrac{d-1}{2}+\epsilon}:L^2_x\to L^2_{t_1}L^2(X_1)
  \end{multlined}
  $$
  for some $\epsilon>0$. By the same strategy as above, we have $B\in\gS^{10d/(5d-2)}(L^2_x\to L^2_{t_1}L^2(X_1))$. Hence, it remains to show that $A\in\gS^{10d/(5d+2)}(L^2_{t_1}L^2(X_1)\to L^2_x)$. To prove it, we invoke the partially orthogonal Christ-Kiselev result of Corollary \ref{coro:CK-2}.  To apply it, we recall that we have
  $$\|M_1^{(1)}(g(t))e^{it\Delta_x}\sd^{-1/2}\|_{\gS^{10d/7}(L^2_x\to L^2_tL^2(X_1))}\lesssim \|g\|_{L^{10/3}_t B_1^{(1)}}
  $$
  We next invoke Theorem \ref{thm:strichartz-HS} to infer that
  $$\|\sd^{-\tfrac{d-1}{2}-\epsilon}\int_0^\ii\,dt e^{-it\Delta_x}(|D|^{1/2}W(t)) e^{it\Delta_x}\|_{\gS^2(L^2_x)}\lesssim \|W\|_{L^2_{t,x}}.$$
  We can then apply Corollary \ref{coro:CK-2} to $p_1=2$, $p_2=10/3$, $\alpha_1=2$, $\alpha_2=10d/7$, $\alpha=10d/(5d+2)$ so that we indeed have $1/p_1+1/p_2=1/2+3/10>\max(1/\alpha,1/2)=1/2+2/(10d)$ and $1/\alpha=1/2+2/(10d)\le 1/\alpha_1+1/\alpha_2=1/2+7/(10d)$, so that the operator belongs to
  $\gS^{10d/(5d+2)}(L^2_{t_1,x}\to L^2_x)$. This concludes the proof of Proposition \ref{prop:rho22}.
\end{proof}

\section{Proof of Theorem \ref{thm:main1}}\label{sec:proof-thm}

The proof is only done for positive times, the one for negative times being done in the same way. By Theorem \ref{thm:stri-potential} applied to $s=d/2-1$ and $p=q=\mu=\nu=2$, Theorem \ref{thm:reaction}, and Proposition \ref{prop:penrose-inverse}, the map $\Phi$ defined by \eqref{eq:Phi} is a contraction on $B(0,\delta_0)\subset L^2_t(\R_+,H^s_x)$ if $\|Q_{\text{in}}\|_{\cH^{s,2d/(d+1)}}\le\epsilon_0$ for $\delta_0=\delta_0(\delta)>0$ and $\epsilon_0=\epsilon_0(\delta)>0$ small enough so that it admits a unique fixed point $\varrho_0$ on it. Defining $\gamma(t)=U_{w*\varrho_0}(t)(g(-i\nabla)+Q_{\text{in}})U_{w*\varrho_0}(t)^*$, $\gamma(t)$ is a solution to the Hartree equation $i\partial_t\gamma=[-\Delta_x+w*\varrho_0(t),\gamma(t)]$ and by the definition of $\Phi$ we have $\rho_{Q}=(1+\cL)\Phi(\varrho_0)-\cL[\varrho_0]=\varrho_0$. Let us now show that $Q(t):=\gamma(t)-g(-i\nabla)\in C^0_t \cH^{s,2}$. To do so, let us write
$$\sd^s Q(t)\sd^{s}=Q_1(t)+Q_2(t),$$
with
$$Q_1(t):=\sd^s U_V(t)Q_{\rm in}U_V(t)^*\sd^s,$$
\begin{align*}
Q_2(t) &:= \sd^s U_V(t)g(-i\nabla_x)U_V(t)^*\sd^s-\sd^{2s}g(-i\nabla_x) \\
&= -i\sd^s\int_0^t U_V(t,t_1)[V(t_1),g(-i\nabla_x)]U_V(t_1,t)\,dt_1\sd^s,
\end{align*}
where $V:=w*\varrho_0$. To prove the second equality, we used that $i\partial_t Q=[-\Delta+V(t),g(-i\nabla_x)+Q]$ to infer that
$$Q(t)=U_V(t)Q_{\rm in}U_V(t)^*-i\int_0^t U_V(t,t_1)[V(t_1),g(-i\nabla_x)]U_V(t_1,t)\,dt_1$$
and identify $Q_2(t)$ in this way. By the Duhamel formula, we have for any $t>t'$
$$
\begin{multlined}
 \sd^s e^{-it\Delta_x}U_V(t)\sd^{-s}-\sd^s e^{-it'\Delta}U_V(t')\sd^{-s}\\
 =-i\int_{t'}^t e^{-it_1\Delta}\sd^s V(t_1)U_V(t_1)\sd^{-s}\,dt_1.
\end{multlined}
$$
We have seen in the proof of Proposition \ref{prop:UV-Sonae} that
$$\|\sd^s V(t_1)U_V(t_1)\sd^{-s}\|_{L^2_x\to L^{r'}_{t_1}L^{c'}_x}\lesssim \|V\|_{L^2_{t_1}H^s_x}$$
for some $r,c\in(2,+\ii)$ such that $2/r+d/c=d/2$. We deduce by Strichartz estimates that
\begin{equation}\label{eq:est-scat}
\|\sd^s e^{-it\Delta_x}U_V(t)\sd^{-s}-\sd^s e^{-it'\Delta}U_V(t')\sd^{-s}\|_{L^2_x\to L^2_x}\lesssim \|V\|_{L^2([t',t],H^s_x)},
\end{equation}
showing that $\sd^s e^{-it\Delta_x}U_V(t)\sd^{-s}\in C^0_t\cB(L^2_x\to L^2_x)$ and hence $\sd^s U_V(t)\sd^{-s}\in C^0_t\cB(L^2_x\to L^2_x)$.
Since $\sd^s Q_{\rm in}\sd^s\in\gS^{2d/(d+1)}\hookrightarrow\gS^2$, we deduce that $Q_1\in C^0_t\gS^2$. Next, we have
\begin{align*}
 Q_2(t) &= -i\sd^s\int_0^t U_V(t,t_1)[V(t_1),g(-i\nabla_x)]U_V(t_1,t)\,dt_1\sd^s\\
 &=-i\int_0^t\sd^s U_V(t)\sd^{-s}\sd^s U_V(t_1)^*\sd^{-s}\sd^s [V(t_1),g(-i\nabla_x)]\sd^s\times\\
 &\times\sd^{-s}U_V(t_1)\sd^s \sd^{-s}U_V(t)^*\sd^s\,dt_1
\end{align*}
Since $\sd^sU_V(t)\sd^{-s},\sd^sU_V(t)^*\sd^{-s}\in C^0_t\cB(L^2_x\to L^2_x)$, and by the inequality
\begin{equation}\label{eq:KSS-Hs}
 \|\sd^s W(x)g(-i\nabla_x)\|_{\gS^2}\lesssim \|W\|_{H^s_x}\|\langle\xi\rangle^sg(\xi)\|_{L^2_\xi},
\end{equation}
which can be proved first for $s\in\N$ using the Kato-Seiler-Simon inequality and then extended to any $s\ge0$ by interpolation, we deduce that $Q_2\in C^0_t\gS^2$. We now turn to the scattering property. To prove it, it is enough to show that
$$\|\sd^s e^{-it\Delta_x}Q(t)e^{it\Delta_x}\sd^s - \sd^s e^{-it'\Delta}Q(t')e^{it'\Delta}\sd^s\|_{\gS^{2d/(d-1)}}\to0$$
as $t,t'\to+\ii$. As above, we split $\sd^s Q(t)\sd^{s}=Q_1(t)+Q_2(t)$. For the part involving $Q_1$, we again use \eqref{eq:est-scat} and the fact that $Q_{\rm in}\in\gS^{2d/(d+1)}\hookrightarrow\gS^{2d/(d-1)}$ to deduce that
$$\|e^{-it\Delta_x}Q_1(t)e^{it\Delta_x}-e^{-it'\Delta}Q_1(t')e^{it'\Delta}\|_{\gS^{2d/(d-1)}}\to0$$
as $t,t'\to+\ii$. To deal with $Q_2(t)$, we need to expand once more,
$$
\begin{multlined}
 \sd^se^{-it\Delta_x}U_V(t)\sd^{-s} - \sd^se^{-it'\Delta}U_V(t')\sd^{-s} = -i\int_{t'}^t e^{-it_1\Delta}\sd^s V(t_1)\sd^{-s}e^{it_1\Delta}\,dt_1\\
 -\int_{t'}^t\,dt_1\int_0^{t_1}\,dt_2 e^{-it_1\Delta}\sd^s V(t_1)U_V(t_1,t_2)V(t_2)e^{it_2\Delta}\sd^{-s},
\end{multlined}
$$
Combining Lemma \ref{lem:leibniz-op-2} and Corollary \ref{coro:general-schatten}, we deduce that
$$\left\| \int_{t'}^t e^{-it_1\Delta}\sd^s V(t_1)\sd^{-s}e^{it_1\Delta}\,dt_1 \sd^{-s}\right\|_{\gS^{2d/(d-1)}(L^2_x\to L^2_x)}\lesssim \|V\|_{L^2([t',t],H^s_x)}.$$
We factorize the second integral as
$$
\begin{multlined}
 \int_{t'}^t\,dt_1\int_0^{t_1}\,dt_2 e^{-it_1\Delta}\sd^s V(t_1)U_V(t_1,t_2)V(t_2)e^{it_2\Delta}\sd^{-s}\\
 = \int_{t'}^t\,dt_1e^{-it_1\Delta}\sd^s V(t_1)\sd^{-s}\sd^s U_V(t_1)\sd^{-s}\times\\\times\int_0^{t_1}\,dt_2 \sd^s U_V(t_2)^*\sd^{-s}\sd^s V(t_2)\sd^{-s}e^{it_2\Delta}\sd^{-s}\sd^s.
\end{multlined}
$$
Splitting $\sd^s V(t_1)\sd^{-s}$ according to Lemma \ref{lem:leibniz-without-loss} and $\sd^s V(t_2)\sd^{-s}$ according to Lemma \ref{lem:leibniz-op-2}, using Corollary \ref{coro:CK} and Lemma \ref{lem:strichartz-A}, we deduce that
$$\left\|
\int_{t'}^t\,dt_1\int_0^{t_1}\,dt_2 e^{-it_1\Delta}\sd^s V(t_1)U_V(t_1,t_2)V(t_2)e^{it_2\Delta}\sd^{-2s}
\right\|_{\gS^{2d/(d-1)}(L^2_x\to L^2_x)}\lesssim \|V\|_{L^2([t',t],H^s_x)}.$$
We conclude that
$$\|\sd^se^{-it\Delta_x}U_V(t)g(-i\nabla_x) - \sd^se^{-it'\Delta}U_V(t')g(-i\nabla_x)\|_{\gS^{2d/(d-1)}}\lesssim \|V\|_{L^2([t',t],H^s_x)},$$
and hence
$$\|e^{-it\Delta_x}Q_2(t)e^{it\Delta_x}-e^{-it'\Delta}Q_2(t')e^{it'\Delta}\|_{\gS^{2d/(d-1)}}\to0$$
as $t,t'\to+\ii$. The last step of the proof is to show uniqueness of solutions, which we prove in a separate statement below because it requires slightly different tools.

\begin{proposition}\label{prop:uniqueness}
 Let $d\ge2$ and $s=d/2-1$. Assume that $(w,g)$ satisfies the assumptions of Theorem \ref{thm:main1}, with furthermore $\langle\xi\rangle^{d/2+\eta}g\in L^2_\xi$ for some $\eta>0$.  Then, there exists $\epsilon_0>0$ such that for any $T\in[0,+\ii]$ and any $Q_{\rm in}\in\cH^{s,2d/(d+1)}$ with $\|Q_{\rm in}\|_{\cH^{s,2d/(d+1)}}\le\epsilon_0$ the following holds: let $Q,\tilde{Q}\in C^0_t([0,T),\gS^2)$ be two solutions to the Hartree equation \eqref{eq:hartree-Q} such that $\rho_Q,\rho_{\tilde{Q}}\in L^2_{t,{\rm loc}}([0,T), H^s_x)$. Then, for all $t\in[0,T)$ we have $Q(t)=\tilde{Q}(t)$.
\end{proposition}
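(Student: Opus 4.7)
The plan is to derive a closed difference equation for $\delta Q := Q - \tilde Q$, prove local-in-time uniqueness on a sufficiently short sub-interval, and conclude by a standard bootstrap. Subtracting the two Hartree equations gives $\delta Q(0)=0$ and
$$i\partial_t \delta Q = [-\Delta_x+V,\delta Q] + [\delta V, g(-i\nabla_x) + \tilde Q(t)], \quad \delta V := w\ast\rho_{\delta Q},$$
so that by Duhamel against $U_V$ (which exists on any sub-interval $[0,t_0]\subset[0,T)$ on which $\|V\|_{L^2_tH^s_x}+\|\tilde V\|_{L^2_tH^s_x}\le \delta$, arrangeable by local integrability of $\rho_Q,\rho_{\tilde Q}$),
$$\delta Q(t) = -i\int_0^t U_V(t,\tau)\big[\delta V(\tau),\, g(-i\nabla_x) + \tilde Q(\tau)\big] U_V(t,\tau)^*\,d\tau.$$
Observe that $Q_{\rm in}$ no longer appears explicitly in the forcing; its role will re-emerge only via the substitution of $\tilde Q$ below.

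Taking densities and isolating the leading linear response in $\delta V$ exactly as in the construction of $\Phi$ in \eqref{eq:Phi}, the equation rewrites as $(1+\cL)\rho_{\delta Q} = \cR_{\rm bg}(\delta V) + \cR_{\tilde Q}(\delta V)$, where $\cR_{\rm bg}$ collects the cubic-and-higher-order background-reaction corrections (controlled by the difference statement of Theorem \ref{thm:reaction} with constant $\le\tfrac12\|\hat w\|_\infty$) and $\cR_{\tilde Q}(\delta V)(t) := -i\rho\bigl[\int_0^t U_V(t,\tau)[\delta V(\tau),\tilde Q(\tau)]U_V(t,\tau)^*\,d\tau\bigr]$. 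By Proposition \ref{prop:penrose-inverse}, $1+\cL$ is invertible on $L^2([0,t_0],H^s_x)$ with a uniform bound, so local uniqueness reduces to showing $\|\cR_{\tilde Q}(\delta V)\|_{L^2([0,t_0],H^s_x)} \le \omega(t_0)\,\|\rho_{\delta Q}\|_{L^2([0,t_0],H^s_x)}$ with $\omega(t_0)\to 0$ as $t_0\to 0^+$.

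Estimating $\cR_{\tilde Q}$ is the main obstacle, because $\tilde Q\in C^0_t\gS^2$ carries no Sobolev regularity and cannot be fed directly into Theorem \ref{thm:stri-potential}. My strategy is to substitute for $\tilde Q(\tau)$ its own Duhamel expansion
$$\tilde Q(\tau) = U_{\tilde V}(\tau)Q_{\rm in}U_{\tilde V}(\tau)^* - i\int_0^\tau U_{\tilde V}(\tau,s)[\tilde V(s),g(-i\nabla_x)]U_{\tilde V}(\tau,s)^*\,ds,$$
which re-introduces the Sobolev-regular small initial datum $Q_{\rm in}\in\cH^{s,2d/(d+1)}$ and a background-reaction piece. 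The $Q_{\rm in}$-contribution to $\cR_{\tilde Q}$ is then controlled by the difference estimate of Theorem \ref{thm:stri-potential} applied to $\gamma=Q_{\rm in}$, while the nested background-reaction contribution leads to triple-time-ordered operator integrals of the same type as in Section \ref{sec:reaction}, to be handled by combining the fractional Leibniz factorizations of Lemmas \ref{lem:leibniz-op-2}--\ref{lem:leibniz-gamma}, the generalized Strichartz bounds of Proposition \ref{prop:UV-bez}, and the Christ--Kiselev lemmas of Corollaries \ref{coro:CK}--\ref{coro:CK-2}. The outer commutator $[\delta V,\cdot\,]$ creates one extra derivative that must land on $g$ rather than on some low-regularity potential; this is exactly where the hypothesis $\langle\xi\rangle^{d/2+\eta}g\in L^2_\xi$ is used, via the Kato--Seiler--Simon inequality $\|f\,h(-i\nabla_x)\|_{\gS^2}\lesssim\|f\|_{L^2}\|h\|_{L^2}$ applied with $h(\xi)=\langle\xi\rangle^{d/2+\eta}g(\xi)$. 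The gain $\omega(t_0)\to 0$ comes from Cauchy--Schwarz on the nested time integrals over $[0,t_0]$.

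Finally, I conclude by bootstrap: set $T^*:=\sup\{t\in[0,T): Q\equiv\tilde Q\text{ on }[0,t]\}$, which is positive by the local argument applied on $[0,t_0]$. If $T^*<T$, $C^0_t\gS^2$-continuity gives $\delta Q(T^*)=0$, and the Duhamel identity restarted at $t_1=T^*$ gives exactly the same local problem on $[T^*,T^*+\delta']$; the required substitution for $\tilde Q(\tau)$ with $\tau>T^*$ still runs from time $0$ using the original small datum $Q_{\rm in}$, since $V=\tilde V$ on $[0,T^*]$ ensures that the only effective contribution to the time integral comes from $\tau\ge T^*$ (where $\delta V\neq 0$) and the analysis on the short window $[T^*,T^*+\delta']$ reproduces the same $\omega(\delta')\to 0$ contraction. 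Hence $T^*=T$ and $Q\equiv\tilde Q$ on $[0,T)$.
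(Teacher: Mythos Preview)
Your plan differs substantially from the paper's, and the bootstrap step has a real gap. The paper deliberately abandons the $L^2_tH^s_x$ norm and works in the weaker $L^2_{t,x}$ norm for $\rho_Q-\rho_{\tilde Q}$. It restarts at $T^*$ with $Q_*:=Q(T^*)=\tilde Q(T^*)$ and splits $\rho_Q-\rho_{\tilde Q}$ on $[T^*,T^*+\epsilon]$ into four pieces $I_1,\dots,I_4$; the pieces $I_2,I_3,I_4$ pick up a factor $\sqrt{\epsilon}$, while $I_1$ only carries $\|Q_*\|_{\cH^{s,d}}$. The crux is then to show that $\|Q_*\|_{\cH^{s,d}}$ (not $\cH^{s,2d/(d+1)}$!) stays uniformly small in $T^*$, which the paper does by explicitly taking $Q$ to be the globally small solution built in Theorem~\ref{thm:main1}, so that $\|Q_*\|_{\cH^{s,d}}\lesssim\|Q_{\rm in}\|_{\cH^{s,d}}+\|V\|_{L^2_t(\R_+,H^s_x)}$ via Proposition~\ref{prop:UV-bez}. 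The extra hypothesis $\langle\xi\rangle^{d/2+\eta}g\in L^2_\xi$ enters only in the $I_3$ term, because there $\delta V$ is controlled merely in $L^2_x$ and all derivatives must be absorbed by $g$, through $\|\rho_\Gamma\|_{L^2_x}\lesssim\|\Gamma\sd^{d/2+\eta}\|_{\gS^2}$ and Kato--Seiler--Simon; this is a feature of the weaker-norm approach, not of a Leibniz-type derivative count as you suggest.

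Your substitution of $\tilde Q(\tau)$ by its Duhamel expansion from time $0$ does not sidestep the propagation problem. After substitution, both the $Q_{\rm in}$-piece and the background-piece of $\cR_{\tilde Q}$ carry the propagator $U_{\tilde V}(\tau,0)$ and the integral $\int_0^\tau\cdots\tilde V(s)\cdots ds$ with $\tau\in[T^*,T^*+\delta']$; all the Strichartz/Schatten constants therefore depend on $\|\tilde V\|_{L^2([0,T^*+\delta'],H^s_x)}$, which you only know to be finite (by $L^2_{\rm loc}$), not uniformly bounded in $T^*$. Your observation that $V=\tilde V$ on $[0,T^*]$ is correct but useless without the further input---made explicit in the paper---that one of the two solutions is globally small. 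Two further points: the $Q_{\rm in}$-contribution does \emph{not} yield a factor $\omega(t_0)\to0$ but only $C\|Q_{\rm in}\|_{\cH^{s,2d/(d+1)}}$, and it is not a direct instance of Theorem~\ref{thm:stri-potential} since the relevant operator has the form $\int_{T^*}^t U_V(t,\tau)\delta V(\tau)U_{\tilde V}(\tau)Q_{\rm in}U_{\tilde V}(\tau)^*U_V(\tau,t)\,d\tau$, with $\tau$-dependence on both sides of $Q_{\rm in}$, rather than $U_{V_1}(t)\gamma(U_{V_2}(t)-U_{V_3}(t))^*$.
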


\begin{remark}
 For any $\delta_0>0$, there exists $T_0\in(0,T)$ such that we have $\|\rho_Q\|_{L^2_t([0,T_0],H^s_x)}\le\delta_0$ and $\|\rho_{\tilde{Q}}\|_{L^2_t([0,T_0],H^s_x)}\le\delta_0$. If $\delta_0$ is small enough, one can show that the same map $\Phi$ as in the beginning of the proof of Theorem \ref{thm:main1} is a contraction on $B(0,\delta_0)\subset L^2([0,T_0],H^s_x)$. Hence, we deduce that $\rho_Q=\rho_{\tilde{Q}}$ and hence $Q=\tilde{Q}$ on $[0,T_0]$. The problem with this argument is that it cannot be iterated, because $\Phi$ is a contraction under the condition that $\|Q_{\rm in}\|_{\cH^{s,2d/(d+1)}}\le\epsilon_0$, a condition that we don't know how to propagate in time. Hence, we will use weaker norms to obtain uniqueness.
\end{remark}

\begin{lemma}\label{lem:est-uniqueness-L2}
 Let $d\ge2$, $s=d/2-1$, $T>0$ and $V_1,V_2\in L^2_t([0,T],H^s_x)$. Then, we have
 $$\|h(t)(U_{V_1}(t)-U_{V_2}(t))\sd^{-s}\|_{\gS^{\frac{2d(d+2)}{d^2+2d-2}}(L^2_x\to L^2_t([0,T],L^2_x))}\lesssim \|h\|_{L^{d+2}_{t,x}([0,T]\times\R^d)}\|V_1-V_2\|_{L^{2}_{t,x}([0,T]\times\R^d)},$$
 $$\|h(t)(U_{V_1}(t)-U_{V_2}(t))\sd^{-s}\|_{\gS^{\frac{2(d+2)}{d+1}}(L^2_x\to L^2_t([0,T],L^2_x))}\lesssim \|h\|_{L^{\frac{2(d+2)}{d}}_{t,x}([0,T]\times\R^d)}\|V_1-V_2\|_{L^{2}_{t}([0,T],H^s(\R^d))}.$$
\end{lemma}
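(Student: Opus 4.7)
The plan is to start from the Duhamel identity
\[
U_{V_1}(t)-U_{V_2}(t) = -i\int_0^t U_{V_1}(t)U_{V_1}(\tau)^*(V_1-V_2)(\tau)U_{V_2}(\tau)\,d\tau,
\]
which turns both bounds into trilinear estimates on the integrand. In each case the strategy is to write the integrand as a product of three pieces whose Schatten norms are controlled by Proposition \ref{prop:UV-bez} (supplemented by Remark \ref{rk:negative-sobolev} and Lemma \ref{lem:strichartz-A}), then apply Corollary \ref{coro:CK} to absorb the retarded time integral, and finally use Hölder's inequality in Schatten classes.

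For the first bound, where $F:=V_1-V_2$ is only assumed in $L^2_{t,x}$, I would factor $F(\tau)=a(\tau)^*b(\tau)$ as multiplication operators with $|a|=|F|^{2/(d+2)}$ and $|b|=|F|^{d/(d+2)}$, so that $a\in L^{d+2}_{t,x}$, $b\in L^{2(d+2)/d}_{t,x}$ with $\|a\|\cdot\|b\|\lesssim\|F\|_{L^2_{t,x}}$. Proposition \ref{prop:UV-bez} with $s_0=\sigma=0$ then gives $\|h(t)U_{V_1}(t)\|_{\gS^{2(d+2)}}\lesssim\|h\|_{L^{d+2}_{t,x}}$ and the same estimate for $a(\tau)U_{V_1}(\tau)$, while the same proposition with $s_0=\sigma=s$ gives $\|b(\tau)U_{V_2}(\tau)\sd^{-s}\|_{\gS^{2d(d+2)/(d^2-2)}}\lesssim\|b\|_{L^{2(d+2)/d}_{t,x}}$. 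Corollary \ref{coro:CK} applied to the first two pieces (its hypothesis $1/(d+2)<2/(d+2)$ holds) places the retarded composition in $\gS^{d+2}$, and Hölder in Schatten with the third piece yields the target exponent via
\[
\frac{1}{d+2}+\frac{d^2-2}{2d(d+2)}=\frac{d^2+2d-2}{2d(d+2)}.
\]

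For the second bound, I would instead exploit the $H^s$-regularity of $F$ by inserting $\sd^{-s}\sd^s$ on each side of $F$ in the Duhamel integrand, so that the middle factor becomes $\sd^sF(\tau)\sd^{-s}$; this is then decomposed via Lemma \ref{lem:leibniz-without-loss} (with $s_0=s$, $\mu=\nu=2$) as $\sum_n J_n^{(1)}(g_n^{(1)})^*J_n^{(2)}(g_n^{(2)})$. Introducing $\cU_1^-(\tau):=\sd^{-s}U_{V_1}(\tau)\sd^s$ and $\cU_2^+(\tau):=\sd^sU_{V_2}(\tau)\sd^{-s}$, both of which satisfy the Schatten–Strichartz bounds of $e^{it\Delta_x}$ by Proposition \ref{prop:UV-bez} (used with $s_0=-s,\sigma=0$ and $s_0=s,\sigma=0$ respectively), the integrand reads
\[
(h(t)U_{V_1}(t)\sd^{-s})\cdot(J_n^{(1)}(g_n^{(1)}(\tau))\cU_1^-(\tau))^*\cdot J_n^{(2)}(g_n^{(2)}(\tau))\cU_2^+(\tau).
\]
Proposition \ref{prop:UV-bez} (with $s_0=\sigma=s$) gives $\|h(t)U_{V_1}(t)\sd^{-s}\|_{\gS^{2d(d+2)/(d^2-2)}}\lesssim\|h\|_{L^{2(d+2)/d}_{t,x}}$, while Lemma \ref{lem:strichartz-A} combined with Lemma \ref{lem:leibniz-without-loss} places $J_n^{(\ell)}(g_n^{(\ell)})\cU_j^{\pm}$ in $\gS^{2(\alpha_n^{(\ell)})'}$ with norm $\lesssim\|V_1-V_2\|_{L^2_tH^s_x}^{\theta_n^{(\ell)}}$. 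Corollary \ref{coro:CK} and Hölder in Schatten then produce the final exponent $\gamma'$ with
\[
\frac{1}{\gamma'}=\frac{d^2-2}{2d(d+2)}+\frac{1}{2(\alpha_n^{(1)})'}+\frac{1}{2(\alpha_n^{(2)})'}=\frac{d^2-2}{2d(d+2)}+\frac{1}{2d}=\frac{d+1}{2(d+2)},
\]
using the sharp additivity identity built into Lemma \ref{lem:leibniz-without-loss}, so that $\gamma'=2(d+2)/(d+1)$.

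The delicate point is that a direct application of the second estimate of Proposition \ref{prop:UV-bez} to $V_1-V_2\in L^2_tH^s_x$ would only produce the weaker exponent $\gS^{2d(d+2)/(d^2-2)}$; to reach the sharp $\gS^{2(d+2)/(d+1)}$ one really needs to decompose $\sd^sF\sd^{-s}$ via Lemma \ref{lem:leibniz-without-loss} and to locate the $\sd^{\pm s}$ factors so that each of the three pieces sits exactly at the boundary of its admissible Schatten–Strichartz range and so that the three Schatten exponents sum precisely to the advertised value. The main obstacle will be this bookkeeping, together with checking at each step that the strict inequality required by Corollary \ref{coro:CK} holds between the Schatten and Lebesgue exponents of each pair of factors.
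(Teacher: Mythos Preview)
Your proposal is correct and follows essentially the same route as the paper: the same Duhamel identity, the same splitting $F=a^*b$ with $|a|=|F|^{2/(d+2)}$, $|b|=|F|^{d/(d+2)}$ for the first estimate, and the same insertion of $\sd^{-s}\sd^s$ together with Lemma~\ref{lem:leibniz-without-loss} (at $s_0=s$, $\mu=\nu_0=2$) for the second, combined via Corollary~\ref{coro:CK} and H\"older in Schatten spaces. The paper simply phrases the second half more tersely, recording that the full $t_1$-integral lands in $\gS^{2d}$ (from $\tfrac{1}{2(\alpha_n^{(1)})'}+\tfrac{1}{2(\alpha_n^{(2)})'}=\tfrac{1}{2d}$) before invoking Christ--Kiselev, whereas you spell out the three factors explicitly; the arithmetic and the tools are identical.
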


\begin{proof}
 We have
 $$h(t)(U_{V_1}(t)-U_{V_2}(t))\sd^{-s}
 =-ih(t)U_{V_1}(t)\int_0^t U_{V_1}(t_1)^*(V_1(t_1)-V_2(t_1))U_{V_2}(t_1)\sd^{-s}\,dt_1.$$
 Using Proposition \ref{prop:UV-bez}, if $h\in L^{d+2}_{t,x}$ we have  $h(t)U_{V_1}(t)\in\gS^{2(d+2)}(L^2_x\to L^2_{t,x})$ and
 $$\left\|\int_0^\ii U_{V_1}(t_1)^*(V_1(t_1)-V_2(t_1))U_{V_2}(t_1)\sd^{-s}\,dt_1\right\|_{\gS^{\frac{2d}{d-1}}(L^2_x)}\lesssim \|V_1-V_2\|_{L^2_{t,x}}.$$
 The first estimate then follows from Corollary \ref{coro:CK}. For the second estimate, we distribute the derivatives differently:
 $$
 \begin{multlined}
 h(t)(U_{V_1}(t)-U_{V_2}(t))\sd^{-s}
 =-ih(t)U_{V_1}(t)\sd^{-s}\times\\
 \times\int_0^t \sd^s U_{V_1}(t_1)^*\sd^{-s}\sd^s(V_1(t_1)-V_2(t_1))\sd^{-s}\sd^s U_{V_2}(t_1)\sd^{-s}\,dt_1.
 \end{multlined}
 $$
 By Proposition \ref{prop:UV-bez}, we have  that $h(t)U_{V_1}(t)\sd^{-s}\in\gS^{2d(d+2)/(d^2-2)}(L^2_x\to L^2_{t,x})$ if $h\in L^{2(d+2)/d}_{t,x}$. Decomposing $\sd^s(V_1(t_1)-V_2(t_1))\sd^{-s}$ according to Lemma \ref{lem:leibniz-without-loss} (applied to $s_0=s$, $\mu=\nu_0=2$), and using again Proposition \ref{prop:UV-bez}, the integral term leads to a Schatten exponent $\gS^{2d}(L^2_x)$. By Corollary \ref{coro:CK}, we deduce the second estimate.
\end{proof}

\begin{proof}[Proof of Proposition \ref{prop:uniqueness}]
Define
$$T^*:=\sup\{s\in[0,T),\ \|\rho_Q-\rho_{\tilde{Q}}\|_{L^2([0,s],L^2_x)}=0\},$$
and assume that $T^*<T$. Since $\rho_Q=\rho_{\tilde{Q}}$ on $[0,T^*]$, we deduce that $Q=\tilde{Q}$ in $[0,T^*]$. Let $\epsilon>0$ be such that $T^*+\epsilon<T$, and let us show that for $\epsilon>0$ small enough we have $\|\rho_Q-\rho_{\tilde{Q}}\|_{L^2([T^*,T^*+\epsilon],L^2_x)}=0$, reaching a contradiction and proving that $T^*=T$. To prove it, let us define $Q_*:=Q(T^*)=\tilde{Q}(T^*)$, $V:=w*\rho_Q$ and $\tilde{V}:=w*\rho_{\tilde{Q}}$. First, we choose $\epsilon>0$ small enough such that $\|V\|_{L^2_t([T^*,T^*+\epsilon],H^s_x)},\|\tilde{V}\|_{L^2_t([T^*,T^*+\epsilon],H^s_x)}\le1$ and hence all constants that depend implicitly on $\|V\|_{L^2_t([T^*,T^*+\epsilon],H^s_x)}$ and $\|\tilde{V}\|_{L^2_t([T^*,T^*+\epsilon],H^s_x)}$ are uniformly bounded. Choosing $\epsilon_0>0$ small enough, we can also assume that $Q$ is the solution we constructed in Theorem \ref{thm:main1} so that, choosing $\delta_0\le1$ in its proof, we have $\|V\|_{L^2_t([0,T),H^s_x)}\le1$ and hence all implicit constants involving $V$ are uniformly bounded. For any $t\in[T^*,T^*+\epsilon]$, we split
$$\rho_{Q(t)}-\rho_{\tilde{Q}(t)} = I_1(t)+I_2(t)+I_3(t)+I_4(t),$$
where
$$I_1(t):=\rho_{U_V(t,T^*)Q_*U_V(t,T^*)^*}-\rho_{U_{\tilde{V}}(t,T^*)Q_*U_{\tilde{V}}(t,T^*)^*},$$
$$I_2(t):=\rho\left[-i\int_{T^*}^t (U_V(t,t_1)-U_{\tilde{V}}(t,t_1))[V(t_1),g(-i\nabla_x)]U_V(t_1,t)\,dt_1\right],$$
$$I_3(t):=\rho\left[-i\int_{T^*}^t U_{\tilde{V}}(t,t_1)[(V-\tilde{V})(t_1),g(-i\nabla_x)]U_V(t_1,t)\,dt_1\right],$$
$$I_4(t):=\rho\left[-i\int_{T^*}^t U_{\tilde{V}}(t,t_1)[\tilde{V}(t_1),g(-i\nabla_x)](U_V(t_1,t)-U_{\tilde{V}}(t_1,t))\,dt_1\right].$$
To estimate $I_1(t)$, we further decompose it as
$$I_1(t)=\rho_{(U_V(t,T^*)-U_{\tilde{V}}(t,T^*))Q_*U_V(t,T^*)^*}+\rho_{U_{\tilde{V}}(t,T^*)Q_*(U_V(t,T^*)-U_{\tilde{V}}(t,T^*))^*}.$$
Let $W\in L^2_{t,x}$. We have
$$
\begin{multlined}
 \int W(t,x) \rho_{(U_V(t,T^*)-U_{\tilde{V}}(t,T^*))Q_*U_V(t,T^*)^*}(x)\,dx\,dt
 \\
 = \int \tr_{L^2_x} \sd^{-s}U_V(t,T^*)^*W(t,x)(U_V(t,T^*)-U_{\tilde{V}}(t,T^*))\sd^{-s}\sd^s Q_*\sd^s\,dt
\end{multlined}
$$
By Proposition \ref{prop:UV-bez}, we have
$$\| |W|^{\frac{d}{d+2}}U_V(t,T^*)\sd^{-s}\|_{\gS^{\frac{2d(d+2)}{d^2-2}}(L^2_x\to L^2_{t,x})} \lesssim \|W\|_{L^2_{t,x}}^{\frac{d}{d+2}},$$
while by Lemma \ref{lem:est-uniqueness-L2} we have
$$\|W^{\frac{2}{d+2}}(U_V(t,T^*)-U_{\tilde{V}}(t,T^*))\sd^{-s}\|_{\gS^{\frac{2d(d+2)}{d^2+2d-2}}(L^2_x\to L^2_{t,x})}\lesssim \|W\|_{L^2_{t,x}}^{\frac{2}{d+2}}\|V-\tilde{V}\|_{L^2_{t,x}}.$$
By the H\"older inequality in Schatten spaces, we deduce that
$$\|I_1(t)\|_{L^2_{t,x}}\le C\|Q_*\|_{\cH^{s,d}}\|V-\tilde{V}\|_{L^2_{t,x}}\le C\|Q_*\|_{\cH^{s,d}}\|\rho_Q-\rho_{\tilde{Q}}\|_{L^2_{t,x}}.$$
We estimate $I_2(t)$ by
$$\|I_2(t)\|_{L^2_{t,x}}\le\int_{T^*}^{T^*+\epsilon}\|\rho_{(U_V(t,t_1)-U_{\tilde{V}}(t,t_1))[V(t_1),g(-i\nabla_x)]U_V(t_1,t)}\|_{L^2_{t,x}}\,dt_1,$$
and by the same estimate that we used to treat the term $I_1(t)$ we deduce that
$$\|I_2(t)\|_{L^2_{t,x}}\le C\|\rho_Q-\rho_{\tilde{Q}}\|_{L^2_{t,x}}\int_{T^*}^{T^*+\epsilon}\|\sd^s[V(t_1),g(-i\nabla_x)]\sd^s\|_{\gS^d}\,dt_1.$$
Using \eqref{eq:KSS-Hs} above we deduce that
$$\|\sd^s[V(t_1),g(-i\nabla_x)]\sd^s\|_{\gS^d}\le \|\sd^s[V(t_1),g(-i\nabla_x)]\sd^s\|_{\gS^2}\lesssim \|V(t_1)\|_{H^s_x},$$
hence
$$\|I_2(t)\|_{L^2_{t,x}}\le C\sqrt{\epsilon}\|\rho_Q-\rho_{\tilde{Q}}\|_{L^2_{t,x}}.$$
The term $I_4(t)$ has the same form as $I_2(t)$ and hence can be estimated in the same way. To estimate $I_3(t)$, we only explain how to treat the term where the commutator $[(V-\tilde{V})(t_1),g(-i\nabla_x)]$ is replaced by $(V-\tilde{V})(t_1)g(-i\nabla_x)$, the other term in the commutator being treated in the same way. Using the same technique that we used to estimate $I_1(t)$ but using the second estimate of Lemma \ref{lem:est-uniqueness-L2}, we deduce that
$$\|\rho_{U_{\tilde{V}}(t,t_1)\Gamma(U_{V_1}(t_1,t)-U_{V_2}(t_1,t)}\|_{L^2_{t,x}}\lesssim \|\Gamma\sd^s\|_{\gS^2}.$$
Applying this estimate to $V_1=V$ and $V_2=0$ and using the same technique that we used to estimate $I_2(t)$, we deduce that
$$
\begin{multlined}
\left\|\rho\left[-i\int_{T^*}^t U_{\tilde{V}}(t,t_1)(V-\tilde{V})(t_1)g(-i\nabla_x)(U_V(t_1,t)-e^{i(t_1-t)\Delta})\,dt_1\right]\right\|_{L^2_{t,x}}\\
\le C\int_{T^*}^{T^*+\epsilon}\|(V-\tilde{V})(t_1)g(-i\nabla_x)\sd^{s}\|_{\gS^2}\,dt_1\le C\sqrt{\epsilon}\|\rho_Q-\rho_{\tilde{Q}}\|_{L^2_{t,x}}.
\end{multlined}
$$
We estimate the remaining term using that for any $\eta>0$ and by the Kato-Seiler-Simon inequality,
$$\|\rho_\Gamma\|_{L^2_x}\lesssim\|\Gamma\sd^{d/2+\eta}\|_{\gS^2},$$
so that
$$
\begin{multlined}
\left\|\rho\left[-i\int_{T^*}^t U_{\tilde{V}}(t,t_1)(V-\tilde{V})(t_1)g(-i\nabla_x)e^{i(t_1-t)\Delta}\,dt_1\right]\right\|_{L^2_{t,x}}\\
\le C\int_{T^*}^{T^*+\epsilon}\|(V-\tilde{V})(t_1)g(-i\nabla_x)\sd^{d/2+\eta}\|_{\gS^2}\,dt_1\le C\sqrt{\epsilon}\|\rho_Q-\rho_{\tilde{Q}}\|_{L^2_{t,x}},
\end{multlined}
$$
and thus $\|I_3(t)\|_{L^2_{t,x}}\le C\sqrt{\epsilon}\|\rho_Q-\rho_{\tilde{Q}}\|_{L^2_{t,x}}$. Hence, we have proved that
$$\|\rho_Q-\rho_{\tilde{Q}}\|_{L^2_{t,x}}\le C(\sqrt{\epsilon}+\|Q_*\|_{\cH^{s,d}})\|\rho_Q-\rho_{\tilde{Q}}\|_{L^2_{t,x}},$$
for some $C>0$ which is independent of $Q$ and $\tilde{Q}$. It remains to show that $Q_{\rm in}$ can be chosen small enough so that $C\|Q_*\|_{\cH^{s,d}}<1/2$. If we have this property, this concludes the proof by choosing $\epsilon>0$ small enough so that $C\sqrt{\epsilon}<1/2$. To prove the smallness of $Q_*$, we use that $\sd^s U_V(t)\sd^{-s}$ is bounded on $L^2_x$ by Proposition \ref{prop:UV-Sonae} to infer that
$$\|Q_*\|_{\cH^{s,d}}\lesssim \|Q_{\rm in}\|_{\cH^{s,d}}+\left\|\int_0^{T^*}U_V(t_1)^* V(t_1)g(-i\nabla_x)U_V(t_1)\,dt_1\right\|_{\cH^{s,d}}.$$
We expand
$$U_V(t_1)=e^{it_1\Delta}-i\int_0^{t_1}e^{i(t_1-t_2)\Delta}V(t_2)U_V(t_2)\,dt_2,$$
and use Proposition \ref{prop:UV-bez} to infer that
$$
\begin{multlined}
  \left\|\int_0^{T^*}\sd^s U_V(t_1)^*\sd^{-s}\sd^s V(t_1)\sd^{-s}g(-i\nabla_x)\sd^{3s} e^{it_1\Delta}\,dt_1\sd^{-s}\right\|_{\gS^{2d/(d-1)}}\\
  \lesssim\|V\|_{L^2_tH^s_x}.
\end{multlined}
$$
Similarly, we can write
$$
\begin{multlined}
\sd^s \int_0^\ii U_{V}(t_1)^*V(t_1)g(-i\nabla_x)\int_0^{t_1}e^{i(t_1-t_2)\Delta}V(t_2)U_{V}(t_2)\,dt_2 \sd^s\\
=\int_0^{T^*}\sd^s U_V(t_1)^*\sd^{-s} \sd^s V(t_1)\sd^{-s}e^{it_1\Delta}\sd^{-s}\sd^{3s}g(-i\nabla_x)\times\\
\times\int_0^{t_1}e^{-it_2\Delta}\sd^{-s}V(t_2)\sd^s \sd^{-s}U_V(t_2)\sd^s\,dt_2,
\end{multlined}
$$
so that we also have
$$
\begin{multlined}
 \left\|\sd^s \int_0^\ii U_{V}(t_1)^*V(t_1)g(-i\nabla_x)\int_0^{t_1}e^{i(t_1-t_2)\Delta}V(t_2)U_{V}(t_2)\,dt_2 \sd^s\right\|_{\gS^{2d/(d-1)}}\\
  \lesssim\|V\|_{L^2_tH^s_x}.
\end{multlined}
$$
We deduce that
$$
\begin{multlined}
 \left\|\int_0^{T^*}U_V(t_1)^* V(t_1)g(-i\nabla_x)U_V(t_1)\,dt_1\right\|_{\cH^{s,d}}\\
 \le\left\|\sd^s\int_0^{T^*}U_V(t_1)^* V(t_1)g(-i\nabla_x)U_V(t_1)\,dt_1\sd^s\right\|_{\gS^{2d/(d-1)}}\lesssim\|V\|_{L^2_tH^s_x},
\end{multlined}
$$
and thus
$$\|Q_*\|_{\cH^{s,d}}\le C(\|Q_{\rm in}\|_{\cH^{s,d}}+\|V\|_{L^2_tH^s_x}),$$
for some absolute constant $C>0$. Hence, for $\epsilon_0$ small enough and for $\delta_0>0$ small enough in the proof of Theorem \ref{thm:main1}, we have $C\|Q_*\|_{\cH^{s,d}}<1/2$. This concludes the proof.
\end{proof}

 \appendix

  \section{An alternative proof of \cite[Thm. 3.1]{CheHonPav-17}}\label{app:CHP}

  \begin{proof}[Proof of Theorem \ref{thm:strichartz-HS}]

  We have
  $$I=\left\|\sd^{-\alpha_1}\int_\R dt\,e^{-it\Delta_x} V e^{it\Delta_x} \sd^{-\alpha_2}\right\|_{\gS^2}^2 = \int |\hat{V}(|p|^2-|q|^2,p-q)|^2\frac{dp\,dq}{\langle p\rangle^{2\alpha_1}\langle q\rangle^{2\alpha_2}}.$$
  Writing $c=p+q$ and $r=p-q$ we get
  $$I\lesssim \int |\hat{V}(c\cdot r,r)|^2\frac{dc\,dr}{\langle c+r\rangle^{2\alpha_1}\langle c-r\rangle^{2\alpha_2}}.$$
  Writing next $c=\rho\omega$ with $\rho\in(0,+\ii)$ and $\omega\in\Sph^{d-1}$, we get
  $$I\lesssim \int |\hat{V}(\rho \omega\cdot r,r)|^2\frac{\rho^{d-1}d\rho\,d\omega\,dr}{\langle \rho\omega+r\rangle^{2\alpha_1}\langle \rho\omega-r\rangle^{2\alpha_2}}.$$
  For $r\neq0$, we write $\omega=\cos\theta(r/|r|)+\sin\theta\omega'$ with $\omega'\cdot r=0$ and $\theta\in(0,\pi)$. We obtain
  $$I\lesssim \int |\hat{V}(\rho |r|\cos\theta,r)|^2\frac{\rho^{d-1}d\rho\,\sin^{d-2}\theta d\theta\,dr}{\langle \rho\cos\theta+|r|,\rho\sin\theta\rangle^{2\alpha_1}\langle \rho\cos\theta-|r|,\rho\sin\theta\rangle^{2\alpha_2}}.$$
  In the region $\theta\in(0,\pi/2)$ (we denote this integral $I_+$), we change variables  $\rho'=\rho|r|\cos\theta$, we obtain
  $$I_+\lesssim \int_{\R^3}\,dr\int_0^\ii\,d\rho'\,|\hat{V}(\rho',r)|^2\int_0^{\pi/2}\,d\theta \frac{1}{\cos^2\theta|r|^d}  \frac{(\rho')^{d-1}\tan^{d-2}\theta}{\langle \tfrac{\rho'}{|r|}+|r|,\tfrac{\rho'}{|r|}\tan\theta
  \rangle^{2\alpha_1}\langle \tfrac{\rho'}{|r|}-|r|,\tfrac{\rho'}{|r|}\tan\theta\rangle^{2\alpha_2}}.$$
  Defining next $s=\tan\theta$ we have to analyze the integral
  $$\frac{(\rho')^{d-1}}{|r|^d}\int_0^\ii \frac{s^{d-2}\,ds}{\langle \tfrac{\rho'}{|r|}+|r|,\tfrac{\rho'}{|r|}s
  \rangle^{2\alpha_1}\langle \tfrac{\rho'}{|r|}-|r|,\tfrac{\rho'}{|r|}s\rangle^{2\alpha_2}}=\frac{1}{|r|}\int_0^\ii \frac{s^{d-2}\,ds}{\langle \tfrac{\rho'}{|r|}+|r|,s
  \rangle^{2\alpha_1}\langle \tfrac{\rho'}{|r|}-|r|,s\rangle^{2\alpha_2}}.$$
  Notice that it converges only when $2\alpha_1+2\alpha_2>d-1$, that is if $\alpha_1+\alpha_2>(d-1)/2$. To bound $I_+\lesssim \| |\nabla|^{-1/2}\sd^{-\alpha_0}V\|_{L^2_{t,x}}^2$, we need to show that
  $$\langle r\rangle^{2\alpha_0}\int_0^\ii \frac{s^{d-2}\,ds}{\langle \tfrac{\rho'}{|r|}+|r|,s
  \rangle^{2\alpha_1}\langle \tfrac{\rho'}{|r|}-|r|,s\rangle^{2\alpha_2}}\lesssim1$$
  uniformly in $(\rho',r)$. For $|r|\le1$, this is obvious so we consider the case $|r|\ge1$. Given that $\rho'\ge0$ we can bound
  $$\int_0^\ii \frac{s^{d-2}\,ds}{\langle \tfrac{\rho'}{|r|}+|r|,s
  \rangle^{2\alpha_1}\langle \tfrac{\rho'}{|r|}-|r|,s\rangle^{2\alpha_2}}\lesssim
  \int_0^\ii \frac{s^{d-2}\,ds}{\langle |r|,s
  \rangle^{2\alpha_1}\langle s\rangle^{2\alpha_2}}.
  $$
  Now using that
  $$\frac{1}{\langle |r|,s
  \rangle^{2\alpha_1}\langle s\rangle^{2\alpha_2}}\lesssim\frac{1}{(1+|r|+s)^{2\alpha_1}(1+s)^{2\alpha_2}}\lesssim\frac{1}{(1+|r|)^{2\alpha_1+2\alpha_2}}\frac{1}{(1+\frac{s}{1+|r|})^{2\alpha_1}}\frac{1}{(\frac{1}{1+|r|}+\frac{s}{1+|r|})^{2\alpha_2}}
  $$
  we obtain
  $$\int_0^\ii \frac{s^{d-2}\,ds}{\langle |r|,s
  \rangle^{2\alpha_1}\langle s\rangle^{2\alpha_2}} \lesssim \frac{(1+|r|)^{d-1}}{(1+|r|)^{2\alpha_1+2\alpha_2}}\int_0^\ii\frac{s^{d-2}\,ds}{(1+s)^{2\alpha_1}(\frac{1}{1+|r|}+s)^{2\alpha_2}}.
  $$
  Now as $|r|\to+\ii$ we have
  $$
  \int_0^\ii\frac{s^{d-2}\,ds}{(1+s)^{2\alpha_1}(\frac{1}{1+|r|}+s)^{2\alpha_2}} \sim
  \begin{cases}
   1 & \text{if}\ 2\alpha_2<d-1 \\
   \log\frac{1}{1+|r|} & \text{if}\ 2\alpha_2=d-1 \\
   \frac{1}{|r|^{d-1-2\alpha_2}} & \text{if}\ 2\alpha_2>d-1.
  \end{cases}
  $$
  Doing the same reasoning in the region $\theta\in(\pi/2,\pi)$ (with the change of variables $\rho'= -\rho|r|\cos\theta$, which amounts to reverse the roles of $\alpha_1$ and $\alpha_2$), we obtain the result.

 \end{proof}

%
%

\end{document}